\documentclass[12pt]{amsart}

%% Include packages

%%%%%%%This sorts the citations in a given cite block
\usepackage[numbers,sort]{natbib}
%%%%%%%
%%\usepackage{times}
\usepackage[margin=1in,letterpaper,portrait]{geometry}
\usepackage{longtable}
\usepackage{amsmath}
\usepackage{amssymb}
\usepackage{amsthm}
\usepackage{mathrsfs}
\usepackage{verbatim}
\usepackage{fancyhdr}
\usepackage{url}
\usepackage{xcolor}
\usepackage{tikz}
\usepackage{enumerate}
\usepackage{cleveref}
\usetikzlibrary{matrix,arrows,shapes,decorations.markings}
\usepackage{tikz-cd}
\usepackage{rotating}
\usepackage{mathabx,epsfig}

\usetikzlibrary{plotmarks}
\usetikzlibrary{matrix, arrows}
\usepackage{pgfplots}
\pgfplotsset{compat=1.14}
\usepgfplotslibrary{fillbetween}

\usepackage{setspace}
%\doublespacing

%% Modify these variables

\newcommand{\mb}{\mathbb}

\newcommand{\mc}{\mathcal}
\newcommand{\im}{{\rm im}}
\newcommand{\ol}{\overline}
\newcommand{\wt}{\widetilde}
\newcommand{\codim}{{\rm codim}}

\newcommand{\Spec}{{\rm Spec}}

\newcommand{\Hom}{{\rm Hom}}
\newcommand{\ms}{\mathscr}
\newcommand*{\sheafhom}{\mathscr{H}\kern -.5pt om}

\newcommand{\ev}{{\rm ev}}

\newcommand{\pt}{{\rm pt}}
\newcommand{\rk}{{\rm rk}}
\newcommand{\MM}{{\mathcal M}}
\newcommand{\NN}{{\mathcal N}}

\newcommand{\Vt}{{\operatorname{Vert}}}
\newcommand{\Gr}{{\operatorname{Gr}}}
\newcommand{\ord}{{\operatorname{ord}}}
\newcommand{\conv}{{\operatorname{conv}}}

%% DO NOT MODIFY THIS SECTION

\fancyhf{}
%\lhead{\course \\ \pset}
\rhead{\student \\ \studentemail}
\cfoot{\thepage}
\addtolength{\headheight}{30pt}

\setlength{\parindent}{0cm}
\setlength{\parskip}{.15cm plus1mm minus1mm}
%\pagestyle{fancy}

%% END DO NOT MODIFY THIS SECTION

\theoremstyle{plain}
\newtheorem{Thm}{Theorem}[section]
\newtheorem{Thm*}{Theorem}
\crefname{Thm}{Theorem}{Theorems}
\crefname{Thm*}{Theorem}{Theorems}
\newtheorem{Cor}[Thm]{Corollary}
\crefname{Cor}{Corollary}{Corollaries}

\crefname{Conj}{Conjecture}{Conjectures}
\newtheorem{Pro}[Thm]{Problem}
\newtheorem{Pro*}{Problem}
\crefname{Pro}{Problem}{Problems}

\crefname{Main}{Main Theorem}{Main Theorems}

\newtheorem{Lem}[Thm]{Lemma}
\crefname{Lem}{Lemma}{Lemmas}
\newtheorem{Claim}[Thm]{Claim}
\crefname{Claim}{Claim}{Claims}
\newtheorem{Prop}[Thm]{Proposition}
\crefname{Prop}{Proposition}{Propositions}
\newtheorem{Prop*}{Proposition}

\crefname{ToDo}{To Do}{To Dos}

\crefname{Fact}{Fact}{Facts}
\newtheorem*{Fact*}{Fact}

\theoremstyle{definition}
\newtheorem{Def}{Definition}[section]
\crefname{Def}{Definition}{Definitions}

\crefname{Exer}{Exercise}{Exercises}
\newtheorem{Rem}[Thm]{Remark}
\crefname{Rem}{Remark}{Remarks}
\newtheorem{Exam}{Example}
\crefname{Exam}{Example}{Examples}

\theoremstyle{remark}
\usepackage{twoopt}
\newcommandtwoopt{\op}[3][r][n] {\mathcal{O}_{#3}}
\newcommandtwoopt{\opc}[3][r][n] {[\mathcal{O}_{#3}]}
\newcommandtwoopt{\og}[3][r+1][n] {\mathcal{O}(#3)_{\textrm{G}(#1,#2)}}
\newcommandtwoopt{\ogc}[3][r+1][n] {[\mathcal{O}(#3)]_{\textrm{G}(#1,#2)}}
\newcommandtwoopt{\oa}[3][(r+1)][n] {\mathcal{O}(#3)_{\mb{A}^{#1\times #2}}}
\newcommandtwoopt{\oac}[3][(r+1)][n] {[\mathcal{O}(#3)]_{\mb{A}^{#1 \times #2}}}

\begin{document}

\title{Orbits in $(\mathbb{P}^r)^n$ and equivariant quantum cohomology}
\author{Mitchell Lee, Anand Patel, Hunter Spink, Dennis Tseng}
\date{\today}

\allowdisplaybreaks

\begin{abstract}
\begin{comment}
We completely analyze arbitrary $GL_{r+1}$-orbit closures in $(\mb{P}^r)^n$ as $GL_{r+1}$-equivariant cycles. Aluffi and Faber (1993) initiated the program of studying the Chow classes of these cycles non-equivariantly, recently solved by Binglin (2017), and we extend this to the $GL_{r+1}$-equivariant setting, completely describing the classes of the refinement of the matroid strata of Feh\'er and Rimyani (2008) by moduli. The class of the entire matroid stratum remains intractable. Kapranov (1993) considered relations between these cycles induced by degenerations via polytope geometry, and we show that these degenerations are sufficiently versatile to reconstruct any relation between the underlying polytopes. Finally, we extend the construction of small equivariant quantum cohomology of Kim and Giventhal (1993) via these classes, creating a framework for solving problems involving configurations of points of fixed moduli in a relative setting that generalizes points of fixed moduli on a curve.
\end{comment}
We compute the $GL_{r+1}$-equivariant Chow class of the $GL_{r+1}$-orbit closure of any point $(x_1, \ldots, x_n) \in (\mb{P}^r)^n$ in terms of the rank polytope of the matroid represented by $x_1, \ldots, x_n \in \mb{P}^r$. Using these classes and generalizations involving point configurations in higher dimensional projective spaces, we define for each $d\times n$ matrix $M$ an $n$-ary operation $[M]_\hbar$ on the small equivariant quantum cohomology ring of $\mb{P}^r$, which is the $n$-ary quantum product when $M$ is an invertible matrix. We prove that $M \mapsto [M]_\hbar$ is a valuative matroid polytope association.

Like the quantum product, these operations satisfy recursive properties encoding solutions to enumerative problems involving point configurations of given moduli in a relative setting. As an application, we compute the number of line sections with given moduli of a general degree $2r+1$ hypersurface in $\mb{P}^r$, generalizing the known case of quintic plane curves.
\end{abstract}

\maketitle

\section{Introduction}
For any point $X=(x_1,\ldots,x_n)\in(\mb{P}^r)^n,$ we can consider its orbit closure $\overline{GL_{r+1}\cdot X}$ under the diagonal $GL_{r+1}$-action on $(\mb{P}^r)^n$. The Chow class of this subvariety in $A^\bullet((\mb{P}^r)^n)$ was computed by Li \cite{Binglin}, and previously studied by many others \cite{AF93,AF00,Serdica,Kapranov}. The point $X=(x_1,\ldots,x_n)$ determines a \emph{rank function} which associates to a subset $S\subset \{1,\ldots,n\}$ one plus the dimension of the linear span of $\{x_s\}_{s \in S}$ in $\mb{P}^r$, known to combinatorialists as a \emph{representable matroid}. Li's result shows this combinatorial data determines the Chow class of the orbit closure. 

Given a rank $r+1$ vector bundle $V\to B$, one can form a relative orbit closure $(\ol{GL_{r+1}\cdot X})_V\subset \mb{P}(V)^n$ that restricts to $\overline{GL_{r+1}\cdot X}$ in each fiber of $\mb{P}(V)^n\to B$. One can clearly do this locally on a trivialization, and this patches together as the transition functions preserve the orbit closure. It turns out that the Chow class of the relative orbit closure in $A^{\bullet}(\mb{P}(V)^n)$ is a universal polynomial in the $n$ $\mathscr{O}(1)$-classes $H_1,\ldots,H_n$ and the $r+1$ Chern classes of $V$ pulled back from $B$. This polynomial is the $GL_{r+1}$-\emph{equivariant Chow class} of $\overline{GL_{r+1}\cdot X}$. That such a universal polynomial exists follows from the foundations of equivariant intersection theory \cite{EG98}.

The previous approach to studying $\overline{GL_{r+1}\cdot X}$ in the literature involves viewing $X$ as an $(r+1)\times n$ matrix, and taking the row span $\Lambda\in G(r+1,n)$, provided $X$ has full row rank. The data of the $GL_{r+1}$-equivariant Chow class of $\overline{GL_{r+1}\cdot X}$ is then in principle equivalent to the data of the $T$-equivalent Chow class of $\overline{T\cdot \Lambda}$, where $T$ is the $n$-dimensional torus acting on $G(r+1,n)$ (see \cite{Fink} and \Cref{liftingsection}). Classes of torus orbits in the Grassmannian have been widely studied \cite{Kapranov,SpeInvariant,FS12,berget:hal-01283166}, but it is unclear how to translate these formulas to our setting. 

In this paper, we will take a completely different approach to studying these orbit closures, which has connections to equivariant quantum cohomology. In particular, we compute the $GL_{r+1}$-equivariant classes of all $GL_{r+1}$-orbit closures in $(\mb{P}^r)^n$. There are two main halves to the argument. The first half is to use $GL_{r+1}$-equivariant degenerations to reduce the case of an arbitrary $X$ to the case where the matroid of $X$ is ``series-parallel''. In particular, we show the $GL_{r+1}$-equivariant class depends only on the matroid of $X$. The second half is to show the $GL_{r+1}$-equivariant classes behave well under parallel and series connection of matroids, allowing us to compute the $GL_{r+1}$-equivariant class when the matroid of $X$ is series-parallel. 

In fact, the $GL_{r+1}$-equivariant Chow classes associated to the series and parallel connections of two matroids are computed from the classes associated to the original matroids via the small equivariant quantum product $\star$ of $\mb{P}^r$ (\Cref{serparintro}). This is fundamental to the connection we describe between orbit closures and quantum cohomology. This paper is self-contained from the perspective of equivariant quantum cohomology of $\mb{P}^r$ (see \Cref{QCRD} and \Cref{KSI}). However we rely heavily on the combinatorics of (matroid) polytopes, the relevant facts of which are collected in \Cref{matroidpolytopedef} and \Cref{polyapp}. 

%See \Cref{KSI} for a self-contained expository account of this connection in the case of a uniform matroid.

Our approach has several advantages. 
\begin{enumerate}
    \item Even though the case of $GL_{r+1}$-orbits is easiest to state, our method naturally extends to the $GL_{r+1}$-invariant subvarieties given as the image of matrix multiplication (more precisely the pushforward of the fundamental cycle of the source after resolution, see \Cref{SchubertDef}):
    \begin{align*}
        \mu_M: \mb{P}^{d\times (r+1)-1}\dashrightarrow (\mb{P}^r)^n,
    \end{align*}
    where $M$ is a $d\times n$ matrix, which takes the projectivization of a $d\times (r+1)$ matrix $A$, and outputs the projectivized columns of $AM$. These are $GL_{r+1}$-orbit closures when $d\leq r+1$ and can be studied via torus orbits in the Grassmannian $G(r+1,n)$ when $d=r+1$. See \Cref{KGO} for a situation where these cycles with $d>r+1$ naturally occur. Our formulas are identical in this more general setting (see \Cref{finkformulaintro}).
    \item Equivalent to the data of the $GL_{r+1}$-equivariant class $[\overline{GL_{r+1}\cdot X}]$ is the Kronecker dual
    \begin{align*}
        [\overline{GL_{r+1}\cdot X}]^{\dagger}: A_{GL_{r+1}}^{\bullet}((\mb{P}^r)^n)\to A_{GL_{r+1}}^{\bullet}(\pt)
    \end{align*}
   defined by $$\alpha\mapsto \int\alpha\cap [\overline{GL_{r+1}\cdot X}].$$ When $\alpha$ comes from a $GL_{r+1}$-invariant subvariety $Z$ of $(\mb{P}^r)^n$, we can always intersect the relative versions of $Z$ and $\overline{GL_{r+1}\cdot X}$ in $\mb{P}(V)^n$ and pushforward to $B$ for every rank $r+1$ vector bundle $V\to B$. This yields a universal polynomial in the Chern classes of $V$, encoded by the class $\int\alpha\cap \overline{GL_{r+1}\cdot X}\in A^\bullet_{GL_{r+1}}(\pt)$.
   
   Our method shows that the map $[\overline{GL_{r+1}\cdot X}]^{\dagger}$ is often more natural to describe than the $GL_{r+1}$-equivariant class itself, and can be computed in terms of certain polynomial expansions and truncations (see \Cref{extendedexample}). This is particularly pleasant in the case where the matroid of $X$ is a Schubert matroid (see \Cref{Schubertexample} for a representative example). Our expressions for $[\overline{GL_{r+1}\cdot X}]^{\dagger}$ are motivated by the small quantum cohomology of projective space, and we have a self-contained expository account of this connection in \Cref{KSI} in the special case of a uniform matroid. 
   \item 
   The nonequivariant Chow class of $\overline{GL_{r+1}\cdot X}$ computed by Li \cite{Binglin} is a multiplicity-free sum of monomials in the $n$ hyperplane classes $H_i$, where the monomials correspond to integral points inside a polytope, specifically a rescaling and translation of the rank polytope of the matroid of $X$ (see \Cref{noneqsection}). Chow classes of torus orbits in the Grassmannian seem to be more complicated when expanded in terms of Schubert varieties, even non-equivariantly.
   
   Our method naturally yields the equivariant Chow class of $\overline{GL_{r+1}\cdot X}$ as an equivariant version of a sum of integral points of a polytope. Specifically, an application of Brion's theorem \cite{BHS} allows us to encode Li's result via a unique $2n$-variable rational function $f_M(x_1,\ldots,x_n,y_1,\ldots,y_n)$ depending only on $M$ and not on $r$ which has the property that $f_M(z_1,\ldots,z_n,z_1^{r+1},\ldots,z_n^{r+1})$ is a polynomial, and this polynomial specialized to $z_i=H_i$ recovers the non-equivariant class. Denoting by $F(z)=z^{r+1}+c_1z^r+\cdots+c_{r+1}$, we show that $f_M(z_1,\ldots,z_n,F(z_1),\ldots,F(z_n))$ is a polynomial, which when specialized to $z_i=H_i$ recovers the equivariant class. See the beginning of \Cref{statement} up to \Cref{finkformulaintro} for a precise formulation.
   \item 
   Given a $d\times n$ matrix $M$, we can consider the equivariant Chow class associated to the $k\times n$ matrix we get by applying a generic projection $K^d\to K^k$ to all of the columns of $M$. These classes don't depend on the choices of generic projections, and we will package these classes over all $k\leq d$ into an object we will call $[M]_{\hbar}$, which satisfies a fundamental relation 
   $$[M\oplus N]_{\hbar}=[M]_{\hbar}\star[N]_{\hbar}.$$
   Here, the matrix direct sum $M\oplus N$ is the block matrix $\begin{pmatrix} M & 0 \\ 0 & N\end{pmatrix}$. The $\star$ operation can be computed cleanly via the small quantum product of projective space, yielding elegant formulas relating Kronecker dual classes associated to column projections of $M$ and $N$ to Kronecker dual classes associated to column projections of $M\oplus N$. For example, we can use the fundamental relation to compute $[I_n]_{\hbar}$ inductively, where $I_n$ is the $n\times n$ identity matrix. This yields all Kronecker dual classes associated to uniform matroids. With only slightly more work, the same argument yields the nice formula for Kronecker dual classes associated to Schubert matroids mentioned in the second item above.
   
   The object $[M]_\hbar$ will actually be an operator taking $n$ classes in $A^\bullet_{GL_{r+1}}(\mb{P}^r)$, and outputting a class in the equivariant quantum cohomology ring $QH^\bullet_{GL_{r+1}}(\mb{P}^r)$. This ring is a deformation of $A^\bullet_{GL_{r+1}}(\mb{P}^r)$ by a single parameter $\hbar$, which we describe in detail in \Cref{NotandConv}. The construction of $[M]_\hbar$ is tightly paralleled by the construction of small equivariant quantum cohomology, and in fact, by construction, $[I_n]_{\hbar}$ is precisely the $n$-ary quantum product
$$(\alpha_1,\ldots,\alpha_n)\mapsto \alpha_1 \star \ldots \star \alpha_n.$$
The relation $[M\oplus N]_{\hbar}=[M]_{\hbar}\star [N]_{\hbar}$ can then be seen as a generalization of the associativity relations of small equivariant quantum cohomology. See \Cref{Mhbardef} for a precise definition of $[M]_\hbar$ and \Cref{Mhbarintro} for a precise theorem.
\end{enumerate}
As a consequence of our approach, we will compute all $GL_{r+1}$-equivariant classes of $GL_{r+1}$-orbit closures in $(\mb{P}^r)^n$ (see \Cref{allorbits}). This solves the problem from \cite{Fink} of computing $GL_{r+1}\times T$-equivariant classes of $GL_{r+1}\times T$-orbit closures in $\mb{A}^{(r+1)\times n}$ (see \Cref{liftingsection}).

Finally, we mention that these Chow classes associated to matrices have applications to enumerative geometry. For example, when the image of matrix multiplication $\mu_M$ happens to be a divisor, one can construct (virtual) effective divisors on $\ol{\mathscr{M}_g}$ \cite[Example 1.3]{T19}. For a second example, fix a hypersurface $Y\subset \mb{P}^r$ and a general hypersurface $Z\subset \mb{P}^{r+a}$.
\begin{Pro*}
\label{slicing}
As we vary over $r$-planes $\Lambda\subset\mb{P}^{r+a}$, how many times is $\Lambda\cap Z\cong Y$? 
\end{Pro*}
\Cref{slicing} has been solved in the context of line slices of a quintic plane curve \cite{Laza} and planar slices of a quartic threefold \cite[Corollary 1.3]{LPT19} for many choices of $Y$. To our knowledge, \Cref{slicing} is still open for hyperplane slices of a cubic threefold, even for general $Y$. A further generalization of \Cref{slicing} is to restrict $\Lambda$ to vary inside a subvariety of the Grassmannian, for example in a Schubert variety. 

The formulas in this paper solve \Cref{slicing} in the case where $Y$ is any union of hyperplanes. We demonstrate this in the case of line slices of a degree $2r+1$ hypersurface in $\mb{P}^r$, deriving an explicit formula and generalizing the example of quintic plane curves in \cite{Laza} (see \Cref{APIntro,APDIntro}). 
\subsection{Relation to Previous Work}
The map $\mu_M$ has been studied under various guises, both classically \cite{AF93, Binglin2, Kapranov, Binglin, Serdica} and $GL_{r+1}$-equivariantly \cite{berget:hal-01283166,BergetPaper,Fink,Harm,FS12,SpeInvariant}.

There is an analogous problem, studied in \cite{KMS,KMY09}, of computing $T\times T$- equivariant Chow classes of $B\times B$-orbit closures in the space of $n\times n$ matrices, lifting the $T$-equivariant classes of Schubert cycles in $GL_n/B$. Because of a lifting lemma implied by this result deduced in \cite{Fink} and a lifting lemma from \cite{FR07} (see \Cref{liftingsection}), a special case of our results yield the $GL_{r+1} \times T$-equivariant classes of $GL_{r+1}\times T$-orbit closures in $\mb{A}^{(r+1)\times n}$. Since this paper has been disseminated, \cite{BF19} has computed the $GL_{r+1}\times T$-equivariant K theory classes of $GL_{r+1}\times T$-orbit closures in $\mb{A}^{(r+1)\times n}$, which implies this special case of our results.

Small equivariant quantum cohomology, pioneered by Givental and Kim \cite{Kim93}, yields in a relative setting operations encoding counts of rational curves which intersect  a collection of subvarieties with fixed moduli of intersection. The operation $[M]_{\hbar}$ we construct tightly parallels this classical construction, and has analogous recursive and enumerative properties.

We remark as a consequence of our results that $[M]_\hbar(\exp(d_1H_1+\ldots+d_nH_n))$ computes an equivariant generalization to the adjusted predegree polynomial of hyperplane configurations with multiplicities $d_i$ as defined in \cite{AF00, Serdica}, and explains their truncated exponential formulas.

The degenerations we consider induce subdivisions of matroid rank polytopes. Such subdivisions have been studied in related contexts \cite{Derksen,GelfandZelevinsky,Kapranov,SpeThesis,SpeInvariant}, and in particular this extends the connection of Gelfand, Zelevinsky and Kapranov \cite{GelfandZelevinsky} between regular subdivisions of moment polytopes and degenerations of toric varieties to our situation.

Of interest is that our sequence of degenerations yields an algorithm providing a converse to the statement that degenerations of orbits yield additive relations in the sense of Derksen and Fink \cite{Derksen}. This is one of the main technical challenges we overcome, relying heavily on the geometry of infinite polytopal regions, and appears to have been a gap in understanding the geometry of realizable matroid polytope subdivisions.

Finally, we note that the orbits of ordered point configurations have also appeared under the guise of hyperplane arrangements by taking duals \cite{ Alexeev, Hacking,Kapranov, Keel}.

 \subsection{Acknowledgements}
 We would like to thank Alex Fink for many helpful discussions on polytope subdivisions. We would also like to thank Paolo Aluffi and Andrew Berget for helpful discussions. Finally, we would like to thank Joe Harris, Davesh Maulik, Richard Rim\'anyi, and Jenia Tevelev for answering questions in the initial stages of this project, and the anonymous referee for greatly improving the exposition of the paper.

\section{Definitions}\label{NotandConv}
 In what follows, we fix an algebraically closed field $K$ of arbitrary characteristic. If $v$ is a nonzero element in a vector space $V$ we write $\wt{v}$ for the image of $v$ in $\mb{P}(V)$. We denote the projectivization of the vector space $\mb{A}^{k \times \ell}$ of $k \times \ell$ matrices by $\mb{P}^{k \times \ell-1}$.
 
To help the reader locate relevant symbols, we have included the tables \Cref{wordtable} and \Cref{symboltable} in \Cref{tableappendix}.

\subsection{Matrices, matroids and polytopes}
\label{matroidpolytopedef}
Let $d$ and $n$ be integers. For a polytope $P$, denote by $\Vt(P)$ the vertices of $P$. The \emph{hypersimplex} $\Delta_{d,n} \subset \mb{R}^n$ is defined by $$\Delta_{d,n}=\left\{(x_1, \ldots, x_n) \in [0, 1]^n\middle|\sum_{i=1}^n x_i = d\right\}.$$ When $d$ and $n$ are clear from context, we sometimes write the hypersimplex as $\Delta$ rather than $\Delta_{d, n}$. The hypersimplex is the convex hull of the vectors $e_A := \sum_{i \in A} e_i$, where $A$ ranges over all $d$-element subsets of $\{1, \ldots, n\}$. We will sometimes abuse notation and use the same symbol to denote both $e_A$ and $A$. 

Given a $d \times n$ matrix $M$, we define the \emph{rank function} $\rk_M:2^{\{1,\ldots,n\}} \to \mb{Z}_{\ge 0}$ by $\rk_M(A)=\rk(M^A)$, where $M^A$ is the submatrix of $M$ formed by the columns in $A$. When $M$ is clear from context, we sometimes write the rank function as $\rk$ rather than $\rk_M$. The \emph{matroid} of $M$ is the data of the rank function $\rk_M$, which is determined by the maximal independent subsets of the columns of $M$. The \emph{rank polytope} of $M$ is the set
\[P_M=\left\{(x_1, \ldots, x_n) \in \Delta_{d,n}\middle|\text{$\sum_{i \in A} x_i \leq \rk_M(A)$ for all $A \subset \{1, \ldots, n\}$}\right\}\] and the \emph{independence polytope} of $M$ is the set
\[I_M=\left\{(x_1, \ldots, x_n) \in \mb{R}_{\geq 0}^n\middle|\text{$\sum_{i \in A} x_i \leq \rk_M(A)$ for all $A \subset \{1, \ldots, n\}$}\right\}.\]
 The rank polytope of $M$ is the convex hull of $e_A$ where $A$ ranges over all $d$-element subsets of $\{1, \ldots, n\}$ such that $\rk_M(A) = d$, and the independence polytope of $M$ is the convex hull of $e_A$ where $A$ ranges over all subsets of $\{1, \ldots, n\}$ such that $\rk_M(A) = |A|$.
The rank polytope of $M$ is empty if $\rk(M) < d$ and determines the matroid of $M$ if $\rk(M)=d$. The rank polytope of a general $d \times n$ matrix is $\Delta_{d, n}$. % Citation for these facts?

As in \cite{Derksen}, we say that a function $f$ from $d \times n$ matrices to an abelian group $Z$ is \emph{valuative} (respectively \emph{additive}) if whenever $\sum a_i1_{P_{M_i}}=0$ (respectively $\sum a_i 1_{P_{M_i}}$ is supported on a positive codimension subset of $\{\sum_{i=1}^n x_i=d\} \subset \mb{R}^n$ ), we have $\sum a_i f(M_i)=0$. These notions have been intensely studied in a variety of contexts \cite{Harm,Derksen,SpeInvariant}.

The following proposition shows we can replace the condition $\sum a_i1_{P_{M_i}}=0$ with the condition $\sum a_i1_{I_{M_i}}=0$, which will be convenient at times.

\begin{Prop}\label{rankind}
For $d\times n$ matrices $M_i$ of rank $d$, $\sum a_i1_{P_{M_i}}=0$ if and only if $\sum a_i1_{I_{M_i}}=0$.
\end{Prop}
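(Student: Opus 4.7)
The ``only if'' direction is immediate: since each $M_i$ has rank $d$, the linear functional $\sum_{j=1}^n x_j$ attains its maximum value $d$ on $I_{M_i}$ precisely along the top face $P_{M_i}$, so $I_{M_i}\cap\{\sum_j x_j=d\}=P_{M_i}$. Restricting the hypothesized identity $\sum a_i 1_{I_{M_i}}=0$ to this hyperplane yields $\sum a_i 1_{P_{M_i}}=0$.

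For the converse, the plan is to realize $I_M$ as the image of an auxiliary matroid's rank polytope under a coordinate projection. For each $M$ of rank $d$ on $[n]$, let $\widehat M$ denote the rank-$d$ truncation of the direct sum of the matroid of $M$ with the uniform matroid $U_{d,d}$ on a fresh $d$-element ground set $\{n{+}1,\ldots,n{+}d\}$; its bases are the sets $A\sqcup B$ with $A\subseteq[n]$ independent in $M$, $B\subseteq\{n{+}1,\ldots,n{+}d\}$, and $|A|+|B|=d$. Let $\pi\colon\mathbb{R}^{n+d}\to\mathbb{R}^n$ be projection onto the first $n$ coordinates. A straightforward inspection of the matroid constraints of $\widehat M$ shows that the fiber of $\pi$ over a point $v$ with $v\in I_M$ and $\sum_j v_j=k$ is a translate of $P_{U_{d-k,d}}$, while the fiber over $v\notin I_M$ is empty. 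The fiberwise pushforward of indicators therefore satisfies
\[\pi_*(1_{P_{\widehat M}})(v)=c\!\left({\textstyle\sum_j v_j}\right)\cdot 1_{I_M}(v),\qquad c(k):=\operatorname{vol}_{d-1}(P_{U_{d-k,d}}),\]
with $c$ independent of $M$ and strictly positive on $(0,d)$.

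Granting this identity, the converse reduces to showing that $M\mapsto 1_{P_{\widehat M}}$ is valuative. Direct sum with a fixed matroid is valuative because $P_{M\oplus U_{d,d}}=P_M\times P_{U_{d,d}}$ makes its indicator a pointwise tensor product, and truncation is valuative by the standard results of Derksen--Fink \cite{Derksen}. Pushing the hypothesized relation $\sum a_i 1_{P_{M_i}}=0$ through both operations and then through $\pi_*$ yields $c(\sum_j v_j)\sum a_i 1_{I_{M_i}}(v)=0$, which after cancelling the positive scalar produces $\sum a_i 1_{I_{M_i}}(v)=0$ on the open slab $\sum_j v_j\in(0,d)$. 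The boundary slice $\sum_j v_j=d$ is handled directly, since $I_{M_i}$ and $P_{M_i}$ agree there, and the single point $v=0$ is handled by applying the constructible-functions Euler characteristic to the hypothesis, giving $\sum a_i=\sum a_i\chi(P_{M_i})=0=\sum a_i 1_{I_{M_i}}(0)$. The main technical obstacle I anticipate is justifying the valuativity of truncation; as a fallback, the specific $\widehat M$ can be analyzed by slicing $P_{\widehat M}\cap\{\sum_{j>n}x_j=d-k\}=P_{T_kM}\times P_{U_{d-k,d}}$ for each $k$, reducing the task to valuativity statements about rank polytopes of lower-rank truncations.
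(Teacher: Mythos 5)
Your forward direction (restricting $\sum a_i 1_{I_{M_i}}=0$ to the hyperplane $\{\sum_j x_j = d\}$) is exactly the paper's, though note your if/only-if labels are reversed. The problem is the converse. Your entire construction --- the auxiliary matroid $\widehat M$, the fiber volumes $c(k)$, the Euler-characteristic patch at the origin --- funnels into the single claim that $M\mapsto 1_{P_{\widehat M}}$ is valuative, and that claim rests on the assertion that truncation is a valuative operation on rank polytopes. That assertion is not proved, and you flag it yourself as the main obstacle. It is also where all the content of the proposition lives: the standard way to see that $P_{T_kM}=I_M\cap\{\sum_j x_j=k\}$ depends valuatively on $P_M$ is to first establish valuativity of $M\mapsto 1_{I_M}$ --- which is precisely the statement being proved --- so the most natural reading of your argument is circular. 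The non-circular routes (e.g.\ universality of the $\mathcal{G}$-invariant) are far heavier than the proposition warrants, and I do not believe the cited reference states truncation-valuativity in the form you need. Your fallback of slicing $P_{\widehat M}$ does not help, since it again reduces to valuativity of the truncations $T_kM$.

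The paper's proof of the converse is two lines and supplies the missing ingredient directly. If $\sum a_i 1_{P_i}=0$ for polytopal regions $P_i$, then $\sum a_i 1_{P_i + R}=0$ for any ray $R$: restrict to a line $\ell$ parallel to $R$, where $(P_i+R)\cap\ell = (P_i\cap\ell)+R$ and the one-dimensional statement for intervals is immediate. Applying this with each of the rays $\mathbb{R}_{\geq 0}(-e_j)$ and then intersecting with the orthant $\mathbb{R}_{\geq 0}^n$ (intersection with a fixed set preserves linear identities of indicators) gives $\sum a_i 1_{(P_{M_i}+\mathbb{R}_{\leq 0}^n)\cap \mathbb{R}_{\geq 0}^n}=0$, and $(P_M+\mathbb{R}_{\leq 0}^n)\cap\mathbb{R}_{\geq 0}^n=I_M$. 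Note that this same ray trick is what would prove your truncation claim, via $P_{T_kM}=(P_M+\mathbb{R}_{\leq 0}^n)\cap\mathbb{R}_{\geq 0}^n\cap\{\sum_j x_j=k\}$, so once the gap is filled honestly your scaffolding becomes superfluous. Two smaller remarks: your fibers $\{w\in[0,1]^d:\sum_j w_j=d-k\}$ occur for non-integral $k$, so they are cube slices rather than hypersimplices $P_{U_{d-k,d}}$; and using the Euler-characteristic pushforward $v\mapsto\chi\bigl(\pi^{-1}(v)\cap P_{\widehat M}\bigr)$ instead of $\mathrm{vol}_{d-1}$ would give $\pi_*1_{P_{\widehat M}}=1_{I_M}$ on the nose and spare you the boundary case analysis.
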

\begin{proof}
If $\sum a_i1_{I_{M_i}}=0$, then we can restrict to the hyperplane $\sum_{i=1}^n x_i=d$ to obtain $\sum a_i1_{I_{M_i}}=0$. Conversely, suppose $\sum a_i1_{P_{M_i}}=0$. We recall that if $\{P_i\}$ is a collection of possibly unbounded polytopal regions, and $\sum a_i 1_{P_i}=0$, then $\sum a_i 1_{R+P_i}=0$ for any ray $R$ (this can be checked on each ray parallel to $R$ separately). Applying this successively to $\sum a_i 1_{P_{M_i}}=0$ with each of the negative coordinate rays and then restricting to the positive orthant yields $\sum a_i 1_{I_{M_i}}=0$.
\end{proof}

Parallel and series connection of matroids play a crucial role in this paper, and we recall the definition for matrices from \cite[page 262, Figure 7.6(b)]{Oxley}. First, to simplify notation, we define the direct sum of two matrices.

\begin{Def}
For matrices $M_1,M_2$, define the \emph{direct sum} matrix $M_1\oplus M_2=\begin{pmatrix} M_1 & 0 \\ 0 & M_2\end{pmatrix}$.
\end{Def}

Let $M_1 \in K^{d_1\times n_1}$ and $M_2 \in K^{d_2 \times n_2}$. We now define the ``parallel connection matrix'' $P(M_1, M_2)$ and ``series connection matrix'' $S(M_1, M_2)$ (along the last column of $M_1$ and the first column of $M_2$). \Cref{serpardefn} depends on certain non-canonical isomorphisms, so it only defines $P(M_1, M_2)$ and $S(M_1, M_2)$ up to left $GL$-action.

\begin{Def}
\label{serpardefn}
Let $W_1 = K^{n_1}$ and $W_2 = K^{n_2}$. Let $x_{1, 1}, \ldots, x_{1, n_1} \in W_1$ be the columns of $M_1$ and let $x_{2, 1}, \ldots, x_{2, n_2} \in W_2$ be the columns of $M_2$. The \emph{parallel connection matrix} $P(M_1, M_2) \in K^{(d_1 + d_2 - 1) \times (n_1 + n_2 - 1)}$ is the matrix with columns $x_{1, 1}, \ldots, x_{1, n_1} = x_{2, 1}, \ldots, x_{2, n_2}$, considered as elements of $(W_1 \oplus W_2) / \langle x_{1, n_1} - x_{2, 1}\rangle \cong K^{n_1 + n_2 - 1}$. The \emph{series connection matrix} $S(M_1, M_2) \in K^{(d_1 + d_2) \times (n_1 + n_2 - 1)}$ is the matrix with columns $x_{1, 1}, \ldots, x_{1, n_1 - 1}, x_{1, n_1} + x_{2, 1}, x_{2, 2}, \ldots, x_{2, n_2}$, considered as elements of $W_1 \oplus W_2 \cong K^{n_1 + n_2}$.
\end{Def}

 If we consider a matrix as a collection of points in projective space, one point for each column, then series and parallel connection have intuitive geometric meanings. 
\begin{center}
    \includegraphics[scale=.4]{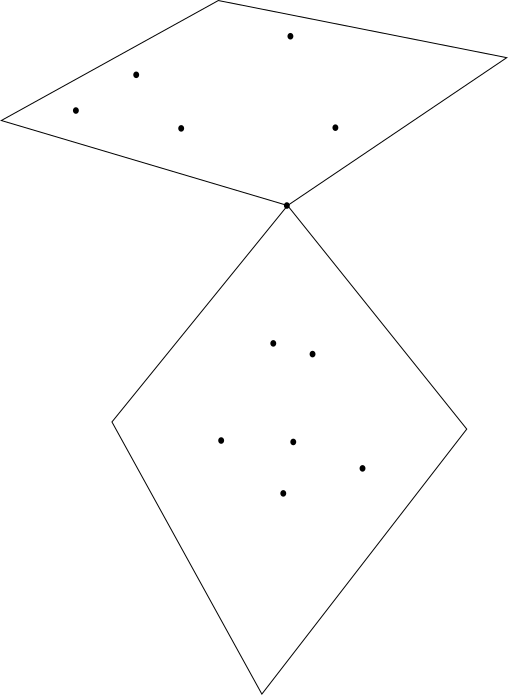}
    \includegraphics[scale=.4]{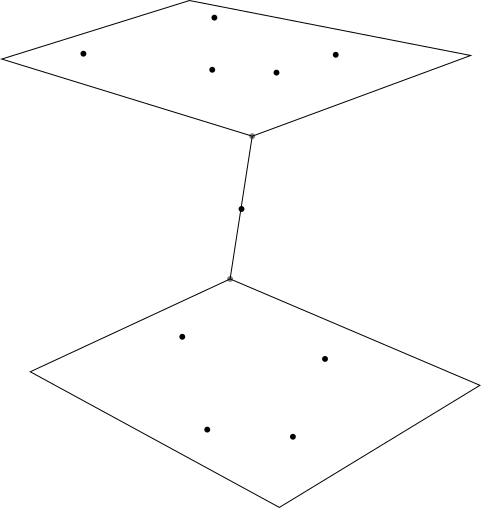}  
\end{center}
Parallel connection (pictured on the left) corresponds to identifying two of the points. Series connection (pictured on the right) corresponds to drawing a line through two of the points and replacing them with a third (distinct) point on the connecting line. 

\begin{Def}\label{serpardef}
A \emph{series-parallel} matrix is a matrix obtained from a general $1 \times 1$ matrix by permutation of columns and series and parallel connection with general $1 \times 2$ matrices.
\end{Def}

\begin{center}
        \includegraphics[scale=.4]{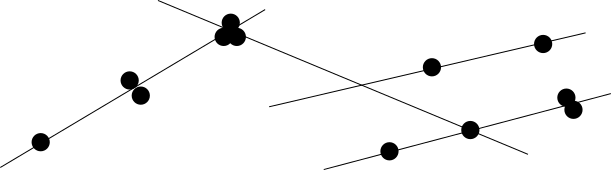}
\end{center}

Considering matrices as projective vector configurations, parallel connection with a general $1 \times 2$ matrix corresponds to duplicating a vector, and series connection with a general $1 \times 2$ matrix corresponds to removing a vector and sprouting off two general points on the line.

\begin{Def}
\label{asttau}
We denote by $\ast$ a non-zero $1\times 1$ matrix.

We denote by $\tau^{\le k}$ to be a general matrix with $k$ rows (where the number of columns will always be clear from context), a different general matrix every time it appears. 
\end{Def}

\begin{Def}\label{SchubertDef}
A \emph{Schubert matroid} $\operatorname{Sch}(r_1,\ldots,r_\ell,X_1,\ldots,X_\ell)$ with $\emptyset \subseteq X_1 \subseteq  \ldots \subseteq X_\ell=\{1,\ldots,n\}$ and $0 \le r_1 \le \ldots \le r_\ell = d$ is the matroid associated to a $d \times n$ matrix $M$ with the first $r_i$ entries of every column in $X_i\setminus X_{i-1}$ being general and the remaining entries being zero. Up to left $GL$-action and column permutation, such a matrix can be written as
$$\tau^{\le r_\ell}(\tau^{\le r_{\ell-1}}(\ldots\tau^{\le r_2}(\tau^{\le r_1}(\ast \oplus \ldots \oplus \ast)\oplus \ast \oplus \ldots \oplus \ast)\ldots)\oplus \ast \oplus \ldots \oplus \ast),$$
where the $\ast$'s in the $i$'th block from the left are associated to the indices in $X_i \setminus X_{i-1}$.
\end{Def}
In terms of point configurations, a Schubert matroid is given by fixing a flag, and picking general points in each member of the flag.
\begin{center}
        \includegraphics[scale=.6]{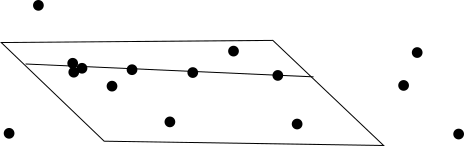}
\end{center}
\subsection{$GL_{r+1}$-equivariant Chow rings}
\label{Chowring}
The standard references for equivariant intersection theory are \cite{EG98,EG98b}, see also \cite{A12} for a self-contained exposition. Let $t_0,\ldots,t_r:(K^\times)^{r+1}\to K^\times$ be the standard characters of the maximal torus $(K^\times)^{r+1}$ of diagonal matrices in $GL_{r+1}$. The $GL_{r+1}$-equivariant Chow ring of a point is $A^\bullet_{GL_{r+1}}(\pt) \cong \mb{Z}[t_0,\ldots,t_r]^{S_{r+1}}$ \cite[Section 3.2]{EG98}. All tensor products $\otimes$ of equivariant Chow rings will be over $A^\bullet_{GL_{r+1}}(\pt)$.

We only consider varieties with algebraic cell decompositions, so $GL_{r+1}$-equivariant Chow rings in this paper embed into the corresponding $(K^\times)^{r+1}$-equivariant Chow rings via the inclusion $\mb{Z}[t_0,\ldots,t_n]^{S_{r+1}} \hookrightarrow \mb{Z}[t_0,\ldots,t_n]$. Thus we may treat the $t_i$ as separate variables.

The $GL_{r+1}$-equivariant Chow ring of projective space is $$A^\bullet_{GL_{r+1}}(\mb{P}^r) =  A^\bullet_{GL_{r+1}}(\pt)[H]/(F(H))$$ where $H=c_1(\ms{O}_{\mb{P}^r}(1))$ and the degree $r+1$ polynomial $F(H)=\prod_{i=0}^r (H+t_i)$ is the universal Leray relation \cite[Section 3.3]{EG98}. Also, \[A^\bullet_{GL_{r+1}}((\mb{P}^r)^n) = A^\bullet_{GL_{r+1}}(\mb{P}^r)^{\otimes n} = A^\bullet_{GL_{r+1}}(\pt)[H_1,\ldots,H_n]/(F(H_1),\ldots,F(H_n))\] and \[A^\bullet_{GL_{r+1}}(\mb{P}^{(r+1)\times d-1}) = A^\bullet_{GL_{r+1}}(\pt)[H]/(F(H)^d).\]
\begin{Def}
\label{reducedef}
For a one-variable polynomial $g$ with coefficients in $A^\bullet_{GL_{r+1}}(\pt)$, we define $\overline{g}$ to be the reduction of $g$ modulo $F$ (i.e. $\deg \overline{g}<\deg F$ and $g-\overline{g}$ is divisible by $F$).
\end{Def}
If $g$ is a polynomial with coefficients in $A^\bullet_{GL_{r+1}}(\pt)$, then the equivariant integration (pushforward) map $\int_{\mb{P}^r}:A^\bullet_{GL_{r+1}}(\mb{P}^r) \to  A^\bullet_{GL_{r+1}}(\pt)$ takes $g(H)$ to the coefficient of $H^r$ in $\bar{g}(H)$.

\begin{Def}
\label{dualdef}
If $X$ is a variety acted on by $GL_{r+1}$ and $\alpha \in A^\bullet_{GL_{r+1}}(X)$, then we define the Kronecker dual
$\alpha^{\dagger}$ as the function $A^\bullet_{GL_{r+1}}(X) \to A^\bullet_{GL_{r+1}}(\pt)$ given by
$$\alpha^{\dagger}:\beta \mapsto \int \alpha\cap\beta.$$
\end{Def}

\subsection{The quantum product $\star$ and $QH^\bullet_{GL_{r+1}}(\mb{P}^r)$}
\label{QCRD}
We define the \emph{small equivariant quantum cohomology ring of $\mb{P}^r$} (as an abstract ring, independent of characteristic) to be
$$QH^\bullet_{GL_{r+1}}(\mb{P}^r) = A^\bullet_{GL_{r+1}}(\pt)[z][\hbar]/(\hbar-F(z))\cong A^\bullet_{GL_{r+1}}(\pt)[z].$$  Specializing to $\hbar=0$ recovers $A^\bullet_{GL_{r+1}}(\mb{P}^r)$, so $QH^\bullet_{GL_{r+1}}(\mb{P}^r)$ is a commutative, associative deformation of $A^\bullet_{GL_{r+1}}(\mb{P}^r)$. We use $\star$ to denote the multiplication on $QH^\bullet_{GL_{r+1}}(\mb{P}^r)$. 

%One way to view the $\star$-product structure of $A^\bullet_{GL_{r+1}}(\mb{P}^r)[\hbar]$ is that it is isomorphic as a ring to $\mb{Z}[t_0,\ldots,t_r]^{S_{r+1}}[z]$, but this isomorphism forgets the grading by the $\hbar$ coefficient. To recover the grading by $\hbar$, one needs to expand the resulting polynomial in $\mb{Z}[t_0,\ldots,t_r]^{S_{r+1}}[z]$ as $a_0+a_1 F(z)+a_2 F(z)^2+\cdots$ such that the coefficients $a_i$ are reduced modulo $F(z)$, i.e. in the image of the lifting map $A^\bullet_{GL_{r+1}}(\mb{P}^r)\to QH^\bullet_{GL_{r+1}}(\mb{P}^r)$, from which we recover the expression $a_0+a_1\hbar +a_2\hbar^2+\cdots$. 

If $f$ is a polynomial in one variable $z$ with coefficients in $A^\bullet_{GL_{r+1}}(\pt)$, we denote by $[z^k]f \in A^\bullet_{GL_{r+1}}(\pt)$ the coefficient of $z^k$ in $f$. We denote by $[F(z)^k]f$ or $[\hbar^k]f$ the coefficient $a_k$ when $f$ is expressed as \[f(z)=a_0+a_1F(z)+a_2 F(z)^2 + \ldots=a_0+a_1\hbar+a_2 \hbar^2 + \ldots\] where each coefficient $a_i$ is a polynomial of degree at most $r$ in $z$.

\subsubsection{Quantum product $\star$ on $A^\bullet_{GL_{r+1}}(\mb{P}^r)[\hbar]$}
We will define an isomorphism of $A^\bullet_{GL_{r+1}}(\pt)$-modules between $A^\bullet_{GL_{r+1}}(\mb{P}^r)[\hbar]$ and $H^\bullet_{GL_{r+1}}(\mb{P}^r)$ and define $\star$ on $A^\bullet_{GL_{r+1}}(\mb{P}^r)[\hbar]$ via this isomorphism. 

There is an $A^\bullet_{GL_{r+1}}(\pt)$-linear lifting map 
\begin{align*}
    A^\bullet_{GL_{r+1}}(\mb{P}^r)&\to QH^\bullet_{GL_{r+1}}(\mb{P}^r) \qquad\text{where }    g(H) \mapsto \bar{g}(z),
\end{align*} 
given by choosing the representative that is reduced modulo $F$. This yields an isomorphism $A^\bullet_{GL_{r+1}}(\mb{P}^r)[\hbar] \cong QH^\bullet_{GL_{r+1}}(\mb{P}^r)$ of $A^\bullet_{GL_{r+1}}(\pt)$-modules given by extending $\hbar$-linearly. We use this isomorphism to define multiplication $\star$ on $A^\bullet_{GL_{r+1}}(\mb{P}^r)[\hbar]$. (Observe that $\star$ is not the usual multiplication operation on $A^\bullet_{GL_{r+1}}(\mb{P}^r)[\hbar]$ as a polynomial ring over $A^\bullet_{GL_{r+1}}(\mb{P}^r)$.)

The operation $\star$ on $A^\bullet_{GL_{r+1}}(\mb{P}^r)[\hbar]$ is $A^\bullet_{GL_{r+1}}(\pt)[\hbar]$-linear. If $f$ and $g$ are polynomials with coefficients in $A^\bullet_{GL_{r+1}}(\pt)$, then $f(H) \star g(H)=a(H)+ b(H) \hbar$, where $a$ and $b$ are the polynomials of degree at most $r$ with $\overline{f}(z) \overline{g}(z) = a(z) + b(z)F(z)$. These two properties characterize the $\star$-product. From an operational point of view, the usual multiplication on $A^\bullet_{GL_{r+1}}(\mb{P}^r)$ is polynomial multiplication mod $F$ and the $\star$-product is polynomial multiplication.

\subsection{Generalized matrix orbit closures}
\label{Setup}
Fix an integer $r$. For each $d \times n$ matrix $M$ with no zero columns, we consider the map $\mu_M:\mb{P}^{(r+1)\times d-1}\dashrightarrow (\mb{P}^r)^n$ induced by multiplication $A \mapsto AM$ of $(r+1)\times d$ matrices with $M$. More precisely, let $x_1, \ldots, x_n$ be the columns of $M$. Then $\mu_M(\wt A) = (\wt {Ax_1}, \ldots, \wt {Ax_n}) \in (\mb{P}^r)^d$ whenever $A \in \mb{A}^{(r+1) \times d}$ is a matrix such that $Ax_i \neq 0$ for all $i$. The map $\mu_M$ is clearly $GL_{r+1}$-equivariant. If $d \leq r+1$, then $\ol{\im(\mu_M)}$ is equal to the closure of the $GL_{r+1}$-orbit of $(\wt{x}_1, \ldots, \wt{x}_n) \in (\mb{P}^r)^n$, where $\wt{x_i}$ is considered in $\mb{P}^r$ via any linear inclusion $\mb{P}^{d-1} \subset \mb{P}^r$.
Let
%$$\mu_M:\mb{P}^{(r+1)\times d-1} \dashrightarrow (\mb{P}^r)^n$$
\begin{center}
\begin{tikzcd}
B \arrow[d,"\pi_M"] \arrow[dr, "\wt{\mu}_M"] & \\
\mb{P}^{(r+1)\times d-1} \arrow[r, dashed, "\mu_M"] & (\mb{P}^r)^n
\end{tikzcd}
\end{center}
be any resolution of the indeterminacy of $\mu_M$. In proofs, we will always use the $GL_{r+1}$-equivariant resolution given by the wonderful compactification of \cite{LiLi} detailed in \Cref{LL}, which has the advantage of being given by a sequence of blowups along smooth centers. 

We define the \emph{generalized matrix orbit of $M$} to be the algebraic cycle
%$$\op{M}=\begin{cases}\ol{\text{im}(\mu_M)} & \text{ if $\ol{\text{im}(\mu_M)}$ is $(r+1)\rk(M)-1$-dimensional, and}\\ 0 & \text{ otherwise.}\end{cases}$$
$$\op{M}=(\wt{\mu}_M)_*[B] = (\wt{\mu}_M)_*\pi_M^*[\mb{P}^{(r+1)\times d-1}] \in Z_{(r+1)d-1}((\mb{P}^r)^n).$$ This cycle is independent of the choice of resolution $\wt{\mu}_M$, and is equal to $\overline{\im(\mu_M)}$ or $0$ because if $\mu_M$ is generically finite then it is generically injective (if $\mu_M(A_1)=\mu_M(A_2)$, then this further equals $\mu_M(A_1+tA_2)$ for every $t$). We remark that this cycle is automatically $0$ if $d\ge n$, however the graph closure $\overline{\Gamma_M}$, which we will shortly define will be nontrivial even if $d\ge n$. As the morphisms $\wt{\mu}_M, \pi_M$ are $GL_{r+1}$-equivariant %(which can be guaranteed by \Cref{RMM}),
we may further define the \emph{equivariant generalized matrix orbit pushforward and pullback maps} \[(\mu_M)_*:A^\bullet_{GL_{r+1}}(\mb{P}^{(r+1)\times d-1}) \rightleftarrows A^\bullet_{GL_{r+1}}((\mb{P}^r)^n):\mu_M^*\] on equivariant Chow groups by $(\mu_M)_* =(\wt{\mu}_M)_* \circ \pi_M^*$ and $\mu_M^*=(\pi_M)_*\circ \wt{\mu}_M^*$. These are $A^\bullet_{GL_{r+1}}(\pt)$-linear maps, independent of the choice of resolution. The graph closure $\ol{\Gamma_M}$ of $\mu_M$ is the $GL_{r+1}$-equivariant cycle given by $$\ol{\Gamma_M}=\ol{\operatorname{im}(\operatorname{id} \times \mu_M)}\in Z_{(r+1)d-1}(\mb{P}^{(r+1)\times d-1}\times (\mb{P}^r)^n),$$ and resolves the indeterminacy locus of $\mu_M$ via its two projections. We have $$\int_{\mb{P}^{(r+1)\times d-1}}\alpha\cap\mu_M^*\beta=\int_{B}\pi_M^*\alpha\cap\wt{\mu}_M^*\beta=\int_{(\mb{P}^r)^n}\beta\cap(\mu_M)_*\alpha,$$ and $\int_{\ol{\Gamma_M}}\pi^*\alpha\cap\tilde{\mu}_M^*\beta=\int_{\mb{P}^{(r+1)\times d-1}\times (\mb{P}^r)^n}\alpha\cap\beta\cap[\ol{\Gamma_M}]$, so the equivariant formulas for $(\mu_M)_*$, $\mu^*_M$, and $[\ol{\Gamma_M}]$ all determine each other.

\subsection{Class of an arbitrary orbit closure}
\label{allorbits}
Observe that since we defined $\op{M}$ to be an $((r+1)d-1)$-dimensional algebraic cycle, it does not contain the data of $\ol{\im(\mu_M)}$ if $\dim (\ol{\im(\mu_M)}) < (r+1)d-1$. This in particular occurs if $M$ is not a ``connected'' matrix. Nevertheless, for $d \leq r+1$ we can still write $\ol{\im(\mu_M)}$ as the product of generalized matrix orbits as follows. First, by replacing $M$ with a subset of rows we may assume $\rk(M)=d$.

\begin{Def}
We say that a $d \times n$ matrix $M$ is \emph{connected} if $\rk(M) = d$ and there does not exist $\varnothing \subsetneq A \subsetneq \{1, \ldots, n\}$ such that $\rk_M(A) + \rk_M(\{1, \ldots, n\} \setminus A) = d$.
\end{Def}
\begin{Exam}
Any direct sum $M_1\oplus M_2$ is not connected. If $M$ is a general $d\times n$ matrix, then $M$ is connected if and only if $d<n$.
\end{Exam}

Observe that any $d \times n$ matrix $M$ of rank $d$ with no zero columns can be written as \[g (M_1 \oplus \ldots \oplus M_k) P\] where each $M_i$ is connected, $P$ is a permutation matrix, and $g \in GL_d$.
\begin{Prop}
\label{connectedfact}
Let $\wt{x}_1, \ldots, \wt{x}_n \in \mb{P}^{d-1}$ be points not all contained in any hyperplane and let $M$ be the matrix with columns $x_1, \ldots, x_n$. Suppose that $M = M_1 \oplus \cdots \oplus M_k$ where each $M_i$ is connected.

Then if $d \leq r+1$, the $GL_{r+1}$-orbit closure $\ol{\im(\mu_M)}$ of $(\wt{x}_1, \ldots, \wt{x}_n) \in (\mb{P}^r)^n$ is equal to $\op{M_1} \times \ldots \times \op{M_k}$ and has the expected dimension $(r+1)d - k$. Hence $[\ol{\im(\mu_M)}]=\prod_{i=1}^k \opc{M_i}$.
\end{Prop}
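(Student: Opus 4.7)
The plan is to identify $\ol{\im(\mu_M)}$ with $\op{M_1}\times\cdots\times\op{M_k}$ set-theoretically as reduced irreducible subvarieties of $(\mb{P}^r)^n$, then extract the Chow class statement via the K\"unneth decomposition. First, since each $d_i \le d \le r+1$ and $M_i$ is connected, the discussion in \Cref{Setup} identifies $\op{M_i}$ with the closure of the $GL_{r+1}$-orbit in $(\mb{P}^r)^{n_i}$ of the columns of $M_i$, which is irreducible of dimension $(r+1)d_i - 1$ (the $k=1$ case of the stabilizer calculation below). Hence $\op{M_1}\times\cdots\times\op{M_k}$ is irreducible of dimension $(r+1)d-k$. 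The inclusion $\ol{\im(\mu_M)}\subseteq\op{M_1}\times\cdots\times\op{M_k}$ is immediate by projecting onto the coordinates indexed by each block: projection carries the $GL_{r+1}$-orbit of $(\wt{x}_1,\ldots,\wt{x}_n)$ into the $GL_{r+1}$-orbit of the columns of $M_i$, whose closure is $\op{M_i}$.

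The crux is the stabilizer computation $\dim\ol{\im(\mu_M)}=(r+1)d-k$. Let $V_i\subseteq K^{r+1}$ be the span of the columns of $M_i$, so the $V_i$ are linearly independent of total dimension $d$. If $g \in GL_{r+1}$ stabilizes $(\wt{x}_1,\ldots,\wt{x}_n)$, then $gx_j=\mu_j x_j$ for scalars $\mu_j$, and in particular $gV_i\subseteq V_i$. Any circuit of $M_i$ gives a dependence $\sum c_j x_j=0$ with all $c_j\neq 0$; applying $g$ yields $\sum c_j\mu_j x_j=0$, and by uniqueness up to scale of the circuit's dependence the $\mu_j$ inside that circuit must coincide. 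Connectedness of $M_i$ propagates this to a single scalar $\lambda_i$ across block $i$, so $g|_{V_i}=\lambda_i\,\mathrm{id}_{V_i}$. Choosing a complement $W$ of $V_1\oplus\cdots\oplus V_k$ in $K^{r+1}$, such a $g$ is determined by the $k$ scalars $\lambda_i$ together with its action on $W$ (the columns of $g$ indexed by $W$), giving stabilizer dimension $k+(r+1)(r+1-d)$ and orbit dimension $(r+1)^2-k-(r+1)(r+1-d)=(r+1)d-k$.

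Since $\ol{\im(\mu_M)}$ is irreducible of dimension $(r+1)d-k$ and is contained in the irreducible variety $\op{M_1}\times\cdots\times\op{M_k}$ of the same dimension, the two coincide as reduced subvarieties of $(\mb{P}^r)^n$, yielding $[\ol{\im(\mu_M)}]=\prod_i \opc{M_i}$ via the K\"unneth decomposition $A^\bullet_{GL_{r+1}}((\mb{P}^r)^n)\cong\bigotimes_i A^\bullet_{GL_{r+1}}((\mb{P}^r)^{n_i})$. The principal obstacle is the stabilizer argument, specifically verifying that connectedness of $M_i$ (rather than merely the linear independence of the $V_i$) constrains $g|_{V_i}$ to be a scalar; everything else is essentially a dimension bookkeeping exercise and invocation of known properties of $\op{M_i}$.
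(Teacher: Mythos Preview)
Your proof is correct and shares the essential idea with the paper---the stabilizer computation using connectedness. There are two minor differences worth noting. First, for general $k$ the paper observes directly via block multiplication that $(A_1\,|\,\cdots\,|\,A_k)(M_1\oplus\cdots\oplus M_k)=(A_1M_1\,|\,\cdots\,|\,A_kM_k)$, which immediately gives $\ol{\im(\mu_M)}=\prod_i\ol{\im(\mu_{M_i})}$ without needing the full stabilizer computation or the inclusion-plus-dimension argument; only the $k=1$ stabilizer is then required. Second, the paper's connectedness step avoids circuits: since every $x_j$ is an eigenvector of $g$, the columns of $M_i$ partition by eigenvalue into subsets whose spans are independent, and connectedness (no nontrivial rank-additive split) forces a single eigenvalue. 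Your circuit argument is equally valid but tacitly uses the characterization that any two elements of a connected matroid lie in a common circuit, whereas the eigenspace phrasing appeals directly to the paper's definition of connected.
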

\begin{proof}
We will first prove the proposition for $k=1$. Viewing $\mb{P}^{d-1}$ as linearly embedded in $\mb{P}^r$, it suffices to show that any element stabilizing $(\wt{x}_1, \ldots, \wt{x}_n)$ stabilizes every point of $\mb{P}^{d-1}$. Indeed, given such a $g$, each $x_i$ is an eigenvector of $g$, so $K^{d} = \operatorname{Span} \{x_1, \ldots, x_n\}$ is a direct sum of eigenspaces of $g$. Since $M$ is connected, each $x_i$ has the same eigenvalue, so $g$ acts by a scalar on $K^d$, as desired.

For any $k$, let $d_i$ be the number of columns of $M_i$. Then, if $A_i$  is a $(r+1)\times d_i$ matrix for $1 \leq i \leq n$, we have the equality of block matrices
\[
\begin{pmatrix} 
\vert &  & \vert \\
A_1 & \cdots & A_k \\
\vert &  & \vert 
\end{pmatrix} 
\begin{pmatrix}M_1 & & \\ 
 & \ddots & \\
&  &  M_k
\end{pmatrix} 
= \begin{pmatrix}
\vert &  & \vert \\
A_1 M_1 & \cdots & A_k M_k \\
\vert &  & \vert
\end{pmatrix}.
\]
Hence $\ol{\im(\mu_M)} = \ol{\im(\mu_{M_1})} \times \ldots \times \ol{\im(\mu_{M_k})}$ and the result follows from the previous paragraph.
\end{proof}

By the above, to compute the equivariant class of any $GL_{r+1}$-orbit in $(\mb{P}^r)^n$ it suffices to compute the classes $\opc{M}$. From now on we work exclusively with $\op{M}$ instead of $\ol{\im(\mu_M)}$.

All of our theorems can be trivially extended to all matrices by defining $\op{M}$, $(\mu_M)_*$, $\mu_M^*$, and $\ol{\Gamma_M}$ to all be zero if $M$ has a zero column. However, we will not treat matrices with a zero column in our proofs.

\subsection{The operation $[M]_\hbar$}\label{Mhbardef}
In this last subsection, we define our operation $[M]_\hbar$ for a $d \times n$ matrix $M$. %Then consider the formal sum
%$$([M]_\hbar)^{\dagger}=\sum_{k=0}^{d-1}\opc{\tau^{\le k+1}(M \oplus \ast)} \hbar^k \in A^\bullet_{GL_{r+1}}((\mb{P}^r)^{n+1})[\hbar]=A^\bullet_{GL_{r+1}}((\mb{P}^r)^n)\otimes A^\bullet_{GL_{r+1}}(\mb{P}^r)[\hbar].$$
The operator $$[M]_\hbar:A^\bullet_{GL_{r+1}}(\mb{P}^r)^{\otimes n} \to QH^\bullet_{GL_{r+1}}(\mb{P}^r)$$ is defined by taking the formal sum of convolutions (partial integrals to the last factor of $\mb{P}^r$)
\begin{align*}[M]_\hbar(f_1(H_1),\ldots,f_n(H_n))=\sum_{k=0}^{d-1}\left(\int_{(\mb{P}^r)^{n+1}\to \mb{P}^r}f_1(H_1)\ldots f_n(H_n)\opc{\tau^{\le k+1}(M\oplus \ast)}\right)\hbar^k\\\in A^\bullet_{GL_{r+1}}(\mb{P}^r)[\hbar]\cong QH^\bullet_{GL_{r+1}}(\mb{P}^r).\end{align*}

See \Cref{KSI}, where we exposit the geometric construction of the small equivariant quantum cohomology of $\mb{P}^r$, which parallels the above.

\section{Statement of Results, extended example, and high level outline}
\label{statement}
In this section, we state our main results, give an extended example, and give a high level outline of our proofs.

Our first result computes the $GL_{r+1}$-equivariant Chow class of $\opc{M}$.

The non-equivariant class $\opc{M}$ computed in \cite{Binglin} (see \Cref{NE}) is an integer point transform of $(r+\frac{1}{n})(1,\ldots,1) - (r+1)P_M$. By Brion's~Theorem \cite{BHS}, it can be written as $f_M(H_1,\ldots,H_n,H_1^{r+1},\ldots,H_n^{r+1})$, where $f_M$ is a rational function that does not depend on $r$. The expression in \Cref{finkformulaintro} is then $f_M(H_1,\ldots,H_n,F(H_1),\ldots,F(H_n))$. We will see later that a similar procedure computes the $GL_{r+1}$-equivariant class $[\ol{\Gamma_M}]$ using the independence polytope $I_M$.

\begin{Thm*}[\Cref{finkformulas}] \label{finkformulaintro}
Let $M$ be a $d \times n$ matrix with nonzero columns $x_1,\ldots, x_n$. If $\rk(M) < d$, then $\op{M} = 0$. Otherwise, for each permutation $\sigma\in S_n$, let $B(\sigma)$ be the lexicographically first $d$-element subset $\{i_1,\ldots, i_d\} \subset \{1,\ldots,n\}$ with respect to the ordering $\sigma(1)\prec \cdots\prec\sigma(n)$ such that $x_{i_1},\ldots,x_{i_d}$ is a basis of $K^d$. Then for indeterminates $z_1,\ldots,z_n$, the expression $$\sum_{\sigma \in S_n} \left(\prod_{i \in \{1,\ldots,n\}\setminus B(\sigma)} F(z_i)\right)\frac{1}{(z_{\sigma(2)}-z_{\sigma(1)})\ldots (z_{\sigma(n)}-z_{\sigma(n-1)})}$$ is a polynomial of degree at most $r$ in each $z_i$ with coefficients in $A^{\bullet}_{GL_{r+1}}(\pt)$, and the equivariant Chow class $\opc{M}$ is given by evaluating this polynomial at $H_1,\ldots,H_n$.%, or informally $$\opc{M}=\sum_{\sigma \in S_n} \left(\prod_{i \in \{1,\ldots,n\}\setminus B(\sigma)} F(H_i)\right)\frac{1}{(H_{\sigma(2)}-H_{\sigma(1)})\ldots (H_{\sigma(n)}-H_{\sigma(n-1)})}.$$ 
\end{Thm*}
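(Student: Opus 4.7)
The plan follows the two-step strategy outlined in the introduction: first, use $GL_{r+1}$-equivariant degenerations to reduce to the series-parallel case; then compute the series-parallel case inductively via the small equivariant quantum product, and compare the result to a Brion-type formula.

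The case $\rk(M) < d$ is immediate: $\mu_M$ factors through some $(\mb{P}^{\rk(M)-1})^n \subset (\mb{P}^r)^n$, so $\ol{\operatorname{im}(\mu_M)}$ has dimension strictly less than $(r+1)d-1$ and $\op{M}=0$. For the main case $\rk(M) = d$, I would first establish that $M \mapsto \opc{M}$ is valuative in the rank polytope $P_M$. Given any regular matroid subdivision $P_M = \bigcup_i P_{M_i}$ coming from a convex piecewise-linear function, I would construct a $GL_{r+1}$-equivariant flat family over $\mb{A}^1$ whose generic fiber has class $\opc{M}$ and whose special fiber has class $\sum_i \opc{M_i}$ with appropriate multiplicities. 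Combinatorially, every matroid polytope admits a valuative decomposition into polytopes of series-parallel matroids, so together with valuativity this reduces the computation to the series-parallel case. The main obstacle here, flagged in the introduction, is the converse: showing that every additive polytope relation is realized by such a geometric degeneration, which requires delicate analysis of unbounded polytopal regions.

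For series-parallel $M$, I would compute $\opc{M}$ inductively by building $M$ from a single column via iterated series and parallel connections with general $1 \times 2$ matrices. Parallel connection corresponds to duplicating a column and series connection to replacing two columns by a third general point on their span. Via the fundamental relation $[M \oplus N]_\hbar = [M]_\hbar \star [N]_\hbar$ (and its counterparts for series and parallel connection), both of these translate into explicit $\star$-multiplications in $QH^\bullet_{GL_{r+1}}(\mb{P}^r) \cong A^\bullet_{GL_{r+1}}(\pt)[z]$ under the identification $\hbar = F(z)$. Comparing the resulting polynomial to the stated formula, one recognizes the latter as the Brion integer-point-transform of the shifted polytope $(r+\tfrac{1}{n})(1,\ldots,1)-(r+1)P_M$, with the substitution $z_i^{r+1} \to F(z_i)$: each permutation $\sigma$ selects a vertex $e_{B(\sigma)}$ via the lex-first basis rule, its tangent cone contributes $1/\prod_{k}(z_{\sigma(k+1)}-z_{\sigma(k)})$, and the factor $\prod_{i \notin B(\sigma)} F(z_i)$ encodes the equivariant correction. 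Polynomiality and the degree-at-most-$r$ bound in each $z_i$ follow from the standard cancellation of apparent poles across adjacent transpositions in Brion-type sums. Valuativity together with the series-parallel base case then extends the formula to all matroids, and the non-equivariant specialization ($c_i = 0$) recovers Li's theorem as a consistency check.
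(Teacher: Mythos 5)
Your overall architecture matches the paper's: reduce to the series-parallel case via degenerations and additivity of $M \mapsto \opc{M}$, compute series-parallel classes by induction using the quantum product relation $\opc{M_1}\star\opc{M_2}=\opc{P(M_1,M_2)}+\hbar\opc{S(M_1,M_2)}$ of \Cref{serpar}, and bring in Brion's theorem to interpret the permutation sum. (Two small points: for $\opc{M}$ the relevant notion is \emph{additivity} rather than valuativity, since rank polytopes of positive codimension in $\Delta_{d,n}$ give zero classes; and the natural relation to induct with is \Cref{serpar} rather than $[M\oplus N]_\hbar=[M]_\hbar\star[N]_\hbar$, whose proof in the paper itself routes through the series-parallel computation and the very identification discussed below.)

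The genuine gap is in the step you describe as ``comparing the resulting polynomial to the stated formula.'' The series-parallel induction (partial fractions applied to products $\frac{F(z)-F(H_i)}{z-H_i}\cdot\frac{F(z)-F(H_j)}{z-H_j}$, as in \Cref{serparform}) produces \emph{some} rational function $Q_M(x_1,\dots,x_n,y_1,\dots,y_n)$, independent of $r$, with $\opc{M}=Q_M(H_1,\dots,H_n,F(H_1),\dots,F(H_n))$; the permutation sum in the statement is an a priori \emph{different} rational function $f_M$. Verifying directly that the partial-fraction output equals the Brion sum over $S_n$ with the Gale-maximality vertices $B(\sigma)$ would be a nontrivial combinatorial identity, and you give no mechanism for it. The paper avoids this entirely: it proves, via Li's theorem (\Cref{NE}), Brion's theorem, and the Gale maximality principle, that $f_M(z_1,\dots,z_n,z_1^{r+1},\dots,z_n^{r+1})$ computes the non-equivariant class for every $r$; observes that $Q_M(z_1,\dots,z_n,z_1^{r+1},\dots,z_n^{r+1})$ does too; and then uses the fact that $\bigcup_{r}\{(z_1,\dots,z_n,z_1^{r+1},\dots,z_n^{r+1})\}$ is Zariski dense in $\mb{A}^{2n}$ to conclude $f_M=Q_M$ as rational functions, from which the equivariant formula follows by substituting $y_i=F(z_i)$. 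In your write-up the non-equivariant specialization appears only as a ``consistency check,'' but it is the load-bearing step: without this density argument (or an explicit combinatorial identity in its place), nothing identifies the equivariant class you have computed with the displayed permutation sum, nor establishes its polynomiality and degree bound in the equivariant setting.
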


The above formula, and the one we describe later for $[\overline{\Gamma_M}]$, also provide formulas for $GL_{r+1}\times (K^\times)^n$-equivariant classes of the analogues of $\op{M}$ and $\ol{\Gamma_M}$ in $\mb{A}^{(r+1)\times n}$ (see \Cref{liftingsection}). In particular, this recovers the torus-equivariant classes of torus orbits in the Grassmannian. 
%satisfies the degree bounds necessary to transfer between the projective, affine, and Grassmannian settings (see \Cref{liftingsection}), thus completing and vastly generalizing the computation in \cite{Fink}.

Because of the connections to sums over lattice points of rescaled rank and independence polytopes, the formulas for $\opc{M}$ and for $[\overline{\Gamma_M}]$ turn out to be additive and valuative respectively.

\begin{Thm*}[\Cref{realval}, \Cref{graphvaluative}]
\label{realvalintro}
For fixed $\mb{P}^r$, the maps $M \mapsto \opc{M}$  and $M \mapsto [\ol{\Gamma_M}]$ from $d \times n$ matrices to equivariant Chow classes
are additive and valuative associations respectively. In particular, $\opc{M}$ and $[\overline{\Gamma_M}]$ only depend on the matroid of $M$.
\end{Thm*}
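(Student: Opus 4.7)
My plan is to deduce both statements from a Brion-type lattice-point interpretation of the classes. By \Cref{finkformulaintro}, $\opc{M}$ is a polynomial (in $H_1,\ldots,H_n$) extracted from a sum of short rational functions indexed by vertices of the rank polytope $P_M$ (the greedy bases $B(\sigma)$), which is precisely the shape produced by Brion's theorem applied to the rescaled and shifted polytope $(r+\tfrac{1}{n})(1,\ldots,1)-(r+1)P_M$. I would first establish the analogous formula for $[\ol{\Gamma_M}]$, in which the same construction applies verbatim but with the independence polytope $I_M$ in place of $P_M$.

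Next, I would invoke the standard fact that the integer-point transform $P\mapsto\sum_{m\in P\cap\mb{Z}^n}x^m$ extends to a valuation on the group of indicator functions of polytopes: whenever $\sum a_i\, 1_{P_i}=0$, the corresponding sum of rational functions vanishes as well. Applied to the independence polytopes $I_{M_i}$ (and transferred to the rank polytopes via \Cref{rankind}), this gives valuativity of $M\mapsto[\ol{\Gamma_M}]$. The passage from this rational-function identity to the actual equivariant Chow class amounts to substituting $y_i=F(z_i)$ and then $z_i=H_i$, an $A^\bullet_{GL_{r+1}}(\pt)$-linear operation that preserves any valuative relation.

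For the additivity statement about $\opc{M}$, I would use that the translation by $\tfrac{1}{n}(1,\ldots,1)$ serves to peel off boundary lattice points: the integer points in $(r+\tfrac{1}{n})(1,\ldots,1)-(r+1)P_M$ are exactly those lying in the relative interior of $(r+1)P_M$. Hence the relevant generating function is the \emph{open} polytope integer-point transform, which is \emph{additive} in the Derksen--Fink sense, since an indicator function $\sum a_i\, 1_{P_{M_i}}$ supported in positive codimension inside $\{\sum x_i=d\}$ picks up no interior lattice points after the shift, and so its open integer-point transform vanishes identically.

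The principal obstacle is twofold: first, carefully establishing the Brion-style formula for $[\ol{\Gamma_M}]$ from the independence polytope, likely by a separate analysis of the resolution $\pi_M$ and the pushforward to the graph closure; and second, verifying that the substitution $y_i\mapsto F(z_i)$ produces a genuine polynomial (not merely a rational function), which requires tracking the cancellation of poles at $z_i=z_j$ across the Brion sum. Once both formulas are in hand, the fact that $\opc{M}$ and $[\ol{\Gamma_M}]$ depend only on the matroid of $M$ is immediate, since $P_M$ and $I_M$ are matroid invariants and the formulas are built solely from these polytopes.
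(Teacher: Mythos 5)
Your argument is circular as a proof of this theorem within the paper's logical structure. You take \Cref{finkformulaintro} as the starting point, but in the paper that formula is a \emph{consequence} of additivity, not an input to it: the proof of \Cref{finkformulas} goes through \Cref{formulaexists}, whose proof invokes \Cref{realval} together with \Cref{SchubertFinkThm} to reduce an arbitrary matroid to series-parallel ones, where the class is computed directly via \Cref{serpar}. Before additivity is established one does not even know that $\opc{M}$ depends only on the matroid of $M$ (the paper emphasizes that this is far from obvious, citing Mnev universality and disconnected realization spaces), so there is no a priori formula for $\opc{M}$ in terms of $P_M$ alone to which a Brion-type valuation argument could be applied. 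The same circularity affects your treatment of $[\ol{\Gamma_M}]$: its valuativity in the paper is deduced from that of $[M]_\hbar$ (\Cref{starunion}), which again rests on \Cref{serpar} and \Cref{formulaexists}, hence on \Cref{realval}.

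The content you are missing is the actual mechanism the paper uses to prove additivity: (i) one-parameter degenerations $\op{M_D(t)}$ whose special fibers realize regular subdivisions of $P_M$ and yield relations $\opc{M_D(p)}=\sum_i\opc{M_i}$ (\Cref{degenerationfibers}, \Cref{siorbit}); (ii) a delicate augmentation of the Derksen--Fink algorithm by ``admissible pairs'' $(M,C)$ of a matrix and a matroidal cone, showing that shadow-facet subdivisions of the unbounded polytopes $P_M+C$ can always be realized by explicit degenerations $M(1+tB)$, so that degeneration relations generate \emph{all} additive relations; and (iii) the connectivity statement \Cref{matroidconelemma} for matrices whose rank polytope is cone-like, which lets one identify the classes at the end of the reduction. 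Your lattice-point reasoning is essentially the paper's proof of the \emph{non-equivariant} additivity (\Cref{realvalnoneq}), where Li's formula is available unconditionally; it does not lift to the equivariant setting without first doing the geometric work above.
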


\begin{comment}
In fact our degeneration results yield a more explicit statement regarding the additivity of $\opc{M}$. This result essentially says that ``realizable regular subdivisions'' of realizable matroid polytopes generate all additive relations between them.

\begin{Thm*}[\Cref{AMOC}]
Suppose we have a collection of connected $d \times n$ matrices such that $\sum a_i \opc{M_i}=0$. Then the equation is a formal linear combination of equations $\sum \opc{N_i}=\opc{N(I_n+t'B)}$ (possibly swapping the left and right hand sides) for various $d\times n$ matrices $N$ and $n \times n$ matrices $B$, and $t'$ chosen so that $(I_d+t'B)N$ has the same matroid as $(I_d+tB)N$ for general $t$, arising from equating the fiber at $t'$ with the special fiber at $0$ in a flat degenerations of cycles $\op{N(I_n+tB)}$ with $t \to 0$.
\end{Thm*}
\end{comment}

The classes $\opc{M}$ also behave well under series and parallel connection.

\begin{Thm*}[\Cref{serpar}]\label{serparintro}
For $i \in \{1,2\}$ let $M_i$ be a $d_i \times n_i$ matrix. Taking series and parallel connections along the last column of $M_1$ and the first column of $M_2$, we have in $A^\bullet_{GL_{r+1}}(\mb{P}^r)^{\otimes n_1-1}\otimes QH^\bullet_{GL_{r+1}}(\mb{P}^r)\otimes A^\bullet_{GL_{r+1}}(\mb{P}^r)^{\otimes n_2-1}\cong A^\bullet_{GL_{r+1}}(\mb{P}^r)^{\otimes n_1+n_2-1}[\hbar]$ the equality $$\opc[r][n_1]{M_1} \star \opc[r][n_2]{M_2}=\opc[r][n_1+n_2-1]{P(M_1,M_2)}+\hbar\opc[r][n_1+n_2-1]{S(M_1,M_2)}$$ where the quantum product $\star$ is taken along the last factor of $A^\bullet_{GL_{r+1}} ((\mb{P}^r)^{n_1}) = A^\bullet_{GL_{r+1}} (\mb{P}^r)^{\otimes n_1}$ and the first factor of $A^\bullet_{GL_{r+1}} ((\mb{P}^r)^{n_2}) = A^\bullet_{GL_{r+1}} (\mb{P}^r)^{\otimes n_2}$.
\end{Thm*}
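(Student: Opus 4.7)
The plan is to exploit the algebraic definition of $\star$. Representing $[\mathcal{O}_{M_1}]$ as a polynomial $f(z)$ of degree at most $r$ in the shared variable $z = H_{n_1}$ (with coefficients in $A^\bullet_{GL_{r+1}}((\mb{P}^r)^{n_1-1})$) and $[\mathcal{O}_{M_2}]$ similarly as $g(z)$ with $z = H'_1$, we have $f \star g = a + b\hbar$ where $fg = a + bF(z)$ is the unique division with remainder by $F(z)$ with $\deg_z a \le r$. Since $\deg_z(fg) \le 2r$ and $\deg F = r+1$, the quotient satisfies $\deg_z b \le r-1$, so no higher powers of $\hbar$ appear. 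The theorem therefore reduces to the two identities $a = [\mathcal{O}_{P(M_1, M_2)}]$ (remainder) and $b = [\mathcal{O}_{S(M_1, M_2)}]$ (quotient) in $A^\bullet_{GL_{r+1}}((\mb{P}^r)^{n_1+n_2-1})$.

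For the remainder, $a$ coincides with the classical intersection product $p_1^*[\mathcal{O}_{M_1}] \cdot p_2^*[\mathcal{O}_{M_2}]$, where $p_1, p_2$ are the two projections from $(\mb{P}^r)^{n_1+n_2-1}$ sharing the $n_1$-th coordinate. In the relative setting over a base with a rank $r+1$ bundle $V$, the set-theoretic intersection $p_1^{-1}(\mathcal{O}_{M_1})_V \cap p_2^{-1}(\mathcal{O}_{M_2})_V$ consists of configurations realizing the parallel connection matroid $P(M_1, M_2)$: requiring the $n_1$-th point to simultaneously play the role of the last column of $M_1$ and the first column of $M_2$ amounts exactly to identifying those two columns. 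The dimension count $\dim \mathcal{O}_{M_1} + \dim \mathcal{O}_{M_2} - r = \dim \mathcal{O}_{P(M_1, M_2)}$ (using $\dim \mathcal{O}_M = (r+1)d - 1$ from \Cref{connectedfact}) shows the intersection is proper, so $a = [\mathcal{O}_{P(M_1, M_2)}]$ follows once multiplicity one is verified locally at a generic smooth point.

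The main obstacle is the quantum part: identifying the quotient $b$ with $[\mathcal{O}_{S(M_1, M_2)}]$, since the polynomial quotient by $F(z)$ has no immediate geometric interpretation. The intuition is that series connection replaces $x_{1,n_1}$ and $x_{2,1}$ by their sum in $K^{d_1+d_2}$, producing a ``third point on the line'' through the would-be shared points --- mirroring the quantum correction for $\mb{P}^r$, where two nominally-coincident points may instead lie on a common line with a third distinguished point. To make this rigorous I would pursue one of two strategies: (i) use the wonderful compactification of \Cref{LL} to resolve $\mu_{M_1 \oplus M_2}$ simultaneously with the natural column-collapsing map sending $(x_{1, n_1}, x_{2, 1})$ to $x_{1, n_1} + x_{2, 1}$, and compute the pushforward to identify $b$ directly; or (ii) verify the identity by pairing both sides against a complete system of monomials $\prod_i H_i^{a_i}$ (with $0 \le a_i \le r$), isolating the $\hbar^1$ contribution via the $z$-degree of the test class and computing the relevant integrals from the explicit parametrization $A \mapsto A \cdot S(M_1, M_2)$.
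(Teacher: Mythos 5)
Your setup is correct: reducing the identity to the two statements that the remainder $a$ equals $\opc{P(M_1,M_2)}$ and the quotient $b$ equals $\opc{S(M_1,M_2)}$ is exactly how the paper organizes the proof, and your identification of $a$ with the ordinary intersection product $p_1^*\opc{M_1}\cap p_2^*\opc{M_2}$ matches \Cref{parlem}. But even there your argument has a gap you have only deferred, not closed: the set-theoretic intersection of the two pulled-back orbit closures contains boundary configurations coming from the closures of the images of $\mu_{M_1}$ and $\mu_{M_2}$, so ``the intersection consists of configurations realizing $P(M_1,M_2)$'' is only true on an open locus, and ``multiplicity one at a generic smooth point'' is precisely what needs proof. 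The paper avoids this by forming the fiber product $B_1\times_{\mb{P}(V)}B_2$ of resolutions, checking it is birational to a resolution of $\mu_{P(M_1,M_2)}$, and invoking the refined Gysin pullback along the diagonal together with flatness of the evaluation maps (\cite[Theorem 6.2]{Fulton}); no multiplicity computation on the intersection itself is ever needed.

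The more serious problem is the quantum part, which you correctly flag as the main obstacle but do not resolve. Neither of your two strategies contains the idea that actually makes the computation go through. The paper's key observation is purely combinatorial at the level of matrices: if $L$ is a general $2\times 3$ matrix (three collinear points), then $S(M_1,M_2)$ is, up to left $GL$-action, the iterated parallel connection $P(P(M_1,L),M_2)$ with the two auxiliary columns deleted. Combined with \Cref{integrateaway} (deleting a column corresponds to integrating out the corresponding $\mb{P}^r$ factor), this reduces the series connection entirely to parallel connections plus one self-contained three-point computation, \Cref{SCL}: the convolution of $f$ and $g$ against $\opc[r][3]{L}$ equals $[\hbar](f\star g)$, proved by evaluating on classes of torus-invariant linear subspaces $\Lambda_I$, $\Lambda_J$ and observing that the union of lines joining them is $\Lambda_{I\cap J}$. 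Without this factorization, your strategy (i) leaves you to extract the $\hbar$-coefficient geometrically from a resolution of $\mu_{M_1\oplus M_2}$ with no mechanism for doing so, and your strategy (ii) amounts to computing both sides in general, which is the original problem restated. You would need to find the $L$-factorization (or an equivalent device) to complete the argument.
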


The next theorem develops the key properties of the operations $[M]_\hbar$.

\begin{Thm*}[\Cref{poincaresection}]\label{Mhbarintro}
Fix $r \ge 0$. Then the $A^\bullet_{GL_{r+1}}(\pt)$-linear operations $$[M]_{\hbar}:A^\bullet_{GL_{r+1}}(\mb{P}^r)^{\otimes n}\to QH^\bullet_{GL_{r+1}}(\mb{P}^r)$$ satisfy
$$[z^r][F(z)^k][M_{\hbar}]=\opc{\tau^{\le (k+1)}M}^{\dagger}.$$
The following properties uniquely characterize the $A^\bullet_{GL_{r+1}}(\pt)$-linear operations $[M]_{\hbar}$.
\begin{enumerate}
    \item Let $\ast$ denote the $1 \times 1$ identity matrix. Then $$[\ast]_\hbar(f(H))=\overline{f}(z).$$
    \item If $M_1$ and $M_2$ are matrices with $n_1$ and $n_2$ columns respectively, then $$[M_1 \oplus M_2]_\hbar = [M_1]_\hbar \star [M_2]_\hbar,$$ where $[M_1]_\hbar$ applies to the first $n_1$ tensor factors and $[M_2]_\hbar$ applies to the last $n_2$ tensor factors.
    \item If $M$ is a $d \times n$ matrix and $\tau^{\leq k}$ is a general $k \times d$ matrix, then $[\tau^{\leq k} M]_\hbar$ is the reduction of $[M]_\hbar$ modulo $\hbar^k=F(z)^k$.
    \item The association $M \mapsto [M]_{\hbar}$ is valuative. For fixed $n$ and possibly varying $d$, the operation $[M]_{\hbar}$ depends only on the matroid of $M$.
    \item The actions of the symmetric group $S_n$ on the columns of $M$ and on $A^\bullet_{GL_{r+1}}(\mb{P}^r)^{\otimes n}$ are compatible.
\end{enumerate}
\end{Thm*}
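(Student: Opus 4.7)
The plan is first to establish the stated formula $[z^r][F(z)^k][M]_\hbar = \opc{\tau^{\le (k+1)}M}^{\dagger}$ by unwinding the definition of $[M]_\hbar$, and then to use it both to verify the five properties and to deduce uniqueness. Extracting $[z^r]$ integrates the pushforward to the last $\mb{P}^r$ factor, so by Fubini
\[
[z^r][\hbar^k][M]_\hbar(f_1,\ldots,f_n) = \int_{(\mb{P}^r)^{n+1}} \prod_{i=1}^n f_i(H_i)\cdot \opc{\tau^{\le k+1}(M\oplus\ast)}.
\]
Writing $\tau^{\le k+1}(M \oplus \ast) = (\tau^{\le k+1}M \mid v)$ with $v$ a general vector, one checks by comparing resolutions that the forgetful projection $\pi\colon (\mb{P}^r)^{n+1} \to (\mb{P}^r)^n$ satisfies $\pi_*\opc{(\tau^{\le k+1}M \mid v)} = \opc{\tau^{\le k+1}M}$, which gives the formula via the projection formula.

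Properties (1), (3), (4), (5) follow with little additional work. For (1), $\op{\tau^{\le k+1}\ast}$ vanishes for $k \geq 1$ (the map $A \mapsto Av$ has positive-dimensional fibers) and equals $[\mb{P}^r] = 1$ for $k = 0$, yielding $[\ast]_\hbar(f) = \bar f(z)$. For (3), the generic composition law $\tau^{\le k}(\tau^{\le k'}N) = \tau^{\le \min(k,k')}N$ shows that the summands of $[\tau^{\le k}M]_\hbar$ are exactly the $\hbar^{<k}$ summands of $[M]_\hbar$. Properties (4) and (5) are inherited from the corresponding valuativity and $S_n$-equivariance of the cycles $\opc{\tau^{\le k+1}(M\oplus\ast)}$ established in \Cref{realvalintro}.

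The main obstacle is property (2), the direct sum formula. The approach is to apply \Cref{serparintro} to the identification $P(M_1 \oplus \ast, \ast \oplus M_2) = M_1 \oplus \ast \oplus M_2$, after suitably inserting generic $\tau^{\le k+1}$-projections: summing the resulting series-and-parallel decomposition of $\opc{\tau^{\le i+1}(M_1 \oplus \ast)} \star \opc{\tau^{\le j+1}(\ast \oplus M_2)}$ over $i + j = k$ and tracking the $\hbar$-shift contributed by the series connection term recovers $\opc{\tau^{\le k+1}((M_1 \oplus M_2)\oplus\ast)}$. Combining this with the $(n+1)$-variable extension of the formula above (which determines each $\hbar$-coefficient of $[M]_\hbar$ by integration against test classes on the extra $\mb{P}^r$) yields the desired identity $[M_1 \oplus M_2]_\hbar = [M_1]_\hbar \star [M_2]_\hbar$.

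For uniqueness, note that every Schubert matroid (\Cref{SchubertDef}) is represented by a matrix built from copies of $\ast$ by iterated direct sum and generic projection, so properties (1), (2), (3) inductively determine $[M]_\hbar$ for every Schubert matrix $M$. Property (5) extends this to all column permutations of Schubert matroids, and property (4) combined with the standard fact (collected in \Cref{polyapp}) that permuted Schubert matroid polytopes span all matroid polytopes under valuative combinations then extends the characterization to arbitrary matrices $M$.
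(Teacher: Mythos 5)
Your coefficient-extraction formula $[z^r][F(z)^k][M]_\hbar=\opc{\tau^{\le k+1}M}^\dagger$, properties (1), (3), (5), and the uniqueness argument via Schubert matroids all match the paper (\Cref{mhbareasy} and \Cref{poincaredualsforallom}, using \Cref{SchubertFinkThm}); that part is sound. But the two substantive claims, (2) and (4), are exactly where the paper does its real work (\Cref{starunion}), and in each case your sketch asserts the key step rather than proving it.

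For (2), your route via $P(M_1\oplus\ast,\ast\oplus M_2)$ is genuinely different from the paper's and can in principle be completed, but the identity you need,
$$\sum_{i+j=k}\opc{P(\tau^{\le i+1}(M_1\oplus\ast),\tau^{\le j+1}(\ast\oplus M_2))}+\sum_{i+j=k-1}\opc{S(\tau^{\le i+1}(M_1\oplus\ast),\tau^{\le j+1}(\ast\oplus M_2))}=\opc{\tau^{\le k+1}(M_1\oplus M_2\oplus\ast)},$$
is precisely the hard part, and ``tracking the $\hbar$-shift'' does not produce it. Once the truncations are inserted, the parallel connection of $\tau^{\le i+1}(M_1\oplus\ast)$ with $\tau^{\le j+1}(\ast\oplus M_2)$ is no longer a truncation of a direct sum; one has to exhibit the rank polytopes on the left as a subdivision of $P_{\tau^{\le k+1}(M_1\oplus M_2\oplus\ast)}$ (via the independence-polytope descriptions of truncation, direct sum, and series/parallel connection) and then invoke additivity. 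The paper avoids this entirely: it uses \Cref{serpar} and \Cref{formulaexists} to show $[M_1]_\hbar^\dagger\star[M_2]_\hbar^\dagger-[M_1\oplus M_2]_\hbar^\dagger$ is a universal rational function in $H_i$, $z$, $F(H_i)$, $F(z)$, reduces by Zariski density to the non-equivariant case, identifies $[M]_\hbar^\dagger$ there with the integer-point transform of $(r+1)I_M$, and concludes from $I_{M\oplus N}=I_M\times I_N$.

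For (4), valuativity is not formally ``inherited'' from \Cref{realvalintro}: that theorem gives additivity of $M\mapsto\opc{M}$, whereas you need that $\sum a_i1_{P_{M_i}}=0$ forces $\sum a_i1_{P_{\tau^{\le k+1}(M_i\oplus\ast)}}$ to be supported in positive codimension for every $k$, where the matrices $\tau^{\le k+1}(M_i\oplus\ast)$ are not the $M_i$. This passage goes through \Cref{rankind} (rank-polytope relations are equivalent to independence-polytope relations) together with the identification of $P_{\tau^{\le k+1}(M\oplus\ast)}$ with the slice $I_M\cap\{k\le\sum x_i\le k+1\}$ homogenized by the extra coordinate --- again exactly what the paper's lattice-point computation supplies. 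Neither repair is deep, but as written the two central claims of the theorem rest on unproved assertions.
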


The next theorem relates the class $[\ol{\Gamma_M}]$ to the operation $[M]_\hbar$. Recall that $\mu_M^*:A^\bullet_{GL_{r+1}}((\mb{P}^r)^n)\to A^\bullet_{GL_{r+1}}(\mb{P}^{(r+1)\times d-1})$ was defined via convolution with $[\overline{\Gamma_M}]$.
\begin{Thm*}[\Cref{PFA}]\label{PFAintro}
Define 
\begin{align*}
    q&:QH^\bullet_{GL_{r+1}}(\mb{P}^r)\to A^\bullet_{GL_{r+1}}(\mb{P}^{(r+1)\times d-1})\text{ by }z \mapsto H\\
    \ell&:A^\bullet_{GL_{r+1}}(\mb{P}^{(r+1)\times d-1}) \to QH^\bullet_{GL_{r+1}}(\mb{P}^r)\text{ by }\ell(f(H))=\widetilde{f}(z),
\end{align*}
where $\widetilde{f}$ is the reduction of $f$ modulo $F(H)^d$ (analogously to \Cref{reducedef}). Then
$$\mu_M^*=q\circ [M]_{\hbar},\qquad [M]_{\hbar}=\ell \circ \mu_M^*.$$
\end{Thm*}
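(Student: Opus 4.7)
The two equations in the statement are equivalent. Since $\ell$ returns the canonical polynomial representative modulo $F(H)^d$, we have $q\circ\ell = \operatorname{id}$ on $A^\bullet_{GL_{r+1}}(\mb{P}^{(r+1)d-1})$; conversely $[M]_\hbar$ is by construction of the form $\sum_{k=0}^{d-1}(\cdot)\hbar^k$, so $\ell\circ q$ acts as the identity on its image. I therefore aim to prove $\Phi(M) := \ell\circ\mu_M^*$ equals $[M]_\hbar$, which by the uniqueness statement in \Cref{Mhbarintro} reduces to checking that $\Phi$ satisfies the five characterizing properties listed there.

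Properties (1), (3), (4), and (5) are direct. For (1), $\mu_\ast = \operatorname{id}_{\mb{P}^r}$, giving $\Phi(\ast)(f(H)) = \ell(f(H)) = \overline f(z)$. Property (5) is manifest, and (4) follows from the valuativity of $M\mapsto[\ol{\Gamma_M}]$ (\Cref{realvalintro}) via the formula $\mu_M^*(\beta) = \pi_{1*}(\pi_2^*\beta\cap[\ol{\Gamma_M}])$. For (3), the rational map $\mu_{\tau^{\le k}M}$ factors as $\mu_M\circ g$, where $g: \mb{P}^{(r+1)k-1}\hookrightarrow\mb{P}^{(r+1)d-1}$, $\wt A \mapsto \wt{A\tau^{\le k}}$, is a linear embedding; hence $\mu_{\tau^{\le k}M}^* = g^*\circ\mu_M^*$, with $g^*$ identified with the natural surjection $A^\bullet_{GL_{r+1}}(\pt)[H]/F(H)^d \twoheadrightarrow A^\bullet_{GL_{r+1}}(\pt)[H]/F(H)^k$, which under $\ell$ becomes reduction modulo $\hbar^k = F(z)^k$.

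The main obstacle is property (2), the direct sum relation $\Phi(M_1\oplus M_2) = \Phi(M_1)\star\Phi(M_2)$. I would factor $\mu_{M_1\oplus M_2}: J := \mb{P}(V_1\oplus V_2)\dashrightarrow(\mb{P}^r)^{n_1+n_2}$, with $V_i = K^{(r+1)d_i}$, as the rational block-splitting projection $p: J\dashrightarrow\mb{P}(V_1)\times\mb{P}(V_2)$, $[A_1|A_2]\mapsto([A_1],[A_2])$, followed by $\mu_{M_1}\times\mu_{M_2}$, and analyze $p^*$ via the blowup $\pi:\wt J\to J$ of the disjoint complementary linear subspaces $\mb{P}(V_1)\sqcup\mb{P}(V_2)\subset J$. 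On $\wt J$ one obtains a regular morphism $\wt p:\wt J\to\mb{P}(V_1)\times\mb{P}(V_2)$ with $\wt p_i^*H_i = \pi^*H_J - E_{V_{3-i}}$, where $E_{V_j}$ is the exceptional divisor over $\mb{P}(V_j)$ (since the sections of $\mathscr{O}(1)$ defining $p_i$ are precisely those linear forms vanishing on $\mb{P}(V_{3-i})$). The two exceptional divisors are disjoint, so $E_{V_1}\cdot E_{V_2} = 0$; combined with the Segre-class vanishing $\pi_*E_{V_j}^m = 0$ for $m < \codim_J\mb{P}(V_j) = (r+1)d_{3-j}$, a direct binomial expansion gives $\pi_*(\wt p_1^*H_1^a\cdot\wt p_2^*H_2^b) = H_J^{a+b}$ for all $a < (r+1)d_1$ and $b < (r+1)d_2$. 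Extending by $A^\bullet_{GL_{r+1}}(\pt)$-linearity yields $p^*(\overline{P_1}(H_1)\boxtimes\overline{P_2}(H_2)) = \overline{P_1}(H_J)\,\overline{P_2}(H_J)$ for canonical representatives $\overline{P_i}$ of $H_i$-degree $< (r+1)d_i$. Applied to $\overline{P_i} = \overline{\mu_{M_i}^*\beta_i}$, this product has $H_J$-degree $< (r+1)(d_1+d_2)$ and is therefore the canonical representative of $\mu_{M_1\oplus M_2}^*(\beta_1\boxtimes\beta_2)$; applying $\ell$ yields the polynomial product in $A^\bullet_{GL_{r+1}}(\pt)[z]$, which coincides with $\Phi(M_1)(\beta_1)\star\Phi(M_2)(\beta_2)$ because $\star$ on $QH^\bullet_{GL_{r+1}}(\mb{P}^r)\cong A^\bullet_{GL_{r+1}}(\pt)[z]$ is ordinary polynomial multiplication.
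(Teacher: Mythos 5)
Your overall strategy (characterize $\ell\circ\mu_M^*$ by the five properties of \Cref{Mhbarintro} and invoke uniqueness) is a genuinely different route from the paper, which instead computes $(\mu_M)_*(F(H)^iH^j)$ directly on the wonderful compactification by comparing resolutions of $\mu_M$, $\mu_{M\oplus\ast}$, and $\mu_{\tau^{\le d-i}(M\oplus\ast)}$. Your treatment of property (2) via the join $\mb{P}(V_1\oplus V_2)\dashrightarrow\mb{P}(V_1)\times\mb{P}(V_2)$ is attractive and essentially correct: the blown-up join $\wt J$ is a $\mb{P}^1$-bundle over $\mb{P}(V_1)\times\mb{P}(V_2)$, so $\wt p$ is flat, the composition of correspondences behaves by flat base change, and your Segre-class bookkeeping ($E_{V_1}\cdot E_{V_2}=0$, $\pi_*E_{V_j}^m=0$ for $m<(r+1)d_{3-j}$, degree bounds $a<(r+1)d_1$, $b<(r+1)d_2$) does give $p^*(\overline{P_1}\boxtimes\overline{P_2})=\overline{P_1}(H_J)\overline{P_2}(H_J)$ in reduced form. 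Property (3) also needs the compatibility $\mu_{\tau^{\le k}M}^*=g^*\circ\mu_M^*$, which is not automatic for pullbacks defined through resolutions; it requires the transversality/strict-transform argument of \Cref{BUR} applied to the generic linear subspace $\im(g)$, exactly as in the paper's \Cref{pushforwardkeylemma}(ii). That is fillable.

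The genuine gap is property (4). You justify valuativity of $M\mapsto\ell\circ\mu_M^*$ by citing the valuativity of $M\mapsto[\ol{\Gamma_M}]$ from \Cref{realvalintro}. But that half of \Cref{realvalintro} is \Cref{graphvaluative}, which the paper obtains as a \emph{corollary} of \Cref{PFA}: it is deduced from the valuativity of $[M]_\hbar$ precisely through the identity $[\ol{\Gamma_M}]=(1\otimes q)([M]_\hbar^\dagger)$ that you are trying to prove. So your argument is circular as written. The circularity is not cosmetic: without \Cref{PFA} there is no independent handle on the classes $(\mu_M)_*(H^jF(H)^i)$ for $j,i>0$ — \Cref{realval} only gives additivity of $(\mu_M)_*(1)=\opc{M}$ — and the Zariski-density/rational-function mechanism used elsewhere in the paper to upgrade non-equivariant valuativity (from \Cref{NEgraphclosure}) to equivariant valuativity requires knowing in advance that $[\ol{\Gamma_M}]$ admits a universal rational-function expression, which itself comes out of \Cref{PFA} via \Cref{formulaexists}. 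Since the uniqueness statement in \Cref{Mhbarintro} genuinely uses valuativity (to reduce an arbitrary matroid to Schubert matroids via \Cref{SchubertFinkThm}), your proof only establishes the theorem for matrices reachable from $\ast$ by direct sums, truncations, and column permutations, i.e.\ for Schubert matroids. To close the gap you would need an independent proof that $M\mapsto\mu_M^*$ is valuative, or you would need to replace the uniqueness argument by one that does not pass through valuativity.
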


\begin{comment}
\begin{Thm*}[\Cref{PFA}]\label{PFAintro}

Define the $A^\bullet_{GL_{r+1}}(\pt)$-module injection
\begin{align*}
\iota:A^\bullet_{GL_{r+1}}(\mb{P}^r) &\hookrightarrow A^\bullet_{GL_{r+1}}(\mb{P}^{(r+1)\times d-1})\cong A^\bullet_{GL_{r+1}}(\pt)[H]/(F(H)^d)\end{align*} by $\iota(f(H))=\overline{f}(H)$. Then the $GL_{r+1}$-equivariant Chow class of $[\overline{\Gamma_M}]$
is given by (taking $id$ the identity map on $A^\bullet_{GL_{r+1}}(\mb{P}^r)$)
$$[\overline{\Gamma_M}]=\sum_{k=0}^{d-1}(id^{\otimes n}\otimes \iota)\opc{\tau^{\le k+1}(M\oplus \ast)}\cdot F(H)^k.$$
In particular, if we define $q:QH^\bullet_{GL_{r+1}}(\mb{P}^r) \to A^\bullet_{GL_{r+1}}(\mb{P}^{(r+1)\times d-1})$ by $z \mapsto H$, then
$$\mu_M^*=q \circ [M]_{\hbar}.$$
\end{Thm*}
\end{comment}
\begin{comment}
\begin{Rem}
We can also recover $[M]_{\hbar}$ from $\mu_M^*$ as $[M]_{\hbar}=\ell \circ \mu_M^*$ where $\ell:A^\bullet_{GL_{r+1}}(\mb{P}^{(r+1)\times d-1}) \to QH^\bullet_{GL_{r+1}}(\mb{P}^r)$ is the map taking $\ell(f(H))=\widetilde{f}(z)$ where $\widetilde{f}$ is the reduction of $f$ modulo $F(H)^d$ (analogously to how $\overline{f}$ was defined with $F(H)$).
\end{Rem}
\end{comment}
\begin{Rem}
From this, we can express the $GL_{r+1}$-equivariant class $[\overline{\Gamma_M}]$ in terms of the classes $\opc{\tau^{\le k+1}(M \oplus \ast)}$ used to define $[M]_{\hbar}$ (see \Cref{PFA}). This expression in turn may be seen as an application of Brion's formula applied to $(r+\frac{1}{n})(1,\ldots,1) - (r+1)I_M$ with $H$ used non-equivariantly as a grading variable to ensure the expression is homogenous of rank $rn$, see the proof of \cref{starunion}.
\end{Rem}

We give an application to enumerative geometry. Given a smooth quintic plane curve $X\subset\mb{P}^2$, there is a rational map
\begin{align*}
\phi_X: (\mb{P}^{2})^\vee \dashrightarrow \ol{\mc{M}_{0,5}}/S_5
\end{align*}
sending a line $\ell$ to the moduli of the intersection $\ell \cap X$ as a quintuple of points on $\ell\cong \mb{P}^1$. Cadman and Laza \cite{Laza} showed that if every line intersects $X$ in at least three points, then 
\begin{align*}
\deg (\phi_X) = 2\cdot (\text{\# bitangents of $X$})+4\cdot (\text{\# flexes of $X$}) = 2\cdot 120 + 4\cdot 45 = 420.
\end{align*}
We extend this to hypersurfaces $X\subset\mb{P}^r$ of degree $d=2r+1$ for all $r$ by computing $\deg(\phi_X)$ and separately expressing $\deg(\phi_X)$ in terms of the lines tri-incident to $X$. 

\begin{Thm*}[\Cref{AP}]
\label{APIntro}
Let $X\subset\mb{P}^r$ be a hypersurface of degree $d=2r+1$ such that every line meets $X$ in at least three points and is not contained in $X$. The degree of
\begin{align*}
\phi_X: \mb{G}(1,r) \dashrightarrow \ol{\mc{M}_{0,d}}/S_{d}
\end{align*}
is the difference between the coefficients of $u^{r-1}v^{r-1}$ and $u^{r-2}v^{r}$ in the expansion of 
\begin{align*}
d(d-1)(d-2)\prod_{k=2}^{d-2} (ku+(d-k)v).
\end{align*}
\end{Thm*}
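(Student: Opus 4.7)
The plan is to compute $\deg(\phi_X)$ by applying the operator $[M]_{\hbar}$ from \Cref{Mhbarintro} with $M = \tau^{\le 2}$ (a general $2 \times d$ matrix representing $U_{2,d}$, the matroid of $d$ general points on $\mb{P}^1$), together with the small quantum product on $\mb{P}^r$. Geometrically, a generic fiber of $\phi_X$ over $[Y] \in \ol{\mc{M}_{0,d}}/S_d$ parametrizes lines $\ell \subset \mb{P}^r$ with $\ell \cap X$ of moduli $[Y]$; after ordering the $d$ intersection points each such line corresponds to a tuple in $\op{M} \cap (X \times \cdots \times X) \subset (\mb{P}^r)^d$, so the class $\mu_M^*(\prod_i dH_i)$ packages the enumerative data. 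By \Cref{PFAintro} this class equals $q([M]_{\hbar}(dH,\ldots,dH))$ in $A^{\bullet}_{GL_{r+1}}(\mb{P}^{(r+1)\cdot 2-1})$, reducing the problem to an equivariant polynomial computation.

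By properties (1)-(3) of \Cref{Mhbarintro}, $[M]_{\hbar}$ is the reduction of $[I_d]_{\hbar}$ modulo $\hbar^2$, and $[I_d]_{\hbar}(dH,\ldots,dH) = (dz)^{\star d} = d^d z^d$ in $QH^{\bullet}_{GL_{r+1}}(\mb{P}^r)$. To isolate a two-variable polynomial I specialize the equivariant weights via $t_0 = u$, $t_1 = v$, and $t_i = 0$ for $i \ge 2$, whence $F(z) = z^{r-1}(z+u)(z+v)$ factors cleanly with roots at $0$ (of multiplicity $r-1$), $-u$, and $-v$. The reduction $z^d \bmod F(z)^2$ can then be computed via Hermite interpolation at these nodes with their multiplicities doubled: the partial-fraction expansion at the double roots $-u, -v$ contributes the factors $(ku+(d-k)v)$ indexed by $k=2,\ldots,d-2$ (each factor coming from evaluating $z^d$ or its derivative at $-u$ and $-v$), while the Taylor expansion at $z = 0$ produces the prefactor $d(d-1)(d-2)$.

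To recover $\deg(\phi_X)$ as an integer, one pushes forward the resulting equivariant class from $\mb{P}^{(r+1)\cdot 2-1}$ to the Grassmannian $\mb{G}(1,r)$ and applies equivariant localization. Under the chosen specialization, the only torus-fixed line of $\mb{G}(1,r)$ whose tangent weights do not vanish is the coordinate line spanned by the weights $u, v$; its contribution, after dividing by the Euler class of its normal bundle in $\mb{G}(1,r)$, recovers exactly the coefficient combination $[u^{r-1}v^{r-1}] - [u^{r-2}v^r]$, with the sign coming from the normal-bundle Euler factor.

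The main technical obstacle is establishing the explicit polynomial identity matching $z^d \bmod F(z)^2$ (for the specialized $F$) with $d(d-1)(d-2)\prod_{k=2}^{d-2}(ku+(d-k)v)$ at the correct level of coefficient extraction. I expect this to be most cleanly established inductively: $U_{2,d}$ is series-parallel, built from $U_{2,3}$ by iterated parallel connection with $U_{1,2}$, so by the multiplicativity of $[M]_{\hbar}$ under parallel connection (\Cref{serparintro}) each new factor $(ku+(d-k)v)$ should emerge from one step of the induction on $d$, avoiding the need to grind through the full Hermite expansion directly.
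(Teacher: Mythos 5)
Your setup breaks at the very first step: the identification of the general fiber of $\phi_X$ with $\op{M}\cap(X\times\cdots\times X)$ inside $(\mb{P}^r)^d$, and hence of $\deg(\phi_X)$ with the intersection number encoded by $\mu_M^*\bigl(\prod_i dH_i\bigr)$, is false because of excess intersection. The closure $\op{M}$ for a general $2\times d$ matrix contains all degenerate configurations (e.g.\ the small diagonal $\mb{P}^r$, where the $(r+1)\times 2$ matrix $A$ drops rank), and these meet $X^d$ in positive-dimensional loci, so the class does not count lines. You can see the failure numerically: for a general $2\times d$ matrix, \Cref{PDGM} gives $\opc{M}^{\dagger}(dH,\ldots,dH)=d^d\,[z^r][F(z)^1]\bigl(z^d \bmod F(z)^2\bigr)$, which non-equivariantly ($F(z)=z^{r+1}$, $d=2r+1$) equals $d^d$; for $r=2$ this is $3125$, not $420$. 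A telling symptom is that your argument never uses the hypothesis that every line meets $X$ in at least three points --- that hypothesis is exactly what is needed to avoid the degenerate locus. The later steps compound the problem: the specialization $t_0=u$, $t_1=v$, $t_{\ge 2}=0$ of the $GL_{r+1}$-weights has no geometric relation to the Chern roots of $\mc{S}^{\vee}$ on $\mb{G}(1,r)$; the map $\mb{P}^{(r+1)\times 2-1}\dashrightarrow\mb{G}(1,r)$ is neither everywhere defined nor proper, so there is no pushforward; and localization with most tangent weights set to $0$ produces vanishing denominators. The concluding induction is also off: parallel connection of $U_{2,d-1}$ with a general $1\times 2$ matrix duplicates a column and does not produce $U_{2,d}$.

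The paper's proof instead works relatively over $\mb{G}(1,r)$: it takes the orbit of $d$ points on $\mb{P}^1$ (an $r=1$ computation, with $F(z)=(z+u)(z+v)$ where $u,v$ are the Chern roots of $\mc{S}^{\vee}$, not specialized $GL_{r+1}$-weights), spreads it out over the universal line to get $(\op[1]{M})_{\mb{P}(\mc{S}^{\vee})^d}$, pushes forward under the multiplication map $\Phi$ to $\mb{P}(\operatorname{Sym}^d\mc{S}^{\vee})$, and pulls back along the section $s$ determined by $X$ (\Cref{IN}); the hypothesis on $X$ guarantees $s(\mb{G}(1,r))$ misses the boundary of $\Phi_*\op[1]{M}$, which is how the excess problem is actually dealt with. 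The factors $ku+(d-k)v$ then come from the Leray relation $G(z)=\prod_{k=0}^d(z+ku+(d-k)v)$ of $\mb{P}(\operatorname{Sym}^d\mc{S}^{\vee})$ via $[\im(s)]=(G(H)-G(0))/H$ --- data about the degree-$d$ hypersurface that your configuration in $(\mb{P}^r)^d$ never encodes --- and the final degree extraction is the Schur-polynomial expansion of $A^{\bullet}(\mb{G}(1,r))$, not localization. To repair your approach you would essentially have to reconstruct this relative setup.
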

In fact \Cref{AP} computes the class of a general fiber of $\phi_X$ when we drop the assumption $d=2r+1$. Finally, we can express $\deg(\phi_X)$ in terms of numbers of tri-incident lines. 
\begin{Thm*} [\Cref{APD}]
\label{APDIntro}
Let $X\subset\mb{P}^r$ be a hypersurface of degree $d=2r+1$ such that every line meets $X$ in at least three points and is not contained in $X$. Then 
\begin{align}
 \deg(\phi_X) = \sum_{a\ge b>1,a+b+1=d}{2n_{a,b,1}}+4n_{d-2,1,1},\nonumber\label{APT}
\end{align}
where $n_{a,b,c}$ is the number (counted with certain multiplicities) of lines that intersect $X$ at three points with multiplicities exactly $a,b,c$. 
\end{Thm*}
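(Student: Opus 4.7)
The plan is to use \Cref{APIntro} to reduce $\deg(\phi_X)$ to an explicit polynomial coefficient formula, and then identify each contribution with a tri-incident line count via a combinatorial matching argument combined with intersection theory on $\mb{G}(1,r)$.

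First, I apply \Cref{APIntro} to $\phi_X$, yielding
\[
\deg(\phi_X) = [u^{r-1}v^{r-1}]\, P(u,v) - [u^{r-2}v^r]\, P(u,v),
\]
where $P(u,v) = d(d-1)(d-2)\prod_{k=2}^{d-2}(ku + (d-k)v)$. Expanding the product, each monomial corresponds to a subset $S \subseteq \{2, \ldots, d-2\}$ which records, for each factor, whether the $u$-term or the $v$-term is chosen; the resulting contribution is $d(d-1)(d-2) \prod_{k \in S} k \prod_{k \notin S} (d-k)$. In particular, $[u^i v^{d-3-i}] P$ is a sum over subsets of size $i$.

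Second, I express each tri-incident line count $n_{a,b,c}$ as an intersection number on $\mb{G}(1,r)$. The locus of lines meeting $X$ with intersection multiplicity at least $k$ at some (varying) point of $X$ is a classical cycle class on $\mb{G}(1,r)$ whose degree is computable from the defining equation of $X$. Imposing three such tangencies at distinct points with prescribed multiplicities $(a,b,c)$ (with $a+b+c = d$) gives a $0$-dimensional intersection on $\mb{G}(1,r)$ whose length equals $n_{a,b,c}$ up to an automorphism factor reflecting permutations of equal-size clusters.

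Third, I match the coefficient difference from Step 1 with the weighted sum of tri-incident line counts. Each subset $S$ contributing to the coefficient difference encodes a splitting of the intersection multiplicities $\{2,\ldots,d-2\}$ into two groups. Geometrically, such a splitting records which multiplicity is "on the $a$ side" and which on the $(d-a)$ side in a profile $(a, d-a)$; a further refinement produces the tri-incident profile $(a,b,1)$ by isolating the singleton. The factor of $2$ for each $(a,b,1)$ contribution arises from the two distinct subsets $S$ encoding the profile (corresponding to the symmetric roles of $a$ and $b$), while the factor of $4$ for $(d-2,1,1)$ arises from the additional combinatorial freedom in placing the two singleton clusters.

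The main obstacle is the precise combinatorial matching in Step 3. In particular, one must verify that (i) contributions from profiles $(a,b,c)$ with $c \geq 2$ cancel identically in the coefficient difference $[u^{r-1}v^{r-1}]P - [u^{r-2}v^r]P$, (ii) the multiplicative prefactor $d(d-1)(d-2)$ correctly distributes to produce the integer weights $2$ and $4$, and (iii) the identification of subsets $S$ with tri-incident profiles is bijective once symmetries are accounted for. The valuative property of $\opc{M}$ (\Cref{realvalintro}) and the series-parallel formula (\Cref{serparintro}) provide the algebraic toolkit to translate the polynomial expansion into intersection-theoretic statements on $\mb{G}(1,r)$, reducing the identity to a finite combinatorial check.
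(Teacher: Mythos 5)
Your proposal has a genuine gap at its core. The identity in this theorem is not obtained in the paper by computing both sides of the equation and matching coefficients; it is obtained from a single cycle-level relation, namely \Cref{P1S}:
\[
\opc[1]{M_{\{1\}\sqcup\cdots\sqcup \{d\}}} = \sum_{i=1}^{d-2}{\opc[1]{M_{\{1,\ldots,i\}\sqcup \{i+1\}\sqcup \{i+2, \ldots,d\}}}},
\]
which expresses the class of the orbit of $d$ general points on $\mb{P}^1$ as a sum of orbit classes of configurations supported on three points with multiplicities $(i,1,d-i-1)$. This relation is a direct consequence of additivity/valuativity (\Cref{realvalintro}) applied to a subdivision of the hypersimplex into Schubert rank polytopes. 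One then pushes the relation forward under the symmetrization map $\Phi:\mb{P}(\mc{S}^{\vee})^d\to \mb{P}({\rm Sym}^d\mc{S}^{\vee})$ and pulls back along the section $s$; the weights $2$ and $4$ arise because $\Phi$ restricted to the orbit with profile $(i,1,d-i-1)$ has degree $2$ onto $\ol{Z_{i,d-i-1,1}}$ exactly when two of the three multiplicities coincide (i.e. $i\in\{1,\tfrac{d-1}{2},d-2\}$) and degree $1$ otherwise, and because $n_{a,b,1}=n_{b,a,1}$ so each unordered profile is hit twice in the sum over $i$. Your proposal never states or uses this decomposition, and without it your Step 3 is not an argument: the claimed bijection between subsets $S\subseteq\{2,\ldots,d-2\}$ in the expansion of $d(d-1)(d-2)\prod_{k=2}^{d-2}(ku+(d-k)v)$ and tri-incident profiles is unsubstantiated (the coefficient extraction $[u^{r-1}v^{r-1}]$ selects subsets of one fixed size, while the profiles $(a,b,1)$ are indexed by a single integer, so there is no correspondence of the shape you describe), and the factors $2$ and $4$ have the geometric origin above rather than the combinatorial one you posit. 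Your item (i) about cancellation of profiles with $c\ge 2$ is also misdirected: such profiles never enter, because the subdivision only produces configurations with a singleton cluster.

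Your approach could in principle be salvaged by explicitly computing each $n_{a,b,1}$ (e.g. via $n_{a,b,c}=\int_{\mb{G}(1,r)}s^*[\ol{Z_{a,b,c}}]$ together with the $[M]_\hbar$ formalism, as the paper remarks is possible) and verifying the numerical identity against \Cref{APIntro}, but that computation is not carried out in your proposal, and it would in any case obscure the reason the identity holds, which is the polytope subdivision.
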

We will see later that \Cref{APDIntro} is induced by a relation between generalized matrix orbit classes arising from \Cref{realvalintro}.

\subsection{Extended Example}
\label{extendedexample}
In this subsection we present an extended example to demonstrate the key aspects of our results. 
\subsubsection{Setup} Let $p_1,\ldots,p_{6}\in \mb{P}^2$ be the points depicted below (or any other six points with the same collinearity relations).

\begin{center}
\begin{tikzpicture}
\draw (0, 0) -- (1, 1) -- (2, 0) -- (0, 0);
\filldraw[black] (1,1) circle (2pt) node[anchor=south] {1};
\filldraw[black] (0.5,0.5) circle (2pt) node[anchor=south east] {2};
\filldraw[black] (1.5,0.5) circle (2pt) node[anchor=south west] {3};
\filldraw[black] (0, 0) circle (2pt) node[anchor=north east] {4};
\filldraw[black] (1,0) circle (2pt) node[anchor=north] {5};
\filldraw[black] (2,0) circle (2pt) node[anchor=north west] {6};
\end{tikzpicture}
\end{center}
Let $A$ be any $3 \times 6$ matrix whose projectivized columns are $p_1, \ldots, p_{6}$. For any $r \geq 2$, the cycle $\opc{A} \in A^\bullet_{GL_{r+1}}((\mb{P}^r)^{6})$ is the closure of the $GL_{r+1}$-orbit of $(p_1, \ldots, p_{6}) \in (\mb{P}^r)^{6}$, where each point $p_i$ is considered as an element of $\mb{P}^r$ via any linear inclusion $\mb{P}^2 \hookrightarrow \mb{P}^{r}$ (\Cref{connectedfact}).

\subsubsection{Formula for $\opc{A}$}
The rank polytope of (the column matroid of) $A$ is the convex hull of the 17 points
\begin{align*}
\{e_i+e_j+e_k\mid p_i, p_j, p_k\text{ span }\mb{P}^2\} \subset \mb{R}^{6}.
\end{align*}
and \Cref{finkformulaintro} expresses the class $\opc{M}\in A^\bullet_{GL_{r+1}}((\mb{P}^r)^{6})$ as a sum of $6!$ rational functions. However, such a large sum is difficult to manipulate in practice. Instead, we may understand $\opc{A}$ by computing the Kronecker dual \[\opc{A}^{\dagger} : A^\bullet_{GL_{r+1}}(\mb{P}^r)^{\otimes 6} \to A^\bullet_{GL_{r+1}}(\pt),\] which we recall is defined by \[\opc{A}^\dagger(f_1(H), \ldots, f_6(H)) = \int_{(\mb{P}^r)^{6}} \opc{A} f_1(H_1) \ldots f_6(H_6)\] for any polynomials $f_1, \ldots, f_{6}$ with coefficients in $A^\bullet_{GL_{r+1}}(\pt)$.
\subsubsection{Degenerations}
In many cases (such as this one), we can compute Kronecker dual classes efficiently by breaking the problem into smaller pieces.

We can relate $\opc{A}$ with other classes through the following degeneration tree. Here $B,C,D,E$ are $3\times 6$ matrices whose projectivized columns are the depicted point configurations. 
\begin{center}
\begin{tikzpicture}

\draw (0.5,0) -- (3.5,0);
\draw (1.75,1.25) -- (3.25,-0.25);
\draw (0.75,-0.25) -- (2.25,1.25);
\draw[->] (3.5,1.5)--(3,1);
\draw[->] (6.5,1.5)--(7,1);
\filldraw[black] (2,0) circle (2pt) node[anchor=north] {5};
\filldraw[black] (3,0) circle (2pt) node[anchor=north] {6};
\filldraw[black] (1.5,0.5) circle (2pt) node[anchor=south east] {2};
\filldraw[black] (2,1) circle (2pt) node[anchor=south] {1};
\filldraw[black] (2.5,0.5) circle (2pt) node[anchor=south west] {3};
\filldraw[black] (1,0) circle (2pt) node[anchor=north] {4};
\node at (0,0) [rectangle,draw] {A};

\draw (3.5,2) -- (6.5,2);
\draw (4.75,3.25) -- (6.25,1.75);
%\draw (3.75,1.75) -- (5.25,3.25);
\draw[->] (6.5,3.5)--(6,3);
\draw[->] (9.5,3.5)--(10,3);
\filldraw[black] (5,2) circle (2pt) node[anchor=north] {5};
\filldraw[black] (6,2) circle (2pt) node[anchor=north] {6};
%\filldraw[black] (4.5,2.5) circle (2pt) node[anchor=west] {6};
\filldraw[black] (4.25,2.75) circle (2pt) node[anchor=west] {2};
\filldraw[black] (5,3) circle (2pt) node[anchor=south west] {1};
\filldraw[black] (5.5,2.5) circle (2pt) node[anchor=south west] {3};
\filldraw[black] (4,2) circle (2pt) node[anchor=north] {4};
\node at (3,2) [rectangle,draw] {B};

\draw (6.5,4) -- (9.5,4);
%\draw (7.75,5.25) -- (9.25,3.75);
%\draw (6.75,3.75) -- (8.25,5.25);
\filldraw[black] (8,4) circle (2pt) node[anchor=north] {5};
\filldraw[black] (9,4) circle (2pt) node[anchor=north] {6};
%\filldraw[black] (7.5,4.5) circle (2pt) node[anchor=west] {6};
\filldraw[black] (7.25,4.75) circle (2pt) node[anchor=west] {2};
\filldraw[black] (8,5) circle (2pt) node[anchor=west] {1};
%\filldraw[black] (8.5,4.5) circle (2pt) node[anchor=west] {8};
\filldraw[black] (8.75,4.75) circle (2pt) node[anchor=north] {3};
\filldraw[black] (7,4) circle (2pt) node[anchor=north] {4};
\node at (6,4) [rectangle,draw] {D};

\filldraw[black] (7.25,0.75) circle (2pt) node[anchor=west] {2};
\filldraw[black] (8,1) circle (2pt) node[anchor=west] {1};
\filldraw[black] (7,0) circle (2pt) node[anchor=west] {4};
\filldraw[black] (9,0) circle (2pt) node[anchor=west] {3, 5, 6};
\node at (6,0) [rectangle,draw] {C};

\filldraw[black] (11,3) circle (2pt) node[anchor=west] {1};
\filldraw[black] (11.75,2.75) circle (2pt) node[anchor=west] {3};
\filldraw[black] (12,2) circle (2pt) node[anchor=west] {6};
\filldraw[black] (10,2) circle (2pt) node[anchor=west] {2, 4, 5};
\node at (9,2) [rectangle,draw] {E};
\end{tikzpicture}
\end{center}
The rank polytopes of $A$, $B$, $C$, $D$, and $E$ are related by subdivisions. The hyperplane $\{x_1+x_2+x_{4}=2\}$ separates the rank polytope $P_B$ of $B$ into the rank polytopes $P_A$ and $P_C$ of $A$ and $C$ respectively. Similarly, the hyperplane $\{x_1+x_3+x_6=2\}$ separates $P_D$ into $P_B$ and $P_E$.

Accordingly, the cycle $\op{B}$ degenerates to a union $\op{A}\cup \op{C}$ and $\op{D}$ degenerates to a union $\op{B}\cup \op{E}$ (\Cref{degenerationfibers}), so we get 
\begin{align*}
\opc{B}=\opc{A}+\opc{C} \qquad \opc{D}=\opc{B}+\opc{E}.
\end{align*}
Not all subdivisions of rank polytopes yield degenerations of the cycles $\op{M}$, but they all yield relations between the generalized matrix orbit classes $\opc{M}$ (\Cref{realvalintro}).

Therefore, 
\[
\opc{A}=\opc{D}-\opc{C}-\opc{E}
\] so 
\[
\opc{A}^\dagger=\opc{D}^\dagger-\opc{C}^\dagger-\opc{E}^\dagger.
\]

\subsubsection{Kronecker duals to Schubert Matroids}
\label{Schubertexample}
To compute $\opc{A}^{\dagger}$, it remains to compute $\opc{C}^{\dagger}$, $\opc{D}^{\dagger}$, and $\opc{E}^{\dagger}$, for which we will use the operations 
\[[C]_\hbar, [D]_\hbar, [E]_\hbar : A^\bullet_{GL_{r+1}}(\mb{P}^r)^{\otimes 6} \to QH^\bullet_{GL_{r+1}}(\mb{P}^r) = A^\bullet_{GL_{r+1}}(\pt)[z][\hbar]/(\hbar-F(z)).\]
(\Cref{Mhbarintro}). %Here $F(z) = \prod_{i=0}^r (z + t_i)$ is the universal Leray relation, where the $t_i$ are the standard characters of the maximal torus of $GL_{r+1}$.

To compute $[C]_\hbar, [D]_\hbar, [E]_\hbar$, we will use the fact that the column matroids of $C$, $D$, $E$ are Schubert matroids. For example, we may write \[D = \operatorname{Sch}(2,3,\{4,5,6\},\{1,2,3,4,5,6\}).\] Hence, by the definition of a Schubert matroid and \Cref{Mhbarintro} we have 
\begin{align*}
[D]_\hbar(f_1(H), \ldots, f_{6}(H)) &= f_1(z)f_2(z)f_3(z) (f_4(z) f_{5}(z) f_{6}(z)\text{ mod }F(z)^2)\text{ mod }F(z)^3
\end{align*}
for any polynomials $f_1, \ldots, f_{6}$ of degree at most $r$ with coefficients in $A^\bullet_{GL_{r+1}}(\pt)$. We have thus expressed $[D]_\hbar$ using polynomial multiplication and division with remainder.

By \Cref{Mhbarintro} again, we have
\begin{align*}
\opc{D}^{\dagger}(f_1(H), \ldots, f_{6}(H)) &= [z^r][F(z)^2][D]_\hbar(f_1(H), \ldots, f_{6}(H)) \\
&= [z^r][F(z)^2](f_1(z)f_2(z)f_3(z) (f_4(z) f_{5}(z) f_{6}(z)\text{ mod }F(z)^2)).
\end{align*}

We can perform a similar procedure for the matrices $C$ and $E$, yielding
\begin{align*}
\opc{C}^{\dagger}(f_1(H), \ldots, f_{6}(H)) &= [z^r][F(z)^2][C]_\hbar(f_1(H), \ldots, f_{6}(H)) \\
&= [z^r][F(z)^2](f_1(z)f_2(z)f_4(z) (f_3(z) f_{5}(z) f_{6}(z)\text{ mod }F(z))) \\
\opc{E}^{\dagger}(f_1(H), \ldots, f_{6}(H)) &= [z^r][F(z)^2][E]_\hbar(f_1(H), \ldots, f_{6}(H)) \\
&= [z^r][F(z)^2](f_1(z)f_3(z)f_6(z) (f_2(z) f_{4}(z) f_{5}(z)\text{ mod }F(z))).
\end{align*}
Hence $\opc{A}^\dagger$ is given by
\begin{align*}
\opc{A}^{\dagger}(f_1(H), \ldots, f_{6}(H)) = [z^r][F(z)^2]( &f_1(z)f_2(z)f_3(z) (f_4(z) f_{5}(z) f_{6}(z)\text{ mod }F(z)^2)\\
&-f_1(z)f_2(z)f_4(z) (f_3(z) f_{5}(z) f_{6}(z)\text{ mod }F(z)) \\
&-f_1(z)f_3(z)f_6(z) (f_2(z) f_{4}(z) f_{5}(z)\text{ mod }F(z)) ) .
\end{align*}
for all polynomials $f_1, \ldots, f_{6}$ of degree at most $r$ with coefficients in $A^\bullet_{GL_{r+1}}(\pt)$. We will see in \Cref{poincaredualsforallom} how to apply a similar procedure to compute $\opc{M}^\dagger$ for all $M$.

\subsection{High level outline}
We now give a high level outline of our proofs. A flow chart containing the logical structure of the proofs is given in \Cref{flow}. Input from the literature is shown in dotted boxes, while our contributions are in solid boxes. 

% Definition of blocks:
\tikzset{%
  block/.style    = {dashed,draw, very thick, rectangle, minimum height = 3em,
    minimum width = 3em,text width=5cm},
  importantblock/.style    = {draw, very thick, rectangle, minimum height = 3em,
    minimum width = 3em,text width=5cm},
  sum/.style      = {draw, circle, node distance = 2cm}, % Adder
  input/.style    = {coordinate}, % Input
  output/.style   = {coordinate} % Output
}
\begin{figure}[h]
    \centering
\begin{tikzpicture}[auto, thick, node distance=2cm, >=triangle 45]
    %Code for additivity block
    \node at (0,0) [block] (add) {Additivity (nonequivariant) from \cite{Binglin}};
    \node [importantblock, below of=add] (deg) {Equivariant degeneration of orbits};
    \node at (7,0) [importantblock] (infinite) {Relations generated by degenerations (modified algorithm of \cite{Derksen})};
    \node [importantblock, below of=infinite] (adde) {Equivariant Additivity (\Cref{realvalintro})};
    \draw[->](add) -- node {}(deg);
    \draw[->](deg) -- node {}(adde);
    \draw[->](infinite) -- node {}(adde);
    \draw [color=gray,thick](-3,1) rectangle (11,-3);
    
    %Code for big formula block
    \node at (0,-6) [importantblock] (seriesparallel) {Classes under series and parallel (\Cref{serparintro})};
    \node at (0,-4) [block] (quantum) {Quantum product of $\mathbb{P}^r$};
    \node at (7,-6) [importantblock] (serparm) {Classes for series-parallel matrices};  
    \node at (7,-8) [importantblock] (bigformula) {Formula for class $\opc{M}$ (\Cref{finkformulaintro})}; 
    \draw[->](seriesparallel) --node{} (serparm);
    \draw[->](quantum) --node{} (seriesparallel);
    \draw[->](serparm) --node{} (bigformula);
    
    %Rational function box
    \node at (7,-4) [importantblock] (rat) {Rational Function Trick};
    \coordinate (rrat) at (10.5,-4);
    \draw[-] (rat)--node{} (rrat);
    
    %Stuff before Thm 5
    \node at (0,-8) [block] (Li) {Wonderful compatification of \cite{LiLi}};
    \node at (0,-10) [importantblock] (Graph) {$[M]_\hbar$ same as $[\overline{\Gamma_M}]$ convolution
    (\Cref{PFAintro})};
    \node at (0,-12) [importantblock] (valuative) {Equivariant valuativeness (\Cref{realvalintro})};
    \draw[->] (Li)--node{} (Graph);
    \draw[->] (Graph)--node{} (valuative);

    %Mhbar
    \node at (7,-10) [importantblock] (Mhbar) {$[M_1 \oplus M_2]_\hbar = [M_1]_\hbar \star [M_2]_\hbar$ (\Cref{Mhbarintro})};  
     \node at (7,-12) [importantblock] (Mhbarv) {$[M]_\hbar $ valuativeness (\Cref{Mhbarintro})};
    \draw[->] (Mhbarv)--node{} (valuative);

    %draw an arrow from Theorem 3 to theorem 4
    \coordinate (c1) at (3.5,-6);
    \coordinate (c2) at (3.5,-10);
    \draw[-] (c1) --node{} (c2);
    \draw[->] (c2)--node{} (Mhbar);

    %draw an arrow from additivity to big formula
    \coordinate (radde) at (10.5,-2);
    \coordinate (rbigformula) at (10.5,-8);
    \draw[-] (radde)--node{} (rbigformula);
    \draw[-] (adde)--node{} (radde);
    \draw[->] (rbigformula)--node{} (bigformula);
    
    %add an arrow from additivity to Mhbarv)
    \coordinate (rMhbar) at (10.5,-10);    
    \draw[-] (radde)--node{} (rMhbar);
    \draw[->] (rMhbar)--node{} (Mhbar);  
    \coordinate (rMhbarv) at (10.5,-12);
    \draw[-] (rMhbar) --node{} (rMhbarv);
    \draw[->] (rMhbarv) --node{} (Mhbarv);
\end{tikzpicture}
\caption{Logical structure of proofs} \label{flow}
\end{figure}

\subsubsection{Non-equivariant additivity}
In \Cref{NE}, we show the non-equivariant additivity of $M \mapsto \opc{M}$. Li's result \cite{Binglin} exhibits the non-equivariant class $\opc{M}$ as the sum of monomials in the $H_i$ variables whose exponent sequence lies inside the region $(r+\frac{1}{n})(1,\ldots,1)-(r+1)P_M$, so the result essentially follows by definition.

\subsubsection{Equivariant degenerations of orbits} 
\label{degsssection}
In \Cref{degenerationfibers}, we show that any flat limit of the form $\op{M(t)}$ as $t \to 0$ has the property that the special fiber is $\bigcup_{i=1}^k \opc{M_i}$ for some matrices $M_i$ such that the rank polytopes subdivide $P_{M_p}$ for $p$ general. In particular, we deduce in \Cref{siorbit} additivity relations of the form
$$\opc{M(p)}=\sum_{i=1}^k \opc{M_i}$$ arising from such degenerations. We do this by identifying enough $\op{M_i}$ in the limit so that both sides agree non-equivariantly, and then conclude by the fact that an effective cycle that is non-equivariantly zero is also equivariantly zero. Note at this point we do not know that $\opc{M}$ depends only on the matroid of $M$.

\subsubsection{Relations generated by degenerations}
Even though \Cref{degenerationfibers} provides many relations among the classes $\opc{M}$, it is not clear that these relations induced from degenerations imply additivity of $\opc{M}$. We will show that they in fact do.

 We prove in \Cref{matroidconelemma} that for $M$ having a particular type of matroid polytope, which we call a ``cone-like matroid polytope'' (see \Cref{conelike}), $\opc{M}$ depends only on the matroid of $M$. Given $M,M'$ with the same cone-like matroid polytope, we do this by creating a flat degeneration with two special fibers $\op{M}$ and $\op{M'}$.

Using degenerations to reduce to a certain class of matrices with ``positive cone-like matroid polytopes'', which satisfy no non-trivial additive relations, is a delicate combinatorial argument which modifies the algorithm from \cite[Appendix A]{Derksen}. The essential point is that we closely follow a sequence of infinite polytope subdivisions, which when restricted to $\Delta_{d,n}$ yield a sequence of finite polytope subdivisions. This is done by attaching a certain cone to each matrix in our sequence of degenerations, which governs future degenerations based on past ones. We defer the more detailed high level explanation to \Cref{outline}.

\subsubsection{Series and parallel connection}
We show that parallel and series connection are given by certain convolutions with the classes of the diagonal in $(\mb{P}^r)^3$ and the class of collinear triples of points in $(\mb{P}^r)^3$ respectively in \Cref{ParAndSer}. Then identical to the standard derivation of the equivariant $3$-point Gromov-witten invariants of $\mb{P}^r$, we are able to deduce \Cref{serparintro}.

\subsubsection{Series-parallel matroids}
To analyze series-parallel matrices, through their recursive definition it suffices to have a good understanding of how the $\opc{M}$ classes behave under series and parallel connections of the underlying matrices. 

Through a clever application of partial fraction decomposition combined with \Cref{serparintro}, we are able to show that for each series-parallel matroid there is a universal rational function in $2n$ variables such that for any $r$, if we substitute $H_1,\ldots,H_n,F(H_1),\ldots,F(H_n)$ into the variables then we formally recover the equivariant class. Note that as $r$ increases, the polynomial $F(H_i)$ changes with the addition of more $t_i$ variables.

\subsubsection{Formula for $\opc{M}$ (\Cref{finkformulaintro})}

Once we have additivity of the classes $M \mapsto \opc{M}$, to prove \Cref{finkformulaintro} it suffices to prove it for a special class of matrices whose matroid polytopes generate all others through additivity relations. \cite{Derksen} used the additive basis of Schubert matroids, which we will analyze in detail later, but we instead use series-parallel matroids, as every Schubert matroid polytope can be subdivided into series-parallel matroid polytopes.

From additivity we are able to deduce the existence of a rational function for any matrix $M$ determining $\opc{M}$ as above. We show in \Cref{formulaexists} that this rational function is uniquely determined by its values when substituting $H_1,\ldots,H_n,H_1^{r+1},\ldots,H_n^{r+1}$ (giving the nonequivariant class of $\op{M}$), so it must agree with the rational function produced by Brion's formula for the non-equivariant classes. \Cref{finkformulaintro} then follows by substituting $F(H_i)$ in place of $H_i^{r+1}$ in the formula, which is done in \Cref{formulasection}.

\subsubsection{Properties of $[M]_{\hbar}$ (\Cref{Mhbarintro})}
To prove \Cref{Mhbarintro}, we use an analogous non-equivariant for every $r$ to equivariant strategy as above, and show that non-equivariantly the data of $[M]_{\hbar}$ can be encoded via a sum of monomials inside $(r+\frac{1}{n})(1,1,\ldots,1)-(r+1)I_M$, by slicing the independence polytope according to the hyperplanes $\{\sum x_i=k\}$ with $k \in \mathbb{Z}$, and showing that each slice, homogenized with an extra variable $H_{n+1}$, appears as the rank polytope of $\tau^{\le k+1}(M\oplus \ast)$. Then all of the parts of \Cref{Mhbarintro} readily follows as shown in \Cref{poincaresection}. For example valuativity of $[M]_{\hbar}$ follows from the fact that valuativity can be interpreted in terms of independence polytopes, and $[M\oplus N]_{\hbar}=[M]_{\hbar}\star [N]_{\hbar}$ follows from the fact that $I_{M\oplus N}=I_M \times I_N$.

\subsubsection{Convolution with graph closure $\overline{\Gamma_M}$ (\Cref{PFAintro})}
Finally, \Cref{PFAintro} follows from carefully analyzing the wonderful compactification of \cite{LiLi}, where we have to relate the images of classes under $(\mu_M)_*$, and $(\mu_{\tau^{\le k+1}(M\oplus \ast)})_*$ under various pullbacks and pushforwards. Then valuativity of $[\overline{\Gamma_M}]$ from \Cref{realvalintro} follows from the corresponding valuativity of $[M]_{\hbar}$. This is done in \Cref{graphclosuresection}.

\section{Motivating example and the connection of $\mu_M$ with $QH_{GL_{r+1}}^\bullet(\mb{P}^r)$}
\label{KSI}

Here we make precise the connection between generalized matrix orbits and the small equivariant quantum cohomology of $\mb{P}^r$, assuming the part of \Cref{realvalintro} asserting that $\opc{M}$ depends only on the matroid of $M$. In this section only, we will assume $\operatorname{char} K = 0$.

The concrete focus of this section will be on the following problem.
\begin{Pro}
\label{ProS}
What is the class in $A^{\bullet}_{GL_{r+1}}((\mb{P}^r)^n)$ of the closure of the $GL_{r+1}$-orbit of $(p_1,\ldots,p_n)\in (\mb{P}^r)^n$, where $p_1, \ldots, p_n \in \mb{P}^r$ are general? \footnote{The expression for the class of this orbit closure computed in \cite[Theorem 5.1]{Fink} is incorrect because their expression for ``$[Y_{\pi(v)}]_T$'' does not satisfy the degree hypothesis of \cite[Theorem 3.5 (ii)]{Fink}.}
\end{Pro}
Let $M_{r+1}$ be an $(r+1) \times n$ matrix whose projectivized columns are these general $p_1, \ldots, p_n$. By \Cref{connectedfact} the $GL_{r+1}$-orbit closure of $(p_1,\ldots, p_n)$ is $\op{M}$ if $n > r+1$ and $(\mb{P}^r)^n$ if $n \le r+1$. %This class was erroneously computed in \cite[Theorem~5.1]{Fink} and the correct expression does not appear in the literature.

In this section we will answer the following more refined problem, which yields orbit closures for $d+1 \le r+1$, generalized matrix orbits for $r+1<d+1<n$, and $0$ for $n \le d+1$.
\begin{Pro}
\label{ProR}
If $M_{d+1}$ is a general $(d+1)\times n$ matrix, what is $\opc{M_{d+1}}\in A^{\bullet}_{GL_{r+1}}((\mb{P}^r)^n)$?
\begin{comment}
Given $n$ general points $p_1,\ldots,p_n$ in $\mb{P}^d$, we can consider the locus in $(\mb{P}^r)^n$ that is the image of the ``orbit map''
\begin{align*}
\mu: \mb{P}^{(r+1)d-1}\dashrightarrow (\mb{P}^r)^n,
\end{align*}
where $\mb{P}^{(r+1)d-1}$ is viewed as the projectivized space of $(r+1) \times d$ matrices and $\mu$ sends the class of a matrix $A$ to $A$ evaluated on each $p_i$. What is the equivariant class of the image of the orbit map, or, more precisely, what is the pushforward of the fundamental class after resolving $\mu$? 
\end{comment}
\end{Pro}

We will assume $n\geq d+2$ for the remainder of \Cref{KSI}. 

\subsection{Specialization}
\label{SKS}
We first reduce \Cref{ProR} to a fact about the small equivariant quantum cohomology of $\mb{P}^r$. Let $p_1,\ldots,p_n$ be the projectivized columns of $M_{d+1}$.

We will show in \Cref{realval} that the class $\opc{M_{d+1}}$ depends only on the matroid of $p_1,\ldots,p_n$. Hence, rather than choosing $p_1, \ldots, p_n$ to be general, we may choose them in any way such that any $d+1$ of them are linearly independent. 

Therefore, we can choose $p_1,\ldots,p_n$ to be distinct points on a rational normal curve $C\subset \mb{P}^d$. In this way, we may view $C$ as an $n$-pointed rational curve $(\mb{P}^1,p_1,\ldots,p_n)$. We will now show that $\op{M_{d+1}}$ is the closure of \[\{(\psi(p_1), \ldots, \psi(p_n)) \mid \text{$\psi:\mb{P}^1 \to \mb{P}^r$ is a degree $d$ map}\}.\] Afterwards, we may break our problem into smaller parts by using the structure of $\overline{\MM_{0, n}}$, the coarse moduli space of $n$-pointed stable curves of genus $0$.

\begin{comment}
\begin{Rem}
It is not necessary to use the full strength of \Cref{realval} to specialize to a rational normal curve. Rather, we can instead see directly what happens in the flat limit of orbits when we specialize. To see that no other top dimensional components appear in the flat limit, we may use that the non-equivariant class computed in \cite[Theorem 1.1]{Binglin} (see \Cref{NE}) doesn't change when we specialize. 

In the case $d=r+1$, we already know how the general orbit specializes from the literature \cite[Proposition 1.2.11, Proposition 1.2.15]{Kapranov}. 
\end{Rem}
\end{comment}

\subsection{Small Equivariant Quantum Cohomology of $\mb{P}^r$}
At this point it makes sense to recall small equivariant quantum cohomology \cite{Kim93}, specifically in the case of $\mb{P}^r$. We will use the \emph{Kontsevich space}, the coarse moduli space $\ol{\MM_{0,n}}(\mb{P}^r,d)$ whose points are $(\mc{C}, \psi)$ where $\mc{C}$ is a genus $0$ curve with $n$ marked points $p_1, \ldots, p_n$ and $\psi : \mc{C} \to \mb{P}^r$ is a degree $d$ stable map. There is a map $\pi : \ol{\mc{M}_{0,n}}(\mb{P}^r,d) \to \ol{\mc{M}_{0, n}}$ that forgets the morphism to $\mb{P}^r$ and stabilizes the domain curve and there are evaluation maps $\ev_1, \ldots, \ev_n : \ol{\mc{M}_{0,n}}(\mb{P}^r,d) \to \mb{P}^r$ that send $(\mc{C}, \psi)$ to $\psi(p_1), \ldots, \psi(p_n)$ respectively. We thus have the following diagram.
\begin{center}
\begin{tikzcd}
\ol{\MM_{0,n}}(\mb{P}^r,d) \arrow[r,"{\rm ev}"] \arrow[d,"\pi"] & (\mb{P}^r)^n\\
\ol{\MM_{0,n}}
\end{tikzcd}
\end{center}
Let $X \subset \overline{\MM_{0,n}}(\mb{P}^r,d)$ be a $GL_{r+1}$-equivariant cycle. The \emph{convolution operation} $\operatorname{conv}_{X,d}:A^\bullet_{GL_{r+1}}((\mb{P}^r)^{n-1})\to A^\bullet_{GL_{r+1}}(\mb{P}^r)$ is defined by \[\conv_{X,d}(x) = (\ev_n)_*([X] \cap(\ev_{1}, \ldots, \ev_{n-1})^* x).\] 
The $\star$-product of small equivariant quantum cohomology allows one to compute the operation $\conv_{\pi^{-1}(\mc{C}),d}$ for $\mc{C} \in \ol{\MM_{0, n}}$, which can be geometrically interpreted as follows. Given effective cycles $X_1, \ldots, X_{n-1}$ in the projectivization of the universal rank $r+1$ vector bundle, the convolution $\conv_{\pi^{-1}(\mc{C}),d}([X_1] \otimes \ldots \otimes [X_{n-1}])$ should be thought of as the Chow class of the cycle swept out by $\psi(p_n)$ as $\psi: \mc{C} \to \mb{P}^r$ ranges over all degree $d$ rational maps to fibers of the bundle such that $\psi(p_i) \in X_i$ for $1 \leq i \leq n-1$.

As $\pi$ is flat \cite[Remark 2.6.8]{Kock}, $\conv_{\pi^{-1}(\mathcal{C}),d}$ is independent of the choice of $\mathcal{C}$. More generally, the operation $\conv_{X,d}$, where $X$ is an irreducible component of $\pi^{-1}(\mc{C})$, can also be computed using small equivariant quantum cohomology, and if $\pi^{-1}(\mc{C})$ has irreducible components $X_1, \ldots, X_k$, then $\sum_{i=1}^k \conv_{X_i, d}=\conv_{\pi^{-1}(\mc{C}), d}$. The fact that the components of $\pi^{-1}(\mc{C})$ have multiplicity one can be deduced either from the deformation theory techniques of \cite[Section 5]{FP} or from the description in \cite[Section 4.1]{Conformal}. 

In \Cref{expandquantum}, we will see how for a fixed $\mathcal{C}$ the data of $\conv_{\pi^{-1}(\mc{C}),d}$ for varying $d$ is encoded in the $\star$-product on $QH^\bullet_{GL_{r+1}}(\mb{P}^r)$.

\subsection{Connecting the Kontsevich mapping space with $\mu_M$}
The operation $\conv_X$ is Kronecker dual (in the first $n-1$ factors of $\mb{P}^r$) to the class  \[[\ev_*X] \in A^\bullet_{GL_{r+1}}((\mb{P}^r)^{n}) = A^\bullet_{GL_{r+1}}((\mb{P}^r)^{n-1}) \otimes A^\bullet_{GL_{r+1}}(\mb{P}^r).\]

We now make a fundamental observation, apparently new in this level of generality. If $X$ is an irreducible component of a fiber $\pi^{-1}(\mc{C})$ of $\pi$, then $\ev_*X$ can be expressed as $\op{M}$ for an explicit $d \times n$ matrix $M$, which we show in \Cref{KGO}.
%Furthermore, if a one-parameter family of irreducible components of fibers of $\pi$ degenerates to a union of irreducible components of fibers of $\pi$, then the corresponding 
% Is the statement about degenerations actually saying anything meaningful? It seems clear that if the irreducible components of the fibers degenerate, then the corresponding generalized matrix orbits also degenerate.
In the case $d=r+1$, this is reflected in the fact that $\ol{\MM_{0,n}}$ embeds into the Chow quotient $(\mb{P}^r)^n//_{Ch}SL_{r+1}$ \cite[Section 3.2]{Conformal}.

\begin{Lem}
\label{KGO}
Let $Z\subset \ol{\MM_{0,n}}(\mb{P}^r,d)$ be a component of a fiber $\pi^{-1}(\mathcal{C})$. Then $\ev_*Z=\op{M}$ for some $(d+1)\times n$ matrix $M$.
\end{Lem}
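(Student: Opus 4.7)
The plan is to explicitly parametrize $Z$ by (an open subset of) a projective space of dimension $(r+1)(d+1)-1$, and to read off the matrix $M$ from the evaluation data at the marked points. The key observation is that, once the combinatorial type of the domain is fixed, stable maps are governed by sections of a single line bundle on a tree of rational curves, and this is exactly $(d+1)$-dimensional, matching the size of the matrix we seek.

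First I would analyze the generic point of $Z$. A general stable map $(\mathcal{C}',\psi)$ in $Z$ has a fixed combinatorial type, namely a dual graph $\Gamma$ refining the dual graph of $\mathcal{C}$ together with non-negative degrees $(d_v)_{v\in\Vt(\Gamma)}$ satisfying $\sum d_v = d$, chosen so that contracting the components where $\psi$ is constant and forgetting $\psi$ recovers $\mathcal{C}$. Fix a representative $\mathcal{C}'$ of this combinatorial type, and let $L$ be the unique line bundle on $\mathcal{C}'$ with $\deg(L|_{C_v})=d_v$ (unique since $\mathcal{C}'$ is a tree of $\mathbb{P}^1$'s). Because each $d_v\ge 0$, the normalization short exact sequence, combined with $H^1(\mathbb{P}^1,\mathcal{O}(d_v))=0$, gives $H^1(\mathcal{C}',L)=0$ and $h^0(\mathcal{C}',L)=d+1$.

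Next I would set up the parametrization. A stable map with domain $\mathcal{C}'$ and $\psi^*\mathcal{O}(1)\cong L$ is equivalent to a choice of $(r+1)$-tuple of sections $(s_0,\ldots,s_r)\in H^0(\mathcal{C}',L)^{r+1}$ without common zeros, taken up to simultaneous scaling. This identifies an open subset $U\subseteq \mathbb{P}(H^0(\mathcal{C}',L)^{r+1})\cong \mathbb{P}^{(r+1)(d+1)-1}$, and $Z$ is the quotient of $U$ by the finite group $\Gamma$ of automorphisms of $\mathcal{C}'$ preserving the marked points. Now fix a basis of $H^0(\mathcal{C}',L)$, and define $M$ to be the $(d+1)\times n$ matrix whose $i$-th column is the evaluation functional at $p_i$, viewed as an element of $H^0(\mathcal{C}',L)^*$ in the dual basis. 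Then for $(s_0,\ldots,s_r)\in U$, writing $A$ for the $(r+1)\times(d+1)$ matrix whose rows are the coordinates of the $s_j$, we have $\psi(p_i)=(s_0(p_i):\cdots:s_r(p_i))=\widetilde{A\cdot v_i}$, where $v_i$ is the $i$-th column of $M$. In other words, the evaluation on $U$ is nothing but the restriction of $\mu_M$.

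It remains to upgrade this set-theoretic identification to an equality of cycles, and this is the step I expect to be the main obstacle. The composition $U\twoheadrightarrow Z\xrightarrow{\ev}(\mathbb{P}^r)^n$ is $\mu_M|_U$, and comparing generic fibers gives $\deg(\mu_M)=|\Gamma|\cdot\deg(\ev|_Z)$. If $\mu_M$ is not generically finite, then neither is $\ev|_Z$, so $\ev_*[Z]=0=\op{M}$. Otherwise, the fact recorded in \Cref{Setup} that $\mu_M$ generically finite forces $\mu_M$ generically injective yields $\deg(\mu_M)=1$, which forces $|\Gamma|=1$ and $\deg(\ev|_Z)=1$. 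In either case $\ev_*Z$ agrees with $(\mu_M)_*[\mathbb{P}^{(r+1)(d+1)-1}]=\op{M}$, completing the proof. Checking carefully that this degree-matching handles contracted components (which contribute constants to $H^0$ and identify marked columns of $M$ with the evaluation at the attaching node) and any potential non-reducedness or stacky structure along $Z$ is the technical point to verify.
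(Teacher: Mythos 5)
Your proposal is correct and follows essentially the same route as the paper: both identify the generic stable map in $Z$ with a basepoint-free linear system $W \to H^0(C,L)$ for the degree-$(d_v)$ line bundle $L$ on the domain, i.e.\ with a point of $\mb{P}(\Hom(W,V)) \cong \mb{P}^{(r+1)(d+1)-1}$, and read $M$ off from evaluation of $H^0(C,L)$ at the marked points. The only cosmetic difference is that your worries about the automorphism group $\Gamma$ and refined dual graphs evaporate: since $\mathcal{C}$ is a stable $n$-pointed genus-$0$ curve and the fiber of $\pi$ has pure dimension $(r+1)(d+1)-1$, the generic domain is $C$ itself with trivial automorphisms, so the open set $U$ sits directly inside $Z$ and the paper's recorded generic injectivity of $\mu_M$ finishes the cycle-level identification.
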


\begin{proof}
Suppose that $\mathcal{C}=[(C,p_1,\ldots,p_n)]$ and let $C_1, \ldots, C_k$ be the components of $C$. As one might expect \cite[Section 4.1]{Conformal}, there is an open locus $Z^\circ \subset Z$ and integers $d_1, \ldots, d_k$ such that $Z^\circ$ parameterizes maps $C \to \mb{P}^r$ that restrict to degree $d_i$ on each $C_i$.

Let $L$ be the line bundle on $C$ that restricts to $\ms{O}_{\mb{P}^1}(d_v)$ on each component $C_v\cong \mb{P}^1$ and let $V = H^0(C, L)$. Then $\dim(V) = d+1$ and we have a map $f: C\to \mb{P}(V^{\vee})$. 

Let $W = K^{r+1}$. Each rational map $C\to \mb{P}^r = \mb{P}(W^\vee)$ that restricts to degree $d_v$ on each component $C_v$ can be written uniquely as a composition $C \xrightarrow{f} \mb{P}(V^{\vee})\dashrightarrow \mb{P}(W^\vee)$, where the map $\mb{P}(V^{\vee})\dashrightarrow \mb{P}(W^\vee)$ is induced by a linear map $W \to V$. This is a regular map if and only if the linear system $W\to V = H^0(C, L)$ is basepoint-free.

Let $U\subset \mb{P}({\rm Hom}(W,V))$ be the open locus of basepoint-free linear systems $W\to V\cong H^0(C,L)$. Then $U$ is nonempty and by the above it embeds into $Z^\circ$. We have a diagram
\begin{center}
\begin{tikzcd}
U \arrow[d,hook] \arrow[r,hook] &Z \arrow[d,"\ev|_Z"]  \\
\mb{P}({\rm Hom}(W, V)) \arrow[r,dashed,"\mu_M"] & (\mb{P}^r)^n.
\end{tikzcd}
\end{center}
So $\mb{P}({\rm Hom}(W, V))$ and $Z$ share the open set $U$, and the bottom map is $\mu_M$ where $M$ is any matrix with projectivized columns $f(p_1), \ldots, f(p_n) \in \mb{P}(V^\vee)$. The result now follows.
\end{proof}

\subsection{Degenerating the domain curve}
\label{DTDC}
By the proof of \Cref{KGO}, we have $\op{M_{d+1}}=\ev_*\pi^{-1}(\mc{C})$ if $\mc{C} = [(\mb{P}^1, p_1, \ldots, p_n)]$, where $p_1, \ldots, p_n \in \mb{P}^1$ have the same moduli as in \Cref{SKS}.
Since $\pi$ is flat, the same equation holds for any $\mc{C}$, so we can choose $\mc{C} = [(C_0,p_1,\ldots,p_n)]$ to be a chain of $n-2$ rational curves. 

\begin{center}
\begin{tikzpicture}
%\filldraw[black] (7.5,1.5) circle (0pt) node[anchor=south]{$C\to X$};

\draw[gray, thick] (0,0) -- (3,1);
\draw[gray, thick] (2,1) -- (5,0);
\filldraw[black] (2/3,2/9) circle (1.5pt) node[anchor=south]{$p_1$};
\filldraw[black] (3/2,1/2) circle (1.5pt) node[anchor=south]{$p_2$};
\filldraw[black] (7/2,1/2) circle (1.5pt) node[anchor=south]{$p_3$};

%\draw[gray, thick] (4,0) -- (7,1);
\draw[gray, thick] (4,0) -- (5,1/3);
\filldraw[black] (11/2,1/2) circle (1pt) node{};%(1.5pt) node[anchor=south]{$p_4$};
%\draw[gray, thick] (6,1) -- (9,0);
\draw[gray, thick] (8,1/3) -- (9,0);
\filldraw[black] (15/2,1/2) circle (1pt) node{};%(1.5pt) node[anchor=south]{$p_5$};
\filldraw[black] (13/2,1/2) circle (1pt) node{};

\draw[gray, thick] (8,0) -- (11,1);
\filldraw[black] (19/2,1/2) circle (1.5pt) node[anchor=south]{$p_{n-2}$};
\draw[gray, thick] (10,1) -- (13,0);
\filldraw[black] (23/2,1/2) circle (1.5pt) node[anchor=south]{$p_{n-1}$};
\filldraw[black] (37/3,2/9) circle (1.5pt) node[anchor=south]{$p_n$};

%\draw[gray, thick] (12,0) -- (13,1/3);
%\filldraw[black] (14,1/2) circle (1pt) node{};
%\filldraw[black] (14.5,1/2) circle (1pt) node{};
%\filldraw[black] (15,1/2) circle (1pt) node{};
\end{tikzpicture}
\end{center}

Let $\Gamma_0$ be the dual graph of $C_0$, which is a chain of $n-2$ vertices connected by edges. Then the fiber of $\pi$ over $(C_0,p_1,\ldots,p_n)$ has $\binom{n-3+d}{d}$ components, indexed by assignments of degrees to the $n-2$ vertices of $\Gamma_0$ that sum to $d$. However, most components push forward to zero under ${\rm ev}_{*}$ for dimension reasons, and only $\binom{n-2}{d}$ components, corresponding to assignments of degrees of 0's and 1's, survive. 

Therefore, the cycle $\op{M_{d+1}}$ degenerates to $\binom{n-2}{d}$ generalized matrix orbits by \Cref{KGO}.

\subsection{Expanding out a small quantum product} \label{expandquantum}
To make use of the observations in \Cref{DTDC}, we recall the standard geometric definition of the small quantum cohomology ring of projective space. The following is the usual derivation of the WDVV relations for $\mathbb{P}^r$, but we connect the computation to the $\opc{M_d}$ classes explicitly. Recall that we defined $QH^\bullet_{GL_{r+1}}(\mb{P}^r)$ to be $A^\bullet_{GL_{r+1}}(\mb{P}^r)[\hbar]$ with a multiplication $\star$ deforming the usual multiplication on $A^\bullet_{GL_{r+1}}(\mb{P}^r)$. In the remainder of this subsection, we will be working with the following equivalent geometric definition of $\star$.

\begin{Def}
\label{QCD}
Given $f,g\in A^\bullet_{GL_{r+1}}(\mb{P}^r)$, define $f\star g$ in $QH^\bullet_{GL_{r+1}}(\mb{P}^r)$ by
\begin{align*}
f\star g&=
\left(\tikz[baseline=15pt]{ \draw(1,0)--(0,1);\filldraw[black] (1/4,3/4) circle (1pt) node[anchor=south]{$f$}; \filldraw[black] (1/2,1/2) circle (1pt) node[anchor=south]{$g$} node[anchor=north east]{$\boxed 0$};\filldraw[black] (3/4,1/4) circle (1pt) node[anchor=south]{$$}}\right) +
\left(\tikz[baseline=15pt]{\draw(1,0)--(0,1);\filldraw[black] (1/4,3/4) circle (1pt) node[anchor=south]{$f$}; \filldraw[black] (1/2,1/2) circle (1pt) node[anchor=south]{$g$} node[anchor=north east]{\boxed 1};\filldraw[black] (3/4,1/4) circle (1pt) node[anchor=south]{$$}}\right) \hbar
\end{align*}
where $\tikz[baseline=15pt]{\draw(1,0)--(0,1);\filldraw[black] (1/4,3/4) circle (1pt) node[anchor=south]{$f$}; \filldraw[black] (1/2,1/2) circle (1pt) node[anchor=south]{$g$} node[anchor=north east]{\boxed i};\filldraw[black] (3/4,1/4) circle (1pt) node[anchor=south]{$$}}$ denotes $\conv_{\pi^{-1}(\pt),i}(f,g)$ for $\pt=\ol{\MM_{0,3}}$. In characteristic zero this agrees with the definition from \Cref{QCRD} which is the definition used exclusively outside this section.
\end{Def}

We now show geometrically that $\star$ is commutative and associative, and interpret $[\hbar^k](f_1\star \cdots \star f_n)$. Note that $[\hbar^0](f \star g)=fg$ since $\ol{\MM_{0,3}}(\mb{P}^r,0)=\mb{P}^r$, so $\star$ will thus yield a commutative associative deformation of the multiplication on $A^\bullet_{GL_{r+1}}(\mb{P}^r)$.

\begin{Rem}
Naively, if $f$ and $g$ are effective cycles, then $\tikz[baseline=15pt]{\draw(1,0)--(0,1);\filldraw[black] (1/4,3/4) circle (1pt) node[anchor=south]{$f$}; \filldraw[black] (1/2,1/2) circle (1pt) node[anchor=south]{$g$} node[anchor=north east]{\boxed i};\filldraw[black] (3/4,1/4) circle (1pt) node[anchor=south]{$$}}$ is the locus in $\mb{P}^r$ swept out by all degree $i$ curves passing through the two cycles. This expression vanishes for $i>1$ for dimension reasons. 
\end{Rem}

We can expand products under $\star$ by applying \Cref{QCD} repeatedly. For example,
\begin{align*}
(f\star g)\star h &=
\left(\left(\tikz[baseline=15pt]{ \draw(1,0)--(0,1);\filldraw[black] (1/4,3/4) circle (1pt) node[anchor=south]{$f$}; \filldraw[black] (1/2,1/2) circle (1pt) node[anchor=south]{$g$} node[anchor=north east]{\boxed 0};\filldraw[black] (3/4,1/4) circle (1pt) node[anchor=south]{$$}}\right) +
\left(\tikz[baseline=15pt]{\draw(1,0)--(0,1);\filldraw[black] (1/4,3/4) circle (1pt) node[anchor=south]{$f$}; \filldraw[black] (1/2,1/2) circle (1pt) node[anchor=south]{$g$} node[anchor=north east]{\boxed 1};\filldraw[black] (3/4,1/4) circle (1pt) node[anchor=south]{$$}}\right) \hbar\right)\star h \\
&=
\left(\tikz[baseline=15pt]{ \draw(1,0)--(0,1); \draw(1/2,0)--(3/2,1);\filldraw[black] (1/4,3/4) circle (1pt) node[anchor=south]{$f$}; \filldraw[black] (1/2,1/2) circle (1pt) node[anchor=south]{$g$} node[anchor=north east]{\boxed 0};\filldraw[black] (3/4,1/4) circle (1pt) node[anchor=south]{$$}; \filldraw[black] (1,1/2) circle (1pt) node[anchor=south]{$h$} node[anchor=north west]{\boxed 0}; \filldraw[black] (1+1/4,3/4) circle (1pt) node[anchor=south]{$$}}\right) + 
\left(
\tikz[baseline=15pt]{ \draw(1,0)--(0,1); \draw(1/2,0)--(3/2,1);\filldraw[black] (1/4,3/4) circle (1pt) node[anchor=south]{$f$}; \filldraw[black] (1/2,1/2) circle (1pt) node[anchor=south]{$g$} node[anchor=north east]{\boxed 1};\filldraw[black] (3/4,1/4) circle (1pt) node[anchor=south]{$$}; \filldraw[black] (1,1/2) circle (1pt) node[anchor=south]{$h$} node[anchor=north west]{\boxed 0}; \filldraw[black] (1+1/4,3/4) circle (1pt) node[anchor=south]{$$}}
+
\tikz[baseline=15pt]{ \draw(1,0)--(0,1); \draw(1/2,0)--(3/2,1);\filldraw[black] (1/4,3/4) circle (1pt) node[anchor=south]{$f$}; \filldraw[black] (1/2,1/2) circle (1pt) node[anchor=south]{$g$} node[anchor=north east]{\boxed 0};\filldraw[black] (3/4,1/4) circle (1pt) node[anchor=south]{$$}; \filldraw[black] (1,1/2) circle (1pt) node[anchor=south]{$h$} node[anchor=north west]{\boxed 1}; \filldraw[black] (1+1/4,3/4) circle (1pt) node[anchor=south]{$$}}
\right)\hbar
+
\left(
\tikz[baseline=15pt]{ \draw(1,0)--(0,1); \draw(1/2,0)--(3/2,1);\filldraw[black] (1/4,3/4) circle (1pt) node[anchor=south]{$f$}; \filldraw[black] (1/2,1/2) circle (1pt) node[anchor=south]{$g$} node[anchor=north east]{\boxed 1};\filldraw[black] (3/4,1/4) circle (1pt) node[anchor=south]{$$}; \filldraw[black] (1,1/2) circle (1pt) node[anchor=south]{$h$} node[anchor=north west]{\boxed 1}; \filldraw[black] (1+1/4,3/4) circle (1pt) node[anchor=south]{$$}}
\right)\hbar^2. 
\end{align*}
%As the notation suggests, if $f,g,h$ are effective cycles, then $\tikz[baseline=15pt]{ \draw(1,0)--(0,1); \draw(1/2,0)--(3/2,1);\filldraw[black] (1/4,3/4) circle (1pt) node[anchor=south]{$f$}; \filldraw[black] (1/2,1/2) circle (1pt) node[anchor=south]{$g$} node[anchor=north east]{\boxed i};\filldraw[black] (3/4,1/4) circle (1pt) node[anchor=south]{$$}; \filldraw[black] (1,1/2) circle (1pt) node[anchor=south]{$h$} node[anchor=north west]{\boxed j}; \filldraw[black] (1+1/4,3/4) circle (1pt) node[anchor=south]{$$}}$ should be thought of as the class represented by the locus swept out by all $C_2$, as we vary over all maps $C=C_1\cup C_2\to \mb{P}^r$, where $C$ is a rational curve with irreducible components $C_1$ and $C_2$ joined at a node, such that $C_1\to \mb{P}^r$ is degree $i$ and passes through $f$ and $g$, and $C_2\to \mb{P}^r$ is degree $j$ passing through $h$.
Here $\tikz[baseline=15pt]{ \draw(1,0)--(0,1); \draw(1/2,0)--(3/2,1);\filldraw[black] (1/4,3/4) circle (1pt) node[anchor=south]{$f$}; \filldraw[black] (1/2,1/2) circle (1pt) node[anchor=south]{$g$} node[anchor=north east]{\boxed i};\filldraw[black] (3/4,1/4) circle (1pt) node[anchor=south]{$$}; \filldraw[black] (1,1/2) circle (1pt) node[anchor=south]{$h$} node[anchor=north west]{\boxed j}; \filldraw[black] (1+1/4,3/4) circle (1pt) node[anchor=south]{$$}}$ denotes the convolution $\conv_{Z_{i,j},i+j}(f,g,h)$, where $Z_{i, j}\subset \ol{\MM_{0,4}}(\mb{P}^r,i+j)$ is defined as follows. Let $\mathcal{C} = [(C, p_1, p_2, p_3, p_4)]$, where $C = C_1 \cup C_2$ be the reducible curve in $\ol{\MM_{0,4}}$ with $p_1, p_2 \in C_1$ and $p_3, p_4 \in C_2$. Then $Z_{i, j}$ is the component of the fiber of $\pi^{-1}(\mathcal{C})$ generically parameterizing maps $C \to \mb{P}^r$ that restrict to degree $i$ on $C_1$ and degree $j$ on $C_2$.

Here, we are using the fact that the composition of convolution operations corresponds to the gluing morphism between Kontsevich spaces, so that 
\begin{align*}
\conv_{Z_{i, j}, i+j}(f,g,h)=\conv_{\pi^{-1}(\pt),j}(\conv_{\pi^{-1}(\pt),i}(f,g),h).
\end{align*}
There are no issues with multiplicities because the gluing morphism will always be an isomorphism 
in our case \cite[Lemma 12 (i)]{FP}.

Observe that, for any $d$, the coefficient of $\hbar^d$ in the expansion of $(f \star g) \star h$ is equal to \[\sum_{i+j=d} \tikz[baseline=15pt]{ \draw(1,0)--(0,1); \draw(1/2,0)--(3/2,1);\filldraw[black] (1/4,3/4) circle (1pt) node[anchor=south]{$f$}; \filldraw[black] (1/2,1/2) circle (1pt) node[anchor=south]{$g$} node[anchor=north east]{\boxed i};\filldraw[black] (3/4,1/4) circle (1pt) node[anchor=south]{$$}; \filldraw[black] (1,1/2) circle (1pt) node[anchor=south]{$h$} node[anchor=north west]{\boxed j}; \filldraw[black] (1+1/4,3/4) circle (1pt) node[anchor=south]{$$}} = \sum_{i+j=d} \conv_{Z_{i, j}, d}(f,g,h)=\conv_{\pi^{-1}(\mathcal{C}), d}(f,g,h).\] as the $Z_{i,j}$ for $i + j = d$ are exactly the components of the fiber $\pi^{-1}(\mathcal{C})$. By the flatness of $\pi$, the class of $\pi^{-1}(\mathcal{C})$ is unchanged if we redistribute the points $p_1,\ldots,p_4$, implying $(f \star g) \star h$ does not depend on the ordering of $f$, $g$, and $h$, so $\star$ is commutative and associative.

In turn, the discussion in \Cref{DTDC} shows that (taking $(\mb{P}^r)^4\to \mb{P}^r$ the last projection) \[\conv_{\pi^{-1}(\mathcal{C}), d}(f,g,h)=\int_{(\mb{P}^r)^4 \to \mb{P}^r}\opc{M_{d+1}} f(H_1) g(H_2) h(H_3),\] where $M_{d+1}$ is a general $(d+1) \times 4$ matrix. In general, the same reasoning shows $$f_1 \star \cdots \star f_{n-1}=\sum_{d=0}^n\hbar^d\conv_{\pi^{-1}(\mc{C}),d}(f_1,\ldots,f_{n-1})$$ thus deducing the following theorem, which we will use to find $\opc{M_{d+1}}$.
\begin{Thm}
\label{GOSP}
We have
\begin{align*}
f_1\star \cdots\star f_{n-1} &= \sum_{d=0}^{n-1} \left(\hbar^d\int_{(\mb{P}^r)^{n}\to \mb{P}^r}{\opc{M_{d+1}}f_1(H_1)\cdots f_{n-1}(H_{n-1})}\right).
\end{align*}
\end{Thm}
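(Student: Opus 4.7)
The plan is a direct $n$-fold generalization of the associativity argument sketched above for the $n=4$ case. I would expand $f_1 \star \cdots \star f_{n-1}$ using \Cref{QCD} with the bracketing $((\cdots((f_1 \star f_2) \star f_3) \cdots ) \star f_{n-1})$, obtaining a sum over $\mathbf{d} = (d_1, \ldots, d_{n-2}) \in \{0, 1\}^{n-2}$ of terms of the form $\hbar^{|\mathbf{d}|}$ times an $(n-2)$-fold composition of binary convolutions. Applying the gluing isomorphism for Kontsevich spaces \cite[Lemma 12(i)]{FP} iteratively, each such composition equals $\conv_{Z_{\mathbf{d}}, |\mathbf{d}|}(f_1, \ldots, f_{n-1})$ where $Z_{\mathbf{d}} \subset \pi^{-1}(\mc{C}_0)$ is the component of the fiber over a chain $\mc{C}_0 \in \ol{\MM_{0,n}}$ of $n-2$ rational curves on which the $i$-th component carries degree $d_i$.

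Collecting the coefficient of $\hbar^d$ and invoking the observation from \Cref{DTDC} that components of $\pi^{-1}(\mc{C}_0)$ with some $d_i \ge 2$ push forward to zero under $\ev_*$ for dimension reasons, I would obtain
$$[\hbar^d](f_1 \star \cdots \star f_{n-1}) = \sum_{\substack{\mathbf{d} \in \{0,1\}^{n-2} \\ |\mathbf{d}| = d}} \conv_{Z_{\mathbf{d}}, d}(f_1, \ldots, f_{n-1}) = \conv_{\pi^{-1}(\mc{C}_0), d}(f_1, \ldots, f_{n-1}),$$
where the multiplicity-one claim for the surviving components is supplied by \cite[Section 5]{FP} or \cite[Section 4.1]{Conformal}.

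By flatness of $\pi$, this equals $\conv_{\pi^{-1}(\mc{C}), d}$ for any $\mc{C} \in \ol{\MM_{0,n}}$. Choosing $\mc{C} = [(\mb{P}^1, p_1, \ldots, p_n)]$ with the $p_i$ being $n$ distinct points on a rational normal curve in $\mb{P}^d$ and applying (the proof of) \Cref{KGO} to the unique top component $\pi^{-1}(\mc{C})$, we have $\ev_* \pi^{-1}(\mc{C}) = \op{M_{d+1}}$ for a general $(d+1) \times n$ matrix $M_{d+1}$. Unpacking the definition of convolution via the projection formula along the last projection $(\mb{P}^r)^n \to \mb{P}^r$ yields
$$\conv_{\pi^{-1}(\mc{C}), d}(f_1, \ldots, f_{n-1}) = \int_{(\mb{P}^r)^n \to \mb{P}^r} \opc{M_{d+1}}\, f_1(H_1) \cdots f_{n-1}(H_{n-1}),$$
and summing over $d$ completes the proof.

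The main obstacle I anticipate is the combinatorial bookkeeping in the $n$-fold expansion: verifying that each component $Z_{\mathbf{d}}$ appears exactly once (rather than with multiplicity coming from different reassociations) and that the multiplicity-one claim from the cited gluing results extends cleanly from individual boundary divisors to arbitrary iterated boundary strata. Both points are folklore in Gromov-Witten theory, and the $n = 4$ computation already displayed in the paper provides the full pattern, so the extension to general $n$ should be essentially formal once the multiplicity conventions are pinned down.
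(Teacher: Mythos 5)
Your proposal is correct and follows essentially the same route as the paper: the paper likewise iterates the binary expansion of $\star$ over a chain of $n-2$ rational curves, identifies the resulting iterated convolutions with the components $Z_{\mathbf{d}}$ of $\pi^{-1}(\mc{C}_0)$ via the gluing isomorphism, discards the components with some $d_i \ge 2$ (which push forward to zero), and then uses flatness of $\pi$ together with \Cref{KGO} to rewrite $\conv_{\pi^{-1}(\mc{C}),d}$ as the integral against $\opc{M_{d+1}}$. No gaps to report.
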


\begin{Rem}
This holds by %\Cref{PDGM}, which proves
\Cref{Mhbarintro} as follows. The equation is equivalently written as $$[\ast]_{\hbar}(f_1)\star [\ast]_{\hbar}(f_2) \star \ldots \star [\ast]_{\hbar}(f_{n-1})=[I_{n-1}]_{\hbar}(f_1,\ldots,f_{n-1}),$$ and the two sides are equal again by repeatedly applying \Cref{Mhbarintro}. By the simple presentation of $QH^\bullet_{GL_{r+1}}(\mb{P}^r)$ described in Section \ref{QCRD}, computing $f_1\star \cdots\star f_n$ reduces to polynomial multiplication and expansion in terms of powers of the polynomial $F(H)$.
\end{Rem}

\subsection{Computing $\opc{M}$ for a general $d \times n$ matrix}Recall the formulas from \Cref{QCRD}.
Note that $\int_{\mb{P}^r}[\hbar]f(H)\star g(H)=[z^r][F(z)]\overline{f}(z)\overline{g}(z)=0$ for degree reasons. Applying $\star f_n$ to the right of both sides of \cref{GOSP} and applying $\int_{\mb{P}^r}$, we deduce that \[\int_{(\mb{P}^r)^n}\opc{M_{d+1}}f_1(H_1) \cdots f_n(H_n)=[z^r][F(z)^{d}]\overline{f_1}(z)\cdots \overline{f_n}(z).\] %To find $\opc{M}$ from this information, one could in theory use a basis of test classes to reconstruct $\opc{M}$, but this would yield an incredibly unenlightening formula. 
Note if we assume \Cref{Mhbarintro}, then this also follows from
$$\opc{M_{d+1}}^{\dagger}=\opc{\tau^{\le d+1}I_{n}}^{\dagger}=[z^r][F(z)^{d}][I_{n}]_{\hbar}.$$
We will use this to construct the formula for $\opc{M_{d+1}}$.
\begin{Thm} \label{opcmd}
The equivariant Chow class $\opc{M_{d+1}}$ is given by
\begin{align*}
\opc{M_{d+1}}=[z^r][F(z)^{d}]\prod_{i=1}^n \frac{F(z)-F(H_i)}{z-H_i}.
\end{align*}
\end{Thm}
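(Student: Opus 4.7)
The plan is to verify the claimed formula by pairing both sides against an arbitrary product class $f_1(H_1)\cdots f_n(H_n)$ and invoking Poincar\'e duality, using the identity established just before the theorem:
\[
\int_{(\mb{P}^r)^n}\opc{M_{d+1}}\,f_1(H_1)\cdots f_n(H_n)=[z^r][F(z)^d]\,\overline{f_1}(z)\cdots\overline{f_n}(z).
\]

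The key technical lemma I need is the kernel identity
\[
\int_{\mb{P}^r}\frac{F(z)-F(H)}{z-H}\,f(H)=\overline{f}(z),
\]
valid for any polynomial $f\in A^\bullet_{GL_{r+1}}(\pt)[H]$. Granting this lemma, applying it in each factor independently gives
\[
\int_{(\mb{P}^r)^n}\prod_{i=1}^n\frac{F(z)-F(H_i)}{z-H_i}\,f_1(H_1)\cdots f_n(H_n)=\overline{f_1}(z)\cdots\overline{f_n}(z),
\]
and then extracting the coefficient $[z^r][F(z)^d]$ (which commutes with integration since it acts only on the $z$-variable) matches the pairing formula displayed above. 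Since $\prod_i\frac{F(z)-F(H_i)}{z-H_i}$ is a polynomial of degree $r$ in each $H_i$, so is its $[z^r][F(z)^d]$-coefficient, giving a well-defined class in $A^\bullet_{GL_{r+1}}((\mb{P}^r)^n)$. Nondegeneracy of the equivariant Poincar\'e pairing on $(\mb{P}^r)^n$ then forces the two classes to agree.

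The only nontrivial step is the kernel identity, and this is where I expect to spend the work. I would prove it via the equivariant Bott residue formula: if $-t_0,\ldots,-t_r$ denote the torus-fixed points of $\mb{P}^r$, then for any polynomial $g(H)$ one has $\int_{\mb{P}^r}g(H)=\sum_{i=0}^r g(-t_i)/F'(-t_i)$. Applying this to $g(H)=\frac{F(z)-F(H)}{z-H}f(H)$ and using $F(-t_i)=0$ yields
\[
\int_{\mb{P}^r}\frac{F(z)-F(H)}{z-H}f(H)=\sum_{i=0}^r \frac{f(-t_i)}{F'(-t_i)}\cdot\frac{F(z)}{z+t_i}=\sum_{i=0}^r\frac{f(-t_i)}{F'(-t_i)}\prod_{j\neq i}(z+t_j),
\]
which is exactly the Lagrange interpolation polynomial of $f$ at the points $-t_0,\ldots,-t_r$, i.e.\ $\overline{f}(z)$. (Geometrically this identity is the statement that $\frac{F(H_1)-F(H_2)}{H_1-H_2}$ represents the class of the diagonal in $\mb{P}^r\times\mb{P}^r$, so convolution with it is the identity on Chow.) This lemma is the only piece requiring calculation; once it is in hand, the theorem is an immediate consequence.
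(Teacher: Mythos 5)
Your proposal is correct and follows essentially the same route as the paper: both reduce the theorem to the kernel identity $\int_{\mb{P}^r}\frac{F(z)-F(H)}{z-H}f(H)=\overline{f}(z)$ and then apply $[z^r][F(z)^d]$ against the pairing formula established just before the statement. The only difference is that you prove the kernel identity by localization/Lagrange interpolation at the roots of $F$, while the paper uses a short telescoping manipulation with $\overline{f}(z)$ and degree bounds; given the paper's purely algebraic definition of $\int_{\mb{P}^r}$, your argument is just as valid and amounts to the same formal computation.
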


\begin{proof}
For any polynomial $f$ with coefficients in $A^\bullet_{GL_{r+1}}(\pt)$, we have
\begin{align*}
\int_{\mb{P}^r} \frac{F(z)-F(H)}{z-H}f(H)&=\int_{\mb{P}^r} \frac{F(z)-F(H)}{z-H}\overline{f}(H) \\
&= \int_{\mb{P}^r} \frac{F(z)-F(H)}{z-H} \overline{f}(z) +  \int_{\mb{P}^r} \frac{F(z)-F(H)}{z-H}(\overline{f}(H)-\overline{f}(z)) \\
&=\int_{\mb{P}^r} \frac{F(z)-F(H)}{z-H} \overline{f}(z) + \int_{\mb{P}^r} \frac{\overline{f}(z)-\overline{f}(H)}{z-H}(F(H)-F(z)) \\
&= \int_{\mb{P}^r}\frac{F(z)-F(H)}{z-H}\overline{f}(z) + 0 - 0 \\
&= \overline{f}(z);
\end{align*} that is, $\frac{F(z)-F(H)}{z-H}\in A^\bullet_{GL_{r+1}}(\mb{P}^r)\otimes QH^\bullet_{GL_{r+1}}(\mb{P}^r)$ is Kronecker dual to the lifting map $A^\bullet_{GL_{r+1}}(\mb{P}^r)\to QH^\bullet_{GL_{r+1}}(\mb{P}^r)$. %that the class $\frac{F(z)-F(H)}{z-H}\in A^\bullet_{GL_{r+1}}(\mb{P}^r)\otimes QH^\bullet_{GL_{r+1}}(\mb{P}^r)$ %(here we formally divided the top by the bottom and considered the resulting polynomial expression before substituting $H$ and $z$) is Kronecker dual to the lifting map $A^\bullet_{GL_{r+1}}(\mb{P}^r)\to QH^\bullet_{GL_{r+1}}(\mb{P}^r)$, i.e. $$\int_{\mb{P}^r} \frac{F(z) - F(H)}{z - H} f(H)=\overline{f}(z)$$ for any polynomial $f$ with coefficients in $A^\bullet_{GL_{r+1}}(\pt)$. This follows from the equations
Hence $$\int_{(\mb{P}^r)^n}\prod_{i=1}^n \frac{F(z)-F(H_i)}{z-H_i}f_i(H_i)=\prod_{i=1}^n \overline{f_i}(z).$$
Applying $[z^r][F(z)^{d}]$ to both sides of this equation yields the desired result.
\begin{comment}
This yields the first equality. We now prove the second equality.

Consider the polynomial $\prod_{i=1}^n \frac{F(z)-F(z_i)}{z-z_i}$, where $z_i$ are indeterminates. Applying partial fraction decomposition to the denominator, we get $$\prod_{i=1}^n \frac{F(z)-F(z_i)}{z-z_i}=\sum_{i=1}^n\left(\prod_{j \ne i}\frac{1}{z_i-z_j}\right) \left(\prod_{j \ne i}(F(z)-F(z_j))\right)\frac{F(z)-F(z_i)}{z-z_i}.$$
As $\frac{F(z)-F(z_i)}{z-z_i}$ is a polynomial of degree $r$ in $z$, and the first product involves no $z$, if we expand out the final product, we obtain the expansion $\prod_{i=1}^n \frac{F(z)-F(z_i)}{z-z_i}=\sum a_kF(z)^k$ with the $a_k$ polynomials of degree at most $r$ in $z$, namely with
\begin{align*}a_k&=\sum_{i=1}^n \left(\prod_{j \ne i}\frac{1}{z_i-z_j}\right) \left(\sum_{|A|=k+1, i \in A}\prod_{j \not \in A}(-F(z_j))\right)\frac{F(z)-F(z_i)}{z-z_i}\\
&=\sum_{|A|=k+1}\left(\prod_{j \not \in A}F(z_j)\right)\left(\sum_{i \in A}\frac{F(z)-F(z_i)}{z-z_i}\prod_{j \ne i}\frac{(-1)^{n-(k+1)}}{z_i-z_j}\right).
\end{align*}
To compute $[z^r]a_{d-1}$, we note that as each $\frac{F(z)-F(z_i)}{z-z_i}$ is monic of degree $r$ in $z$, we have $$[z^r]a_{d-1}=\sum_{|A|=d}\left(\prod_{j \not \in A}F(z_j)\right)\sum_{i \in A}\prod_{j \ne i}\frac{(-1)^{n-d}}{z_i-z_j},$$
which is a polynomial expression by construction. Hence, we have \begin{equation*}\opc{M_d}=\sum_{|A|=d}\left(\prod_{j \not \in A}F(H_j)\right)\sum_{i \in A}\prod_{j \ne i}\frac{(-1)^{n-d}}{H_i-H_j}.\qedhere
\end{equation*}
\end{comment}
\end{proof}
\begin{Cor} The expression $$\sum_{|A|=d+1}\left(\prod_{j \not \in A}F(z_j)\right)\sum_{i \in A}\prod_{j \ne i}\frac{(-1)^{n-d+1}}{z_i-z_j}$$ is a polynomial in $z_1,\ldots,z_r$, and $\opc{M_{d+1}}$ is obtained by evaluating this polynomial at $z_i=H_i$.
\end{Cor}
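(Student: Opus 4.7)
The plan is to apply partial fraction decomposition to the rational expression in Theorem \ref{opcmd} and then extract the appropriate coefficients, so the problem reduces almost entirely to a Lagrange interpolation identity followed by bookkeeping.

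First, I would start from the Lagrange interpolation identity $\sum_{i=1}^n \prod_{j\neq i}\frac{z-z_j}{z_i-z_j}=1$ (valid as a rational identity in $z,z_1,\ldots,z_n$ with the $z_i$ distinct). Multiplying both sides by $\prod_{i=1}^n (F(z)-F(z_i))/\prod_{j}(z-z_j)$ and cancelling the $(z-z_j)$ factors yields the partial fraction decomposition
\[
\prod_{i=1}^n \frac{F(z)-F(z_i)}{z-z_i} \;=\; \sum_{i=1}^n \frac{1}{\prod_{j\neq i}(z_i-z_j)} \,\cdot\, \frac{F(z)-F(z_i)}{z-z_i}\prod_{j\neq i}(F(z)-F(z_j)).
\]
This decomposition is what lets us cleanly isolate $F(z)^d$ contributions: each $\frac{F(z)-F(z_i)}{z-z_i}$ is a monic polynomial of degree $r$ in $z$, while each $\prod_{j\neq i}(F(z)-F(z_j))$ is a polynomial in $F(z)$ whose coefficients do not depend on $z$ at all.

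Next, I would expand $\prod_{j\neq i}(F(z)-F(z_j))$ by choosing, for each factor, whether to select $F(z)$ or $-F(z_j)$. Writing $B=\{i\}\cup\{\text{indices selecting }F(z)\}$, the coefficient of $F(z)^d$ equals $\sum_{B\ni i,\,|B|=d+1}(-1)^{n-d-1}\prod_{j\notin B}F(z_j)$. Since $\frac{F(z)-F(z_i)}{z-z_i}$ is monic of degree $r$ in $z$, extracting $[z^r][F(z)^d]$ of the $i$-th summand picks up exactly this constant-in-$z$ quantity. Summing over $i$ and reversing the order of summation (writing the outer sum over $B$ of size $d+1$ and the inner sum over $i\in B$) gives
\[
[z^r][F(z)^d]\prod_{i=1}^n \frac{F(z)-F(z_i)}{z-z_i} \;=\; \sum_{|B|=d+1}\Bigl(\prod_{j\notin B}F(z_j)\Bigr)\sum_{i\in B}\,(-1)^{n-d-1}\prod_{j\neq i}\frac{1}{z_i-z_j}.
\]

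By Theorem \ref{opcmd}, the left-hand side, specialized to $z_i=H_i$, equals $\opc{M_{d+1}}$, which yields the claimed formula. Polynomiality of the displayed sum in $z_1,\ldots,z_n$ is then automatic: the left-hand side above is manifestly polynomial in the $z_i$ (each $\frac{F(z)-F(z_i)}{z-z_i}$ is a polynomial in $z$ and $z_i$), and the identity just established shows the right-hand side equals this polynomial as rational functions, hence as polynomials after clearing denominators. There is no serious obstacle here; the only step requiring any care is the sign bookkeeping in the partial-fraction expansion, and even that reduces to counting which of the $n-1$ factors contribute $-F(z_j)$.
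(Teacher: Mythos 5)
Your proof is correct and is exactly the paper's intended argument: the paper's entire proof of this corollary is the instruction to apply partial fraction decomposition to the denominator of the expression in \Cref{opcmd}, which is precisely the Lagrange-interpolation identity you derive, followed by the same extraction of the $F(z)^d$-coefficient using that each $\frac{F(z)-F(z_i)}{z-z_i}$ is monic of degree $r$ in $z$. One remark in your favor: your bookkeeping correctly places the sign $(-1)^{n-d-1}=(-1)^{n-d+1}$ once, outside the product $\prod_{j\ne i}\frac{1}{z_i-z_j}$, which is the intended reading of the corollary's displayed formula (taken literally, with the sign inside the product over the $n-1$ factors $j\ne i$, the stated expression would acquire a spurious factor of $(-1)^{(n-d+1)(n-2)}$, e.g.\ it fails for $n=3$, $d=1$).
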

\begin{proof}
Apply partial fraction decomposition to the denominator of the expression in \Cref{opcmd}. This will also follow from the general expression in \Cref{finkformulas}. 
\end{proof}

\section{What is known non-equivariantly}\label{noneqsection}
In this section we recall from the literature the results on non-equivariant Chow classes associated to generalized matrix orbit maps that we will be extending equivariantly.

Non-equivariantly, there is a good understanding of generalized matrix orbits and more. It has been shown in \cite{Binglin} that we can express the non-equivariant Chow class of the image of $\mu:\mb{P}(W) \dashrightarrow \prod \mb{P}(W_i)$ induced by linear maps $\phi_i:W \to W_i$ in terms of the data of the \emph{polymatroid} of the subspaces $U_i=\phi_i^*(W_i^\vee) \subseteq W^\vee$, i.e. the data of $\dim(\sum_{i \in A} U_i)$ for all subsets $A \subseteq [n]$. %What's more, by restricting to a subspace, we understand how to pushforward any Chow class along $\mu$. 
To be more precise, consider a resolution of $\mu$ given by the diagram
\begin{center}
\begin{tikzcd}
B \arrow[d,"\pi"] \arrow[dr, "\tilde{\mu}"] & \\
\mb{P}(W) \arrow[r, dashed, "\mu"] & \prod \mb{P}(W_i).
\end{tikzcd}
\end{center}
Then analogously to generalized matrix orbits as in \Cref{Setup}, for a non-equivariant class $\alpha\in A^\bullet(\mb{P}(W))$, we define $\mu_*(\alpha):=\wt{\mu}_*\pi^*\alpha$, which is independent of $B$.

\begin{Thm}[{\cite[Theorem 1.1]{Binglin}}]\label{section5binglin}
Let $\phi_i:W \to W_i$ be nonzero linear maps, and let $U_i = \phi^*(W_i^\vee)\subseteq W_i^\vee $. Consider the rational map $$\mu:\mb{P}(W) \dashrightarrow \prod \mb{P}(W_i).$$ Then we have $$\mu_*(H^j)=\sum \prod_{i=1}^n H_i^{\dim(W_i)-1-e_i}$$ where the sum is over all tuples $(e_1,\ldots,e_n) \in \mb{Z}^n$ such that
\begin{align*}
&\sum_{i \in A}e_i < \dim \sum_{i \in A} U_i\qquad\text{for all nonempty $A\subseteq [n]$ and}\\
&\sum_{i=1}^n e_i=\dim(W)-1-j.
\end{align*}
\end{Thm}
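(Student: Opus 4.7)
The plan is to establish the formula by pairing $\mu_*(H^j)$ against complementary monomials $\prod_i H_i^{e_i}$ (with $\sum_i e_i = \dim(W) - 1 - j$) and recognizing the result as a polymatroidal intersection count on $\mb{P}(W)$.

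First, fix a resolution $\pi : B \to \mb{P}(W)$ so that $\wt{\mu} = \mu \circ \pi : B \to \prod_i \mb{P}(W_i)$ is a morphism. Expanding $\mu_*(H^j)$ in the monomial basis of $A^\bullet(\prod_i \mb{P}(W_i))$ and using Poincar\'e duality together with the projection formula, the coefficient of $\prod_i H_i^{\dim(W_i)-1-e_i}$ in $\mu_*(H^j) = \wt{\mu}_* \pi^* H^j$ equals
$$\int_B \pi^* H^j \cdot \prod_i \wt{\mu}^* H_i^{e_i},$$
which vanishes unless $\sum_i e_i = \dim(W) - 1 - j$. I would then interpret this number via generic hyperplane sections: a hyperplane in $\mb{P}(W_i)$ is the zero locus of a nonzero linear form on $W_i$, which pulls back through $\phi_i$ to a linear form lying in $U_i \subseteq W^\vee$. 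Thus the integral equals the number of common zeros (counted with multiplicity) in $\mb{P}(W)$ of $j$ generic forms in $W^\vee$ together with $e_i$ generic forms chosen from $U_i$ for each $i$.

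The heart of the proof is then a polymatroidal genericity lemma: for generic choices, the $j + \sum_i e_i = \dim(W)$ forms cut out a single reduced point of $\mb{P}(W)$ if and only if
$$\sum_{i \in A} e_i < \dim \sum_{i \in A} U_i \quad \text{for every nonempty } A \subseteq [n],$$
and the intersection is empty (or positive-dimensional, thus contributing zero) otherwise. The ``only if'' direction is the easier one: if $\sum_{i \in A} e_i = \dim \sum_{i \in A} U_i$ for some nonempty $A$, then generic forms chosen from $U_i$ for $i \in A$ already span all of $\sum_{i \in A} U_i$, so their common zero locus is the \emph{fixed} subspace annihilating $\sum_{i \in A} U_i$; a generic codimension-$j$ plane together with the remaining forms then fails to meet this fixed locus transversely in isolated points. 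The ``if'' direction says that under the strict polymatroidal inequalities, generic selections of forms are linearly independent and their common zero locus is a generic linear subspace of the expected codimension, meeting a generic codimension-$j$ plane in exactly one reduced point.

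The main obstacle is the polymatroidal lemma itself, which is essentially Edmonds' theorem for the polymatroid of subspaces $U_1, \ldots, U_n \subseteq W^\vee$ (equivalently, Rado's generalization of Hall's theorem): the maximum dimension spanned by generic subspaces $V_i \subseteq U_i$ with $\dim V_i = e_i$ equals $\min_{A \subseteq [n]} \bigl(\sum_{i \notin A} e_i + \dim \sum_{i \in A} U_i\bigr)$, and the minimum is achieved at $A = \varnothing$ (giving $\sum_i e_i$) precisely when the stated strict inequalities hold. Combined with transversality of generic forms, this shows that each admissible tuple $(e_1, \ldots, e_n)$ contributes exactly $1$ to the coefficient; summing yields the theorem.
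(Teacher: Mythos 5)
First, a remark on context: the paper does not prove this statement at all --- it is quoted as \cite[Theorem 1.1]{Binglin} and used as a black box --- so your proposal can only be judged on its own terms. Your overall strategy (pair $\mu_*(H^j)$ against complementary monomials, identify each coefficient with the intersection number $\int_B \pi^*H^j\cdot\prod_i\wt{\mu}_i^*H_i^{e_i}$ on a resolution $B$, and decide when that number is $1$ or $0$ via the Rado/Edmonds rank formula for the subspaces $U_i$) is sound, and the ``if'' direction essentially works: $\dim(W)-1$ linearly independent linear forms cut out a single reduced point, and the same rank formula shows this point avoids every $\mb{P}(\ker\phi_i)$, hence lifts to $B$ and contributes exactly $1$.

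The genuine gap is in the vanishing direction, and it originates in the sentence ``the integral equals the number of common zeros in $\mb{P}(W)$ of the chosen forms.'' That identification is valid only when the common zero locus avoids the indeterminacy locus $\bigcup_i\mb{P}(\ker\phi_i)$ of $\mu$: a point of $\mb{P}(\ker\phi_i)$ is a zero of \emph{every} form in $U_i$, yet it need not lift to $\wt{\mu}_i^{-1}(h)$ for a generic hyperplane $h\subset\mb{P}(W_i)$. This is exactly the situation when some inequality fails: if $\sum_{i\in A}e_i\ge\dim\sum_{i\in A}U_i$, the forms drawn from $\{U_i\}_{i\in A}$ cut out precisely the fixed subspace $\mb{P}(\bigcap_{i\in A}\ker\phi_i)$, which sits inside the indeterminacy locus, and the remaining $\dim(W)-1-\sum_{i\in A}e_i$ forms cut this down to a nonempty locus of dimension $\sum_{i\in A}e_i-\dim\sum_{i\in A}U_i\ge 0$. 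In the equality case this is a single isolated point, so ``fails to meet this fixed locus transversely in isolated points'' is simply false; and in either case, neither non-transversality nor positive-dimensionality of one particular choice of representatives implies that the intersection \emph{number} is zero --- you cannot read off an intersection number from a non-generic, improper intersection downstairs. The vanishing has to be argued upstairs: for a violating set $A$, the map $(\wt{\mu}_i)_{i\in A}:B\to\prod_{i\in A}\mb{P}(W_i)$ factors through $\mb{P}(W/\bigcap_{i\in A}\ker\phi_i)$, so its image has dimension $\dim\sum_{i\in A}U_i-1<\sum_{i\in A}e_i$; generic hyperplanes $h_{i,k}$ therefore already satisfy $\bigcap_{i\in A,k}\wt{\mu}_i^{-1}(h_{i,k})=\varnothing$, i.e.\ the class $\prod_{i\in A}\wt{\mu}_i^*H_i^{e_i}$ itself vanishes. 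With that replacement (and the explicit check, which you omit, that in the non-vanishing case the unique intersection point lies off the indeterminacy locus), your argument closes up.
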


Recall that given a $d \times n$ matrix $M$ with no zero columns, we defined the map $\mu_M:\mb{P}^{(r+1) \times d-1} \dashrightarrow (\mb{P}^r)^n$ by matrix multiplication $A \mapsto AM$ of $(r+1)\times d$ matrices with $M$. Letting $W=K^d$ and $V=K^n$, the columns of $M$ are elements of $W$, and hence induce evaluation functionals $W^\vee \to K$. By tensoring with $V$, we get induced maps $\phi_i:V \otimes W^\vee \to V$. The generalized matrix orbit map $\mu_M$ is then induced as above by the linear maps $\phi_i$.

%\begin{Rem}
%We note that we could do a similar tensoring trick for polymatroids, but this situation is outside the scope of this paper.
%\end{Rem}

From this description of $\phi_M$, the following follows immediately from \Cref{section5binglin}. Equivariant generalizations are proved in \Cref{formulasection,graphclosuresection}.

\begin{Thm}
\label{NE} Let $M$ be a $d\times n$ matrix. Then the non-equivariant class $\opc{M}$ is given by $$\opc{M}=\sum \prod_{i=1}^n H_i^{r-e_i}$$
where the sum is over all tuples $(e_1, \ldots, e_n)\in \mb{Z}^n$ such that
\begin{align*}
&\sum_{i \in A} e_i < (r+1)\rk(A)\qquad\text{for all nonempty $A\subset [n]$ and} \\%\label{constraint1}\\
&\sum_{i=1}^n e_i = (r+1)d-1. \\%\label{constraint2}\\
\intertext{Equivalently, fix $0<\epsilon_1,\ldots,\epsilon_n$ with $\sum_{i=1}^n \epsilon_i=1$. Then the sum is over all $(e_1, \ldots, e_n)\in \mb{Z}^n$ such that}
&(e_1+\epsilon_1, \ldots, e_n + \epsilon_n) \in (r+1)P_M. %\label{constraint3}
\end{align*}
\end{Thm}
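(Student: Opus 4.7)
The plan is to deduce this directly from Theorem \ref{section5binglin} applied to the linear maps defining $\mu_M$. Write $V = K^{r+1}$ and $W = K^d$, so that $\mb{P}^{(r+1)\times d - 1} = \mb{P}(V \otimes W^\vee)$ (viewing an $(r+1)\times d$ matrix as a linear map $W \to V$), and each target factor $\mb{P}^r$ is $\mb{P}(V)$. Each column $x_i \in W$ gives the evaluation functional $W^\vee \to K$, and tensoring with the identity on $V$ yields maps $\phi_i : V \otimes W^\vee \to V$ given by $A \mapsto A x_i$. By construction $\mu_M$ is the rational map induced by the $\phi_i$ in the sense of \Cref{section5binglin}.

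The next step is to identify the subspaces $U_i := \phi_i^*(V^\vee) \subseteq (V \otimes W^\vee)^\vee = V^\vee \otimes W$. One checks that for $g \in V^\vee$ we have $\phi_i^*(g)(v \otimes f) = f(x_i) g(v)$, so $\phi_i^*(g) = g \otimes x_i$ under the canonical identification $W^{\vee\vee} = W$. Hence $U_i = V^\vee \otimes \langle x_i\rangle$ and
\[
\dim \sum_{i \in A} U_i = (r+1)\cdot \dim \Span(x_i : i \in A) = (r+1)\rk_M(A).
\]
Substituting this, along with $\dim W_i - 1 = r$ and $\dim(V \otimes W^\vee) - 1 = (r+1)d - 1$, into \Cref{section5binglin} with $j = 0$ yields
\[
\opc{M} = \sum \prod_{i=1}^n H_i^{r-e_i},
\]
where the sum is over integer tuples $(e_1,\ldots,e_n)$ with $\sum_{i \in A} e_i < (r+1)\rk_M(A)$ for every nonempty $A \subseteq [n]$ and $\sum_i e_i = (r+1)d - 1$. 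This is exactly the first description in the theorem.

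For the equivalence with the rank polytope description, fix $\epsilon_1,\ldots,\epsilon_n > 0$ with $\sum_i \epsilon_i = 1$ and set $y_i = e_i + \epsilon_i$. The condition $\sum_i y_i = (r+1)d$ matches $\sum_i e_i = (r+1)d - 1$. For any nonempty $A$, integrality of the $e_i$ together with $0 < \sum_{i \in A} \epsilon_i \le 1$ shows that $\sum_{i \in A} e_i < (r+1)\rk_M(A)$ is equivalent to $\sum_{i \in A} y_i \le (r+1)\rk_M(A)$. The remaining inequalities $0 \le y_i \le r+1$ defining $(r+1)\Delta_{d,n}$ are automatic: $y_i \le r+1$ follows from applying the rank inequality to $A = \{i\}$ (columns are nonzero, so $\rk_M(\{i\}) = 1$), while $y_i \ge 0$ follows from applying it to $A = [n]\setminus\{i\}$ combined with $\sum_j y_j = (r+1)d$. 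Thus the integer tuples satisfying Binglin's constraints are precisely those for which $(e_1+\epsilon_1,\ldots,e_n+\epsilon_n) \in (r+1)P_M$. The only delicate point is the careful handling of the half-open rank inequalities via the $\epsilon$-perturbation; the rest of the argument is bookkeeping.
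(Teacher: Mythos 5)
Your proposal is correct and follows exactly the paper's route: the paper likewise deduces \Cref{NE} by realizing $\mu_M$ as the map induced by the linear maps $\phi_i : V \otimes W^\vee \to V$, $A \mapsto Ax_i$, and invoking \Cref{section5binglin}, the only computation being $\dim \sum_{i \in A} U_i = (r+1)\rk_M(A)$. You simply spell out the identification $U_i = V^\vee \otimes \langle x_i\rangle$ and the $\epsilon$-perturbation bookkeeping that the paper leaves implicit.
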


\begin{Thm}\label{NEgraphclosure}
Let $M$ be a $d \times n$ matrix. Then the non-equivariant class of the graph closure $[\ol{\Gamma_M}]\in A^\bullet(\mb{P}^{(r+1)\times d-1}\times (\mb{P}^r)^n)$ of $\mu_M$ is given by $$[\ol{\Gamma_M}]=\sum H^{\sum e_i} \prod_{i=1}^n H_i^{r-e_i}$$
where the sum is over all tuples $(e_1, \ldots, e_n)\in \mb{Z}^n$ such that
\begin{align*}
&\sum_{i \in A} e_i < (r+1)\rk(A)\qquad\text{for all nonempty $A\subset [n]$.}\\
\intertext{Equivalently, fix $0<\epsilon_1,\ldots,\epsilon_n$ with $\sum_{i=1}^n \epsilon_i\leq 1$. Then the sum is over all $(e_1, \ldots, e_n)\in \mb{Z}^n$ such that}
&(e_1+\epsilon_1, \ldots, e_n + \epsilon_n) \in (r+1)I_M.
\end{align*}
\end{Thm}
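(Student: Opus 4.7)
The plan is to derive \Cref{NEgraphclosure} directly from \Cref{section5binglin}, which already computes $\mu_*(H^j)$ for every $j$ (not only $j=0$, which is the case used for \Cref{NE}), and then to package these pushforwards into the graph closure class via a standard K\"unneth decomposition.

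First I would apply \Cref{section5binglin} to $\mu_M$, taking $W=K^{(r+1)\times d}$, $W_i = K^{r+1}$, and $\phi_i(A)=Ax_i$ where $x_1,\ldots,x_n$ are the columns of $M$. A short coordinate computation identifies $U_i=\phi_i^*(W_i^\vee) \subseteq W^\vee$ as the tensor product of $(K^{r+1})^\vee$ with the line in $(K^d)^\vee$ spanned by the functional $y \mapsto y^{T}x_i$, so that $\dim(\sum_{i\in A}U_i)=(r+1)\rk_M(A)$. \Cref{section5binglin} then yields
$$(\mu_M)_*(H^j)=\sum \prod_{i=1}^n H_i^{r-e_i},$$
summed over integer tuples $(e_i)$ with $\sum_{i\in A}e_i < (r+1)\rk_M(A)$ for all nonempty $A\subseteq\{1,\ldots,n\}$ and $\sum_i e_i = (r+1)d-1-j$.

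Next I would use the K\"unneth decomposition $A^\bullet(\mb{P}^{(r+1)\times d-1}\times (\mb{P}^r)^n)\cong A^\bullet(\mb{P}^{(r+1)\times d-1})\otimes A^\bullet((\mb{P}^r)^n)$ to expand $[\ol{\Gamma_M}]=\sum_j H^j\beta_j$, with $\beta_j\in A^{rn-j}((\mb{P}^r)^n)$. Multiplying by $H^{(r+1)d-1-j}$ and pushing forward along the projection to $(\mb{P}^r)^n$ isolates $\beta_j$. Unwinding the definitions in \Cref{Setup} and using that the two projections from $\ol{\Gamma_M}$ realize it as a resolution of $\mu_M$ (with the first projection birational, so the answer is independent of the specific resolution $B$ used to define $(\mu_M)_*$), I obtain $\beta_j=(\mu_M)_*(H^{(r+1)d-1-j})$.

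Combining the two steps, the constraint $\sum_i e_i = j$ from Step 1 matches the exponent of $H^j$ in Step 2, so summing over $j$ eliminates the constraint and gives the first formula in \Cref{NEgraphclosure}. The equivalent formulation via lattice points of $(r+1)I_M$ follows from the defining inequalities of $I_M$ together with the standard $\epsilon$-perturbation: for $0<\epsilon_i$ with $\sum\epsilon_i\leq 1$ and integer $e_i$, the condition $(e_1+\epsilon_1,\ldots,e_n+\epsilon_n) \in (r+1)I_M$ is equivalent to $e_i \geq 0$ together with the strict rank inequalities (with $e_i\geq 0$ being automatic, as $H_i^{r-e_i}=0$ in $A^\bullet(\mb{P}^r)$ otherwise). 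The only delicate point is the identification of the $U_i$ in Step 1, which just requires writing matrix multiplication in coordinates; beyond that, no real obstacle arises.
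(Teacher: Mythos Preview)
Your proposal is correct and follows exactly the route the paper indicates: both \Cref{NE} and \Cref{NEgraphclosure} are stated as immediate consequences of \Cref{section5binglin}, and you have spelled out precisely that immediate derivation---compute every $(\mu_M)_*(H^j)$ from \Cref{section5binglin} via the identification $\dim(\sum_{i\in A}U_i)=(r+1)\rk_M(A)$, then read off the K\"unneth components of $[\ol{\Gamma_M}]$ using that $\ol{\Gamma_M}$ itself serves as a resolution of $\mu_M$. The handling of the $I_M$ reformulation (including the observation that terms with $e_i<0$ vanish because $H_i^{r-e_i}=0$) is also fine.
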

\begin{comment}
!You need to define the predegree polynomial
\begin{Cor}
Aluffi's predegree polynomial is given by $$\mu_M^*\exp(d_1H_1+\ldots+d_nH_n)=\sum\frac{H^{\sum e_i}}{\prod e_i!}$$
where the sum is taken over all tuples $(e_1, \ldots, e_n)\in \mb{Z}^n$ such that $\sum_{i \in A} e_i < (r+1)\rk(A)$ for all nonempty $A\subset [n]$.
\end{Cor}
\end{comment}
We note the following immediate corollary of \cref{NE}.
\begin{Cor}
The non-equivariant class $\opc{M}$ depends only on the matroid of $M$.
\end{Cor}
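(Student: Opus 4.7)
The plan is to read the corollary directly off of \Cref{NE}. According to that theorem, the non-equivariant class $\opc{M}$ is given by
\[
\opc{M}=\sum \prod_{i=1}^n H_i^{r-e_i},
\]
where the sum ranges over all tuples $(e_1,\ldots,e_n)\in\mb{Z}^n$ satisfying $\sum_{i\in A} e_i < (r+1)\rk_M(A)$ for every nonempty $A\subset\{1,\ldots,n\}$, together with the global degree constraint $\sum_{i=1}^n e_i=(r+1)d-1$. Every datum appearing in this description is a function of the rank function $\rk_M$ alone (or equivalently, of the rank polytope $P_M$ scaled by $r+1$).

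Consequently, if $M$ and $M'$ are two $d\times n$ matrices with the same matroid, then $\rk_M=\rk_{M'}$, so the index sets over which the two sums are taken coincide, hence $\opc{M}=\opc{M'}$. There is no further calculation to carry out; the only thing to verify is that $d$ itself is recovered from the matroid, which is immediate since $d=\rk_M(\{1,\ldots,n\})$ whenever $\rk(M)=d$ (and if $\rk(M)<d$ then $\op{M}=0$ by convention, which again depends only on the matroid).

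There is essentially no obstacle here: the entire content is the observation that the right-hand side of the formula in \Cref{NE} is manifestly matroid-invariant. The real work was done upstream in \cite{Binglin} to establish the formula; this corollary is a clean consequence.
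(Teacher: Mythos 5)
Your proof is correct and is exactly the paper's intended argument: the paper states this as an immediate corollary of \Cref{NE}, since the index set in that formula is determined by the rank function $\rk_M$ alone. Your handling of the degenerate case $\rk(M)<d$ is also consistent with the paper's conventions.
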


\section{One-parameter degenerations of generalized matrix orbits}\label{degenerations}
In this section, describe how the cycles $\op{M}$ degenerate. As a consequence, we obtain relations between the equivariant Chow classes $\opc{M}$ which will be necessary for their later computation.
%In this section and the next, we aim to prove \Cref{realval}, which states that the association $M \to \opc{M}$ of $d\times n$ matrices to equivariant Chow classes is ``additive''.

Degenerations of toric varieties are governed by polytopal geometry, and in particular ``regular subdivisions'' of their moment polytopes \cite{GelfandZelevinsky, GrossSeibert}. In particular, Kapranov \cite{Kapranov} studied the Chow quotient $\Gr(r+1,n)//(K^\times)^n$  and limits of torus orbits in the Grassmannian using polytope subdivisions via the Pl\"ucker embedding.  Using his isomorphism of Chow quotients $\Gr(r+1,n)//(K^\times)^n \cong (\mb{P}^r)^n//GL_{r+1}$, general orbits under degeneration split up into certain ``sibling orbits'' in the special fiber. Speyer investigated this combinatorially \cite{SpeThesis, TropicalLinearSpaces}, developing a notion of tropical linear space to study these degenerations.

We adopt the following conventions.
\begin{enumerate}
\item
Let $D$ be a smooth, affine, finite-type $K$-scheme of dimension 1 (we will only use the case that $D\subset \mb{A}^1$ is an open set).
\item
Let $0\in D$ be a closed point and let $t$ be a uniformizing parameter around 0. By replacing $D$ with a suitable open subset, we may assume that $t$ is regular and vanishes only at $0$.
\item Let $M_D: D\to \mb{A}^{d\times n}$ be a morphism from $D$ to the space of matrices that maps the generic point to a rank $d$ matrix with no identically zero column.
\item
Let $P$ be the rank polytope corresponding to the image of the generic point of $D$ under $M_D$. Equivalently, it is the rank polytope of $M_D$ evaluated at a general point.
\end{enumerate}

The generalized matrix orbits $\op{M_D(p)} \subset (\mb{P}^r)^n$ form a flat family over some open subset $U \subset D\setminus \{0\}$. We are interested in extending this family to a flat family of subvarieties of $(\mb{P}^r)^n$ over $U \cup \{0\}$. In other words, we are interested in the flat limit of the generalized matrix orbits $\op{M_D(p)}$ as $p \to 0$.

We will show that the components of this flat limit have multiplicity $1$ and are of the form $\op{M_1}, \ldots, \op{M_k}$ for certain $d\times n$ matrices $M_1, \ldots, M_k$. We can take $M_1 =M_D(0)$ if $\op{M_D(0)}$ is nonzero. \Cref{degenerationfibers} below, which is the main result of this section, describes the sibling orbits $\op{M_1}, \ldots, \op{M_k}$. It states that the matrices $M_i$ arise from a particular ``regular subdivision'' of $P$, and they are all what we call ``orb-limits'' of $M_D$. 

\begin{Def}
Given a convex polytope $P$, a \emph{subdivision} of $P$ is a decomposition $P=P_1 \cup \ldots \cup P_k$ into convex polytopes of the same dimension where each pairwise intersection $P_i \cap P_j$ is either empty or a common proper face of $P_i$ and $P_j$. The faces of the $P_i$ are called the \emph{faces} of the subdivision, and the polytopes $P_i$ are called the \emph{facets} of the subdivision.
\end{Def}

%Our degenerations yield certain relations of the form $\opc{M}=\sum_{i=1}^k \opc{M_i}$ where the $P_{M_i}$ form a subdivision of $P_M$ (cf. Remark 2). The subdivisions $P_M=\bigcup_{i=1}^k P_{M_i}$ are of a very particular kind: they are ``regular subdivisions''.

\begin{Def}
\label{regularsubdef}
Let $P$ be a (possibly unbounded) convex polytope, and let $h:P \to \mb{R}$ be a piecewise affine convex function. Denote by $P^h$ for the lower convex hull of $\{(p,h(p))\mid p \in P\}\subset P \times \mb{R}$. The facets of $P^h$ each have dimension $\dim(P)$, and the projections of these facets to $P$ form a subdivision of $P$, called the \emph{regular subdivision of $P$ with lifting function $h$}.

If $P$ is bounded, then given a function $h:\Vt(P)\to \mb{R}$, we define $P^h$ as the lower convex hull of $\{(v,h(v)) \mid v \in \Vt(P)\} \subset P \times \mb{R}$ and as above use the facets of $P^h$ to define the regular subdivison of $P$ with lifting function $h$. Extending $h$ to a piecewise affine convex function on all of $P$ by defining $h(p) \in \mb{R}$ as the unique value such that $(p,h(p)) \in P^h$ induces the same $P^h$ and regular subdivision.

%Let $P$ be a bounded convex polytope and let $h : \Vt(P) \to \mb{R}$ be a function. Then denoting $P^h$ to be the lower convex hull of $\{(v,h(v)) \mid v \in \Vt(P)\} \subset P \times \mb{R}$, the facets of $P^h$ each have dimension $\dim(P)$, and the projections of these facets to $P$ form a subdivision of $P$, called the \emph{regular subdivision of $P$ with lifting function $h$}. We extend $h$ to a piecewise affine convex linear function on all of $P$ by defining $h(p) \in \mb{R}$ as the unique value such that $(p,h(p)) \in P^h$.

%For unbounded polytopes as we will be considering later, it is necessary to define regular subdivision by taking $h$ to be defined on all of $P$ as a piecewise affine convex linear function, as the values at the vertices will not determine the function.
\end{Def}
\begin{Rem}
Adding an affine function to $h$ or multiplying it by a positive scalar yields the same decomposition of $P$.
\end{Rem}
\begin{Def}
Define the height function $h_{M_D} : \Vt(P) \to \mb{Z}$ by \[h_{M_D}(v) = \operatorname{ord}_t \det(M_D^v)\] where for a matrix $M$ and a subset of columns $v$, we define $M^v$ to be the restriction of $M$ to the $v$ columns.
\end{Def}
\begin{Def}
\label{orblimitdef}
We say that $M_D$ \emph{simply orb-limits} to a matrix $M$ if there exist maps $\gamma_T: D\setminus\{0\}\to T$ and $\gamma_{GL}: D\setminus\{0\}\to GL_{d}$ such that $M=\lim_{p \to 0}\gamma_{GL}(p) M_D(p) \gamma_T(p) $. We say that $M_D$ \emph{orb-limits} to a matrix $M$ if $M_D$ simply orb-limits to $M$ after pulling back to a finite cover by a nonsingular curve (possibly everywhere ramified) of an open neighborhood of $0\in D$. 
\end{Def}

We note that if $M_D$ orb-limits or simply orb-limits to a matrix $M$, then it also does so after pulling back to a finite cover by a nonsingular curve (possibly everywhere ramified) of an open neighborhood of $0\in D$.

Recall that $\op{M}$ was defined in such a way that it is either zero or of dimension $(r+1)d-1$.

\begin{Thm}\label{degenerationfibers}
Let $D$ be a smooth affine $1$-dimensional $K$-scheme of finite type, let $0 \in D$ be a closed point, and let $t$ be a uniformizing parameter for $D$ around $0$. Let $M_D : D \to \mb{A}^{d \times n}$ be a morphism that maps the generic point of $D$ to a rank $d$ matrix with no identically zero column.

Consider the map $\psi: \mb{P}^{(r+1)\times d-1}\times D \dashrightarrow (\mb{P}^r)^n\times D$ given by $(\widetilde A,p) \mapsto (\widetilde{AM_D(p)},p)$. Let $U\subset \mb{P}^{(r+1)\times d-1}\times (D \setminus \{0\})$ be a dense open subset of the domain of definition of $\psi$, and let $\wt{\mc{O}}$ be the family $\overline{\psi(U)} \subset (\mb{P}^{r})^n\times D$. Then
\begin{enumerate}[(a)]
\item The family $\wt{\mc{O}}$ is locally flat over $0 \in D$.
\item The fiber of $\wt{\mc{O}}$ over a general point $p \in D$ set-theoretically contains $\op{M_D(p)}$, and it is equal (as an algebraic cycle) to $\op{M_D(p)}$ if it is $((r+1)d-1)$-dimensional.
\item Suppose that $M_D$ orb-limits to matrices $M_1, \ldots, M_k$ whose rank polytopes are the facets of the regular subdivision of $P$ induced by the lifting function $h_{M_D}$. (The existence of the $M_i$ is guaranteed by \cref{lpath}.) Then the fiber of $\wt{\mc{O}}$ over $0 \in D$ is equal to
\[\bigcup_{i=1}^{k}\op{M_i}\]
as an $((r+1)d-1)$-dimensional algebraic cycle.
\end{enumerate}
\end{Thm}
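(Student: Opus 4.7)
Parts (a) and (b) are essentially formal. For (a), $\wt{\mc{O}}=\overline{\psi(U)}$ is the closure in $(\mb{P}^r)^n\times D$ of the image of an irreducible open set, hence is itself irreducible and dominates the smooth curve $D$; any such subvariety is automatically flat over $D$, in particular locally flat over $0$. For (b), the fiber over a general $p\in D$ is the reduced closure $\overline{\im(\mu_{M_D(p)})}$, which agrees as a cycle with $\op{M_D(p)}$ whenever the image has the expected dimension $(r+1)d-1$, by the generic injectivity of $\mu_M$ recorded in \Cref{Setup}. The substance lies in part (c), which I would attack in two steps.

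\emph{Step 1: each $\op{M_i}$ is contained in $\wt{\mc{O}}|_0$.} After pulling back along a finite ramified cover of a neighborhood of $0\in D$ (harmless for flatness and for the cycle class of the special fiber), we may assume each orb-limit is simple, witnessed by regular rational maps $\gamma_{GL,i}:D\dashrightarrow GL_d$ and $\gamma_{T,i}:D\dashrightarrow(K^\times)^n$ with $\gamma_{GL,i}(p)M_D(p)\gamma_{T,i}(p)\to M_i$. For each $\alpha\in K^{(r+1)\times d}$ the rational map $p\mapsto\wt{\alpha\gamma_{GL,i}(p)}$ extends to a regular morphism $\sigma_\alpha:D\to\mb{P}^{(r+1)\times d-1}$ by the valuative criterion. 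Since $(K^\times)^n$ acts trivially on $(\mb{P}^r)^n$ coordinatewise, for $p\ne 0$ we have
\[
\psi(\sigma_\alpha(p),p)=\wt{\alpha\gamma_{GL,i}(p)M_D(p)}=\wt{\alpha\gamma_{GL,i}(p)M_D(p)\gamma_{T,i}(p)},
\]
whose inner matrix tends to $\alpha M_i$; after absorbing the scalar introduced by normalizing $\sigma_\alpha$ at $p=0$, the projective limit is $\wt{\alpha M_i}\in\op{M_i}$. Letting $\alpha$ vary, these limits cover a dense open of $\op{M_i}$, so $\op{M_i}\subseteq\wt{\mc{O}}|_0$.

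\emph{Step 2: matching non-equivariant Chow classes.} By \Cref{NE}, for any fixed generic $\epsilon\in(0,1)^n$ with $\sum\epsilon_j=1$, the class $[\op M]$ is the sum of monomials $\prod H_j^{r-e_j}$ over integer vectors $(e_j)$ with $(e_j+\epsilon_j)\in(r+1)P_M$. Since the $P_{M_i}$ tile $P$, choosing $\epsilon$ so that no translated lattice point meets the boundary of any facet partitions the relevant lattice points and gives
\[
[\op{M_D(p)}]=\sum_{i=1}^{k}[\op{M_i}]\qquad\text{in }A^\bullet((\mb{P}^r)^n),
\]
with the $[\op{M_i}]$ having pairwise disjoint monomial supports. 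By the flatness of (a) this also equals $[\wt{\mc{O}}|_0]$. Each $\op{M_i}$ is irreducible of dimension $(r+1)d-1$ (its full-dimensional rank polytope forces its class to be nonzero by \Cref{NE}) and by Step 1 appears in $\wt{\mc{O}}|_0$ with multiplicity at least one; the monomial linear independence of the $[\op{M_i}]$, together with effectivity of $\wt{\mc{O}}|_0$, then forces every multiplicity to be exactly one and rules out any further components, giving the stated cycle equality.

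The main obstacle is the bookkeeping in Step 1 when $\gamma_{GL,i}$ or $\gamma_{T,i}$ has poles at $p=0$: one must verify that the scalar factors introduced in normalizing the projective extension of $\sigma_\alpha$ get absorbed correctly, so that $\wt{\alpha\gamma_{GL,i}(p)M_D(p)\gamma_{T,i}(p)}$ extends regularly across $p=0$ to a generic point of $\op{M_i}$. Conceptually, $\psi$ depends only on $GL_d$-orbits on the source and on $(K^\times)^n$-orbits on each factor of the target, so the orb-limit tuple $(\gamma_{GL,i},\gamma_{T,i})$ produces a well-defined limit in $(\mb{P}^r)^n$ even when its factors individually diverge; the technical task is to match these intrinsic limits with concrete points of $\op{M_i}$ via explicit projective representatives, and to verify that generic choice of $\epsilon$ simultaneously separates the monomial supports of all the $[\op{M_i}]$.
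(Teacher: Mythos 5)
Your proposal is correct and follows essentially the same route as the paper: flatness of an integral family over a smooth curve for (a), generic injectivity of $\psi$ plus compatibility of scheme-theoretic images with general fibers for (b), and for (c) the containment of each $\op{M_i}$ via the orb-limit reparametrization followed by matching non-equivariant classes (lattice points of the subdivided polytope for a generic shift $\epsilon$) and invoking effectivity. The only substantive difference is cosmetic: the paper cites its non-equivariant additivity theorem and the separate observation that the nonzero $\op{M_i}$ are distinct, whereas you inline both via the disjoint-monomial-support argument; your part (b) also silently uses the fact (the paper's \Cref{simage}) that the general fiber of the closed image agrees with the closure of the general fiber.
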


Because the fibers of a flat family of cycles in $(\mb{P}^r)^n$ have the same equivariant Chow class, \Cref{degenerationfibers} has the following immediate corollary.

\begin{Cor}\label{siorbit}
Suppose that $M_D$ orb-limits to matrices $M_1, \ldots, M_k$ whose rank polytopes are the facets of the regular subdivision of $P$ induced by the lifting function $h_{M_D}$. (The existence of the $M_i$ is guaranteed by \cref{lpath}.) Let $p\in D$ be a general point. Then we have the equality of equivariant Chow classes
\begin{align*}
\opc{M_D(p)}=\sum_{i=1}^{k}\opc{M_i}.
\end{align*}
\end{Cor}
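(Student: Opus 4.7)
The plan is to deduce this directly from \Cref{degenerationfibers} via the principle that flat families of equivariant algebraic cycles have fibers of constant equivariant Chow class. First I would observe that the map $\psi : \mb{P}^{(r+1)\times d-1}\times D \dashrightarrow (\mb{P}^r)^n \times D$ appearing in that theorem is $GL_{r+1}$-equivariant, for $GL_{r+1}$ acting by the standard left action on $(r+1)\times d$ matrices, diagonally on $(\mb{P}^r)^n$, and trivially on the parameter curve $D$. The open set $U\subset \mb{P}^{(r+1)\times d-1}\times (D\setminus\{0\})$ used to define $\wt{\mc{O}}$ may therefore be taken $GL_{r+1}$-invariant (replace $U$ by $GL_{r+1}\cdot U$ if necessary), so the closure $\wt{\mc{O}}=\overline{\psi(U)}\subset (\mb{P}^r)^n\times D$ is a $GL_{r+1}$-invariant closed subscheme.

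By part (a) of \Cref{degenerationfibers}, after shrinking $D$ around $0$ if necessary we may regard $\wt{\mc{O}}\to D$ as a flat family of $GL_{r+1}$-invariant cycles in $(\mb{P}^r)^n$. Flatness guarantees that the equivariant Chow class of the fiber is locally constant on $D$, so in $A^\bullet_{GL_{r+1}}((\mb{P}^r)^n)$ one has $[\wt{\mc{O}}_p]=[\wt{\mc{O}}_0]$ for $p\in D$. By part (b), for general $p\in D$ the fiber $\wt{\mc{O}}_p$ is equal to $\op{M_D(p)}$ as an algebraic cycle (the rank hypothesis on $M_D$ at the generic point ensures $\op{M_D(p)}$ attains the expected dimension $(r+1)d-1$, so the dimension proviso of (b) is satisfied). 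By part (c), $\wt{\mc{O}}_0=\bigcup_{i=1}^{k}\op{M_i}$ as an $((r+1)d-1)$-dimensional algebraic cycle, and passing to classes converts this union into the sum $\sum_{i=1}^k\opc{M_i}$ by additivity of cycle classes on top-dimensional components. Equating the two expressions yields
\begin{equation*}
\opc{M_D(p)}=\sum_{i=1}^{k}\opc{M_i}.
\end{equation*}

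The substantive content of the argument is packaged entirely inside \Cref{degenerationfibers}; the only thing to verify here is that each step of the specialization argument respects the $GL_{r+1}$-action, which is automatic from the equivariance of $\psi$. There is no real obstacle in the corollary itself beyond unwinding these definitions.
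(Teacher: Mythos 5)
Your argument is correct and is exactly the paper's: the corollary is deduced in one line from \Cref{degenerationfibers} via the principle that the fibers of a flat family of ($GL_{r+1}$-invariant) cycles in $(\mb{P}^r)^n$ have the same equivariant Chow class, and your added verification that $\wt{\mc{O}}$ may be taken $GL_{r+1}$-invariant is a harmless elaboration of what the paper leaves implicit. The only blemish is the parenthetical claim that the rank hypothesis on $M_D$ forces $\op{M_D(p)}$ to attain the expected dimension $(r+1)d-1$ --- this is false when $M_D(p)$ is disconnected (or when $d\ge n$), but in that degenerate case $P$ has positive codimension in $\Delta_{d,n}$, so $\opc{M_D(p)}$ and all the $\opc{M_i}$ vanish by \Cref{positivecodim} and the asserted identity holds trivially.
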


The remainder of this section is dedicated to the proof of \Cref{degenerationfibers}.

\subsection{Non-equivariant additivity}
We now prove that the non-equivariant class $\opc{M}$ is additive (See \Cref{matroidpolytopedef}), which is needed in the proof of \Cref{degenerationfibers}. In the next section, we will use \Cref{degenerationfibers} to show that the equivariant class $\opc{M}$ is also additive.

\begin{Def}\label{additivedef}
Let $d, n \geq 0$ be integers. Let $T(d,n)$ to be the abelian group generated by the indicator functions $1_{P_M}$ of rank polytopes of $d \times n$ matrices, modulo functions whose support has codimension at least $1$ in $\Delta$. For a $d \times n$ matrix $M$, we define $\langle M \rangle$ to be the class of $1_{P_M}$ in $T(d,n)$.

Let $Z$ be an abelian group. Note that a map of sets $\phi : K^{d \times n} \to Z$ is additive if there is an abelian group homomorphism $\tilde\phi : T(d,n) \to Z$ such that $\tilde\phi(\langle M \rangle) = \phi(M)$ for all matrices $M \in K^{d \times n}$. Observe that if such a map exists, it is unique. 
\end{Def}
\begin{Rem}
If $\phi$ is additive, then $\phi(M)$ depends only on the matroid of $M$, and $\phi(M) = 0$ if $P_M$ has positive codimension in $\Delta$. This in particular occurs if $\rk(M)<d$.
\end{Rem}
\begin{Rem}
If $P_M = P_{M_1} \cup \ldots \cup P_{M_k}$ is a subdivision, then $\langle M\rangle = \sum_{i=1}^k \langle M_i\rangle.$
\end{Rem}

\begin{Thm} \label{realvalnoneq} For each $r$, the association $M \mapsto \opc{M}$ of $d \times n$ matrices to non-equivariant Chow classes is additive.
\end{Thm}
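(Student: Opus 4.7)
The plan is to invoke \Cref{NE} directly and exploit the fact that it exhibits $\opc{M}$ as a lattice point transform of the rescaled rank polytope $(r+1)P_M$. Specifically, the equivalent formulation in \Cref{NE} says that for any fixed tuple $\epsilon = (\epsilon_1,\ldots,\epsilon_n)$ of positive reals with $\sum_j \epsilon_j = 1$, we have
\[
\opc{M} \;=\; \sum_{e \in \mb{Z}^n} 1_{(r+1)P_M}(e+\epsilon)\,\prod_{j=1}^n H_j^{r-e_j},
\]
and the left side is independent of the choice of $\epsilon$. Thus the operation $M\mapsto \opc{M}$ factors (for any admissible $\epsilon$) through evaluation of $1_{P_M}$ at the shifted lattice $\tfrac{1}{r+1}(\mb{Z}^n+\epsilon)$, and additivity should follow by choosing $\epsilon$ that avoids whatever positive-codimensional set carries the hypothetical relation.

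Concretely, suppose $M_1,\ldots,M_k$ are $d\times n$ matrices and $c_1,\ldots,c_k$ are integers such that $\sum_i c_i 1_{P_{M_i}}$ is supported on a subset $S \subset \{\sum_j x_j = d\}$ of positive codimension. Rescaling, $\sum_i c_i 1_{(r+1)P_{M_i}}$ is supported on $(r+1)S$, a positive codimension subset of $\{\sum_j x_j = (r+1)d\}$. The set of admissible $\epsilon$ in the open $(n-1)$-simplex $\{\epsilon_j > 0, \sum_j \epsilon_j = 1\}$ for which some shifted lattice point $e+\epsilon$ ($e\in\mb{Z}^n$) lands in $(r+1)S$ is a countable union of positive codimension subsets of the simplex, since $(r+1)S$ is contained in a finite union of affine subspaces of positive codimension and for each such subspace the condition $e + \epsilon \in (\text{subspace})$ is a nontrivial affine constraint on $\epsilon$ for each fixed $e$, with only finitely many $e$ relevant by boundedness of $(r+1)P_{M_i}$. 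Picking any $\epsilon$ outside this meager set, the display above yields
\[
\sum_{i=1}^k c_i \opc{M_i} \;=\; \sum_{e\in\mb{Z}^n}\!\Bigl(\sum_{i=1}^k c_i 1_{(r+1)P_{M_i}}(e+\epsilon)\Bigr)\prod_{j=1}^n H_j^{r-e_j} \;=\; 0,
\]
since the parenthesized sum vanishes for every $e \in \mb{Z}^n$ by the choice of $\epsilon$.

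There is essentially no obstacle here: the content is already in Li's lattice-point description \Cref{NE}, and additivity is the natural compatibility of lattice point transforms with polytope subdivisions, with the shift by $\epsilon$ serving exactly to avoid the boundary ambiguities that would otherwise appear. Indeed, the role of $\epsilon$ in \Cref{NE} is precisely to enforce that the count of lattice points is insensitive to the codimension $\geq 1$ locus where several $P_{M_i}$ overlap or cancel.
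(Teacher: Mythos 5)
Your proposal is correct and is essentially identical to the paper's own proof: both deduce additivity directly from Li's lattice-point description in \Cref{NE} by choosing the shift $\epsilon$ so that no translated lattice point lands on the positive-codimension support of $\sum_i c_i 1_{(r+1)P_{M_i}}$. Your argument merely spells out in more detail why such an $\epsilon$ exists (finitely many relevant $e$, each imposing a nontrivial affine condition), which the paper leaves implicit.
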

\begin{proof}
Suppose $M_1,\ldots,M_k$ are $d \times n$ matrices with $\sum_{i=1}^k a_i\langle M_i \rangle=0$. By \cref{NE}, the coefficient of $\prod_{i=1}^{n} H_i^{r-e_i}$ in $\sum_{i=1}^k a_i \opc{M_i} $ is given by
\begin{align*}
\sum_{i=1}^k a_i 1_{(r+1) P_{M_i}} (e_1 + \epsilon_1, \ldots, e_n + \epsilon_n)
\end{align*}
for any $0<\epsilon_1,\ldots,\epsilon_n$ with $\sum \epsilon_i=1$.
Since $\sum_{i=1}^k a_i \langle M_i \rangle = 0$, the sum $\sum_{i=1}^k a_i 1_{(r+1) P_{M_i}}$ is supported on a subset of $(r+1)\Delta$ with codimension at least $1$ in $(r+1)\Delta$. Hence we can find $\epsilon_1, \ldots, \epsilon_n$ with $\sum_{i=1}^k a_i 1_{(r+1) P_{M_i}} (e_1 + \epsilon_1, \ldots, e_n + \epsilon_n) = 0$. Therefore $\sum_{i=1}^k a_i \opc{M_i} = 0$ as no monomial can appear in its expansion.
\end{proof}

\begin{Cor}\label{positivecodim}
The equivariant Chow class $\opc{M}$ is zero if $P_M$ has positive codimension in $\Delta$. Also, if $\opc{M}=0$ then $\opc{M'}=0$ for all $M'$ with the same matroid as $M$.  
\end{Cor}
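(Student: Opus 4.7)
The plan is to bootstrap from the non-equivariant additivity of \cref{realvalnoneq} using the general principle that an effective cycle of pure dimension in a smooth projective variety with an algebraic cell decomposition is zero as a cycle if and only if its non-equivariant Chow class vanishes. By its definition in \cref{Setup}, the cycle $\op{M}$ is either the zero cycle or the irreducible subvariety $\overline{\im(\mu_M)}$ appearing with multiplicity one. Hence $\op{M}$ is zero as a cycle precisely when its non-equivariant class $\opc{M}$ is zero in $A^\bullet((\mb{P}^r)^n)$. Once $\op{M}=0$ as a cycle, its equivariant class also vanishes, since the equivariant class is defined via the same cycle.

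For the first statement, suppose $P_M$ has positive codimension in $\Delta$. Then $1_{P_M}$ itself is supported on a subset of $\Delta$ of positive codimension, so $\langle M\rangle = 0$ in $T(d,n)$. By the non-equivariant additivity of $M\mapsto \opc{M}$ from \cref{realvalnoneq}, the non-equivariant class $\opc{M}$ vanishes. By the principle recalled above, $\op{M}=0$ as a cycle, and hence $\opc{M}=0$ equivariantly.

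For the second statement, recall from the corollary to \cref{NE} that the non-equivariant class $\opc{M}$ depends only on the matroid of $M$. If the equivariant class $\opc{M}$ vanishes, then so does the non-equivariant class. Consequently, for any $M'$ with the same matroid as $M$, the non-equivariant class $\opc{M'}$ also vanishes, so by the same effective-cycle principle $\op{M'}=0$ as a cycle, whence $\opc{M'}=0$ equivariantly.

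The only non-trivial ingredient is the effective-cycle principle. For $(\mb{P}^r)^n$ this is immediate: the Chow ring has an additive basis given by monomials in the hyperplane classes $H_1,\ldots,H_n$, the class of any irreducible subvariety is a non-negative combination of such monomials in the appropriate degree, and pairing against a complementary-dimensional monomial yields a positive intersection number whenever the subvariety is nonzero. No further obstacle arises.
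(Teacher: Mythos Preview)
Your proof is correct and follows essentially the same approach as the paper: reduce to the non-equivariant statement via \cref{realvalnoneq} (and the matroid-dependence corollary to \cref{NE}), then use effectivity of $\op{M}$ to conclude the cycle itself vanishes. Your additional paragraph justifying the effective-cycle principle on $(\mb{P}^r)^n$ is a welcome elaboration that the paper leaves implicit.
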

\begin{proof}
For the first part, by \cref{realvalnoneq} we have the non-equivariant class $\opc{M}=0$. Since $\op{M}$ is effective, it follows that $\op{M} = 0$, so the equivariant class $\opc{M} = 0$.

For the second part, if $\opc{M}=0$, then $\opc{M'}=0$ non-equivariantly, so $\opc{M'}=0$ equivariantly by the same argument. 
\end{proof}
\begin{Rem}
The matrix $M$ is connected if and only if $P_M$ is full-dimensional in $\Delta$.
\end{Rem}

\subsection{Regular subdivisions arising from degenerations}\label{dsection}
The following lemma connects degenerations of generalized matrix orbits and regular subdivisions of rank polytopes.

\begin{Lem}
\label{lpath}
Let $F$ be a nonempty face of the regular subdivision of $P$ with lifting function $h_{M_D}$. Then $M_D$ orb-limits to some matrix $M$ with $P_M=F$.
\end{Lem}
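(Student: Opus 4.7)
The plan is to construct $\gamma_T$ and $\gamma_{GL}$ explicitly from the combinatorial data cutting out the face $F$. By the definition of regular subdivision, there exists a linear functional $\lambda \in \mathbb{Q}^n$ such that $h_{M_D}(v) - \lambda(v) \geq 0$ for every vertex $v$ of $P$, with equality precisely when $e_v \in F$; any constant offset can be absorbed into $\lambda$ by using that $\Delta$ lies in $\{\sum_i x_i = d\}$. After pulling back $M_D$ along a sufficiently ramified cover of a neighborhood of $0$, we may assume $\lambda \in \mathbb{Z}^n$, and we set $\gamma_T(t) = \operatorname{diag}(t^{-\lambda_1}, \ldots, t^{-\lambda_n})$.

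Next I would consider $N(t) := M_D(t)\gamma_T(t)$ as a $d \times n$ matrix with entries in $K((t))$, whose $v$-th maximal minor has $t$-order $h_{M_D}(v) - \lambda(v) \geq 0$, with equality iff $e_v \in F$. Let $L \subset K((t))^d$ be the $K[[t]]$-submodule generated by the columns of $N(t)$. Since $M_D$ is generically of rank $d$, $L$ is finitely generated, torsion-free, and contains a $K((t))$-basis of $K((t))^d$; by the structure theorem for modules over the PID $K[[t]]$, $L$ is free of rank $d$. Pick any $K[[t]]$-basis $u_1, \ldots, u_d$ of $L$, assemble them into the columns of $U(t) \in GL_d(K((t)))$, and set $\gamma_{GL}(t) = U(t)^{-1}$. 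By construction each column of $U(t)^{-1}N(t)$ lies in $K[[t]]^d$, so the limit $M := \lim_{t \to 0} \gamma_{GL}(t) M_D(t) \gamma_T(t)$ exists in $K^{d \times n}$, and its columns are the reductions of the $N_j$ modulo $tL$, expressed in the basis $\overline{u}_1, \ldots, \overline{u}_d$ of $L/tL \cong K^d$.

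The crux is identifying the matroid of $M$. For a $d$-subset $v$, the columns of $M^v$ are linearly independent iff the reductions of $N_{j_1}, \ldots, N_{j_d}$ form a basis of $L/tL$, iff the $K[[t]]$-submodule $L_v := \langle N_{j_1}, \ldots, N_{j_d}\rangle$ equals $L$. Since $L_v \subseteq L$ are both free of rank $d$, the quotient $L/L_v$ is torsion with length equal to
\[
\operatorname{ord}\det(U^{-1} N^v) \;=\; -\operatorname{ord}\det U + h_{M_D}(v) - \lambda(v) \;\geq\; 0,
\]
so $\operatorname{ord}\det U \leq h_{M_D}(v) - \lambda(v)$ for every $v$ in the matroid of $M_D$. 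Any minimal generating subset of $\{N_1, \ldots, N_n\}$ for $L$ has size $d$ and is therefore a $K[[t]]$-basis of $L$, which shows the inequality is an equality for some $v$. Hence $\operatorname{ord}\det U = \min_v(h_{M_D}(v) - \lambda(v)) = 0$ by our choice of $\lambda$, and the $v$ for which $M^v$ is invertible are exactly those with $e_v \in F$. Since $F$ is a matroid polytope whose vertices determine it, $P_M = F$.

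The main obstacle is the lattice computation in the third paragraph: correctly interpreting $\operatorname{ord}\det U$ as the minimum of the $t$-orders of the maximal minors of $N$. The existence of a linear functional $\lambda$ cutting out a face of arbitrary codimension is standard, since faces of the regular subdivision correspond to lower faces of the lifted polytope $P^{h_{M_D}}$ and hence to affine supporting hyperplanes of $P^{h_{M_D}}$; everything else (the ramified cover to clear denominators, reducing modulo $t$, rank $d$ of $M$) is routine once the lattice picture is in place.
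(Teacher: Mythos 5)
Your argument is correct in substance and ends at the same computation as the paper's proof --- namely that $\ord_t\det\bigl((\gamma_{GL}M_D\gamma_T)^v\bigr)=\ord_t\det\gamma_{GL}+h_{M_D}(v)-\lambda(v)$ for $v\in\Vt(P)$, that the minimum of this quantity is $0$, and hence that $\det M^v\neq 0$ exactly for $v\in F$ --- but you construct $\gamma_{GL}$ by a genuinely different route. The paper composes $M_D\gamma_T$ with the row-span map to $\Gr(d,n)$, extends across $0$ by properness of the Grassmannian, and extracts $\gamma_{GL}$ from a Zariski-local trivialization of the $GL_d$-frame bundle, so that full rank of the limit comes for free; you instead take the lattice $L$ generated by the columns of $M_D\gamma_T$, invoke the structure theorem over a DVR to get a free basis $U$, and set $\gamma_{GL}=U^{-1}$, after which full rank and the identification $\ord_t\det U=\min_{v\in\Vt(P)}(h_{M_D}(v)-\lambda(v))=0$ follow from Nakayama together with the length formula for $L/L_v$. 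Your route is more elementary (no Grassmannian, no bundle trivialization) and makes visible exactly which columns survive in the limit; the paper's route avoids the module theory entirely.

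One point needs tightening. \Cref{orblimitdef} requires $\gamma_{GL}$ to be a morphism of varieties $D\setminus\{0\}\to GL_d$ (in the proof of \Cref{degenerationfibers}(c) the product $\gamma_{GL}M_D\gamma_T$ is extended across $0$ as a morphism), whereas ``pick any $K[[t]]$-basis of $L$'' may produce a $U$ whose entries are formal power series that are not rational functions on $D$. The fix is to run the identical structure-theorem argument over the discrete valuation ring $\mathcal{O}_{D,0}$ with fraction field $K(D)$ rather than over $K[[t]]$: the columns of $M_D\gamma_T$ have entries in $K(D)$, so the lattice they generate admits a basis with entries in $K(D)$, and then $\gamma_{GL}=U^{-1}$ is a matrix of rational functions, regular and invertible on a punctured neighborhood of $0$ after shrinking $D$. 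Nothing else in your computation changes. You should also state explicitly that the minimum of $h_{M_D}-\lambda$ over $\Vt(P)$ is attained, namely at any vertex of the nonempty bounded face $F$ (such vertices lie in $\Vt(P)$); this is what pins $\ord_t\det U$ at $0$ rather than merely bounding it below.
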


\begin{proof}
As in the definition of a regular subdivision, extend $h_{M_D}$ to a function on all of $P$, and let $P^h \subset P\times \mb{R}$ be the lower convex hull of $\{(v,h_{M_D}(v))\mid v\in \Vt(P)\}$. Because $F$ is the projection of a face of $P^h$, there exists an affine function $L_{i,F}:\mb{R}^n\to \mb{R}$ defined over $\mathbb{Q}$ such that $L_F(p) \le h_{M_D}(p)$ for all $p \in P$ and $F = \{p \in P \mid L_{F}(p)=h_{M_D}(p)\}$. There exists a positive integer $N$ such that $L_F(x_1,\ldots,x_n)=\sum_{i=1}^n a_{i,F}x_i+c_F$ for constants $c_F,a_{i,F}\in \frac{1}{N}\mb{Z}$.%Replacing $D$ with a finite cover on which the uniformizer $t$ at $0$ gets pulled back to $u \cdot \tilde{t}^N$, then $h_{M_{D'}}=Nh_{M_D}$, so choosing $N$ large enough we can further assume that $L(e_i) \in \mb{Z}$, regardless of which face of the lower convex hull we chose.

Let $\wt{D}\to D$ be a finite cover with $\wt{D}$ nonsingular and a choice of point $\wt{0}$ mapping to $0$ such that the uniformizing parameter $s$ around $\wt{0}$ has $\operatorname{ord}_{s}(t)=N$. Then the height function of the composition $M_{\wt{D}}: \tilde{D}\to D\to \mb{A}^{d\times n}$ is $Nh_{M_D}$. Therefore, by pulling back to this cover, we may assume $c_F,a_{i,F}\in \mb{Z}$ for all $1 \le i \le n$. 

Define $\gamma_T: D \setminus \{0\} \to T\cong  (K^{\times})^n$ by $(t^{-a_{1,F}},\ldots,t^{-a_{n,F}})$.  Let $\pi_{{\rm Gr}}: \mb{A}^{d\times n}\dashrightarrow \Gr(d,n)$ be the map sending a matrix $M$ to the span of the rows of $M$. Consider the morphism $M_D \gamma_T : D\setminus\{0\} \to \mb{A}^{d\times n}$ given by pointwise multiplication. The composite $\pi_{{\rm Gr}}\circ (M_D\gamma_T): D\setminus\{0\}\to \Gr(d,n)$ extends uniquely to $D$ because $\Gr(d,n)$ is projective.

The map $\pi$ restricted to rank $d$ matrices is a principal $GL_d$-bundle with Zariski-local trivialization. Hence there exists an open neighborhood $U\subset \Gr(d,n)$ of $(\pi_{{\rm Gr}}\circ (M_D\gamma_T))(0)$ such that there exists a trivialization $\pi_{{\rm Gr}}^{-1}(U)\cong U\times GL_d$. Let $\pi_{GL} : \pi_{{\rm Gr}}^{-1}(U) \to GL_d$ be the coordinate projection of this trivialization, which is $GL_d$-equivariant. We may assume that the image of $M_D\gamma_T: D \setminus \{0\}\to \mb{A}^{d\times n}$ lies in $\pi_{{\rm Gr}}^{-1}(U)$ by replacing $D$ with a suitable open subset. This yields a morphism
\begin{align*}
\pi_{GL} \circ (M_D \gamma_T) : D \setminus \{0\}\to GL_d. \label{GLmap}
\end{align*}
Let $\gamma_{GL}$ be the composition of $\pi_{GL} \circ (M_D \gamma_T)$ with the inverse map $GL_d\to GL_d$. 

Let $M'_D : D \setminus \{0\} \to \mb{A}^{d \times n}$ be the pointwise product $\gamma_{GL} M_D \gamma_T$. Then the image of $M'_D$ is contained in $\pi_{{\rm Gr}}^{-1}(U)$. We will show that $M'_D$ extends uniquely to a map $D \to \pi_{{\rm Gr}}^{-1}(U)$, which we also call $M'_D$ by abuse of notation.
\begin{center}
\begin{tikzcd}
D \setminus \{0\}
\arrow[drr, bend left]
\arrow[ddr, bend right]
\arrow[dr, "M'_D"] \\
 & \pi_{{\rm Gr}}^{-1}(U) \arrow[r, "\pi_{GL}"] \arrow[d, "\pi_{\rm Gr}"]  & GL_d\\
 & U \\ 
\end{tikzcd}
\end{center}
To show this, it suffices to show that the compositions $\pi_{{\rm Gr}} \circ M'_D : D \setminus \{0\} \to U$ and $\pi_{GL} \circ M'_D : D \setminus \{0\} \to GL_d$ extend uniquely to $D$.
Observe that $\pi_{{\rm Gr}} \circ M'_D = \pi_{{\rm Gr}}\circ (M_D\gamma_T)$, which we have already extended uniquely to $U$. Also, the map $\pi_{GL} \circ M'_D : D \setminus \{0\} \to GL_d$ is given by 
\begin{align*}
\pi_{GL} \circ (\gamma_{GL} M_D \gamma_T) 
= \gamma_{GL} (\pi_{GL} \circ (M_D \gamma_T) ) 
= (\pi_{GL} \circ (M_D \gamma_T) )^{-1} (\pi_{GL} \circ (M_D \gamma_T) ) 
= I,
\end{align*}
 the constant map sending each point of the curve $D \setminus \{0\}$ to the identity. This manifestly extends uniquely to $D$, so there is an extension $M'_D: D \to \pi_{{\rm Gr}}^{-1}(U)$.

Define \[M = M'_D(0) = \lim_{p \to 0} \gamma_{GL}(p)M_D(p)\gamma_{T}(p),\] which is an orb-limit of $M_D$. Then $M \in \pi_{{\rm Gr}}^{-1}(U)$, so it has full rank. For any vertex $v$ of $P$ we have
\[
\det((M'_D)^v) = \det \gamma_{GL} \cdot \det M_D \cdot t^{- \sum_{i=1}^n a_{i,F} v_i}
\]
so \[\operatorname{ord}_t \det((M'_D)^v) = \operatorname{ord}_t(\det \gamma_{GL}) + h_{M_D}(v) - \sum_{i=1}^n a_{i,F} v_i.\] Observe that since $M'_D(0)$ exists and has full rank, the minimum value of $\operatorname{ord}_t \det((M'_D)^v)$ over all vertices $v \in \Vt(P)$ is $0$. Hence $\operatorname{ord}_t(\det \gamma_{GL}) = -c_F$. Now $\det(M^v) \neq 0$ if and only if $\operatorname{ord}_t\det((M'_D)^v) = 0$, and this happens exactly when $h_{M_D}(v) = L_F(v)$, i.e. $v \in F$. Hence, the rank polytope of $M$ is $F$, as desired.
% Since in the argument we could have replaced $N$ with any multiple of $N$, the same finite cover will work for all faces $P'$ of the regular subdivision. 
\end{proof}

Hence, we have some information about the special fibers of degenerations of generalized matrix orbits. Now we want to understand the general fibers. The following technical lemma is necessary to compare images of general fibers of a family with general fibers of the image of a family.

\begin{Lem}\label{simage}
Let $S$ be a Noetherian integral $K$-scheme. Suppose that $X,Y$ are finite type schemes over $S$ such that $X\to S$ has integral general fiber. Let $\phi: X\to Y$ be a morphism of $S$-schemes that is generically finite onto its image. Then for $s\in S$ general, the scheme-theoretic image of $\phi|_{s}$ agrees with the fiber of the scheme-theoretic image of $\phi$ over $s$. 
\end{Lem}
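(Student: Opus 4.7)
The plan is to factor $\phi$ through its scheme-theoretic image and then exploit generic flatness on the resulting dominant morphism, reducing everything to a faithful-flatness statement on fibers of a suitable finite map.

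First, after shrinking $S$ if necessary, I would replace $X$ with the closure of its integral generic fiber (which agrees with $X$ over a dense open of $S$, since all other irreducible components of $X$ must fail to dominate $S$), so that $X$ is integral. Then $Z := $ scheme-theoretic image of $\phi$ is automatically integral, and $\phi$ factors as $X \xrightarrow{\psi} Z \hookrightarrow Y$ with $\psi$ dominant. Since $Z_s\subset Y_s$ is closed, the scheme-theoretic image of $\phi|_{X_s}:X_s\to Y_s$ inside $Y_s$ coincides with that of $\psi|_{X_s}:X_s\to Z_s$ inside $Z_s$. So the claim reduces to showing that $\psi|_{X_s}$ is scheme-theoretically surjective for general $s\in S$.

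Using that $\psi$ is generically finite, I would pick a dense open $V\subset Z$ on which $\psi^{-1}(V)\to V$ is finite; then by generic flatness, after further shrinking $V$, I can assume this map is finite, flat, and dominant, hence finite flat surjective. Base-changing to any fiber $s\in S$, the map $\psi^{-1}(V)_s\to V_s$ remains finite flat surjective and is therefore faithfully flat, so $\mathcal{O}_{V_s}\hookrightarrow (\psi|_{V_s})_*\mathcal{O}_{\psi^{-1}(V)_s}$ is injective and the scheme-theoretic image of $\psi^{-1}(V)_s\to V_s$ is all of $V_s$.

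It remains to verify that for $s$ in a dense open of $S$, the inclusion $V_s\subset Z_s$ is scheme-theoretically dense. Set-theoretic density follows from Chevalley's theorem on fiber dimension applied to $Z\setminus V\to S$, using that $\dim(Z\setminus V)<\dim Z$ forces $\dim(Z\setminus V)_s<\dim Z_s$ on a dense open. For scheme-theoretic density, after shrinking $S$ so that $Z\to S$ is flat (generic flatness), the behavior of associated points under flat base change shows that on a further dense open of $S$ every associated point of $Z_s$ lies in $V_s$. Combining these, the scheme-theoretic image of $\psi|_{X_s}$ contains the scheme-theoretic closure of $V_s$ in $Z_s$, which is $Z_s$; the reverse containment is automatic.

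The main technical obstacle is precisely the scheme-theoretic (versus set-theoretic) density of $V_s$ in $Z_s$, which is delicate in positive characteristic where $Z_s$ may fail to be reduced and may carry embedded associated points. This is exactly what forces the step of making $Z\to S$ flat and invoking the associated-points analysis, rather than a cheaper reducedness argument.
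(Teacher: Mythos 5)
Your overall architecture is sound and genuinely different from the paper's: after reducing to the dominant map $\psi\colon X\to Z$ onto the integral scheme-theoretic image, you get honest scheme-theoretic surjectivity of $\psi^{-1}(V)_s\to V_s$ over a dense open $V\subset Z$ via finite faithful flatness, and then try to propagate this to all of $Z_s$ using scheme-theoretic density of $V_s$. The reductions in your first two paragraphs, the faithful-flatness step, and the set-theoretic density argument (which implicitly also needs equidimensionality of $Z_s$ for general $s$, available since $Z$ is integral) are all fine.

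The gap is in the justification of scheme-theoretic density of $V_s$ in $Z_s$. ``Behavior of associated points under flat base change'' does not apply: the relevant base change is $\operatorname{Spec} k(s)\to S$ for a closed point $s$, which is not flat. What the flat-base-change theorem for associated points gives you, applied to the flat morphism $Z\to S$ over the integral base $S$, is $\operatorname{Ass}(\mathcal{O}_Z)=\operatorname{Ass}(\mathcal{O}_{Z_\eta})$ --- a statement about $Z$ itself that says nothing about $\operatorname{Ass}(\mathcal{O}_{Z_s})$ for closed $s$; special fibers of a flat family can acquire embedded points, and ruling this out generically requires a genuinely fiberwise input. The standard fix is the one the paper uses: since scheme-theoretic image commutes with the flat base change to the geometric generic point, $Z_{\bar\eta}$ is the scheme-theoretic image of the integral scheme $X_{\bar\eta}$ and hence integral; by openness of the geometrically-integral locus (EGA IV 9.7.7, Stacks Tag 0551) the fiber $Z_s$ is then integral for general $s$, so its unique associated point is its generic point, which lies in $V_s$ by your density-of-dimension argument. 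Note, however, that once you know $Z_s$ is integral your faithful-flatness analysis over $V$ becomes superfluous: a reduced irreducible $Z_s$ is already the scheme-theoretic closure of the dense constructible set $\psi(X_s)\supseteq V_s$, which is exactly the paper's ``by dimension reasons'' conclusion. So the step you yourself flag as the main technical obstacle is indeed the crux, and the tool you propose for it is the wrong one.
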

%\cite[IV 12.1.1 (x)]{EGA}
\begin{proof}
Without loss of generality, we can restrict $S$ so the fibers of $X\to S$ are equidimensional \cite[Tag 05F7]{stacks-project}. The scheme-theoretic image of $\phi$ is $\overline{\phi(X)}$ with the induced reduced subscheme structure \cite[Tag 056B]{stacks-project}. It is clear that the scheme-theoretic image of $\phi|_{s}$ is contained in the fiber of $\overline{\phi(X)}$  over $s$. To show the other inclusion, we claim the geometric generic fiber of $\overline{\phi(X)}$ over $S$ is integral \cite[IV 12.1.1 (x)]{EGA}. 

Let $\Spec(\overline{L})\to S$ be the geometric generic point. Scheme-theoretic image of a quasicompact morphism commutes with flat base change \cite[Tag 081I]{stacks-project}, so the scheme-theoretic image of $\phi$ pulled back to $Y\times_S \Spec(\overline{L})$ agrees with the scheme-theoretic image of the induced map $X\times_S\Spec(\overline{L})\to Y\times_S \Spec(\overline{L})$, which is integral.  

Therefore, the general fiber of $\overline{\phi(X)}$ over $S$ is integral \cite[Tag 0551]{stacks-project}. Therefore, we can restrict $S$ so that every fiber of $\overline{\phi(X)}$ over $S$ is integral. Then, by dimension reasons, this shows that the scheme-theoretic image of $\phi|_{s}$ is equal to the fiber of $\overline{\phi(X)}$  over $s$.
%generically finite onto its image is used to show that the scheme-theoretic image of each fiber is constant dimension. It's probably possible to remove this hypothesis, but then you need to produce an open subset of S where the image has constant dimension. 
\end{proof}

We are now ready to prove \Cref{degenerationfibers}.
\begin{proof}[Proof of \Cref{degenerationfibers}]
Part (a) follows directly from $\psi(U) \subset (\mb{P}^r)^n \times (D \setminus \{0\})$.

Next, we prove (b). Given a general $\wt A \in \mb{P}^{(r+1)\times d-1}$, we have $(\wt{AM_D(p)}, p) \in \wt{\mc{O}}$. It follows that the fiber of $\wt{\mc{O}}$ over $p \in D$ set-theoretically contains $\op{M_D(p)}$.
If the fiber of $\wt{\mc{O}}$ over $p$ is $((r+1)d-1)$-dimensional, then the map $\psi$ is generically finite onto its image. We claim that $\psi$ is generically injective. Indeed, assume that $\psi(\wt{A}, p') = \psi(\wt{B}, p')$ is a general element of $\im(\psi)$ for $(r+1) \times d$ matrices $A, B$ and $p' \in D$. Then this is further equal to $\psi(\wt{A+ \lambda B},p')$ for general $\lambda$, contradicting generic finiteness if $\wt{A} \neq \wt{B}$.
Hence, by \cref{simage}, the fiber of $\wt{\mc{O}}$ over $p$ is the closure of the image of $\psi|_{(\mb{P}^{(r+1)\times d-1} \times \{p\}) \cap U}$, which is $\op{M_D(p)}$.

Finally, we prove (c). We claim that the fiber $\wt{\mc{O}}_0$ contains each $\op{M_i}$. By replacing $D$ with a suitable finite cover by a nonsingular curve and $M_D$ with an extension to $D$ of $\gamma_{GL} M_D \gamma_T$ for suitable $\gamma_{GL} : D \setminus \{0\} \to GL_{r+1}$ and $\gamma_T : D \setminus \{0\} \to T$, we may assume that $M = M_D(0)$. Given a general $\tilde A \in \mb{P}^{(r+1)\times d-1}$, we have $(\wt{AM_D(p)}, p) \in \wt{\mc{O}}$ for general $p \in D$. Taking the limit as $p \to 0$ yields $(\wt{AM_D(0)}, 0) \in \wt{\mc{O}}$. Since $\tilde A$ is general, it follows that the fiber of $\wt{\mc{O}}$ over $0 \in D$ set-theoretically contains $\op{M_i}$ as desired.

Now, we claim that the nonzero $\op{M_i}$ are distinct. Assume to the contrary that $\op{M_i} = \op{M_j} \neq 0$ for some $1 \leq i< j \leq k$. Then the non-equivariant classes satisfy $\opc{M_i} = \opc{M_j} \neq 0$. By \cref{NE}, there exists $(e_1,\ldots, e_n)\in \mb{Z}^n$ such that for any $0 < \epsilon_1, \ldots, \epsilon_n$ with $\sum_{i=1}^n \epsilon_i = 1$ we have $(e_1 + \epsilon_1, \ldots, e_n + \epsilon_n)$ is contained in $(r+1) (P_{M_i} \cap P_{M_j})$. But $(r+1) (P_{M_i} \cap P_{M_j})$ has codimension at least $1$ in the hyperplane $\{\sum x_i= (r+1)d\}$, so we cannot have the containment for all choices of $\epsilon_i$.

Since $\wt{\mc{O}}_0$ contains each $\op{M_i}$, the $((r+1)d-1)$-dimensional algebraic cycle \[Z = \wt{\mc{O}}_0 - \bigcup_{i=1}^n \op{M_i}\] is effective. The nonzero $\op{M_i}$ are distinct and irreducible of dimension $(r+1)d - 1$ and $\wt{\mc{O}}$ is locally flat at $0 \in D$, so we have the equality of non-equivariant Chow classes
\[
[Z] = [\wt{\mc{O}}_0] - \sum_{i=1}^n \opc{M_i} 
= [\wt{\mc{O}}_p] - \sum_{i=1}^n \opc{M_i} 
= \opc{M_D(p)} - \sum_{i=1}^n \opc{M_i} 
= 0
\]
for $p$ general, where the third equality follows from (b) and the fourth equality follows from \cref{realvalnoneq}. Hence $Z = 0$, as desired.
\end{proof}
\section{Additivity of $\opc{M}$}
\label{AMOC}
In this section, we prove part of \Cref{realvalintro}, by showing that the association $M \mapsto \opc{M}$ is additive (see \Cref{matroidpolytopedef} and \Cref{additivedef}) using the algorithm implicit in \cite[Appendix A]{Derksen} and results from \Cref{degenerations} and \Cref{polyapp}. This will be used to reduce the computation of $\opc{M}$ to simpler generalized matrix orbit classes in later sections.

\begin{Thm} \label{realval}
For each $r$, the association $M \mapsto \opc{M}$ of $d \times n$ matrices to equivariant Chow classes is additive.
\end{Thm}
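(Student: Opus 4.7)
The plan is to bootstrap the non-equivariant additivity of \Cref{realvalnoneq} up to the equivariant setting by using the degeneration relations $\opc{M_D(p)} = \sum_i \opc{M_i}$ of \Cref{siorbit} as generators of equivariant additive relations. Given a relation $\sum_i a_i \langle M_i \rangle = 0$ in $T(d,n)$, I want to show $\sum_i a_i \opc{M_i} = 0$. The non-equivariant statement tells us this holds modulo the fact that an effective cycle which vanishes non-equivariantly also vanishes equivariantly (\Cref{positivecodim}), but this only handles the case where no individual class is nonzero on the boundary of its matroid polytope, which is not the generic situation. Instead, I will show that repeated substitution of degeneration relations transforms the given equivariant relation into one supported on a family of matrices whose matroid polytopes form an additive basis admitting no non-trivial additive relations at all.

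The distinguished family is the "positive cone-like matroid polytopes" of \Cref{polyapp}. A preliminary step establishes that for any matrix $M$ whose rank polytope is cone-like, $\opc{M}$ depends only on the matroid of $M$; this will follow by constructing, for any two such $M, M'$ with $P_M = P_{M'}$, one-parameter degenerations that simultaneously orb-limit to both, forcing $\opc{M} = \opc{M'}$ via \Cref{siorbit}. The core reduction is then a direct adaptation of the algorithm of \cite[Appendix A]{Derksen}, which uses a sequence of regular subdivisions of unbounded polytopal regions whose slices with $\Delta_{d,n}$ induce regular subdivisions of matroid polytopes, to reduce an arbitrary additive relation to one supported on the positive cone-like basis. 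To lift this algorithm from matroids to matrices, I attach to each matrix $M_i$ appearing in the reduction a "matroidal cone" $C_i$, so that $P_{M_i} + C_i$ represents the infinite polytope appearing in that step of the algorithm. At each step the combinatorics dictate a regular subdivision of $P_{M_i} + C_i$, and I must realize it as an orb-limit degeneration of $M_i$ producing matrices with freshly attached, mutually compatible cones that propagate the algorithm's invariants. By \Cref{siorbit}, each such degeneration yields a valid equivariant additive relation which may be substituted into the running relation without altering its truth. After finitely many substitutions the relation is supported on the positive cone-like basis, which has no non-trivial additive relations, so all coefficients vanish and we conclude $\sum_i a_i \opc{M_i} = 0$.

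The main obstacle is the geometric realization of each step of this combinatorial algorithm: one must exhibit a one-parameter family $M_D$ whose height function $h_{M_D}$ induces precisely the required regular subdivision, and whose orb-limits (guaranteed by \Cref{lpath}) land in matrices whose attached cones are compatible with continuing the algorithm. Establishing this compatibility notion and verifying that it is preserved through every step is delicate, and it is what distinguishes our argument from the purely combinatorial one in \cite{Derksen}, where only matroids (not matrices) need to be tracked. The polytopal bookkeeping underlying the notion of compatibility and the existence of the required degenerations is deferred to \Cref{polyapp}; once those are in hand, the proof of \Cref{realval} amounts to assembling the reduction algorithm and invoking \Cref{siorbit} at each step.
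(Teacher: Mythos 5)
Your proposal is correct and follows essentially the same route as the paper: augmenting the Derksen--Fink subdivision algorithm by attaching matroidal cones to matrices (the paper's ``admissible pairs'' $(M,C)$ with associated polytope $P_M+C$), realizing each shadow facet subdivision geometrically via an explicit degeneration $M(1+tB)$ as in \Cref{explsub}, reducing to translates of strictly positive matroidal cones which admit no relations in $S(d,n)$, and using the cone-like invariance lemma (\Cref{matroidconelemma}) to identify the surviving classes. The deferred compatibility and bookkeeping you mention is exactly what the paper handles in \Cref{addcone}, \Cref{horribleequivalence}, and \Cref{lambdalambdaprime}.
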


Additivity of equivariant Chow classes of torus orbits in the Grassmannian is known using \cite[Proposition 1.9]{KMS} to specialize the result in equivariant K-theory \cite[Example 3.5]{FS12} to equivariant Chow classes \cite[Section 2.2]{Fink}. However, this is insufficient for our purposes.

Even the fact that $\opc{M}$ depends only on the matroid of $M$ is far from obvious. The realization spaces of matroids should be expected to be poorly behaved in general by Mnev universality \cite[Theorem 1.3]{KM98} \cite{LV13}, and there are explicit examples of disconnected realization spaces of line arrangements \cite{Line1, Line5, Line3, Line4}.

\subsection{Outline of the proof of \Cref{realval}}
\label{outline}
%In this proof, it will be crucial
Let $S(d,n)$ be the abelian group generated by indicator functions of all (possibly unbounded) convex polytopal regions in $\{ \sum x_i=d\}$ modulo functions whose support has positive codimension. We will be working exclusively inside $S(d,n)$ from now on, treating in particular $\langle M \rangle \in S(d,n)$ (recall \Cref{additivedef}). For any (possibly unbounded) convex polytopal region $P$, we will also write $\langle P \rangle$ for the class of $1_P$ inside $S(d,n)$.

In \cite[Appendix~A]{Derksen}, Derksen and Fink show using an explicit algorithm that if $\sum_{i=1}^k a_i \langle M_i \rangle=0$ then the expression $\sum_{i=1}^k a_i \langle P_{M_i} \rangle$ can be reduced to $0$ by using replacements $\langle P'\rangle\mapsto 0$ when $P'$ is of positive codimension inside $\{\sum x_i=d\}$, and replacements $\langle P \rangle\mapsto\langle P+\mb{R}_{\ge 0}v \rangle - \sum_{i=1}^\ell \langle P_i \rangle$ with $v=e_i-e_j$ with $i>j$, where $P+\mb{R}_{\ge 0}v = P \cup \bigcup_{i=1}^\ell P_i$  are certain subdivisions which we will call ``shadow facet subdivisions'' (see \Cref{finksub}). We will show that each polytope appearing in the algorithm is of the form $P_M+C$ where $(M,C)$ is an ``admissible pair'', which we define below.

As in \Cref{polyapp}, we define a \emph{matroidal cone} to be a cone $C$ generated by rays of the form $\mb{R}_{\ge 0}(e_i-e_j)$.

\begin{Def}
Let $M$ be a $d \times n$ matrix and let $C$ be a matroidal cone. We define the \emph{associated polytope} of $(M,C)$ to be the polytope $P_M+C \subset \{\sum x_i=d\}$. We say that $(M, C)$ is an \emph{admissible pair} if $P_{M} + C$ is full-dimensional in $\{\sum x_i = d\}$ and $(P_M + C) \cap \Delta = P_M$.
\end{Def}

The criterion for $(M,C)$ to be admissible implies that if $e_i-e_j \in C$, then the $i$th column of $M$ is morally speaking more general than the $j$th column of $M$.

We will augment Derksen and Fink's algorithm, replacing each polytope $P$ with an admissible pair $(M,C)$ with associated polytope $P$. This replacement will be done in such a way that if $P+\mb{R}_{\ge 0}v  =  P  \cup \bigcup_{i=1}^\ell  P_i $ is a subdivision used in the algorithm, then $\opc{M'}=\opc{M}+\sum_{i=1}^\ell \opc{M_i}$, where $(M',C')$ is the admissible pair associated to $P+\mb{R}_{\ge 0}v$ and $(M_i,C_i)$ is the admissible pair associated to $P_i$. We will accomplish this using explicit degenerations over $\mb{A}^1_t$ (see \Cref{degenerationfibers}) which are linear in $t$ and add $t$ times general multiples of certain columns of $M$ to other columns. 

As it turns out, applying the rewriting procedure of Derksen and Fink without combining or cancelling any codimension zero terms allows us to rewrite any $\langle M \rangle$ as a linear combination $\sum \epsilon_i \langle P_i \rangle$, where each $P_i$ is a ``strictly positive matroidal cone'' with apex in $\Delta$ and $\epsilon_i = \pm 1$.

\begin{Def}
\label{positivematroidalconedef}
%\label{positivecone}
We define a \emph{positive matroidal cone} $C$ to be a matroidal cone $C$ whose generating vectors $e_i-e_j$ have $i>j$. We define a \emph{strictly positive matroidal cone} $C$ to be an $(n-1)$-dimensional positive matroidal cone generated by $n-1$ vectors $e_j-e_{\alpha_j}$ with $\alpha_j<j$ and $j \in \{2,\ldots,n\}$.
\end{Def}

The key fact about translates of strictly positive matroidal cones is that there are no relations between them in $S(d,n)$, so if we rewrite the expression $\sum a_i\langle M_i \rangle$ as $\sum \epsilon_i \langle P_i \rangle$ where each $P_i$ is a strictly positive matroidal cone with apex in $\Delta$, then the sum of the coefficients in front of a given $\langle P \rangle$ is zero. We then deduce the theorem from the fact for any matroidal cone $P$ with apex in $\Delta$, the space of all matrices $M$ such that $P_M = P \cap \Delta$ is connected, so $\opc{M} = \opc{M'}$ whenever $P_M = P_{M'} = P \cap \Delta$.

\subsection{Invariance of $\opc{M}$ when $P_M=(v+C)\cap \Delta$}
In this subsection, we prove that if $v \in \Vt(\Delta)$ and $C$ is a matroidal cone, then all matrices $M$ with $P_M=(v+C)\cap \Delta$ have the same $\opc{M}$. We also prove \Cref{addcone}, which serves as one of the key combinatorial interpretations of all future degenerations.

\begin{Def}
\label{conelike}
We define a polytope $P \subset \Delta$ to be a \emph{cone-like matroid polytope} if there is a matroidal cone $C$ and vertex $v \in \Delta$ for which $P=(v+C)\cap \Delta$. 
\end{Def}

\begin{Lem} \label{matroidconelemma}
Let $M, M'$ be $d \times n$ matrices. Assume $\op{M}\neq 0$. Suppose that $P_{M} = P_{M'} = (v + C) \cap \Delta$ is a cone-like matroid polytope. Then 
\begin{enumerate}[(a)]
	\item There exists an open subset $U \subset \mb{A}^1_t$ containing $0$ and $1$, a path $M_U : U \to \mb{A}^{d \times n}$ satisfying $M_U(0) = M$ and $M_U(1) = M'$, and a flat degeneration $\wt{\mc{O}} \subset (\mathbb{P}^{d-1})^n \times U$ such that $\wt{\mc{O}}_{p} = \op{M_D(p)}$ as $((r+1)d-1)$-dimensional cycles for all $p \in U$.
	\item $\opc{M}=\opc{M'}$.
\end{enumerate}
\end{Lem}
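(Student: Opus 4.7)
My plan for (a) is to construct a family $M_U : U \to \mb{A}^{d \times n}$ over an open subset $U \subset \mb{A}^1$ containing both $0$ and $1$, with $M_U(0) = M$, $M_U(1) = M'$, and $P_{M_U(t)} = (v+C) \cap \Delta$ for every $t \in U$, and then apply \Cref{degenerationfibers} to this family. Part (b) will follow immediately from the resulting flatness: fibers of the flat family $\wt{\mc{O}}$ all have the same equivariant Chow class, so $\opc{M} = [\wt{\mc{O}}_0] = [\wt{\mc{O}}_1] = \opc{M'}$.

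To construct $M_U$, I observe that the $GL_d$-action on the left preserves the column matroid, the rank polytope, and the class $\opc{\cdot}$, so I may assume the columns of both $M$ and $M'$ indexed by $A$ (where $v = e_A$) form the identity matrix $I_d$. Let $N, N'$ be the $d \times (n-d)$ submatrices consisting of the remaining columns. For each $j \in A$ and $i \notin A$, the $(j,i)$-entry of $N$ (resp. $N'$) vanishes precisely when $e_i - e_j \notin C$, since nonzero entries correspond to rank-one basis exchanges $(A \setminus \{j\}) \cup \{i\}$. Let $Z \subset \mb{A}^{d \times n}$ be the affine subspace of matrices whose $A$-columns are the identity and whose other entries satisfy the same zero pattern; then $M, M' \in Z$, and I take $M_U(t) = (1-t)M + tM'$, which stays in $Z$ for all $t$.

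The combinatorial heart of the argument is the claim that for a Zariski-dense open subset $Z^\circ \subseteq Z$, every matrix has column matroid whose rank polytope is exactly $(v+C) \cap \Delta$. The vanishing of non-basis minors is automatic: given $B = (A \setminus S) \cup T$ with $e_T - e_S \notin C$, any term $\pm \prod_{j \in S} \wt{N}_{j, \sigma(j)}$ in the Laplace expansion of the relevant minor over bijections $\sigma : S \to T$ would require $e_{\sigma(j)} - e_j \in C$ for each $j$, hence $e_T - e_S = \sum_{j \in S} (e_{\sigma(j)} - e_j) \in C$, contradicting the hypothesis; so every such term vanishes identically on $Z$. The nonvanishing of basis minors reduces to showing that for $B$ with $e_T - e_S \in C$, some bijection $\sigma$ exists with $e_{\sigma(j)} - e_j \in C$ for each $j$. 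I expect this to follow from an integer-flow decomposition argument on the directed graph with edges $(j,i)$ for each generator $e_i - e_j$ of $C$: writing $e_T - e_S$ as a nonnegative combination of generators yields a feasible flow from $S$ to $T$ that decomposes (by integrality of network flows) into $|S|$ edge-disjoint directed paths, each of which composes to an element $e_{\sigma(j)} - e_j \in C$.

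Given the claim, $M_U(t)$ has rank polytope $(v+C) \cap \Delta$ for all $t$ in some nonempty open $U \subseteq \mb{A}^1$ containing both $0$ and $1$ (since $M, M' \in Z^\circ$ by assumption). Applying \Cref{degenerationfibers} to $M_U$: because the rank polytope is constant along the family, the lifting function $h_{M_U}$ induces the trivial regular subdivision of $P$, so the unique orb-limit of $M_U$ at each $p \in U$ is $M_U(p)$ itself. Therefore $\wt{\mc{O}}_t = \op{M_U(t)}$ as cycles for every $t \in U$, proving (a); part (b) follows from flatness of $\wt{\mc{O}}$. The main technical obstacle is the flow-decomposition step establishing the basis direction of the combinatorial claim; everything else is clean normalization followed by direct application of \Cref{degenerationfibers}.
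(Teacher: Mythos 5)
Your proposal is correct and follows essentially the same route as the paper's proof: normalize $M^v=(M')^v=I$, observe that the hypothesis forces the remaining entries to obey the zero pattern dictated by $C$, interpolate linearly inside that affine subspace, and apply \Cref{degenerationfibers} with the trivial regular subdivision. The only remark worth making is that the step you flag as the main technical obstacle (producing the matching $\sigma$ that certifies generic nonvanishing of the basis minors) is exactly the content of \Cref{addcone}(b) via Hall's theorem in \Cref{horribleequivalence}, and is in any case dispensable along your particular path, since each basis minor of the matroid $(v+C)\cap\Delta$ is already nonzero at the point $M\in Z$ by hypothesis and hence not identically zero on the line joining $M$ and $M'$.
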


In order to prove \cref{matroidconelemma}, we will need to understand the structure of cone-like matroid polytopes. We will do this in a much greater generality than will be needed at this moment.

\begin{Prop}\label{addcone}
Let $M$ be a  $d \times n$ rank $d$ matrix and let $C$ be a matroidal cone. Define the $n \times n$ matrix $B$ by
\[
B_{i,k}= \begin{cases}
\lambda_{i,k} & \mbox{if $e_k-e_i \in C$ and $k \ne i$}\\
0 & \mbox{otherwise}
\end{cases}
\]
where the $\lambda_{i,k}$ are general elements of $K$. Let $N = M(1+tB)$.
\begin{enumerate}[(a)]
\item For any $z \in \Vt(\Delta)$, the order $\ord_t \det(N^z)$ is the minimum value of $|z \setminus w|$ over all pairs $(w, \phi)$ such that $w \in \Vt(P_M)$ and $\phi: z \setminus w \to w \setminus z$ is a bijection with $e_k-e_{\phi(k)} \in C$ for all $k \in z \setminus w$. In particular, it is $\infty$ if and only if no such pair exists.
\item For general $t$, the rank polytope of $N$ is $(P_A+C)\cap \Delta$.
\end{enumerate}
\end{Prop}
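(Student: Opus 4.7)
The plan for part (a) is to expand $\det(N^z)$ by multilinearity in its columns. Writing the $k$-th column as $N^{\{k\}} = M^{\{k\}} + t\sum_{i:\, e_k - e_i \in C,\, i \neq k} \lambda_{i,k} M^{\{i\}}$, the determinant expands as a signed sum indexed by triples $(w, S, \pi)$, where $S \subset z$ records which columns contributed a factor of $t$ and $\pi : z \to w$ is a bijection equal to the identity off $S$ and satisfying $\pi(k) \neq k$ and $e_k - e_{\pi(k)} \in C$ for $k \in S$. Each triple contributes $\operatorname{sgn}(\pi)\det(M^w)\prod_{k \in S}\lambda_{\pi(k),k}\, t^{|S|}$, nonzero only when $w \in \Vt(P_M)$. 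Setting $\beta = z \setminus w$, $\gamma = w \setminus z$, $T = S \cap w$, I read $\pi|_S : \beta \cup T \to T \cup \gamma$ as a directed graph on $\beta \cup T \cup \gamma$: vertices in $\beta$ have outdegree $1$ and indegree $0$, vertices in $\gamma$ have indegree $1$ and outdegree $0$, and vertices in $T$ have indegree $1$ and outdegree $1$. The graph therefore decomposes into exactly $|\beta|$ vertex-disjoint directed paths from $\beta$ to $\gamma$ through chains of $T$-vertices, together with disjoint cycles contained in $T$. Summing $e_k - e_{\pi(k)} \in C$ along each path yields $e_{\beta_i} - e_{\gamma_i} \in C$, so the path endpoints assemble into a valid pair $(w, \phi)$ for $z$, and $|\beta| \leq |S|$ with equality iff $T = \emptyset$.

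This immediately gives $\ord_t \det(N^z) \geq m$: a contribution to $t^j$ for $j < m$ would furnish a valid pair $(w, \phi)$ with $|z \setminus w| \leq j < m$, contradicting minimality, and when $m = \infty$ the same reasoning shows $\det(N^z) = 0$. For the matching upper bound when $m < \infty$, the equality case forces $T = \emptyset$, so the contributions to $[t^m]$ are indexed precisely by the valid pairs $(w, \phi)$ achieving $|z \setminus w| = m$, each contributing $\operatorname{sgn}(\pi)\det(M^w)\prod_{k \in \beta}\lambda_{\phi(k), k}$. Distinct valid pairs yield distinct monomials in the $\lambda_{i,k}$: the indices $k$ appearing in the monomial recover $\beta$ (hence $w \cap z$), the values $\phi(k)$ appearing recover $\gamma$ (hence $w \setminus z$), and the pairing itself recovers $\phi$. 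Hence $[t^m]\det(N^z)$ is a nonzero polynomial in the $\lambda_{i,k}$, so it is nonvanishing at general choices.

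For part (b), the inclusion $\{z \in \Vt(\Delta) : \det(N^z) \neq 0\} \subset (P_M + C) \cap \Vt(\Delta)$ is the easy direction: part (a) produces a valid pair $(w, \phi)$ from which $e_z - e_w = \sum_{k \in \beta}(e_k - e_{\phi(k)}) \in C$, so $e_z \in e_w + C \subset P_M + C$. For the converse, given $e_z \in (P_M + C) \cap \Vt(\Delta)$, I first locate a vertex $w \in \Vt(P_M)$ with $e_z - e_w \in C$ (a polytopal statement about Minkowski sums of matroid polytopes with cones generated by $e_i - e_j$, invoking the relevant lemmas from \Cref{polyapp}), and then recover a valid bijection $\phi$ by flow decomposition: viewing $C$ as the cone of non-negative integer flows along the directed graph with edges $\{(j \to i) : e_i - e_j \in C\}$, the difference $e_z - e_w$ is an integer flow of value $|z \setminus w|$ from sources $w \setminus z$ to sinks $z \setminus w$, which after deleting superfluous cycles decomposes into $|z \setminus w|$ source-to-sink paths whose endpoints define $\phi$ with $e_k - e_{\phi(k)} \in C$.

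The main obstacle is the reverse inclusion in (b) at the purely polytopal level, namely upgrading an arbitrary decomposition $e_z = p + c$ with $p \in P_M$ to one with $p \in \Vt(P_M)$; the integrality of $e_z$ and the edge-generator structure of $C$ should allow this via matroid basis exchange, but the cleanest route is through the Minkowski-sum lemmas on matroidal cones developed in \Cref{polyapp}, after which the flow decomposition and part (a) finish the proof.
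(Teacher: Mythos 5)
Your proof is correct and follows essentially the same route as the paper: your multilinear column expansion is the Cauchy--Binet computation in disguise, your path/cycle decomposition of $\pi|_S$ is the paper's trick of iterating $\phi'$ until landing in $w\setminus z$ (both using that $C$ is closed under addition), and the genericity step via distinct $\lambda$-monomials matches the paper's "linearly independent terms" argument. For part (b), the polytopal input you defer to \Cref{polyapp} is exactly \Cref{horribleequivalence}, which is also all the paper invokes.
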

\begin{proof}
We first prove (a). For any $w,z \in \Vt(\Delta)$, let $(1+tB)^z_w$ denote the submatrix of $1 + tB$ formed by the columns in $z$ and the rows in $w$. By the Cauchy-Binet formula, $\det(N^z) = \det(M (1+tB)^z)$ is given by
\begin{align*}
%\det(N^z) &= \det(M (1+tB)^z) \\
 \sum_{w \in \Vt(\Delta)} \det(M^w) \det ((1+tB)^z_w) %\\
&= \sum_{w \in \Vt(P_A)} \det(M^w) \sum_{\substack{\phi : z \to w \\ \text{bijection}}} \pm \prod_{k \in z} (1+tB)_{\phi(k), k} \\
&= \sum_{\substack{w \in \Vt(P_M) \\ \phi : z \to w \\ \text{bijection}}} \pm  \det(M^w) \prod_{\substack{k \in z \\ \phi(k) \neq k}} (tB_{\phi(k),k}).
\end{align*}
Because each $\lambda_{i, k}$ is general, all nonzero terms of this sum are linearly independent. Hence $\ord_t \det(N^z)$ is the minimum value of $|\{k \in z \mid \phi(k) \neq k\}|$ over all pairs $(w, \phi)$, where $w\in \Vt(P_A)$ and $\phi:z \to w$ is a bijection such that $e_k-e_{\phi(k)} \in C$ for all $k \in z$. In particular, it is $\infty$ if and only if no such pair exists.

If $\ord_t \det(N^z) = \infty$, then (a) clearly holds. Otherwise, we claim that there exists a pair $(w,\phi)$ attaining the minimum value of $|\{k \in z \mid \phi(k) \neq k\}|$ such that $\phi$ fixes $z \cap w$. Indeed, suppose $(w, \phi')$ minimizes $|\{k \in z \mid \phi(k) \neq k\}|$. Define $\phi$ by $$\phi(i)=\begin{cases}i & \text{if $i \in z \cap w$, and}\\(\phi')^{k_i}(i) & \text{otherwise}\end{cases}$$ where $k_i$ is the smallest number such that $(\phi')^{k_i}(i) \in w \setminus z$. It is clear that $\phi$ is a bijection fixing $z\cap w$, and $e_i-e_{\phi(i)} \in C$ as $C$ is closed under addition. This shows part (a), and part (b) now follows from \cref{horribleequivalence}.
\end{proof}

%Idea of proof:
%Step 1: realization space is the GL orbit of matrices that are forced to have zeros in certain places (like the matrix N below).
%Step 2: The general such matrix (like N) is in the realization space
%Therefore the realization space is connected

\begin{proof}[Proof of \cref{matroidconelemma}]
As $\op{M}\ne 0$, by \Cref{positivecodim}, all generalized matrix orbits with matroid $(v+C)\cap \Delta$ are nonzero.

First, we prove (a). Let $C'$ be the nonnegative span of $\{e_j - e_i \in C \mid j \not \in v, i \in v\}$. By \Cref{horribleequivalence}, we have $(v+C) \cap \Delta = (v+C') \cap \Delta$. Hence, by replacing $C$ with $C'$, we may assume that if $e_j - e_i \in C$ for $i \neq j$, then $j \not \in v$ and $i \in v$. %Since the general linear group is irreducible, we can connect any two elements by a map from $\mb{A}^1$. 
We can multiply $M$ and $M'$ on the left by elements of $GL_d$ and assume that $M^v=(M')^v=I$. We can also assume $v=e_1+\cdots+e_{d}$. %We will show now that the zero-entries are the same in $M$ and $M'$, and a subset of columns is independent if and only if a linear dependency is not implied by the locations of the zero entries.

Now, we consider the locus $Z_M\subset \mb{A}^{d\times n}$ of $d\times n$ matrices whose underlying matroid has polytope $P_M$ and whose restriction to the leftmost $d\times d$ submatrix is the identity. We claim that, for all $N\in Z_M$, if $e_j-e_i\not \in C$ then $N_{ij}=0$. Indeed, if $N_{ij}\neq 0$, then as $M^v=I$, replacing $i$ in the set $\{1,\ldots, d\}$ with $j$ preserves linear independence, so 
\begin{align*}
v+(e_j-e_i)=(e_1+\cdots+e_d)+(e_j-e_i)\in P_M \subset v+C,
\end{align*}
and hence $e_j-e_i\in C$. Therefore, the affine subspace $L_M\subset \mb{A}^{d\times n}$ consisting of matrices $N$ where $N_{ii}=1$ and $N_{ij}=0$ if $e_j-e_i\notin C$ contains $Z_M$. 

We have that $Z_M$ is a locally closed set in $L_M$ because it can be defined by the vanishing and non-vanishing of minors. The inclusion $Z_M\subset L_M$ is dense by \Cref{addcone}(b) applied to the $d \times n$ matrix $A$ with $A_{ii}=1$ and $A_{ij}=0$ if $i \ne j$, so $Z_M$ is open.

In \Cref{degenerationfibers}, choose $D = \mb{A}^1_t$ and $M_D(t) = tM + (1-t)M'$. By choosing $U \subset \mb{P}^{(r+1)\times d - 1} \times (M_D^{-1}(Z_M)\setminus \{0, 1\})$, we obtain a family $\wt{\mc{O}}$ that is flat over both $0 \in D$ and $1 \in D$. Part (a) of the lemma follows, and part (b) follows immediately from part (a).
\end{proof}

\subsection{Regular subdivisions arising from unbounded polytope decompositions} In this subsection, we will describe particular regular subdivisions $P = \bigcup_{i=1}^k P_i$ of possibly unbounded polytopes. Each such regular subdivision yields a decomposition $P \cap \Delta = \bigcup_{i=1}^k (P_i \cap \Delta)$ of bounded polytopes, which becomes a regular subdivision after removing lower-dimensional terms from the right-hand side. We will then use these regular subdivisions to construct the remaining degenerations $M_D$ that will be used in the proof of \Cref{realval}.

\begin{Lem}
\label{finksub}
Given a convex (possibly unbounded) full-dimensional polytope $P$ in $\mb{R}^n$ and a nonzero vector $v$, we have a regular subdivision of $P+\mb{R}_{\ge 0}v$ induced by the piecewise affine function $h: z \mapsto \inf_\lambda \{\lambda | z-\lambda v \in P\}$. The subdivision consists of $P$ and all polytopes $F+\mb{R}_{\ge 0}v$, where $F$ ranges over facets of $P$ such that $(F+\mb{R}_{\ge 0}v)\cap P=F$.
\end{Lem}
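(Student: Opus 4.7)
The plan is to compute $h$ explicitly and read off the subdivision from its domains of linearity. Write $P$ as a finite intersection of half-spaces $\{x \in \mb{R}^n \mid a_i \cdot x \le b_i,\ i \in I\}$; then $P + \mb{R}_{\ge 0}v$ is also a polyhedron. For $z$ in this Minkowski sum, the condition $z - \lambda v \in P$ unfolds to $\lambda(a_i \cdot v) \ge a_i \cdot z - b_i$ for each $i$. Combined with $\lambda \ge 0$, this produces a lower bound on $\lambda$ of the form $(a_i \cdot z - b_i)/(a_i\cdot v)$ for each $i$ with $a_i\cdot v > 0$. Hence
$$h(z) = \max\!\Bigl(0,\ \max_{i : a_i \cdot v > 0} \tfrac{a_i \cdot z - b_i}{a_i \cdot v}\Bigr),$$
a pointwise maximum of finitely many affine functions, which is immediately convex and piecewise affine on $P + \mb{R}_{\ge 0}v$. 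Existence of the infimum on all of $P + \mb{R}_{\ge 0}v$ follows because the set $\{\lambda \ge 0 : z - \lambda v \in P\}$ is closed and nonempty by definition of the Minkowski sum.

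Next I would identify the maximal domains of linearity of $h$. The region $\{h = 0\}$ is exactly $P$ (every lower bound is non-positive there). For each index $i$ with $a_i \cdot v > 0$, the region on which the $i$th affine form $(a_i \cdot z - b_i)/(a_i \cdot v)$ is the maximum is readily checked to equal $F_i + \mb{R}_{\ge 0}v$, where $F_i = P \cap \{a_i \cdot x = b_i\}$ is the corresponding facet: any such $z$ can be written as $z = (z - h(z)v) + h(z)v$ with $z - h(z)v$ lying on the face of $P$ cut out by $a_i$, and conversely $F_i + \mb{R}_{\ge 0}v$ lies in this region by substitution.

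The third step is to match these cells with the condition in the statement. I claim that for a facet $F_i$, the condition $(F_i + \mb{R}_{\ge 0}v) \cap P = F_i$ holds together with $F_i + \mb{R}_{\ge 0}v$ being full-dimensional if and only if $a_i \cdot v > 0$. If $a_i\cdot v > 0$, then for $p \in F_i$ and $\lambda > 0$ we have $a_i\cdot(p + \lambda v) > b_i$, so $p + \lambda v \notin P$, yielding both conditions. If $a_i \cdot v < 0$, then from the relative interior of $F_i$ the direction $v$ points into $P$, so $(F_i + \mb{R}_{\ge 0}v) \cap P$ strictly contains $F_i$. The case $a_i \cdot v = 0$ yields $F_i + \mb{R}_{\ge 0}v$ contained in the supporting hyperplane of $F_i$, hence lower-dimensional and not contributing a top-dimensional cell.

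Finally I would verify that the cells $P$ and $\{F_i + \mb{R}_{\ge 0}v\}_{a_i \cdot v > 0}$ form a subdivision of $P + \mb{R}_{\ge 0}v$. Coverage is clear: a point $z \notin P$ in the Minkowski sum must satisfy $a_i \cdot z > b_i$ for some $i$ with $a_i \cdot v > 0$, and then the ray $z - \lambda v$ first meets $\partial P$ at some facet $F_j$ with $a_j \cdot v > 0$, placing $z \in F_j + \mb{R}_{\ge 0}v$. Pairwise intersections are common faces automatically from the piecewise linear structure: two adjacent cells meet where two affine forms agree, cutting out a common face of both. The main subtlety is the bookkeeping in the $a_i \cdot v = 0$ case, where a facet satisfies the lemma's intersection equality but its Minkowski cell is lower-dimensional; this is not an obstacle to the subdivision, but I would spell out that such cells appear only as faces of the top-dimensional cells already listed.
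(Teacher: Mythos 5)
Your argument is correct, and it takes a different (more computational) route than the paper's. The paper avoids writing down $h$ at all: it observes that the set $\{(x,\lambda) \mid \lambda \ge 0,\ x - \lambda v \in P\}$ is the image of $P \times \mb{R}_{\ge 0}$ under the linear shear $(x,\lambda)\mapsto (x+\lambda v,\lambda)$, so its lower boundary is the graph of $h$ and its facets are visibly the images of $P\times\{0\}$ and $F\times\mb{R}_{\ge 0}$; one then just checks which of these are lower facets. You instead extract an explicit formula $h(z)=\max\bigl(0,\max_{a_i\cdot v>0}(a_i\cdot z-b_i)/(a_i\cdot v)\bigr)$ from an (irredundant) half-space description and identify the maximal linearity domains directly. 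Both are complete; the paper's version is coordinate-free and sidesteps the H-description bookkeeping, while yours makes the lifting function concrete, which is genuinely useful downstream (the paper needs exactly this kind of explicit description of $h$ when matching it against the degeneration height function $h_{M_D}$ in the proof of \Cref{explsub}). Two small points worth tightening: the infimum in the statement should be read as over $\lambda\ge 0$ (as both you and the paper implicitly do — otherwise $h$ would be negative on the interior of $P$); and your observation about facets with $a_i\cdot v=0$ is apt — they do satisfy $(F+\mb{R}_{\ge 0}v)\cap P=F$ but contribute only lower-dimensional cells, lying in the supporting hyperplane $\{a_i\cdot x=b_i\}$ of $P+\mb{R}_{\ge 0}v$ and hence in the boundary of the cells already listed, so they do not affect the subdivision into facets.
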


\begin{Def}
We call the facets of $P$ appearing in \Cref{finksub} \emph{shadow facets} as in \cite[Appendix A]{Derksen}. They are the upper facets with respect to the $v$-direction. We further call the subdivision the \emph{shadow facet subdivision}.
\end{Def}

\begin{proof}[Proof of \Cref{finksub}]
Define the linear map $\phi : \mb{R}^n \times \mb{R} \to \mb{R}^n \times \mb{R}$ by $\phi(x, \lambda) = (x + \lambda v, \lambda)$ and define $Q = \phi(P \times \mb{R}_{\geq 0})$. Note that $(y,\lambda) \in Q$ if and only if $\lambda \ge 0$ and $y-\lambda v \in P$. Then, for any $x \in P + \mb{R}_{\geq 0}v$, the minimum of $\{\lambda \mid (x, \lambda) \in Q\}$ is exactly $h(x)$. Hence, the lower convex hull of $Q$ is $P^h$, so the facets of the regular subdivision of $P + \mb{R}_{\geq 0}v$ induced by $h$ are the projections of the lower facets of $Q$ to $P + \mb{R}_{\geq 0}v$.

Because $\phi$ is a linear embedding, the facets of $Q$ are $\phi(P \times \{0\})$ and $\phi(F \times \mb{R}_{\geq 0})$ for the facets $F$ of $P$. The facet $\phi(P \times \{0\})$ is a lower facet and $\phi(F \times \mb{R}_{\geq 0})$ is a lower facet if and only if $F$ is a shadow facet. Projecting these facets to $P + \mb{R}_{\geq 0}v$ yields the shadow facet subdivision.
\end{proof}

\begin{comment}
The unbounded polytopes we will use in the remainder of the proof of \Cref{realval} take the following very particular form.

\begin{Def}
Let $M \in \mb{A}^{d \times n}$ be a matrix and let $C$ be a matroidal cone. We say that the pair $(M, C)$ is \textit{admissible} if $P_{M} + C$ is full-dimensional in $\{\sum x_i = d\}$ and $(P_M + C) \cap \Delta = P_M$.
\end{Def}
\end{comment}

We will now use \Cref{addcone} to construct the remaining degenerations needed in the proof of \Cref{realval}.

\begin{Thm} \label{explsub}
Let $(M,C)$ be an admissible pair and let $v = e_i - e_j$ with $i \neq j$. Then there exists a regular map $M_D : \mb{A}^1_t \to \mb{A}^{d \times n}$ with $M_D(0)=M$ such that for general $s \in \mb{A}^1_t$, there exist admissible pairs $(M_1, C_1), \ldots, (M_k, C_k)$, where each $M_i$ is an orb-limit of $M_D$ and
\begin{enumerate}[(a)]
\item The pair $(M_D(s), C + \mb{R}_{\geq 0} v)$ is admissible and $P_{M_D(s)} + (C + \mb{R}_{\geq 0} v) = (P_{M} + C) + \mb{R}_{\geq 0} v$.
\item We have a decomposition \[P_{M_D(s)} + (C + \mb{R}_{\geq 0} v) = (P_{M} + C) \cup \bigcup_{i=1}^k (P_{M_i} + C_i)\] which is the shadow facet subdivision of $(P_{M} + C) + \mb{R}_{\geq 0} v$.
\item We have a decomposition \[P_{M_D(s)} = P_{M} \cup \bigcup_{i=1}^k P_{M_i}\] which becomes the regular subdivision of $P_{M_D(s)}$ with lifting function $h_{M_D}$ associated to the degeneration $M_D$ after removing all terms of positive codimension in $P_{M_D(s)}$.
\item We have \[\opc{M_D(s)} = \opc{M} + \sum_{i=1}^k \opc{M_i}.\]
\end{enumerate}

\begin{comment}
Let $M\in \mb{A}^{d \times n}$ be a matrix of rank $d$. Let $C$ be a matroidal cone such that $(P_M+C)\cap \Delta= P_M$. Let $v=e_i-e_j$ for fixed $i > j$. Letting $\lambda_{kl}$ be general elements of $K$, if we set $$B_{kl}=\begin{cases}\lambda_{kl} &  \text{if $e_l-e_k \in C+\mb{R}_{\ge 0}v$}\\ 0 & \text{otherwise} \end{cases},$$ then the matroid polytope of $M_D(t)=M(1+tB)$ is $$P=(P_M+C+\mb{R}_{\ge 0}v)\cap \Delta,$$ and the height function associated to the degeneration $M_D$ is given by $$z \mapsto \inf\{\lambda \mid z-\lambda v \in P_M+C\}.$$
%To each shadow facet $F$ of $P_M+C$ with direction vector $v$ (c.f. \cref{finksub}), we further have $(F+\mb{R}_{\ge 0}v) \cap \Delta$ is a face $P_{M_F}$ of the subdivision induced by $M_D$ for some $M_F$ which $M_D$ orb-limits to, and the codimension zero parts of $P=P_M\cup\bigcup_{\text{F shadow facet}}P_{M_F}$ yields the subdivision of $P$ associated to $M_D$. Finally, if $C_F$ is the matroidal subcone of $C$ given by the characteristic cone of $F$, then $F=P_{M_F}+C$. 
For each shadow facet $F$ of $P_M+C$ with direction vector $v$ (c.f. \cref{finksub}) with characteristic cone $C_F$, we may find in the subdivision associated to $M_D$ a (not necessarily top-dimensional) face associated to a matrix $M_F$ which $M_D$ orb-limits to such that \begin{itemize}\item The facets in the expression $P=P_M\cup \bigcup_{\text{F a shadow facet}} P_{M_F}$ yield the subdivision associated to $M_D$. \item $P+C+\mb{R}_{\ge 0}v=(P_M+C) \cup \bigcup_{\text{F a shadow facet}} (P_{M_F}+C_F+\mb{R}_{\ge 0}v)$ is precisely the shadow facet subdivision $P_M+C+\mb{R}_{\ge 0}v=(P_M+C)\cup \bigcup_{\text{F a shadow facet}} (F+\mb{R}_{\ge 0}v)$ \item Intersecting the shadow facet subdivision with $\Delta$ gives back the subdivision associated to $M_D$ after discarding non-facets.\end{itemize}
\end{comment}
\end{Thm}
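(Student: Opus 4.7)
Proof plan. The key idea is to run the construction of \Cref{addcone} with the enlarged cone $C + \mb{R}_{\ge 0}v$ in place of $C$. Concretely, set $M_D(t) = M(I + tB)$ where
\[
B_{k,\ell} = \begin{cases} \lambda_{k,\ell} & \text{if } e_\ell - e_k \in C + \mb{R}_{\ge 0}v \text{ and } k \ne \ell, \\ 0 & \text{otherwise,}\end{cases}
\]
with the $\lambda_{k,\ell}$ general elements of $K$. Then $M_D(0) = M$, and by \Cref{addcone}(b) the rank polytope of $M_D(s)$ for general $s \in \mb{A}^1_t$ is $(P_M + C + \mb{R}_{\ge 0}v) \cap \Delta$. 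Part (a) follows: taking $w \in \Vt(P_M)$ and $\phi$ the identity bijection in \Cref{addcone}(a) shows $\Vt(P_M) \subseteq \Vt(P_{M_D(s)})$, so $P_M \subseteq P_{M_D(s)}$ and hence $P_{M_D(s)} + (C + \mb{R}_{\ge 0}v) = (P_M + C) + \mb{R}_{\ge 0}v$; admissibility of $(M_D(s), C + \mb{R}_{\ge 0}v)$ then follows by intersecting with $\Delta$ and using admissibility of $(M,C)$.

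For part (b), I apply \Cref{finksub} to $P_M + C$ with direction $v$, yielding the shadow facet subdivision
\[
(P_M + C) + \mb{R}_{\ge 0}v = (P_M + C) \cup \bigcup_i (F_i + \mb{R}_{\ge 0}v).
\]
Each shadow facet $F_i$ decomposes as $F'_i + C_{F_i}$, where $F'_i$ is a bounded face of $P_M$ and $C_{F_i}$ is a face of $C$. Setting $C_i := C_{F_i} + \mb{R}_{\ge 0}v$, I construct each $M_i$ via \Cref{lpath}, applied to the face of the regular subdivision of $P_{M_D(s)}$ with lifting function $h_{M_D}$ whose vertex set equals $\Vt(P_{M_D(s)}) \cap (F_i + \mb{R}_{\ge 0}v)$. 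Admissibility of $(M_i, C_i)$ and the equality $P_{M_i} + C_i = F_i + \mb{R}_{\ge 0}v$ reduce, via $F_i = F'_i + C_{F_i}$, to $P_{M_i} + (C_{F_i} + \mb{R}_{\ge 0}v) = F_i + \mb{R}_{\ge 0}v$, which holds because every point of $F_i + \mb{R}_{\ge 0}v$ can be written as $f' + c + \lambda v$ with $f' \in F'_i \subseteq P_M \subseteq \Delta$ and $c + \lambda v \in C_i$.

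The substance of (c) is that the regular subdivision of $P_{M_D(s)}$ induced by $h_{M_D}$ (described explicitly in \Cref{addcone}(a) by pairs $(w, \phi)$) coincides, after discarding lower-dimensional terms, with the restriction to $\Delta$ of the shadow facet subdivision, which has lifting function $h(z) = \inf\{\lambda : z - \lambda v \in P_M + C\}$ by \Cref{finksub}. Given a minimal pair $(w, \phi)$ realizing $h_{M_D}(z)$, a regrouping argument using the matroidal cone identities in \Cref{polyapp} allows one to express $z - w$ as $c + \lambda v$ with $c$ in the characteristic cone of a shadow facet $F_i$ and $\lambda \in \mb{Z}_{\ge 0}$, placing $z$ in the piece $(F_i + \mb{R}_{\ge 0}v) \cap \Delta$ of the shadow facet subdivision; conversely, any such decomposition gives a pair $(w,\phi)$ matching $h_{M_D}(z)$. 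This identifies the two subdivisions on vertices of $P_{M_D(s)}$, and hence as regular subdivisions. Part (d) then follows from (c) combined with \Cref{siorbit}.

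The main obstacle is the combinatorial identification in (c): while \Cref{finksub} provides the shadow facet subdivision on the unbounded polytope side and \Cref{addcone} provides the degeneration together with its height function, matching the two subdivisions after intersecting with $\Delta$ and discarding positive-codimension pieces requires a careful decomposition argument for matroidal cones, which will be developed in \Cref{polyapp}.
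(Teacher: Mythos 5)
Your proposal is correct and follows essentially the same route as the paper: the same degeneration $M_D(t)=M(I+tB)$ with $B$ supported on $C+\mb{R}_{\ge 0}v$, \Cref{addcone} for the general fiber's rank polytope, \Cref{finksub} for the shadow facet subdivision, $C_i = C_{F_i}+\mb{R}_{\ge 0}v$ with the Minkowski decomposition of each shadow facet giving $P_{M_i}+C_i=F_i+\mb{R}_{\ge 0}v$, \Cref{lpath} to produce the $M_i$, and \Cref{siorbit} for (d). The combinatorial identification you defer to \Cref{polyapp} is exactly the content of \Cref{lambdalambdaprime}, which is the lemma the paper invokes at that step.
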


\begin{proof}

Define the $n \times n$ matrix $B$ by
\[
B_{i,k}= \begin{cases}
\lambda_{i,k} & \mbox{if $e_k-e_i \in C+\mb{R}_{\ge 0}v$ and $k \ne i$}\\
0 & \mbox{otherwise}
\end{cases}
\]
where the $\lambda_{i,k}$ are general elements of $K$. Define $M_D$ by $M_D(t) = M(1+tB)$.

By \Cref{addcone}, we have $P_{M_D(s)}=(P_M+(C+\mb{R}_{\ge 0}v))\cap \Delta$. Thus $P_{M_D(s)}+(C+\mb{R}_{\ge 0}v)=(P_M+C)+\mb{R}_{\ge 0}v$, which proves (a).

Furthermore by \Cref{addcone} and \Cref{lambdalambdaprime} we have that $h_{M_D}$, when restricted to $\Delta$, coincides with the height function of the shadow facet subdivision of $(P_M+C)+\mb{R}_{\ge 0}v$ as given in \cref{finksub}. Hence, letting $F_1,\ldots,F_k$ be the shadow facets of $P_M+C$, we have that $(F_i+\mb{R}_{\ge 0}v)\cap \Delta$ are faces of the regular subdivision of $P_{M_D(t)}$ with lifting function $h_{M_D}$, and all facets of the regular subdivision apart from $P_M$ arise in this way. Each shadow facet $F_i$ is the Minkowski sum of a face of $P_M$ with a face of $C$, so $F_i \cap \Delta \ne \emptyset$. Therefore the faces $(F_i+\mb{R}_{\ge 0}v)\cap \Delta$ are nonempty. By \cref{lpath}, for each $i$, the degeneration $M_D$ orb-limits to a matrix $M_i$ such that $P_{M_i} = (F_i+\mb{R}_{\ge 0}v)\cap \Delta$. By definition of the $M_i$, part (c) follows. Define $C_i = C_{F_i} + \mb{R}_{\ge 0} v$, where $C_{F_i}$ is the cone generated by the unbounded edges of $F_i$.

We will now show that $P_{M_i}+C_i=F_i+\mb{R}_{\ge 0}v$, which by definition of $M_i$ implies that $(M_i,C_i)$ is admissible. Clearly $P_{M_i}+C_i\subset F_i+\mb{R}_{\ge 0}v$, so it remains to show that $F_i+\mb{R}_{\ge 0}v \subset P_{M_i}+C_i$. As $F_i$ is the Minkowski sum of a face of $P_M$ with a face of $C$, we have $F_i=(F_i \cap P_M)+C_{F_i} \subset P_{M_i}+C_{F_i}$ from which we conclude $F_i+\mb{R}_{\ge 0}v \subset P_{M_i}+C_i$. Along with (a), this in particular implies (b).

Finally, (d) follows from (c) and \cref{siorbit}.

\end{proof}

\subsection{The proof of \cref{realval}}
In this subsection, we prove \cref{realval}. The final piece of data we need is the algorithm implicit in \cite[Appendix A]{Derksen}, which will be woven into the fabric of the proof.

\begin{proof}
We proceed exactly as in \Cref{outline}. \Cref{positivecodim} allows us to safely discard any region of positive codimension in $\{\sum x_i=d\}$, and  starting with replacing $P_M$ with $(M,0)$, \Cref{explsub} allows us to augment each step of the algorithm as described. Finally, \cref{matroidconelemma} allows us to combine terms at the end when we reduce to strictly positive matroidal cones.

For the interested reader, we supply a sketch of the algorithm to reduce to translates of strictly positive matroidal cones and a proof that there are no relations in $S(d,n)$ between translates of strictly positive matroidal cones.

Define a \emph{positive matroidal polytope} to be a polytope all of whose edges are in directions $e_i - e_j$ and all of whose unbounded edges are in directions $e_i - e_j$ with $i > j$. For example, any rank polytope is a positive matroidal polytope. The algorithm proceeds in such a way that all polytopes described below are positive matroidal polytopes. Take constants $1 \ll \lambda_n \ll \lambda_{n-1} \ll \cdots \ll \lambda_1$. Given a polytope $P$ which appears in the algorithm, take $k \ge -1$ minimal such that $P^k=P\cap \bigcap_{i=n-k}^n \{x_i=\lambda_i\}$ is not a positive matroidal cone. If no such $k$ exists, then the direction vectors connecting the apexes of the cones $P^\ell$ for $\ell=-1,0,1,\ldots,n-1$ realize $P$ as a strictly positive matroidal cone. Otherwise, we claim that $P^k$ is a positive matroidal polytope. Indeed let $A$ be the set of indices $i$ for which $e_{n-k}-e_i$ is a direction vector contained inside the positive matroidal cone $P^{k-1}$ and let $C$ be the positive matroidal subcone of $P^{k-1}$ generated by those $e_i-e_j\in P^{k-1}$ with $i,j < n-k$. Then for $v$ the apex of $P^{k-1}$, we have $P^k=v+(\lambda_{n-k}-v_{n-k})\conv_{i \in A}(e_{n-k}-e_i)+C$. Hence since the edge directions of a Minkowski sum of polytopes are a subset of the edge directions of those polytopes (see \Cref{polyapp}), the claim follows.

Since $P^k$ is a positive matroidal polytope that is not a positive matroidal cone, it is not a cone. Choose a bounded edge of $P^k$, which by the above discussion is parallel to $e_i-e_j$ for some $i>j$. In the shadow facet subdivision of $P^k + \mb{R}_{\ge 0}v$ with $v=e_i-e_j$, both the polytope $P^k+\mb{R}_{\ge 0}v$ and the $P^k_i+\mb{R}_{\ge 0}v$ for shadow facets $P^k_i$ have strictly fewer vertices than $P^k$, as their vertices are all subsets of $\Vt(P)$ containing at most one of the two vertices of the chosen edge. Furthermore, this shadow facet decomposition is induced by the shadow facet decomposition of $P+ \mb{R}_{\ge 0}v$ after intersecting with $\bigcap_{i=n-k}^n\{x_i=\lambda_i\}$, and the polytopes $P+\mb{R}_{\ge 0}v$ and the $P_i$ remain cones up until this intersection. Hence as the number of vertices of the polytopes intersected with $\bigcap_{i=n-k}^n\{x_i=\lambda_i\}$ strictly decreases in each application, we will eventually reduce down to polytopes whose intersections with $\bigcap_{i=n-k}^n\{x_i=\lambda_i\}$ are positive matroidal cones. Since we can now decrement $k$, we eventually reduce all polytopes to strictly positive matroidal cones.

Now, we argue why there are no relations between translates of strictly positive matroidal cones. Indeed, suppose we have a combination $\sum a_i \langle P_i \rangle=0$. Then as the cones are positive matroidal, if one of these cones intersects a sufficiently small neighborhood of the lexicographically first vertex $v$, then it has this vertex as an apex. By considering this small neighborhood of $v$, we deduce that if we rectrict our sum to just those cones with apex $v$ we still get zero. By induction on the number of cones, we may assume that all cones have apex $v$. Take $\lambda \gg 0$, and consider the intersections with $x_n=\lambda$. These intersections remain strictly positive matroidal cones, and $\sum a_i\langle P_i \cap \{x_n=\lambda\}\rangle=0$ in $S(d-\lambda,n-1)\times \{\lambda\}$. Hence, we are done by induction on dimension.
\end{proof}

\section{Parallel and Series Connection, Existence of Rational Functions}
\label{ParAndSer}
%Parallel and series connection are two of the most fundamental operations one can apply to matrices or matroids, and consequently arise in a myriad of contexts. For us, their appearances as operations on vector configurations, operations on matroid polytopes, and operations in quantum cohomology via the gluing operations on stable curves (the last of which we do not know a reference for, but will discuss in detail) are at the heart of this paper.

In this section, we show that the generalized matrix orbit classes of the series and parallel connection of two matrices (\Cref{serpardef}) can be computed from the generalized matrix orbit classes of the matrices themselves (\Cref{serpar}). Combining this with the additivity result of the previous section, we will prove that the generalized matrix orbit class of a matrix takes a very particular form (\Cref{formulaexists}). In \Cref{formulasection}, we will combine \Cref{formulaexists} with the non-equivariant results of \Cref{noneqsection} to compute $\opc{M}$ for any matrix $M$.

%We recall the geometric definition of series and parallel connection in terms of vector configurations. Suppose we have, for $i=1,2$, a collection $\mc{C}_i$ spanning dimension $d_i$ vector spaces $W_i$ with vectors $w_1 \in \mc{C}_1$ and $w_2 \in \mc{C}_2$. Then the parallel connection configuration associated to $w_1,w_2$ is the collection of $n_1+n_2-1$ images of vectors from $\mc{C}_1$ and $\mc{C}_2$ which span $(W_1 \oplus W_2)/ (w_1=w_2)$, where we identify $w_1$ with $w_2$. The rank of the new configuration is $d_1+d_2-1$. The series connection is defined instead by taking the two configurations separately in $W_1 \oplus \{0\}$ and $\{0\} \oplus W_2$, and replacing $w_1$ and $w_2$ by a generic vector on the line between $w_1$ and $w_2$. The rank of the new configuration is $d_1+d_2$.
%
%\begin{Rem}
%For configurations of vectors in projective spaces, the parallel and series configurations are not uniquely defined, but all choices are equivalent up to $GL$-action.
%
%Also, the series connection can alternatively be constructed by taking the configuration of 3 distinct points on $\mb{P}^1$, using parallel connection to glue $w_1$ and $w_2$ to two of these points, and then restricting away from $w_1$ and $w_2$, leaving only the third point on the line between $w_1$ and $w_2$.
%\end{Rem}

The next theorem, \Cref{serparintro} from the introduction, relates the classes
\begin{align*}
\opc[r][n_1]{M_1} &\in A^\bullet_{GL_{r+1}} ((\mb{P}^r)^{n_1}) = A^\bullet_{GL_{r+1}} ((\mb{P}^r)^{n_1-1}) \otimes A^\bullet_{GL_{r+1}} (\mb{P}^r) \\
\opc[r][n_2]{M_2} &\in A^\bullet_{GL_{r+1}} ((\mb{P}^r)^{n_2}) = A^\bullet_{GL_{r+1}} (\mb{P}^r) \otimes A^\bullet_{GL_{r+1}} ((\mb{P}^r)^{n_2-1}) \\
\opc[r][n_1 + n_2 - 1]{P(M_1, M_2)} &\in A^\bullet_{GL_{r+1}} ((\mb{P}^r)^{n_1 + n_2 - 1}) = A^\bullet_{GL_{r+1}} ((\mb{P}^r)^{n_1-1}) \otimes A^\bullet_{GL_{r+1}} (\mb{P}^r) \otimes A^\bullet_{GL_{r+1}} ((\mb{P}^r)^{n_2-1}) \\
\opc[r][n_1 + n_2 - 1]{S(M_1, M_2)} &\in A^\bullet_{GL_{r+1}} ((\mb{P}^r)^{n_1 + n_2 - 1}) = A^\bullet_{GL_{r+1}} ((\mb{P}^r)^{n_1-1}) \otimes A^\bullet_{GL_{r+1}} (\mb{P}^r) \otimes A^\bullet_{GL_{r+1}} ((\mb{P}^r)^{n_2-1})
\end{align*}

using the quantum product $\star$ from \Cref{QCRD}.

\begin{Thm}\label{serpar} For $i \in \{1,2\}$ let $M_i$ be a $d_i \times n_i$ matrix. We have the equality of equivariant Chow classes $$\opc[r][n_1]{M_1} \star \opc[r][n_2]{M_2}=\opc[r][n_1+n_2-1]{P(M_1,M_2)}+\hbar\opc[r][n_1+n_2-1]{S(M_1,M_2)}$$ where the quantum product $\star$ is taken along the last factor of $A^\bullet_{GL_{r+1}} ((\mb{P}^r)^{n_1}) = A^\bullet_{GL_{r+1}} ((\mb{P}^r)^{n_1-1}) \otimes A^\bullet_{GL_{r+1}} (\mb{P}^r)$ and the first factor of $A^\bullet_{GL_{r+1}} ((\mb{P}^r)^{n_2}) = A^\bullet_{GL_{r+1}} (\mb{P}^r) \otimes A^\bullet_{GL_{r+1}} ((\mb{P}^r)^{n_2-1})$.
\end{Thm}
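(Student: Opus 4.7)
The plan is to reduce the identity to the standard three-point Gromov--Witten computation on $\mb{P}^r$ by interpreting the parallel and series connections as explicit convolution operations on $(\mb{P}^r)^3$. Let $\Delta\subset (\mb{P}^r)^3$ denote the small diagonal and $L\subset (\mb{P}^r)^3$ the closure of the locus of three distinct collinear points. Consider $(\mb{P}^r)^{n_1+n_2+1}$ with factors labeled by $1,\ldots,n_1,\,n_1{+}1,\,n_1{+}2,\ldots,n_1{+}n_2{+}1$, and let $\pi_1$ and $\pi_2$ be the projections to the first $n_1$ factors and last $n_2$ factors, respectively; let $\tau:(\mb{P}^r)^{n_1+n_2+1}\to (\mb{P}^r)^{n_1+n_2-1}$ forget factors $n_1$ and $n_1+2$, keeping the bridge factor $n_1+1$.

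First I would establish the geometric convolution identities
\[
\opc{P(M_1,M_2)}=\tau_*\bigl(\pi_1^*\opc{M_1}\cdot \pi_2^*\opc{M_2}\cdot [\Delta]_{n_1,n_1+1,n_1+2}\bigr),
\]
\[
\opc{S(M_1,M_2)}=\tau_*\bigl(\pi_1^*\opc{M_1}\cdot \pi_2^*\opc{M_2}\cdot [L]_{n_1,n_1+1,n_1+2}\bigr).
\]
This is verified directly from the matrix multiplication defining the resolution $\wt\mu_{P(M_1,M_2)}$ and $\wt\mu_{S(M_1,M_2)}$. For the parallel case, writing a matrix in $\mb{P}^{(r+1)\times(d_1+d_2-1)-1}$ as $(A_1\mid A_2)$ in the block decomposition compatible with $P(M_1,M_2)$, the column in position $n_1$ of $A\cdot P(M_1,M_2)$ is simultaneously the last column of the $M_1$-side output and the first column of the $M_2$-side output, matching the fiber product of $\op{M_1}$ and $\op{M_2}$ over the shared $\mb{P}^r$. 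For the series case, the $n_1$-th projectivized column equals $[A_1 x_{1,n_1}+A_2 x_{2,1}]$, which lies on the line through $[A_1 x_{1,n_1}]$ and $[A_2 x_{2,1}]$, exactly matching the collinearity cycle $L$ at positions $n_1, n_1+1, n_1+2$. In both cases the map from the convolution locus to $\op{P(M_1,M_2)}$ (respectively $\op{S(M_1,M_2)}$) is generically a bijection, so the multiplicities are one, and the identities above follow.

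Next I would apply the standard three-point identity for the small equivariant quantum product of $\mb{P}^r$: for $\alpha,\beta\in A^\bullet_{GL_{r+1}}(\mb{P}^r)$,
\[
\alpha\star\beta=\rho_*\bigl(\pi_{12}^*(\alpha\otimes\beta)\cdot[\Delta]\bigr)+\hbar\,\rho_*\bigl(\pi_{12}^*(\alpha\otimes\beta)\cdot[L]\bigr),
\]
where $\rho,\pi_{12}$ are the projections of $(\mb{P}^r)^3$ to the third factor and first two factors. By $A^\bullet_{GL_{r+1}}(\pt)$-linearity this reduces to the case $\alpha=H^a,\beta=H^b$; the right-hand side can be computed by expanding the equivariant class of $\Delta$ (an explicit symmetric polynomial in $H_1,H_2,H_3,t_0,\ldots,t_r$) and the class of $L$, and matching against the abstract definition of $\star$ from \Cref{QCRD}. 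Combining this identity, applied along the shared tensor factor, with the convolution descriptions of Step~1, and using the Künneth decomposition $\opc{M_i}=\sum_j \alpha_j\otimes\beta_j$ over the shared factor, gives the claimed equality.

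The main obstacle is the verification of the convolution identities in Step 1 at the level of cycle classes rather than just set-theoretically; one must check that no extra components appear in the convolution and that the generic-to-one map between the cycles indeed has multiplicity one. This follows from the definition of $\op{M}$ as $(\wt\mu_M)_*[B]$ together with the explicit parameterizations of $\mu_{P(M_1,M_2)}$ and $\mu_{S(M_1,M_2)}$ given by the block decompositions above, which exhibit the convolution locus as birational to the generic locus of the resolution $B$ of $\mu_{P(M_1,M_2)}$ or $\mu_{S(M_1,M_2)}$.
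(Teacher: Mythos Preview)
Your approach is correct and essentially the same as the paper's. The paper organizes the argument slightly differently: it proves the parallel identity directly as a cup product $p_1^*\opc{M_1}\cap p_2^*\opc{M_2}=\opc{P(M_1,M_2)}$ in $(\mb{P}^r)^{n_1+n_2-1}$ (via a fiber-product-of-resolutions argument, \Cref{parlem}) rather than as a diagonal convolution in $(\mb{P}^r)^{n_1+n_2+1}$; it then reduces the series case to the parallel case by the matrix identity $S(M_1,M_2)\cong P(P(M_1,L),M_2)$ with two columns deleted (combined with \Cref{integrateaway}), rather than analyzing the series convolution directly as you do; and for the three-point identity with $L$ (\Cref{SCL}) it uses a short argument with $T$-invariant linear subspaces $\Lambda_I,\Lambda_J$ rather than computing $\opc{L}$ explicitly. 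These are presentational differences only---the underlying geometric content (diagonal $=$ cup product, collinearity cycle $=$ $\hbar$-part of the quantum product) is identical to your plan.
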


\begin{Thm}\label{formulaexists}
For every matrix $M$, there is a rational function $Q_{M}(x_1,\ldots,x_n,y_1,\ldots,y_n)$ independent of $r$ such that $Q_{M}(z_1,\ldots,z_n,F(z_1),\ldots,F(z_n))$ is a polynomial of degree at most $r$ in the indeterminates $z_i$ that yields $\opc{M}$ after substituting $z_i = H_i$.
\end{Thm}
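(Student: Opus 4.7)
\textbf{Proof proposal for \Cref{formulaexists}.} The plan is to first establish the statement for series-parallel matrices by structural induction using \Cref{serpar}, then extend to arbitrary matrices by invoking additivity (\Cref{realval}) together with the combinatorial fact that every matroid polytope admits a subdivision into series-parallel matroid polytopes.

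For the base case, let $M = \ast$ be a $1 \times 1$ matrix. Then $\mu_M$ is the identity map, so $\op{M} = \mb{P}^r$ and $\opc{M} = 1 \in A^\bullet_{GL_{r+1}}(\mb{P}^r)$. We take $Q_\ast(x_1, y_1) = 1$, which manifestly satisfies the required conditions. For the inductive step, suppose $M_1$ and $M_2$ are series-parallel matrices admitting rational functions $Q_{M_1}$ and $Q_{M_2}$ as in the statement. By \Cref{serpar},
\[
\opc{P(M_1,M_2)} + \hbar\,\opc{S(M_1,M_2)} \;=\; \opc{M_1} \star \opc{M_2},
\]
where $\star$ is taken along the variable, say $z$, shared between the last column of $M_1$ and the first column of $M_2$. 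Recall that for polynomials $f, g$ of degree at most $r$ in $z$, the product $f\star g$ is $a(z) + b(z)\hbar$ with $f(z)g(z) = a(z) + b(z)F(z)$, $\deg a, \deg b \leq r$.

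The technical heart of the induction is to show that if $\opc{M_1}$ and $\opc{M_2}$ arise from rational functions $Q_{M_1}$ and $Q_{M_2}$, then so do $\opc{P(M_1,M_2)}$ and $\opc{S(M_1,M_2)}$. For this, write the polynomial in $z$ representing $\opc{M_1}$ via a partial-fraction identity in the shared variable, so that the $r$-dependence is packaged entirely into the substitution $y \mapsto F(z)$. Concretely, performing polynomial division in $z$ gives $Q_{M_1}(z_1,\ldots,z_{n_1-1},z,F(z_1),\ldots,F(z_{n_1-1}),F(z)) = u(z) + v(z)\,F(z)$ for rational functions $u, v$ in the other variables, and similarly for $Q_{M_2}$. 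Multiplying out and separating the coefficient of $F(z)^0$ from that of $F(z)$ (using $F(z)^2 = F(z)\hbar$ modulo higher order terms, and noting one only needs the coefficient of $\hbar^0$ and $\hbar^1$ because of the $\star$-product structure) then produces rational functions $Q_{P(M_1,M_2)}$ and $Q_{S(M_1,M_2)}$ with the required properties. Crucially, the entire construction is $r$-independent because the only place $r$ enters is through $F$, and all bookkeeping is done formally in the variables $y_i$.

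For a general $d \times n$ matrix $M$, we invoke the fact that any realizable matroid polytope $P_M$ admits a subdivision into series-parallel matroid polytopes $P_{M_1}, \ldots, P_{M_k}$. Combined with the shadow-facet machinery of \Cref{AMOC}, this yields an additive relation $\langle M\rangle = \sum_i \epsilon_i \langle M_i\rangle$ with $M_i$ series-parallel and $\epsilon_i = \pm 1$. By \Cref{realval}, $\opc{M} = \sum_i \epsilon_i \opc{M_i}$, so taking $Q_M = \sum_i \epsilon_i Q_{M_i}$ completes the proof. The main obstacle is the bookkeeping in the inductive step: one must verify that the partial-fraction manipulation of $Q_{M_1}$ and $Q_{M_2}$ that computes the $\star$-product can be carried out formally in the $(x, y)$-variables without any reference to $r$, and that the resulting $Q$ produces a genuine polynomial (not just a rational function) of degree at most $r$ in each $z_i$ after the substitution $y_i = F(z_i)$. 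This last polynomiality is automatic once one observes that the singular loci $\{z_i = z_j\}$ of the partial-fraction form cancel symmetrically, mirroring the cancellations in \Cref{finkformulaintro}.
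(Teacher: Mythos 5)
Your overall route is the same as the paper's: establish the result for series-parallel matrices by induction on the series/parallel construction using \Cref{serpar}, then pass to arbitrary matrices via additivity and a decomposition into series-parallel rank polytopes. The reduction of the general case is essentially correct: the paper obtains the signed combination $\langle M\rangle=\sum_i\epsilon_i\langle M_i\rangle$ from Derksen's Schubert expansion (\Cref{SchubertFinkThm}) followed by an explicit subdivision of each full-dimensional Schubert rank polytope into series-parallel ones; you assert the analogous combinatorial fact without proof, but given \Cref{realval} any such additive relation suffices, so this part is only incomplete, not wrong.

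The genuine gap is in the inductive step. Knowing only that $\opc{M_1}$ and $\opc{M_2}$ are of the form $Q_{M_i}(\ldots,z,\ldots,F(z))$ with degree at most $r$ in the shared variable $z$ is not enough, and the concrete manipulation you describe --- ``performing polynomial division in $z$ gives $Q_{M_1}(\ldots)=u(z)+v(z)F(z)$'' --- is vacuous: the expression already has degree at most $r$ in $z$, so $u=Q_{M_1}$ and $v=0$. The real difficulty is that $\opc{M_1}\star\opc{M_2}$ requires reducing a degree-$2r$ polynomial in $z$ modulo $F(z)$, and one must show that this reduction is itself captured by an $r$-independent rational function in the $(x,y)$-variables; for arbitrary $Q_{M_1},Q_{M_2}$ this does not follow. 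The paper handles it by carrying a \emph{strengthened} induction hypothesis (\Cref{serparform}): the $z$-dependence of $\opc{M}$ is entirely through the kernels $\frac{F(z)-F(H_i)}{z-H_i}$, i.e.\ $\opc{M}=\sum_i Q_{M,i}\cdot\frac{F(z)-F(H_i)}{z-H_i}$, with base case the diagonal class $\frac{F(H_1)-F(H_2)}{H_1-H_2}$ of a general $1\times 2$ matrix (\Cref{diagclass}). For a product of two such kernels there is an explicit, $r$-independent partial-fraction identity splitting it into a combination of single kernels plus $F(z)$ times a combination of single kernels, which by \Cref{serpar} identifies the two pieces as $\opc{P(M_1,M_2)}$ and $\opc{S(M_1,M_2)}$ and visibly preserves the kernel form for the next step. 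Without maintaining this form (your base case $Q_\ast=1$ already discards it), the induction does not close. Your closing appeal to ``symmetric cancellation on the loci $\{z_i=z_j\}$'' for polynomiality is also unnecessary once the strengthened hypothesis is in place, since each kernel is already a polynomial and the assembled expression equals a Chow class.
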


\subsection{Parallel Connection and Series Connection}
The goal of this subsection is to prove \cref{serpar}. We start by proving the parallel connection formula.

%Need to add notation explaining what this \hbar stuff is %I did that before this subsection a bit.
\begin{Lem}\label{parlem}
Let $p_1 : (\mb{P}^r)^{n_1 + n_2 - 1} \to (\mb{P}^r)^{n_1}$ be the projection to the first $n_1$ factors and let $p_2 : (\mb{P}^r)^{n_1 + n_2 - 1} \to (\mb{P}^r)^{n_2}$ be the projection to the last $n_2$ factors. Then we have $$p_1^*\opc[r][n_1]{M_1} \cap p_2^*\opc[r][n_2]{M_2}=\opc[r][n_1+n_2-1]{P(M_1,M_2)}.$$ (Here we write the intersection product as $\cap$ to distinguish it from the quantum product $\star$.)
\end{Lem}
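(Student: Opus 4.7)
The plan is to recognize both sides of the desired identity as the diagonal pullback of the product cycle $[\op{M_1}] \times [\op{M_2}]$. Writing $\Delta = (p_1, p_2) : (\mb{P}^r)^{n_1+n_2-1} \to (\mb{P}^r)^{n_1} \times (\mb{P}^r)^{n_2}$, whose image is the ``diagonal'' where the $n_1$-th coordinate of the first factor is identified with the first coordinate of the second, one has immediately $p_1^*\opc[r][n_1]{M_1} \cap p_2^*\opc[r][n_2]{M_2} = \Delta^*([\op{M_1}] \times [\op{M_2}])$, so the task reduces to showing $\Delta^*([\op{M_1}] \times [\op{M_2}]) = \opc[r][n_1+n_2-1]{P(M_1,M_2)}$. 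If either $M_i$ has rank strictly less than $d_i$, the parallel connection $P(M_1,M_2)$ likewise fails to have full rank (its rank polytope is a fiber product of the two rank polytopes), and both sides vanish by \Cref{positivecodim}; we therefore assume each $M_i$ has full rank.

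I would first establish the set-theoretic equality $\Delta^{-1}(\op{M_1} \times \op{M_2}) = \op{P(M_1,M_2)}$. The forward containment $\op{P(M_1,M_2)} \subseteq \Delta^{-1}(\op{M_1} \times \op{M_2})$ is direct: a matrix $A \in \mb{A}^{(r+1)\times(d_1+d_2-1)}$ realizing a point of $\op{P(M_1,M_2)}$ restricts to matrices $A_1, A_2$ via lifts of the summands $W_1, W_2$ of the quotient $(W_1 \oplus W_2)/\langle x_{1,n_1}-x_{2,1}\rangle$, and these witness the projections $p_1, p_2$ of our point as lying in $\op{M_1}$ and $\op{M_2}$ respectively. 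Conversely, at a generic point of $\Delta^{-1}(\op{M_1} \times \op{M_2})$ one chooses matrices $A_1, A_2$ realizing the two projections; the shared middle coordinate forces $A_1 x_{1,n_1}$ and $A_2 x_{2,1}$ to be nonzero and proportional in $K^{r+1}$, so rescaling $A_2$ allows us to glue them along the identified column into a single matrix on the quotient space, realizing the point as an element of $\op{P(M_1,M_2)}$. A dimension count confirms that $\op{P(M_1,M_2)}$ has the expected codimension equal to the sum of the codimensions of $p_1^{-1}(\op{M_1})$ and $p_2^{-1}(\op{M_2})$.

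To upgrade the set-theoretic equality to an equality of cycles with multiplicity one, I would verify generic transversality of $\Delta$ with $\op{M_1} \times \op{M_2}$. At a generic point $y$ of $\op{P(M_1,M_2)}$ the projections $p_1(y)$ and $p_2(y)$ are generic in $\op{M_1}$ and $\op{M_2}$ respectively (each $p_i$ is dominant by the gluing construction above), hence lie in the smooth locus of their respective orbit closure. The diagonal $GL_{r+1}$-action makes the coordinate projection $\op{M_i} \to \mb{P}^r$ (onto the middle factor) a submersion at generic points, because one-parameter subgroups of $GL_{r+1}$ surject onto $T\mb{P}^r$. Consequently the tangent spaces to $\op{M_1} \times \op{M_2}$ and to the image of $\Delta$ span the ambient tangent space at $\Delta(y)$, yielding $\Delta^*([\op{M_1}] \times [\op{M_2}]) = [\op{P(M_1,M_2)}]$ with multiplicity one. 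The main obstacle is making this transversality rigorous in arbitrary characteristic, where Kleiman-type genericity arguments are unavailable; I would address this either by lifting the computation to the wonderful compactification of \cite{LiLi} used in \Cref{Setup}, where the relevant strata become smooth and the transverse intersection can be checked by explicit local coordinates on the matrix factors, or by using additivity from \Cref{realval} to reduce to a configuration where the singular locus is avoided.
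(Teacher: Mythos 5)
Your reduction to showing $\Delta^*([\op{M_1}]\times[\op{M_2}])=\opc[r][n_1+n_2-1]{P(M_1,M_2)}$ is exactly the paper's starting point, but the way you propose to establish that identity has two genuine gaps. First, your set-theoretic identification of $\Delta^{-1}(\op{M_1}\times\op{M_2})$ with $\op{P(M_1,M_2)}$ only argues the reverse containment ``at a generic point,'' by choosing honest matrices $A_1,A_2$ realizing the two projections. This handles components of the intersection that meet the image of $\mu_{M_1}\times\mu_{M_2}$, but it does not rule out components of $\Delta^{-1}(\op{M_1}\times\op{M_2})$ contained entirely in the boundary $\op{M_i}\setminus\im(\mu_{M_i})$, where points need not be realized by matrices with all $Ax_{i,j}\neq 0$. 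Any such component would either violate properness of the intersection or contribute an extra term to $\Delta^![\op{M_1}\times\op{M_2}]$, and nothing in your dimension count excludes this. Second, the transversality/multiplicity-one step is, as you concede, not carried out: the orbit closures are singular, and neither of your proposed repairs is precise enough to close the argument (appealing to \Cref{realval} to ``avoid the singular locus'' does not obviously make sense, since additivity is a statement about classes, not about choosing nicer representatives of a fixed intersection).

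The paper avoids both problems by never intersecting downstairs. It takes resolutions $B_i\to\mb{P}(V\otimes W_i^\vee)$ of $\mu_{M_i}$, forms the fiber product $B=B_1\times_{\mb{P}(V)}B_2$ over the two evaluation maps $\phi_i=\pi_i\circ\wt\mu_{M_i}$ at the shared column, and checks that $B$ is birational to $\mb{P}(V\otimes W^\vee)$ and resolves $\mu_{P(M_1,M_2)}$ (this is the analogue of your gluing argument, carried out on the resolutions). Then $\phi_1\times\phi_2:B_1\times B_2\to\mb{P}(V)^2$ is flat by miracle flatness (isomorphic, hence equidimensional, fibers between smooth varieties), $\wt\Delta:B\hookrightarrow B_1\times B_2$ is the pullback of the regular embedding $\Delta:\mb{P}(V)\hookrightarrow\mb{P}(V)^2$, and the compatibility of refined Gysin pullback with proper pushforward (\cite[Theorem 6.2]{Fulton}) gives $\Delta^*(\opc{M_1}\times\opc{M_2})=\wt\mu_*[B]=\opc{P(M_1,M_2)}$ with no transversality or multiplicity computation and uniformly in all characteristics. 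Your first suggested fix (pass to the wonderful compactification) is pointing at the right objects, but the key idea you are missing is that one should exploit flatness of the evaluation maps and functoriality of the Gysin map rather than attempt a coordinate check of transversality.
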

\begin{proof}
%It's a long setup, but I guess this whole thing is a tautology if you set it up right

Let $W_1 = K^{n_1}$ and $W_2 = K^{n_2}$. Let $x_{1, 1}, \ldots, x_{1, n_1} \in W_1$ be the columns of $M_1$ and let $x_{2, 1}, \ldots, x_{2, n_2} \in W_2$ be the columns of $M_2$. Define $W = (W_1 \oplus W_2) / \langle x_{1, n_1} - x_{2, 1}\rangle = K^{n_1 + n_2-1}$. The columns of $P(M_1, M_2)$ are the vectors $x_{1, 1}, \ldots, x_{1, n_1} = x_{2, 1}, \ldots, x_{2, n_2}$ considered as elements of $W$.

Let $V = K^{r+1}$. For $i \in \{1,2\}$, we can think of the orbit map of $M_i$ as the map $\mu_{M_i} : \mb{P}(V \otimes W_i^\vee) \dashrightarrow \mb{P}(V)^n$ defined by the formula $\mu_{M_i}(\wt A) = (\wt{Ax_{i,1}}, \ldots, \wt{Ax_{i,n_i}})$ whenever $A \in \Hom(W, V) = V \otimes W_i^\vee$ is a matrix such that $A x_{i,j} \neq 0$ for $1 \leq j \leq n_i$.

For $i \in \{1, 2\}$, let $\wt \mu_{M_i} : B_i \to \mb{P}(V)^{n_i}$ be a regular map resolving the orbit map $\mu_{M_i}$.
\begin{center}
\begin{tikzcd}
B_1 \arrow[d, "\pi_{M_1}"] \arrow[dr, "\wt{\mu}_{M_1}"]& & B_2 \arrow[d, "\pi_{M_2}"] \arrow[dr, "\wt{\mu}_{M_2}"]&\\
\mb{P}(V\otimes W_1^{\vee}) \arrow[dashed,r,"\mu_{M_1}"]& \mb{P}(V)^{n_1} & \mb{P}(V\otimes W_2^{\vee}) \arrow[dashed,r,"\mu_{M_2}"]& \mb{P}(V)^{n_2}
\end{tikzcd}
\end{center}

Let $\pi_1: \mb{P}(V)^{n_1}\to \mb{P}(V)$ be the projection to the last factor and let $\pi_2: \mb{P}(V)^{n_2}\to \mb{P}(V)$ be the projection to the first factor. Define $B = B_1 \times_{\mb{P}(V)} B_2$, where the fiber product is taken over the maps $\phi_i : \pi_i \circ \wt{\mu}_{M_i} : B_i \to \mb{P}(V)$. Then the fiber product of the morphisms $\wt{\mu}_{M_1}$ and $\wt{\mu}_{M_2}$ is a morphism $\wt \mu: B \to \mb{P}(V)^{n_1} \times_{\mb{P}(V)} \mb{P}(V)^{n_2} = \mb{P}(V)^{n_1 + n_2 - 1}$. We claim that $\wt{\mu}$ resolves the orbit map $\mu_{P(M_1, M_2)}$ up to birationality on the domain; that is, there exists a birational map $\psi$ that makes the below diagram commute.
\begin{equation}\label{parallelresolution}
\begin{tikzcd}[column sep=1.5in]
B \arrow[dashed, d, "\psi"] \arrow[dr, "\wt\mu"]&\\
\mb{P}(V\otimes W^{\vee}) \arrow[dashed,r, "\mu_{P(M_1, M_2)}"]& \mb{P}(V)^{n_1+n_2-1}
\end{tikzcd}
\end{equation}
For $i \in \{1, 2\}$, let \[U_i = \{\wt{A} \mid A \in V \otimes W_i^\vee, \text{$A x_{i,j}\neq 0$ for $1 \leq j \leq n_i$} \} \subset \mb{P}(V\otimes W_i^{\vee})\] be the locus of definition of $\mu_{M_i}$ and let \[U = \{\wt{A} \mid A \in V \otimes W^\vee, \text{$A x_{i,j}\neq 0$ for $i \in \{1, 2\}$ and $1 \leq j \leq n_i$} \} \subset \mb{P}(V\otimes W^{\vee})\] be the locus of definition of $\mu_{P(M_1, M_2)}$. To prove the existence of $\psi$, we will define $\psi^{-1}$ on $U$ and show that it maps $U$ birationally to $B$.

Observe that the inclusion $W_i \to W$ induces a projection $\mb{P}(V\otimes W^\vee) \dashrightarrow \mb{P}(V \otimes W_i^\vee)$ that restricts to a regular map $q_i : U \to U_i$. These fit into a fiber diagram
\begin{center}
\begin{tikzcd}
U \arrow[d, "q_2"] \arrow[r, "q_1"]& U_1\arrow[d, "\pi_1 \circ \mu_{M_1}"] \\
U_2\arrow[r, "\pi_2 \circ \mu_{M_2}"] &\mb{P}(V)
\end{tikzcd}
\end{center}
where the maps $\pi_i \circ \mu_{M_i} : U_i \to \mb{P}(V)$ are given by $(\pi_i\circ \mu_{M_i})(\wt{A}) = \wt{A x_{1, n_1}} = \wt{A x_{2, 1}}$ for $A \in V \otimes W_i^\vee$. (Here we consider $x_{1, n_1} = x_{2, 1}$ as an element of the intersection $W_1 \cap W_2 \subset W$.)

Hence we can define $\psi^{-1}:U \to B$ as the fiber product of the maps $\pi_{M_i}^{-1}|_{U_i} : U_i \to B_i$. This maps $U$ birationally to $B$ because each $\pi_{M_i}^{-1}|_{U_i}$ maps $U_i$ birationally to $B_i$. The fact that the birational inverse $\psi$ of $\psi^{-1}$ makes the diagram \eqref{parallelresolution} commute follows from the definitions.
%Hey Mitchell, when you say that the \tilde{\mu}_{M_i} ``resolve'' the orbit map, does that mean that $\pi$ is an isomorphism away from the indeterminacy locus?
%Yes
%Cool Cool. %This proof makes way more sense!

By further resolving $\psi$, we immediately see that $\wt \mu_*[B] = \opc[r][n_1+n_2-1]{P(M_1, M_2)}$.

To finish, consider the diagram below where each square is a fiber diagram. By \Cref{RMM}, we may assume that $B_1$ and $B_2$ are smooth and carry actions of $GL(V)$.
\begin{center}
\begin{tikzcd}
B \arrow[r,hook, "\wt{\Delta}"] \arrow[d, "\wt \mu"] \arrow[dd, swap, bend right=100, "\phi"] &B_1 \times B_2\arrow[d,"\wt{\mu}_{M_1} \times \wt{\mu}_{M_2}"] \arrow[dd, bend left=100, "\phi_1 \times \phi_2"]\\
\mb{P}(V)^{n_1+n_2-1} \arrow[d] \arrow[r,hook] & \mb{P}(V)^{n_1+n_2}\arrow[d]\\
\mb{P}(V) \arrow[r,hook, "\Delta"] & \mb{P}(V)^2
\end{tikzcd}
\end{center}
The map $\phi_1\times \phi_2$ is flat because its fibers are isomorphic, so in particular equidimensional, and the source and target are smooth \cite[Tag 00R4]{stacks-project}. The map $\wt{\Delta}$ is the pullback of the regular embedding $\Delta$ along $\phi_1 \times \phi_2$, so it is also a regular embedding.
\begin{comment} Therefore, we have the equality of cycles \cite[Tag 02RG]{stacks-project}
\begin{align*}
\Delta^{*}(\phi_1\times \phi_2)_{*}(\alpha_1\alpha_2)=\psi_{*}\wt{\Delta}^{*}(\alpha_1\alpha_2).
\end{align*}
The right side of the equation is precisely
\begin{align*}
\int_{\mb{P}(V)^{n_1+n_2-1}\xrightarrow{\pi}\mb{P}(V)}\left(\opc[r][n_1+n_2-1]{P(M_1,M_2)}\alpha_1\alpha_2\right)
\end{align*}
and the left side of the equation is
\begin{align*}
\Delta^{*}\left(\int_{\mb{P}(V)^{n_1}\xrightarrow{\pi_1}\mb{P}(V)}{\opc[r][n_1]{M_1}\alpha_1},\int_{\mb{P}(V)^{n_2}\xrightarrow{\pi_2}\mb{P}(V)}{\opc[r][n_2]{M_2}\alpha_2}\right)
\end{align*}
which is exactly the product 
\begin{align*}
\left(\int_{\mb{P}(V)^{n_1}\xrightarrow{\pi_1}\mb{P}(V)}{\opc[r][n_1]{M_1}\alpha_1}\right)\left(\int_{\mb{P}(V)^{n_2}\xrightarrow{\pi_2}\mb{P}(V)}{\opc[r][n_2]{M_2}\alpha_2}\right)
\end{align*}
by the definition of the intersection product \cite[Section 8.1]{Fulton}. 
\end{comment}
By \cite[Theorem~6.2]{Fulton}, the pushforward of the fundamental class of $B$ along $\wt\mu$, i.e. $\opc[r][n_1+n_2-1]{P(M_1,M_2)}$, is equal to the pullback along $\Delta$ of the pushforward of the fundamental class of $B_1 \times B_2$ to $\mb{P}(V)^{n_1+n_2}$, which is the exterior product $\opc[r][n_1]{M_1}\times \opc[r][n_2]{M_2}$ restricted to $\mb{P}(V)^{n_1+n_2-1}$ and is equal to $p_1^*\opc[r][n_1]{M_1} \cap p_2^*\opc[r][n_2]{M_2}$.
\end{proof}

We need two lemmas to prove the series connection formula.
\begin{Lem}\label{integrateaway}
Given a $d \times n$ matrix $M$, if $M'$ is formed from $M$ by deleting the $i$th column, then $$\int_{(\mb{P}^r)^n \xrightarrow{\pi} (\mb{P}^r)^{n-1}} \opc{M}=\opc[r][n-1]{M'},$$ where $\pi:(\mb{P}^r)^n \to (\mb{P}^r)^{n-1}$ is the projection away from the $i$th factor.
\end{Lem}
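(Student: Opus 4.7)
The plan is to observe that, after composing with the projection $\pi:(\mb{P}^r)^n\to(\mb{P}^r)^{n-1}$ that forgets the $i$th factor, the generalized matrix orbit map of $M$ becomes exactly the generalized matrix orbit map of $M'$. Concretely, if $x_1,\ldots,x_n$ are the columns of $M$, then $\mu_M(\wt A)=(\wt{Ax_1},\ldots,\wt{Ax_n})$, so $\pi\circ\mu_M=\mu_{M'}$ as rational maps $\mb{P}^{(r+1)\times d-1}\dashrightarrow(\mb{P}^r)^{n-1}$. This reduces the problem to a routine manipulation of pushforwards through a resolution.

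First, I would fix any $GL_{r+1}$-equivariant resolution $(\pi_M,\wt\mu_M)$ of $\mu_M$ as in \Cref{Setup} and verify that $(\pi_M,\pi\circ\wt\mu_M)$ is automatically a $GL_{r+1}$-equivariant resolution of $\mu_{M'}$: the map $\pi_M:B\to\mb{P}^{(r+1)\times d-1}$ is proper birational, the composition $\pi\circ\wt\mu_M:B\to(\mb{P}^r)^{n-1}$ is a regular $GL_{r+1}$-equivariant morphism, and on the open set where $\mu_M$ is defined one has $\pi\circ\wt\mu_M=\pi\circ\mu_M\circ\pi_M=\mu_{M'}\circ\pi_M$. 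Since $\op{M'}$ is independent of the choice of resolution, I can then compute
\[
\op[r][n-1]{M'}=(\pi\circ\wt\mu_M)_*\pi_M^*[\mb{P}^{(r+1)\times d-1}]=\pi_*(\wt\mu_M)_*\pi_M^*[\mb{P}^{(r+1)\times d-1}]=\pi_*\op{M},
\]
and this equality of cycles yields the claimed equality of $GL_{r+1}$-equivariant Chow classes under $\int_{(\mb{P}^r)^n\to(\mb{P}^r)^{n-1}}$.

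The only subtlety, which needs a brief sanity check rather than real argument, is the behavior of the degenerate cases covered by the convention that $\op{N}$ is zero whenever $\mu_N$ is not generically finite (equivalently, when the image has dimension less than $(r+1)d-1$). If $\op{M}=0$, the displayed chain of equalities gives $\op[r][n-1]{M'}=\pi_*\op{M}=0$. If instead $\op{M}\neq 0$ but $\op[r][n-1]{M'}=0$, then the image of $\mu_{M'}$ has dimension strictly less than $(r+1)d-1$; since this image contains $\pi$ applied to the support of $\op{M}$, the restriction of $\pi$ to that support has positive-dimensional generic fibers, forcing $\pi_*\op{M}=0$ for dimension reasons. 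So the identity $\op[r][n-1]{M'}=\pi_*\op{M}$ holds unconditionally.

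The hard part will be essentially nothing: once one unwinds the definition of $\op{M}$ as a resolution-independent pushforward and notices the elementary factorization $\mu_{M'}=\pi\circ\mu_M$, the lemma drops out by the functoriality of pushforwards.
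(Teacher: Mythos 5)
Your proposal is correct and is essentially identical to the paper's own (one-line) proof: both rest on the observation that composing a resolution of $\mu_M$ with the projection $\pi$ yields a resolution of $\mu_{M'}$, so the identity follows from functoriality of pushforward and the resolution-independence of $\op{M}$. The extra discussion of the degenerate zero cases is harmless but unnecessary, since the definition of $\op{N}$ as a pushforward of the fundamental class already makes the chain of equalities hold unconditionally.
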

\begin{proof}
Given a resolution of the indeterminacy locus $B$ for $\mu_M$, the composition $B \to \mb{P}(V)^n \to \mb{P}(V)^{n-1}$ resolves the indeterminacy locus for $\mu_{M'}$. The result follows.
\end{proof}

%Need to define F(H)
%The statement is too wordy and needs to be reworded according to our conventions 
\begin{Lem}
\label{SCL}
Let $L$ be a general $2 \times 3$ matrix (corresponding to 3 points on a line in $\mb{P}^1$). Let $\pi_1,\pi_2,\pi_3: (\mb{P}^r)^3\to \mb{P}^r$ be the three projections. Then, for any $f,g \in A^\bullet_{GL_{r+1}}(\mb{P}^r)$, we have
\begin{align*}
\int_{(\mb{P}^r)^3\xrightarrow{\pi_3}\mb{P}^r}{(\pi_1^{*}f) \cap (\pi_2^{*}g) \cap \opc[r][3]{L}}=[\hbar](f \star g),
\end{align*}

\end{Lem}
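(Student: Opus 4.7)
My plan is to prove the identity by verifying it at each torus-fixed point via Atiyah--Bott localization, reducing the problem to an explicit residue identity that encodes the Kronecker duality of $\frac{F(z)-F(H)}{z-H}$.

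By \Cref{realval}, $\opc{L}$ depends only on the matroid $U_{2,3}$ of $L$, so we may take $L = \bigl(\begin{smallmatrix}1 & 0 & 1 \\ 0 & 1 & 1\end{smallmatrix}\bigr)$ concretely. A direct check that $\mu_L$ is birational onto its image shows $\op{L}$ is the closure of the locus of ordered collinear triples in $(\mb{P}^r)^3$, with multiplicity one. For explicit calculations I would use the $GL_{r+1}$-equivariant resolution $X := \mc{L} \times_{\Gr} \mc{L} \times_{\Gr} \mc{L} \to (\mb{P}^r)^3$, where $\Gr = \Gr(2,V)$ is the Grassmannian of lines in $\mb{P}^r$ and $\mc{L} = \mb{P}(S) \to \Gr$ is the universal line; the $T$-fixed points of $X$ are indexed by pairs $(\{i,j\}, (a_1,a_2,a_3))$ with $i<j$ and $a_m \in \{i,j\}$, and the map to $(\mb{P}^r)^3$ sends such a point to $(e_{a_1}, e_{a_2}, e_{a_3})$.

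Both sides are $A^\bullet_{GL_{r+1}}(\pt)$-bilinear in $(f,g)$, so it suffices to check on monomials $f = H^a$, $g = H^b$ with $0\le a,b\le r$; since both classes live in $A^\bullet_{GL_{r+1}}(\mb{P}^r)$, it is enough to verify the identity after restriction to each torus-fixed point $e_k \in \mb{P}^r$. At such $e_k$, the right-hand side equals $q(-t_k)$ where $z^{a+b}=q(z)F(z)+R(z)$ is the polynomial division defining $[\hbar](H^a\star H^b)$. The left-hand side, via AB localization, is the weighted sum $\sum_{i,j}\frac{(-t_i)^a(-t_j)^b \,\opc{L}|_{(e_i,e_j,e_k)}}{\prod_{\ell\neq i}(t_\ell-t_i)\prod_{\ell\neq j}(t_\ell-t_j)}$, where the summand vanishes unless at least two of $i,j,k$ coincide (three generic coordinate points are not collinear for $r\ge 2$); the restrictions $\opc{L}|_{(e_i,e_j,e_k)}$ in the surviving cases can be computed by AB localization on $X$.

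The main obstacle is the resulting residue identity matching the two expressions. I expect this to follow from the Kronecker-duality identity
\[\int_{\mb{P}^r_H}\frac{F(z)-F(H)}{z-H}h(H)=\overline{h}(z),\]
proved purely algebraically and valid in any characteristic as in the proof of \Cref{opcmd}. Concretely, \Cref{SCL} reduces to establishing the closed-form expression $\opc{L} = [z^r][F(z)^1]\prod_{i=1}^3\frac{F(z)-F(H_i)}{z-H_i}$ in $A^\bullet_{GL_{r+1}}((\mb{P}^r)^3)$; after this is in place, pairing against an arbitrary test class $h(H_3)$ and extracting the coefficient of $z^r$ in $[F(z)^1](\overline{f}(z)\overline{g}(z)\overline{h}(z))$ yields the identity of \Cref{SCL}.
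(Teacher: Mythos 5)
Your reduction at the end is correct: the purely algebraic identity $\int_{\mb{P}^r}\frac{F(z)-F(H)}{z-H}h(H)=\overline{h}(z)$ is characteristic-free, so the closed form $\opc[r][3]{L}=[z^r][F(z)^1]\prod_{i=1}^3\frac{F(z)-F(H_i)}{z-H_i}$ does imply the lemma. The problem is that this closed form is precisely the hard content, and as written you have not established it. You cannot cite \Cref{opcmd}: its derivation lives in \Cref{KSI}, assumes $\operatorname{char} K=0$, uses the Kontsevich space and flatness of the forgetful map, and presupposes the matroid-invariance part of \Cref{realval}. You also cannot cite \Cref{finkformulas}, which sits downstream of \Cref{SCL} in the logical order (\Cref{SCL} $\Rightarrow$ \Cref{serpar} $\Rightarrow$ \Cref{formulaexists} $\Rightarrow$ \Cref{finkformulas}), so that route is circular. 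That leaves your proposed Bott localization on $X=\mb{P}(\mc{S})\times_{\Gr}\mb{P}(\mc{S})\times_{\Gr}\mb{P}(\mc{S})$, which in principle works, but the resulting residue identity --- matching the sum over fixed points $(\{i,j\},(a_1,a_2,a_3))$ of Euler-class ratios against the fixed-point restrictions of $[z^r][F(z)]\prod\frac{F(z)-F(H_i)}{z-H_i}$ --- is exactly the statement to be proved, and you flag it as ``the main obstacle'' without carrying it out. Until that computation is done, the argument has a genuine gap. Two smaller points: the Bott residue formula for equivariant Chow groups in arbitrary characteristic needs to be invoked (it holds by Edidin--Graham, but only after inverting torus weights, so one must check the final identity is between honest polynomial classes, which it is); and your parenthetical ``three generic coordinate points are not collinear for $r\ge 2$'' silently excludes $r=1$, where $\op{L}=(\mb{P}^1)^3$ and the identity must be checked separately (it does hold, with $\opc{L}=1$).

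For comparison, the paper avoids all of this by choosing a basis adapted to the geometry rather than the monomial basis: by $A^\bullet_{GL_{r+1}}(\pt)$-bilinearity it suffices to take $f=\prod_{i\in I}(H+t_i)$ and $g=\prod_{j\in J}(H+t_j)$ with either $|I|+|J|\le r$ (both sides vanish, the left for dimension reasons and the right because $\deg(\overline{f}\,\overline{g})\le r$) or $I\cup J=\{0,\dots,r\}$. In the latter case $f$ and $g$ are the classes of the coordinate subspaces $\Lambda_I$ and $\Lambda_J$, the pushforward is the join of these two disjoint linear spaces, namely $\Lambda_{I\cap J}$ with class $\prod_{k\in I\cap J}(H+t_k)$, and $\overline{f}(z)\overline{g}(z)=F(z)\prod_{k\in I\cap J}(z+t_k)$ gives the same answer for $[\hbar](f\star g)$. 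No localization, no residue identities, and no appeal to the closed-form expression for $\opc{L}$ is needed. If you want to salvage your approach, the cleanest fix is to actually perform the localization computation of $\opc{L}|_{(e_{b_1},e_{b_2},e_{b_3})}$ and verify the residue identity; but you should expect that verification to be at least as long as the paper's entire proof.
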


%Why doesn't really stupid proof work?
\begin{proof}
We work with $T$-equivariant cohomology, where $T = (K^\times)^{r+1}$ is the standard torus (see \Cref{NotandConv}). First, we note that both sides of the stated equation are $A_{T}^{\bullet}(\pt)$-linear with respect to both $f$ and $g$. Therefore, it suffices to prove the theorem for $f=\prod_{i\in I}(H+t_i)$ and $g=\prod_{j\in J}(H+t_j)$ where $I,J\subsetneq \{0,\ldots,r\}$ and either $|I|+|J|\le r$ or $I \cup J=\{0,\ldots,r\}$. If $|I|+|J|\leq r$, then both sides of the equation are zero, the left for dimension reasons, and the right because $\overline{f}(z)\overline{g}(z)$ has degree at most $r$.

Suppose now that $I \cup J = \{0, \ldots, r\}$. Observe that $f$ and $g$ are the classes of the $T$-invariant linear subspaces $\Lambda_I = \{x_i=0\mid i\in I\}$ and $\Lambda_J=\{x_j=0\mid j\in J\}$ respectively. Because $\pi_3$ is proper, the integral \[\int_{(\mb{P}^r)^3\xrightarrow{\pi_3}\mb{P}^r}{(\pi_1^{*}f) \cap (\pi_2^{*}g) \cap \opc[r][3]{L}}\] is the class of the subvariety of $\mb{P}^r$ swept out by all lines connecting a point in $\Lambda_I$ to a point in $\Lambda_J$, which is precisely $\Lambda_{I \cap J}$. The class of $\Lambda_{I \cap J}$ is $\prod_{k\in I \cap J}(H+t_k)$. We also have $\overline{f}(z)\overline{g}(z)=F(z)\prod_{k \in I \cap J} (z+t_k)$, so $[\hbar](f \star g)=\prod_{k\in I \cap J}(H+t_k)$ as desired.
\end{proof}

\begin{proof}[Proof of \cref{serpar}]
By \cref{parlem}, it suffices to show $[\hbar^1]\opc{M_1} \star \opc{M_2}=\opc{S(M_1,M_2)}$. Let $L$ be a general $2\times 3$ matrix, and note that
up to left $GL$-action, the series connection $S(M_1, M_2)$ is the parallel connection $P(P(M_1, L), M_2)$ with the $n_1$th and $(n_1 + 2)$nd columns removed. By \cref{integrateaway}, we thus have \[\opc{S(M_1,M_2)}=\int_{(\mb{P}^r)^{n_1 + n_2 + 1} \xrightarrow{\pi} (\mb{P}^r)^{n_1 + n_2 - 1}}\opc{P(P(M_1,L),M_2)},\] where $\pi$ is the projection away from the $n_1$st and $(n_1 + 2)$nd factors. If $\alpha_i$ are classes pulled back from $(\mb{P}^r)^{n_i-1}$, then
\begin{align*}
&\int_{(\mb{P}^r)^{n_1+n_2-1}\to \mb{P}^r}\alpha_1 \cap \alpha_2 \cap \int_{(\mb{P}^r)^{n_1 + n_2 + 1} \xrightarrow{\pi} (\mb{P}^r)^{n_1 + n_2 - 1}}  \opc{P(P(M_1,L),M_2)} \\
=&\int_{(\mb{P}^r)^3 \to \mb{P}^r}\left(\int_{(\mb{P}^r)^{n_1}\to \mb{P}^r}\alpha_1\cap\opc{M_1}\right) \cap \left(\int_{(\mb{P}^r)^{n_2}\to \mb{P}^r}\alpha_2\cap\opc{M_2}\right) \cap \opc{L}\\
=&[\hbar]\left(\int_{(\mb{P}^r)^{n_1}\to \mb{P}^r}\alpha_1 \cap \opc{M_1}\right)\star\left(\int_{(\mb{P}^r)^{n_2}\to \mb{P}^r}\alpha_2\cap\opc{M_2}\right)\\
=&[\hbar]\int_{(\mb{P}^r)^{n_1+n_2-1}\to \mb{P}^r} (\alpha_1 \cap \opc{M_1}) \star (\alpha_2 \cap \opc{M_2})\\
=&\int_{(\mb{P}^r)^{n_1+n_2-1}\to \mb{P}^r} \alpha_1 \cap \alpha_2 \cap [\hbar](\opc{M_1}\star \opc{M_2})
\end{align*}
which shows that $[\hbar](\opc{M_1}\star\opc{M_2})=\opc{S(M,N)}$ as desired.
\end{proof}

\subsection{Proof of \Cref{formulaexists}}

\begin{Lem}\label{diagclass}
Let $M$ be a general $1\times 2$ matrix, corresponding to two identical points in $\mb{P}^0$. Then $\op[r][2]{M}$ is the diagonal cycle in $\mb{P}^r \times \mb{P}^r$, and its class is given by $\frac{F(H_1)-F(H_2)}{H_1-H_2}$.
\end{Lem}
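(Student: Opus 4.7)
The plan is to first identify $\op[r][2]{M}$ with the reduced diagonal of $\mb{P}^r \times \mb{P}^r$ scheme-theoretically, and then verify the claimed formula for its equivariant class via Kronecker duality. For the first step, writing $M=(a_1,a_2)$ with $a_1,a_2\in K^\times$, the map $\mu_M$ defined in \Cref{Setup} sends a column vector $\wt A \in \mb{P}^{(r+1)\times 1-1}=\mb{P}^r$ to $(\wt{A a_1},\wt{A a_2})=(\wt A, \wt A)$, since $a_1, a_2$ are nonzero scalars. Thus $\mu_M$ is the diagonal embedding $\Delta:\mb{P}^r \hookrightarrow \mb{P}^r \times \mb{P}^r$, which is regular and a closed immersion on all of $\mb{P}^r$ (no indeterminacy locus), so no resolution is needed and $\op[r][2]{M}=\Delta_*[\mb{P}^r]$, the reduced diagonal cycle with multiplicity one.

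To compute the equivariant class $[\Delta]$, I would recall that by the projection formula, for any polynomials $f,g$ with coefficients in $A^\bullet_{GL_{r+1}}(\pt)$ one has
$$\int_{\mb{P}^r \times \mb{P}^r} [\Delta] \cdot f(H_1)\,g(H_2) \;=\; \int_{\mb{P}^r} f(H)\,g(H).$$
Since the monomials $H_1^iH_2^j$ with $0\le i,j\le r$ form a basis of $A^\bullet_{GL_{r+1}}(\mb{P}^r\times \mb{P}^r)$ as a free module over $A^\bullet_{GL_{r+1}}(\pt)$, and the intersection pairing is non-degenerate, this identity characterizes $[\Delta]$ uniquely. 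Next, I would observe that $\frac{F(H_1)-F(H_2)}{H_1-H_2}$ is an actual polynomial of degree at most $r$ in each variable, because $F(x)-F(y)$ is divisible by $x-y$, so it gives a well-defined class in $A^\bullet_{GL_{r+1}}(\mb{P}^r \times \mb{P}^r)$.

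To finish, it suffices to verify the displayed identity with $\frac{F(H_1)-F(H_2)}{H_1-H_2}$ substituted for $[\Delta]$. Integrating first over the $H_1$ factor while treating $H_2$ as a parameter playing the role of $z$, the inner integral is $\int_{\mb{P}^r}\frac{F(H_2)-F(H_1)}{H_2-H_1}\,f(H_1)$, which equals $\overline{f}(H_2)$ by the computation carried out in the proof of \Cref{opcmd}, showing that $\frac{F(z)-F(H)}{z-H}$ is Kronecker dual to the lifting map $f(H)\mapsto \overline{f}(z)$. The remaining integral $\int_{\mb{P}^r}\overline{f}(H_2)\,g(H_2)$ equals $\int_{\mb{P}^r}f(H)\,g(H)$ because the pushforward to a point depends only on the class modulo $F(H)$. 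This completes the identification. There is no real obstacle here; the crux is simply recognizing that the formula $\frac{F(H_1)-F(H_2)}{H_1-H_2}$ encodes exactly the Kronecker duality already established in the proof of \Cref{opcmd}.
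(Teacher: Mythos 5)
Your proposal is correct and follows essentially the same route as the paper: identify $\op[r][2]{M}$ as the reduced diagonal (immediate since $\mu_M$ is everywhere defined and equal to the diagonal embedding), characterize $[\Delta]$ by the identity $\int_{\mb{P}^r\times\mb{P}^r}[\Delta]f(H_1)g(H_2)=\int_{\mb{P}^r}f(H)g(H)$, and verify that $\frac{F(H_1)-F(H_2)}{H_1-H_2}$ satisfies it by an algebraic manipulation. The only cosmetic difference is that you verify the identity by integrating out $H_1$ first and invoking the Kronecker-duality computation from the proof of \Cref{opcmd} (which is a self-contained algebraic calculation, so no circularity), whereas the paper substitutes $g(H_2)\mapsto g(H_1)$ using divisibility by $F(H_1)-F(H_2)$ and then integrates out $H_2$; both are valid.
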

\begin{proof}
Clearly $\op{M}$ is the image of the diagonal map $\mb{P}^r \to \mb{P}^r \times \mb{P}^r$. The equivariant class of the diagonal is the unique class which satisfies $$\int_{\mb{P}^r \times \mb{P}^r}f(H_1)g(H_2)\opc{M}= \int_{\mb{P}^r}f(H)g(H).$$ This is indeed satisfied by $\frac{F(H_1)-F(H_2)}{H_1-H_2}$ since $$\int_{\mb{P}^r \times \mb{P}^r}f(H_1)g(H_2)\frac{F(H_1)-F(H_2)}{H_1-H_2}=\int_{\mb{P}^r \times \mb{P}^r}f(H_1)g(H_1)\frac{F(H_1)-F(H_2)}{H_1-H_2}=\int_{\mb{P}^r}f(H_1)g(H_1),$$
where the first equality came from the fact that the difference of the two expressions is $f(H_1)\frac{g(H_1)-g(H_2)}{H_1-H_2}(F(H_1)-F(H_2))$, and the second from the fact that $\frac{F(H_1)-F(H_2)}{H_1-H_2}$ is of degree $r$ in $H_2$ with leading coefficient $1$.
\end{proof}

\begin{Lem}\label{serparform}
If $M \in K^{d \times (n+1)}$ is a series-parallel matrix with distinguished column $c$, then letting $z$ denote the hyperplane variable associated to $c$ and $H_1,\ldots,H_n$ the remaining hyperplane variables, there are rational functions $Q_{M,i}(x_1,\ldots, x_n,y_1,\ldots, y_n)$ not depending on $r$ or $F$ such that the polynomial expression for $\opc{M}$ of degree at most $r$ in each $H_i$ and $z$ can be written as (formally simplifying to a polynomial before evaluating at $H_i$ and $z$)
$$\sum Q_{M,i}(H_1,\ldots,H_n,F(H_1),\ldots,F(H_n))\frac{F(z)-F(H_i)}{z-H_i}.$$
\end{Lem}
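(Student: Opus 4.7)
The plan is to induct on $n$. For the base case $n=1$, a series-parallel $M$ with two columns is, up to column permutation, either the parallel connection $P(\ast,L)$ of $\ast$ with a general $1\times 2$ matrix $L$---yielding two coincident points, whose class by \Cref{diagclass} is $\frac{F(z)-F(H_1)}{z-H_1}$, so $Q_{M,1}=1$---or the series connection $S(\ast,L)$---a general $2\times 2$ matrix with $\op{M}=0$ for dimensional reasons, since $(r+1)d-1=2r+1>2r=\dim(\mb{P}^r)^2$, so $Q_{M,1}=0$ works.

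For the inductive step, after permuting columns write $M=P(M_1,L)$ or $M=S(M_1,L)$, where $M_1$ is series-parallel with $n$ columns and $L$ is a general $1\times 2$ matrix whose second column is the distinguished column $c$ of $M$; then the variable $w$ of the column being identified or summed becomes $H_n$ in $M$. Abbreviating $A_c(u):=\frac{F(u)-F(c)}{u-c}$, the inductive hypothesis applied to $M_1$ (with distinguished column its last column, variable $w$) and \Cref{diagclass} give
\begin{align*}
\opc{M_1}=\sum_{j=1}^{n-1}Q_{M_1,j}\bigl(H_1,\ldots,H_{n-1},F(H_1),\ldots,F(H_{n-1})\bigr)\,A_{H_j}(w),\quad \opc{L}=A_z(w),
\end{align*}
and \Cref{serpar} reduces the problem to computing $A_{H_j}(w)\star A_z(w)$ in $QH^\bullet_{GL_{r+1}}(\mb{P}^r)$. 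The pivotal identity---verified by multiplying by $(z-H_j)$ and using $(u-c)A_c(u)=F(u)-F(c)$---is
\begin{align*}
A_{H_j}(w)\,A_z(w)=\frac{F(z)A_{H_j}(w)-F(H_j)A_z(w)}{z-H_j}+F(w)\cdot\frac{A_z(w)-A_{H_j}(w)}{z-H_j},
\end{align*}
where the first summand has $w$-degree $\leq r$ and the cofactor of $F(w)$ has $w$-degree $\leq r-1$, directly reading off $\opc{P(M_1,L)}$ and $\opc{S(M_1,L)}$ as the $\hbar^0$ and $\hbar^1$ parts of $\opc{M_1}\star\opc{L}$.

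To rewrite with $z$ as the distinguished variable, substitute $w=H_n$ so that $A_z(w)=A_{H_n}(z)$, and apply the symmetric second divided difference identity $\frac{A_{H_n}(z)-A_{H_n}(H_j)}{z-H_j}=\frac{A_{H_j}(z)-A_{H_n}(z)}{H_j-H_n}$ together with $\frac{F(z)}{z-H_j}=A_{H_j}(z)+\frac{F(H_j)}{z-H_j}$; collecting by $A_{H_i}(z)$ yields explicit rational coefficients---for example $Q_{S(M_1,L),n}=\sum_{j<n}\frac{Q_{M_1,j}}{H_n-H_j}$ and $Q_{P(M_1,L),j}=\frac{F(H_n)\,Q_{M_1,j}}{H_n-H_j}$ for $j<n$---all manifestly rational in $H_i$ and $F(H_i)$ and independent of $r$ or the specific shape of $F$. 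The main obstacle is the combinatorial input that any prescribed column $c$ of a series-parallel matrix can be realized as the ``last-added'' column in some series-parallel construction, justifying the decomposition above with $c$ as the second column of $L$; this is a matroidal flexibility statement coming from the correspondence between series-parallel matroids and series-parallel graphs and the freedom to reorder parallel and series operations, and once granted the remainder of the proof is systematic algebraic bookkeeping from the pivotal identity.
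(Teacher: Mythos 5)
Your algebra is sound: the ``pivotal identity'' is exactly the partial-fraction decomposition that drives the paper's proof, and your base case and your use of \Cref{serpar} and \Cref{diagclass} are correct. The problem is the combinatorial claim on which your induction rests, namely that any prescribed column $c$ of a series-parallel matrix can be realized as the second column of $L$ in a final connection $P(M_1,L)$ or $S(M_1,L)$. A column created last by a parallel connection with a $1\times 2$ matrix is parallel to another column, and a column created last by a series connection lies in a $2$-element series class (in the graph picture, the last edge either has a parallel mate or shares a degree-$2$ vertex with the other half of a subdivided edge). Now take $M$ realizing the cycle matroid of $K_4$ minus an edge: five points spanning $\mb{P}^2$ forming two lines of three points each, meeting at a common point $c$; equivalently, the parallel connection of two copies of $U_{2,3}$ glued at $c$. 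This is a connected series-parallel matroid with $\opc{M}\ne 0$ for $r\ge 2$, yet $c$ has no parallel partner and lies in no $2$-element cocircuit, so no series-parallel construction of this matroid can create $c$ in its last step. Your inductive step therefore never applies to this $(M,c)$, and since the obstruction is matroid-theoretic it cannot be repaired by passing to a different realization.

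The paper uses the other natural recursion: it writes $M=P(M_1,M_2)$ or $M=S(M_1,M_2)$ where \emph{both} factors are series-parallel matrices of arbitrary size and $c$ is the \emph{connecting} column. Such a decomposition always exists ($c$ is either repeated, or sits at the junction of two blocks as in the example above, or is handled by series connection otherwise), at the cost that one must then expand $\opc{M_1}\star\opc{M_2}$ as a double sum of products of two kernels $\frac{F(w)-F(H_{1,i})}{w-H_{1,i}}\cdot\frac{F(w)-F(H_{2,j})}{w-H_{2,j}}$ in the gluing variable --- which is precisely the identity you already verified. If you replace your ``$c$ is added last'' decomposition by this ``cut at $c$'' decomposition and apply the inductive hypothesis to both factors with their gluing columns distinguished, the rest of your computation goes through essentially unchanged.
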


\begin{proof}
By \Cref{diagclass}, the result is true for $M$ a general $1 \times 2$ matrix. Suppose now $M$ is an arbitrary series-parallel matrix.

There exist series-parallel matrices $M_1$ and $M_2$ such that (after permutation and scaling of the columns) either $M=P(M_1,M_2)$ or $M=S(M_1,M_2)$ with $c$ the special column for parallel or series connection. Indeed, parallel connection handles the case that the special point is repeated or is on the intersection of two curves, and series connection handles the remaining case. Hence, we suppose by induction that we know the result for $M_1$ and $M_2$. Let $H_{1,1},\ldots,H_{1,n_1}, z_1$ and $z_2, H_{2,1},\ldots,H_{2,n_1}$ be the hyperplane variables associated to the columns of $M_1$ and $M_2$ respectively. For succinctness, we will suppress the inputs of $Q$ in what follows. We have
\begin{align*}
\opc{M_1}\star \opc{M_2}=\sum \sum Q_{M_1,i}Q_{M_2,j}\frac{F(z)-F(H_{1,i})}{z-H_{1,i}}\frac{F(z)-F(H_{2,j})}{z-H_{2,j}},
\end{align*}
and the right hand side is of degree at most $r$ in each of the $H$ variables.
Using partial fraction decomposition, replace each $$\frac{F(z)-F(H_{1,i})}{z-H_{1,i}}\frac{F(z)-F(H_{2,j})}{z-H_{2,j}}$$ with
\begin{align*}
&\frac{F(z)-F(H_{1,i})}{z-H_{1,i}}\frac{1}{H_{1,i}-H_{2,j}}(F(z)-F(H_{2,j}))+\frac{F(z)-F(H_{2,j})}{z-H_{2,j}}\frac{1}{H_{2,j}-H_{1,i}}(F(z)-F(H_{1,i}))\\
=&\left(\frac{-F(H_{2,j})}{H_{1,i}-H_{2,j}}\frac{F(z)-F(H_{1,i})}{z-H_{1,i}}+\frac{-F(H_{1,i})}{H_{2,j}-H_{1,i}}\frac{F(z)-F(H_{2,j})}{z-H_{2,j}}\right)\\
&+F(z)\left(\frac{1}{H_{1,i}-H_{2,j}}\frac{F(z)-F(H_{1,i})}{z-H_{1,i}}+\frac{1}{H_{2,j}-H_{1,i}}\frac{F(z)-F(H_{2,j})}{z-H_{2,j}}\right).\\
\end{align*}
By \cref{serpar}, the sum of the terms with $F(z)$ in front will be the class of the series connection, and the sum of the remaining terms will be the class of the parallel connection. Furthermore, the obtained expressions for $\opc{P(M_1,M_2)}$ and $\opc{S(M_1,M_2)}$ are clearly reduced (i.e. of degree at most $r$) in the $z$-variable. Because the $\opc{M_1}\star \opc{M_2}$ was reduced in the $H$ variables, $\opc{P(M_1,M_2)}=\opc{M_1}\star \opc{M_2} \mod F(z)$ is clearly still reduced in the $H$ variables, as is $\opc{S(M_1,M_2)}=(\opc{M_1}\star \opc{M_2}-\opc{P(M_1,M_2)})/F(z)$.
\end{proof}

Before we prove \cref{formulaexists}, we will need to appeal to a result of \cite{Derksen} on Schubert matroids, so we recall the definition.

\begin{Thm}[\cite{Derksen}]\label{SchubertFinkThm}
For a matroid $\MM$, we have the following expression for $1_{P_\MM}$ as a combination of indicator functions of Schubert matroid rank polytopes.
$$\sum_{\ell=1}^n \sum_{\emptyset \subsetneq X_1 \subsetneq \ldots \subsetneq X_\ell=\{1,\ldots,n\}}(-1)^{n-\ell}1_{\operatorname{Sch}(\rk_{\MM}(X_1),\ldots,\rk_{\MM}(X_\ell),X_1,\ldots,X_\ell)}.$$
\end{Thm}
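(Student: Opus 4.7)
The plan is to verify the identity pointwise as integer-valued functions on $\mathbb{R}^n$. Both sides are piecewise constant on the complement of a codimension-one set, and both vanish outside the hypersimplex $\Delta = \Delta_{d,n}$ (with $d=\rk_\MM([n])$), so I may restrict to a generic $x \in \Delta$.

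Introduce the ``good-set'' collection $T(x) = \{A \subseteq [n] : \sum_{i \in A} x_i \le \rk_\MM(A)\}$. By the description of Schubert matroid polytopes in \Cref{SchubertDef}, $x$ lies in $P_{\operatorname{Sch}(\rk_\MM(X_1),\ldots,\rk_\MM(X_\ell),X_1,\ldots,X_\ell)}$ precisely when $X_j \in T(x)$ for every $j$, while $x \in P_\MM$ precisely when $T(x) = 2^{[n]}$. The claimed identity therefore reduces to the combinatorial statement
\[
\sum_{\substack{\emptyset \subsetneq X_1 \subsetneq \cdots \subsetneq X_\ell = [n]\\ X_j \in T(x)\ \forall j}} (-1)^{n-\ell} \;=\; \begin{cases} 1 & T(x) = 2^{[n]},\\ 0 & \text{otherwise.} \end{cases}
\]

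When $T(x) = 2^{[n]}$ the flags are unconstrained; there are $\ell!\,S(n,\ell)$ of length $\ell$, and the exponential generating function identity $\sum_{\ell \ge 0} (-1)^\ell (e^z-1)^\ell = e^{-z}$ yields $\sum_\ell (-1)^{n-\ell}\ell!\,S(n,\ell) = 1$, giving the desired value $1$. This is the easy case.

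The main step is the vanishing when $T(x) \neq 2^{[n]}$, which I would prove by exhibiting a sign-reversing involution on the set of flags contained in $T(x)$. The essential input is submodularity of $\rk_\MM$: for any $A,B \subseteq [n]$,
\[
\rk_\MM(A\cup B) + \rk_\MM(A \cap B) \le \rk_\MM(A) + \rk_\MM(B),
\]
combined with the additivity $\sum_{i \in A \cup B} x_i + \sum_{i \in A \cap B} x_i = \sum_{i\in A} x_i + \sum_{i\in B} x_i$. Together these imply that if $A \in T(x)$ and $B \notin T(x)$, then at least one of $A \cap B$, $A \cup B$ is again in $T(x)^c$. Using this structural constraint, I would fix a canonical bad set $B_0$ (e.g.\ the lexicographically minimal element of $T(x)^c \setminus \{\emptyset,[n]\}$); for each flag $(X_1,\ldots,X_\ell)$ in $T(x)$ I locate the smallest index $j$ where the interaction with $B_0$ admits a canonical toggle, either inserting or deleting an auxiliary member of $T(x)$ constructed from $X_j \cap B_0$ and $X_{j-1} \cup (B_0 \cap X_j)$. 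This toggle changes $\ell$ by $\pm 1$ and therefore flips the sign $(-1)^{n-\ell}$.

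The principal obstacle is verifying that this toggle is in fact a well-defined involution on flags in $T(x)$: the inserted or deleted set must itself lie in $T(x)$, which is where submodularity is deployed, and the choice of index $j$ together with the auxiliary set must be recoverable by applying the toggle to the new flag. An alternative, less combinatorial route is Möbius inversion applied to the poset of order-compatible flags in $T(x)$, showing that Schubert matroid polytopes form an additive basis (in the sense of \Cref{additivedef}) for the space of matroid polytope classes, so that the stated formula is forced by the Möbius function. Either route completes the pointwise verification and hence the identity.
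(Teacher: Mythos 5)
First, note that the paper does not prove this statement; it is imported verbatim from \cite{Derksen}, so there is no internal proof to compare against and your proposal must stand on its own. Your reduction is correct and well executed: for $x\in\Delta_{d,n}$ one checks that $x$ lies in the rank polytope of $\operatorname{Sch}(\rk_\MM(X_1),\ldots,\rk_\MM(X_\ell),X_1,\ldots,X_\ell)$ exactly when $\sum_{i\in X_j}x_i\le\rk_\MM(X_j)$ for every $j$ (the constraint from a general $A$ follows from the one for $A=X_j$ using $0\le x_i\le 1$), so the theorem reduces to the signed flag count over $T(x)$, and your Stirling-number evaluation of the case $T(x)=2^{[n]}$ is correct.

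The gap is in the vanishing case, and it is twofold. First, the ``essential input'' you isolate is false. Submodularity of $\rk_\MM$ together with modularity of $x$ says that the excess $e(A)=\sum_{i\in A}x_i-\rk_\MM(A)$ is supermodular, which yields: if $A,B\notin T(x)$ then at least one of $A\cup B$, $A\cap B$ lies in $T(x)^c$. It does not yield your statement with $A\in T(x)$ and $B\notin T(x)$. Concretely, let $\MM$ be the rank-$2$ matroid on $\{1,2,3\}$ with $1$ and $2$ parallel and take $x=(0.9,\,0.9,\,0.2)\in\Delta_{2,3}$; then $T(x)=2^{\{1,2,3\}}\setminus\{\{1,2\}\}$, and for $A=\{1,3\}\in T(x)$, $B=\{1,2\}\notin T(x)$ both $A\cup B=\{1,2,3\}$ and $A\cap B=\{1\}$ lie in $T(x)$. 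Second, the involution is never actually constructed: the choice of index $j$, the definition of the toggle, the verification that the inserted set lies in $T(x)$, and the check that the map is a sign-reversing involution are all deferred, and since they are meant to rest on the false lemma there is no reason the construction goes through as described. The fallback route is not routine either: $T(x)$ need not be closed under unions (in the same example $\{1\},\{2\}\in T(x)$ but $\{1,2\}\notin T(x)$), the signed flag count is precisely $(-1)^n\mu_{T(x)}(\emptyset,\{1,\ldots,n\})$ for the poset $T(x)$ ordered by inclusion, and proving that this M\"obius value vanishes whenever $T(x)\neq 2^{\{1,\ldots,n\}}$ is exactly the content that must be supplied; asserting that Schubert polytopes form an additive basis does not by itself pin down the coefficients in the stated formula. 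So the heart of the theorem remains unproved.
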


\begin{proof}[Proof of \cref{formulaexists}]

By \cref{serparform}, we know the result for series-parallel matrices. The indicator functions of rank polytopes can be written as a combination of Schubert rank polytopes by \cref{SchubertFinkThm}, so by \cref{realval} it suffices to show that if the rank polytope of $\operatorname{Sch}(r_1,\ldots,r_\ell,X_1,\ldots,X_\ell)$ has codimension $0$ in $\Delta$, then it has a subdivision into series-parallel rank polytopes. This follows from the following observation. If $|X_\ell\setminus X_{\ell-1}| =1$ then the rank polytope is not of full dimension. Otherwise, let $a, b \in X_\ell\setminus X_{\ell-1}$ be distinct. As $\sum_{X_\ell}x_i=r_\ell$ is constant, there is a subdivision of the rank polytope into the regions where $x_a + x_b \le 1$ and where $\sum_{X_\ell \setminus \{a, b\}}x_i \le r_\ell-1$. The first region is the rank polytope of the parallel connection along $a$ of $\operatorname{Sch}(r_1,\ldots,r_{\ell},X_1,\ldots,X_{\ell-1},X_\ell\setminus\{b\})$ with $\operatorname{Sch}(1,\{a,b\})$ (i.e. $a$ and $b$ merge), and the second region is the rank polytope of the series connection along $a$ of $\operatorname{Sch}(r_1,\ldots,r_{\ell-1},r_\ell-1,X_1,\ldots,X_{\ell-1},X_\ell\setminus \{b\})$ with $\operatorname{Sch}(1,\{a,b\})$ (i.e. all points other than $a,b$ get projected down to rank $r_l-1$), where by abuse of notation we call the newly created points in the series and parallel connection $a$ as well. It is known that series and parallel connection distribute over matroid rank polytope subdivision (from e.g. the explicit description of these operations on polytopes in \cite[Proof of Theorem 7.3]{FS12}), so repeatedly applying this we may reduce the polytope down to series-parallel rank polytopes.
%This is known to experts, but we note that it follows from repeatedly applying \cref{explserpar} with $\NN$ a single point matroid to the alternate definition of a Schubert matroid, using the fact that $\tau^{\le k}(\MM \cup \ast)$ is disconnected (i.e. yields a rank polytope of positive codimension in $\Delta$) unless $k \le \rk(\MM)$.
\end{proof}
\section{Formulas for $\opc{M}$}\label{formulasection}
In this section, we will derive the formulas for the equivariant Chow classes of $\opc{M}$ using Brion's theorem on rational convex polytopes. In the special case $d=r+1$, by \Cref{liftingsection} the formula in \Cref{finkformulas} (\Cref{finkformulaintro} from the introduction) induces the localized classes at the torus-fixed points of the Grassmannian computed in \cite[Lemma 5.2]{Fink}.

\begin{comment}
For a $d \times n$ matrix $M$ and $B$ a basis of $M$, let $(S_n)_{M,B}$ be the set of permutations $(i_1,\ldots,i_n) \in S_n$ such that the lexicographic first basis of $M$ to appear in $i_1i_2\cdots i_n$ is $B$. Note that the $(S_n)_{M,B}$ partition $S_n$ for fixed $M$.

\begin{Thm}\label{finkformulas}
For all $d \times n$ matrices $M$, we have the equivariant Chow class $\opc{M}$ is given by $$\opc{M}=\sum_{\textrm{A basis of }M} \left(\prod_{j \not \in A}  F(H_j)\right) \sum_{(i_1,\ldots,i_n)\in (S_n)_{M,B}} \frac{1}{(H_{i_2}-H_{i_1})\ldots (H_{i_{n}}-H_{i_{n-1}})}.$$
\end{Thm}
\end{comment}
For each permutation $\sigma=(\sigma(1),\ldots,\sigma(n))$ of $\{1,\ldots,n\}$, define the total ordering $\prec_\sigma$ on $\{1, \ldots, n\}$ by $\sigma(1)\prec_\sigma \cdots\prec_\sigma\sigma(n)$. This total ordering induces a (lexicographic) total ordering on the $d$-element subsets of $\{1, \ldots, n\}$.

\begin{Thm}\label{finkformulas}
Let $M$ be a $d \times n$ matrix with columns $x_1,\ldots, x_n \ne 0$. If $\rk(M) < d$, then $\op{M} = 0$. Otherwise, for each permutation $\sigma$ of $\{1,\ldots,n\}$, let $B(\sigma)$ be the lexicographically first $d$-element subset of $\{1,\ldots,n\}$ with respect to the ordering $\prec_\sigma$ such that $e_{B(\sigma)} \in \Vt(P_M)$. Then for indeterminates $z_1,\ldots,z_n$, the expression
\begin{equation}\label{finkformularationalfunction}
\sum_{\sigma \in S_n} \left(\prod_{i \in \{1,\ldots,n\}\setminus B(\sigma)} F(z_i)\right)\frac{1}{(z_{\sigma(2)}-z_{\sigma(1)})\ldots (z_{\sigma(n)}-z_{\sigma(n-1)})}
\end{equation} is a polynomial of degree at most $r$ in each $z_i$ with coefficients in $A^{\bullet}_{GL_{r+1}}(\pt)$, and the equivariant Chow class $\opc{M}$ is given by evaluating this polynomial at $H_1,\ldots,H_r$, or informally
\begin{equation}\label{finkformulaseqn}
\opc{M}=\sum_{\sigma \in S_n} \left(\prod_{i \in \{1,\ldots,n\}\setminus B(\sigma)} F(H_i)\right)\frac{1}{(H_{\sigma(2)}-H_{\sigma(1)})\cdots (H_{\sigma(n)}-H_{\sigma(n-1)})}.
\end{equation}
\end{Thm}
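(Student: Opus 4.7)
The plan is to combine \Cref{formulaexists} with a uniqueness argument to reduce the equivariant identity to its non-equivariant specialization, which is then verified via Brion's theorem on lattice-point generating functions.

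First I would prove that the rational function \eqref{finkformularationalfunction} is a polynomial of degree at most $r$ in each $z_i$. The potential poles lie along the hyperplanes $z_i = z_j$, and I would show they cancel by pairing each permutation $\sigma$ with $\sigma \cdot (k,k+1)$, where $i, j$ are the adjacent entries in positions $k, k+1$. A case analysis on the greedy process defining $B(\sigma)$ shows that either $B(\sigma) = B(\sigma \cdot (k,k+1))$, or the two sets differ exactly by swapping $i$ and $j$ (this last case occurring when $i$ and $j$ are each independent from the prefix $\{\sigma(1),\ldots,\sigma(k-1)\}$ but together dependent over it). In either case, setting $z_i = z_j$ makes the numerator products $\prod_{\ell \notin B(\sigma)} F(z_\ell)$ equal, while the opposing signs in the denominators force the residues to cancel. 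The degree bound in each $z_i$ then follows once the polynomial is identified with $\opc{M}$ in the final step.

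Next, by \Cref{formulaexists}, we know $\opc{M} = Q_M(H_1,\ldots,H_n, F(H_1),\ldots,F(H_n))$ for some rational function $Q_M$ independent of $r$. The candidate formula \eqref{finkformularationalfunction} is itself a rational function of the same form, so it suffices to show the two agree. I would invoke the following uniqueness: if a rational function $R(x_1,\ldots,x_n,y_1,\ldots,y_n)$ vanishes under every specialization $y_i = z_i^{r+1}$ for $r \ge 0$, then $R \equiv 0$, because for generic fixed $z_i$ the tuples $(z_i^{r+1})_i$ trace out a Zariski dense subset of $\mb{A}^n$. Applied to the difference of the two rational functions after setting $t_0 = \cdots = t_r = 0$ (so $F(z) = z^{r+1}$), this reduces \Cref{finkformulas} to checking that the non-equivariant specialization of \eqref{finkformularationalfunction} equals the non-equivariant class from \Cref{NE}.

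Finally, I would identify the non-equivariant specialization of \eqref{finkformularationalfunction} with the lattice-point generating function of (a shift of) $(r+1)P_M$ via Brion's theorem. For each vertex $e_B \in \Vt(P_M)$, the tangent cone $C_B$ admits a half-open simplicial decomposition indexed by permutations $\sigma$ with $B(\sigma) = B$, where the simplex for $\sigma$ is generated by the edge directions $e_{\sigma(k+1)} - e_{\sigma(k)}$ along the flag $\{\sigma(1)\} \subset \{\sigma(1),\sigma(2)\} \subset \cdots$. Each such simplicial cone contributes precisely the summand indexed by $\sigma$ in \eqref{finkformularationalfunction}, and summing over all vertices $B$ and permutations with $B(\sigma)=B$ recovers the Brion sum, matching \Cref{NE}. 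The main obstacle will be constructing and verifying this half-open simplicial decomposition of each matroidal tangent cone explicitly; this is a combinatorial task reminiscent of the greedy flag stratification underlying \Cref{SchubertFinkThm}, and the key check is that the disjoint union of these half-open simplicial cones exactly tiles $C_B$ without overlap.
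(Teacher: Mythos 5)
Your overall strategy---reduce to the non-equivariant statement via \Cref{formulaexists} together with a Zariski-density argument over varying $r$, then verify the non-equivariant identity with Brion's theorem---is the same as the paper's, and your preliminary steps are sound: the adjacent-transposition pairing does show the residues along $z_i=z_j$ cancel (the greedy construction of $B(\sigma)$ behaves exactly as you describe), although the paper gets polynomiality for free from \Cref{formulaexists} rather than proving it directly. The genuine gap is in your final step. You propose that for each vertex $e_B\in\Vt(P_M)$ the tangent cone admits a disjoint half-open tiling by the simplicial cones $C_\sigma=\sum_{k}\mb{R}_{\ge 0}(e_{\sigma(k+1)}-e_{\sigma(k)})$ over those $\sigma$ with $B(\sigma)=B$. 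This is geometrically impossible: by Gale maximality (\Cref{galemaximality}) one has $P_M+C_\sigma=e_{B(\sigma)}+C_\sigma$, hence $P_M-e_{B(\sigma)}\subseteq C_\sigma$ and therefore $T_{e_{B(\sigma)},P_M}-e_{B(\sigma)}\subseteq C_\sigma$; so \emph{every} cone in your proposed tiling already contains the tangent cone it is supposed to tile, and whenever more than one $\sigma$ satisfies $B(\sigma)=B$ (the generic situation) no disjoint decomposition can exist. Concretely, for a nonzero $1\times 3$ matrix, $P_M=\Delta_{1,3}$, and at $B=\{1\}$ the two cones $\operatorname{cone}(e_2-e_1,\,e_3-e_2)$ and $\operatorname{cone}(e_3-e_1,\,e_2-e_3)$ each contain the tangent cone $\operatorname{cone}(e_2-e_1,\,e_3-e_1)$, and their union is the entire half-plane $\{x_1\le 0\}\cap\{\textstyle\sum x_i=0\}$.

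What makes the per-vertex identity nonetheless true is that it holds only at the level of integer point transforms, where cones containing lines contribute zero: in the example, $1_{C_{(1,2,3)}}+1_{C_{(1,3,2)}}$ equals the indicator of the half-plane plus the indicator of the tangent cone, and the half-plane dies under the Brion homomorphism $\phi$. The paper's route (\Cref{addallcones}) makes this precise globally rather than vertex by vertex: it perturbs $P_M$ by a small permutohedron, applies Brion to the perturbation (whose vertices are $v+\epsilon w_\sigma$ with tangent cones $T_{v,P_M}+C_\sigma$), and uses the vanishing of $\phi$ on cones containing lines to collapse the double sum to $\sum_{\sigma\in S_n}\phi(P_M+C_\sigma)$, after which Gale maximality evaluates each term. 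So your decomposition should be recast as an inclusion--exclusion identity modulo cones with lineality (a Lawrence--Varchenko-type polarization), not as a tiling; as written, the ``key check'' you isolate would fail, and repairing it essentially amounts to reproving \Cref{addallcones}.
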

\begin{Rem}
The expression for $\opc{M}$ must be evaluated by first writing \eqref{finkformularationalfunction} as a polynomial \emph{before} plugging in the $H_i$. This is because the denominators appearing in \eqref{finkformulaseqn} are zero-divisors (e.g. $H_i-H_j$ is killed by $\frac{F(H_i)-F(H_j)}{H_i-H_j}$).
\end{Rem}

Before proving \Cref{finkformulas}, we introduce notation and recall a classical theorem of Gale.

\begin{Def}
For any permutation $\sigma \in S_n$, let $C_\sigma$ be the cone generated by all vectors of the form $e_{i}-e_{j}$ with $i \succ_\sigma j$.
\end{Def}

\begin{Lem}\label{galemaximality}
% A collection $\mc{B}$ of $d$-element subsets of $\{1,\ldots,n\}$ is the set of bases of a matroid $\MM$ if and only if for every permutation $\sigma$ of $[n]$, the lexicographically first set $B \in \mc{B}$ with respect to the ordering $\prec_\sigma$ has the property that for any $B' \in \mc{B}$, there is a bijection $\phi:B \to B'$ satisfying $i \preceq_\sigma \phi(i)$ for all $i \in B$.
For any permutation $\sigma \in S_n$ and any $d \times n$ matrix $M$, define $B(\sigma)$ as in the statement of \Cref{finkformulas}. Then we have $P_M + C_\sigma = e_{B(\sigma)} + C_\sigma$.
\end{Lem}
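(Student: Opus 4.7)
The plan is to prove the two inclusions separately. The inclusion $e_{B(\sigma)} + C_\sigma \subseteq P_M + C_\sigma$ is immediate since $e_{B(\sigma)}$ is by assumption a vertex of $P_M$. For the reverse inclusion, I would use that $P_M$ is the convex hull of $\{e_B : B \text{ a basis of the matroid of } M\}$ and that $C_\sigma$ is a convex cone, so $P_M + C_\sigma$ is the convex hull of the translates $e_B + C_\sigma$. It therefore suffices to show that for every basis $B$ of the matroid of $M$, we have $e_B - e_{B(\sigma)} \in C_\sigma$.

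The essential combinatorial content is Gale's characterization of the greedy (equivalently, lex-minimal) basis: writing
\[
B(\sigma) = \{b_1 \prec_\sigma b_2 \prec_\sigma \cdots \prec_\sigma b_d\} \quad\text{and}\quad B = \{b'_1 \prec_\sigma b'_2 \prec_\sigma \cdots \prec_\sigma b'_d\},
\]
Gale's theorem asserts that $b_i \preceq_\sigma b'_i$ for all $1 \leq i \leq d$. I would recall the short proof via basis exchange: if $i$ is the smallest index with $b_i \neq b'_i$, then applying the basis exchange axiom to $B(\sigma)$ and $B$ at the element $b_i$ produces a basis containing $\{b_1, \ldots, b_{i-1}\}$ together with some $b'_j$ for $j \geq i$; since $B(\sigma)$ is $\prec_\sigma$-lex-minimal among bases, we must have $b_i \preceq_\sigma b'_j$ for any such $j$, forcing $b_i \preceq_\sigma b'_i$.

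Given this index-wise inequality, I would conclude by writing
\[
e_B - e_{B(\sigma)} = \sum_{i=1}^d (e_{b'_i} - e_{b_i}),
\]
in which each summand is either zero (when $b_i = b'_i$) or of the form $e_j - e_k$ with $j \succ_\sigma k$, hence lies in $C_\sigma$. Therefore $e_B - e_{B(\sigma)} \in C_\sigma$, proving $P_M + C_\sigma \subseteq e_{B(\sigma)} + C_\sigma$.

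The only nontrivial ingredient is invoking Gale's theorem; the polytope-side reduction and the telescoping identity above are formal. I do not expect any serious obstacle here, as this is a clean instance of how the greedy optimum controls the cone-translated matroid polytope.
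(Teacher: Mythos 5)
Your argument is correct and takes essentially the same route as the paper: the paper's proof also reduces the identity $P_M + C_\sigma = e_{B(\sigma)} + C_\sigma$ to the Gale maximality principle (stated there as the existence of a bijection $f : B(\sigma) \to B'$ with $f(i) \succeq_\sigma i$ for every basis $B'$, which is equivalent to your sorted componentwise domination $b_i \preceq_\sigma b'_i$) and cites it from the literature. You merely add a sketch of the basis-exchange proof of Gale's theorem and spell out the convex-hull and telescoping steps that the paper leaves implicit.
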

\begin{proof}
This is equivalent to the statement that if $B'$ is a $d$-element subset of $\{1, \ldots, n\}$ such that $e_{B'} \in \Vt(P_M)$, then there is a bijection $f : B(\sigma) \to B'$ such that $f(i) \succeq_\sigma i$ for all $i \in B(\sigma)$. This is the Gale~Maximality~Principle; see \cite[Theorem 1.1]{Borovik} or \cite{Gale}.
\end{proof}

Let $Z$ be the abelian group generated by indicator functions of possibly unbounded rational convex polytopes in $\mb{R}^n$, and let $\phi : Z \to \mb{Q}(z_1, \ldots, z_n)$ be the Brion homomorphism, which sends the indicator function of a (possibly unbounded) convex polytope $P$ not containing a line to the integer point transform of $P$ \cite[Theorem 2.4]{BHS}. If $P$ is a convex polytope containing a line, then $\phi(P) = 0$ \cite[Lemma 2.5]{BHS}. We sometimes denote $\phi(1_P)$ by $\phi(P)$.

\begin{proof}[Proof of \cref{finkformulas}]
If $\rk(M) < d$, then $\op{M} = 0$, so we may assume $\rk(M) = d$. First, we will show for all $r$ that substituting $F(H) = H^{r+1}$ into the right-hand side of \eqref{finkformulaseqn} yields the expression for the non-equivariant class $\opc{M} \in A^\bullet((\mb{P}^r)^n)$ with degree at most $r$ in each $H_i$.

Let $\alpha = (r+\frac{1}{n})(1,\ldots,1)$ and let $P'_M = \alpha -(r+1)P_M$. By \Cref{NE}, the non-equivariant class $\opc{M}$ is the sum of $\prod_{i=1}^n H_i^{e_i}$ over all $(e_i)\in P'_M$, and this sum has degree at most $r$ in each $H_i$. Hence
$\opc{M} = \phi(P'_M) (H_1,\ldots, H_n)$ non-equivariantly, so we wish to show that 
\begin{equation}\label{noneqformula}
\phi(P'_M) = \sum_{\sigma \in S_n} \left(\prod_{i \in \{1,\ldots,n\}\setminus B(\sigma)} z_i^{r+1}\right)\frac{1}{(z_{\sigma(2)}-z_{\sigma(1)})\cdots (z_{\sigma(n)}-z_{\sigma(n-1)})}.
\end{equation}
We will do so using the following lemma.

\begin{Lem}\label{addallcones}
Let $P \subset \mb{R}^n$ be a rational convex polytope. Then \[\phi(P) = \sum_{\sigma \in S_n} \phi(P + C_\sigma).\]
\end{Lem}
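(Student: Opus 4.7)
My plan is to apply Brion's theorem to both sides and reduce the identity to a statement purely about pointed cones at the origin, then establish that cone statement as the technical core. By Brion's theorem, $\phi(P) = \sum_{v \in \Vt(P)} \phi(v + T_v P)$ where $T_v P$ is the tangent cone of $P$ at $v$, and analogously $\phi(P + C_\sigma) = \sum_{v \in \Vt(P + C_\sigma)} \phi(v + T_v(P + C_\sigma))$. The key observations are that $\Vt(P + C_\sigma) \subseteq \Vt(P)$, that $v \in \Vt(P)$ is a vertex of $P + C_\sigma$ precisely when the cone $T_v P + C_\sigma$ is pointed (equivalently, $T_v P \cap (-C_\sigma) = \{0\}$), and that $T_v(P + C_\sigma) = T_v P + C_\sigma$ at such vertices. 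Adopting the convention that $\phi$ vanishes on polyhedra containing a line, the terms with $T_v P + C_\sigma$ non-pointed drop out automatically.

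Summing over $\sigma$, interchanging sums, and using translation invariance, the lemma reduces to the following cone-level identity: for every pointed rational polyhedral cone $C$ with apex at the origin,
$$\phi(C) = \sum_{\sigma \in S_n} \phi(C + C_\sigma). \qquad (*)$$
The base case $C = \{0\}$ of $(*)$ is the classical identity
$$\sum_{\sigma \in S_n} \phi(C_\sigma) = \sum_\sigma \prod_{i=1}^{n-1} \frac{1}{1 - z_{\sigma(i+1)}/z_{\sigma(i)}} = 1,$$
provable as a rational-function identity either by partial-fraction manipulation (exactly in the spirit of the denominator-clearing calculations in the proof of \cref{serparform}) or by applying Brion's theorem to the permutohedron, whose $n!$ tangent cones at its vertices are lattice-equivalent to the cones $C_\sigma$. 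The general case of $(*)$ is then deduced by induction on $\dim C$: I would write $C = F + \mathbb{R}_{\geq 0} w$ for a facet $F$ of $C$ and an extremal ray direction $w$, apply the identity inductively to $F$, and combine with the one-ray base case in the direction $w$, using the valuation property of $\phi$ and tracking which $\sigma$ keep each Minkowski sum pointed.

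The main obstacle is the cone identity $(*)$ in full generality. Its base case is a classical and well-known identity, but extending it to arbitrary pointed $C$ requires careful bookkeeping of the pointedness conditions $C \cap (-C_\sigma) = \{0\}$ as $C$ is subdivided, and verifying that the cancellations among rational functions with denominators coming from the ray generators of $C + C_\sigma$ combine correctly. Once $(*)$ is established, the vertex-by-vertex comparison via Brion's theorem produces the lemma immediately, and the formula \eqref{noneqformula} follows by applying $(*)$ to $P'_M$ and invoking \cref{galemaximality} to identify each $\phi(P'_M + C_\sigma)$ with $\phi(\alpha - (r+1)e_{B(\sigma)} + C_\sigma')$ for a suitably reflected cone.
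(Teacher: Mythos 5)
Your reduction of the lemma to the cone identity $\phi(C)=\sum_{\sigma}\phi(C+C_\sigma)$ is sound: the facts that $\Vt(P+C_\sigma)$ consists of those $v\in\Vt(P)$ for which $T_{v,P}+C_\sigma$ contains no line, and that $T_{v,P+C_\sigma}=T_{v,P}+C_\sigma$ there, follow from the Minkowski-sum facts in \Cref{polyapp}, and Brion's theorem then converts the lemma into your identity $(*)$ vertex by vertex. The gap is in the proof of $(*)$, which you correctly flag as the crux but do not carry out. First, the decomposition $C=F+\mb{R}_{\ge 0}w$ with $F$ a facet and $w$ an extremal ray exists only for simplicial cones (for the cone over a square in $\mb{R}^3$, any such sum has at most three extremal rays), and the tangent cones $T_{v,P}$ you must handle are in general not simplicial; triangulating and invoking valuativity forces you to control contributions of lower-dimensional cones, which your sketch does not address. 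Second, even in the simplicial case, knowing $\phi(F)=\sum_\sigma\phi(F+C_\sigma)$ as an identity of rational functions does not let you ``add $\mb{R}_{\ge 0}w$ to both sides'': Minkowski sum with a ray is a linear operator on indicator functions of polyhedra that preserves the span of those containing lines, but it is not an operation on $\phi$-values, so you would need the identity lifted to $1_F\equiv\sum_\sigma 1_{F+C_\sigma}$ modulo indicator functions of polyhedra containing lines. Third, $(*)$ is needed with apex at an arbitrary rational vertex $v$ of $P$, where the zero-dimensional case reads $\sum_\sigma\phi(v+C_\sigma)=\phi(\{v\})$, which is $0$ rather than $1$ when $v$ is not a lattice point; since $\phi$ does not commute with non-lattice translations, this is not a formal consequence of the origin case.

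The identity $(*)$ is true and your route can be completed, for instance by proving $1_{\{v\}}\equiv\sum_\sigma 1_{v+C_\sigma}$ modulo indicator functions of polyhedra containing lines (completeness of the braid fan) and then applying the Minkowski-sum-with-$(T_{v,P}-v)$ operator, or by running the perturbation below on each tangent cone separately. The paper sidesteps $(*)$ entirely: it replaces $P$ by $P+\epsilon\mc{P}$, where $\mc{P}$ is a translate of the permutohedron containing $0$ whose tangent cone at the vertex $w_\sigma$ is $w_\sigma+C_\sigma$, notes that $P$ and $P+\epsilon\mc{P}$ have the same lattice points for small rational $\epsilon$, applies Brion once to the Minkowski sum (whose vertex tangent cones are exactly the pointed $T_{v,P}+\epsilon w_\sigma+C_\sigma$), regroups the inner sum over $v$ into $\phi(P+\epsilon w_\sigma+C_\sigma)$ by Brion in reverse, and finally drops $\epsilon w_\sigma$ using $-w_\sigma\in C_\sigma$. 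As written, your inductive step does not go through; the surrounding reduction is fine.
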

\begin{proof}
Let $\mc{P}$ be the convex hull of the vectors \[w_\sigma = -(\sigma^{-1}(1), \ldots, \sigma^{-1}(n)) + ((n+1)/2, \ldots, (n+1)/2)\] for all $\sigma \in S_n$. (This is the permutohedron \cite[Example 2.6]{permutahedron} translated so that it contains $0$.) The tangent cone to $\mc{P}$ at $w_\sigma$ is $w_\sigma + C_\sigma$.

For a sufficiently small rational number $\epsilon > 0$, the polytopes $P$ and $P_\epsilon := P + \epsilon \mc{P}$ have the same lattice points, so $\phi(P) = \phi(P_\epsilon)$. Now, by the description of vertices of Minkowski sums in \Cref{polyapp} and Brion's~Theorem \cite[p.3]{BHS}, we have $\phi(P) = \phi(P_\epsilon)$ can be written as
\[
  \sum_{u \in \Vt(P_\epsilon)} \phi(T_{u, P_\epsilon}) 
= \sum_{\sigma \in S_n} \sum_{v \in \Vt(P)}  \phi(T_{v, P} + \epsilon w_\sigma + C_\sigma) 
%&= \sum_{\sigma \in S_n} \sum_{v \in \Vt(P + \epsilon w_\sigma + C_\sigma)} \phi(v + T_{v, P + \epsilon w_\sigma + C_\sigma}) \\
= \sum_{\sigma \in S_n} \phi(P + \epsilon w_\sigma + C_\sigma).\]
In each equality we used the vanishing of $\phi$ on cones containing lines.

Since $- w_\sigma \in C_\sigma$, the polytopes $P + \epsilon w_\sigma + C_\sigma$ and $P + C_\sigma$ have the same lattice points for sufficiently small $\epsilon$. Hence this sum is equal to $\sum_{\sigma \in S_n} \phi(P + C_\sigma)$ as desired.
\end{proof}

Continuing the proof of \Cref{finkformulas}, applying \Cref{addallcones} with $P = P'_M$ we obtain
\begin{align*}
\phi(P'_M) = \sum_{\sigma \in S_n} \phi(P'_M + C_\sigma)
=\sum_{\sigma \in S_n} \phi(\alpha-(r+1)P_M + C_\sigma) 
= \sum_{\sigma \in S_n} \phi(\alpha-(r+1)(P_M + C_\sigma)) 
\end{align*}
where the last equality comes from replacing $\sigma$ with its reverse $\sigma^r = (\sigma(n), \ldots, \sigma(1))$. By \Cref{galemaximality}, the term of this sum is equal to

\begin{align*}
\phi(\alpha-(r+1)(e_{B(\sigma)} + C_\sigma)) &= \left(\prod_{i \in \{1,\ldots,n\}\setminus B(\sigma)} z_i^{r+1}\right) (z_1 \cdots z_n)^{-1} \phi((1/n,\ldots,1/n) -  C_\sigma)\\
%&\phantom{=}\left(\prod_{i \in \{1,\ldots,n\}\setminus B(\sigma)} z_i^{r+1}\right) (z_1 \cdots z_n)^{-1} \phi(e_{\sigma(1)} -  C_\sigma) \\
%&= \left(\prod_{i \in \{1,\ldots,n\}\setminus B(\sigma)} z_i^{r+1}\right) (z_1 \cdots z_n)^{-1} z_{\sigma(1)}\phi(-  C_\sigma) \\
&= \left(\prod_{i \in \{1,\ldots,n\}\setminus B(\sigma)} z_i^{r+1}\right)(z_1 \cdots z_n)^{-1} z_{\sigma(1)} \frac{1}{\left(1 - \frac{z_{\sigma(1)}}{z_{\sigma(2)}}\right) \cdots \left(1 - \frac{z_{\sigma(n-1)}}{z_{\sigma(n)}}\right)} \\
&= \left(\prod_{i \in \{1,\ldots,n\}\setminus B(\sigma)} z_i^{r+1}\right)\frac{1}{(z_{\sigma(2)}-z_{\sigma(1)})\cdots (z_{\sigma(n)}-z_{\sigma(n-1)})}
\end{align*}
where we used the fact that the polytopes $(1/n,\ldots,1/n) -  C_\sigma$ and $e_{\sigma(1)} - C_\sigma$ have the same lattice points, from which \eqref{noneqformula} follows.

Let $f_M$ be the rational function
\begin{align*}
f_M(x_1,\ldots,x_n,y_1,\ldots,y_n) &= \sum_{\sigma \in S_n} \left(\prod_{i \in \{1,\ldots,n\}\setminus B(\sigma)} y_i\right)\frac{1}{(x_{\sigma(2)}-x_{\sigma(1)})\ldots (x_{\sigma(n)}-x_{\sigma(n-1)})},
\end{align*}
and let $Q_{M}(x_1,\ldots,x_n,y_1,\ldots,y_n)$ be the rational function from \Cref{formulaexists}. Then, by the above and \cref{formulaexists}, we have for all $r$ that the expressions
\begin{align*}
\text{$f_M(z_1,\ldots,z_n,z_1^{r+1},\ldots,z_{n}^{r+1})$ and $Q_{M}(z_1,\ldots,z_n,z_1^{r+1},\ldots,z_n^{r+1})$}
\end{align*}
are both polynomials of degree at most $r$ in each $z_i$ that yield the non-equivariant class $\opc{M} \in A^\bullet((\mb{P}^r)^n)$ after substituting $z_i = H_i$. Therefore, they are equal.

Since the set \[\bigcup_{r=0}^\infty \{(z_1,\ldots,z_n,z_1^{r+1},\ldots,z_{n}^{r+1}) \mid z_1, \ldots, z_n \in K\}\] is Zariski dense in $\mb{A}^{2n}$ (any polynomial of total degree $k$ vanishing on the $k$th term of this union is zero), this implies that $f_M = Q_M$, and the theorem follows from \Cref{formulaexists}.
\end{proof}
\begin{comment}
Therefore, we have shown that substituting $F(H) = H^{r+1}$ for all $r$ into the right-hand side of \eqref{finkformulaseqn} yields the degree-reduced expression for the non-equivariant class $\opc{M} \in A^\bullet((\mb{P}^r)^n)$. 

By \Cref{formulaexists}, there exists a rational function $Q_M$ of $2n$ variables, so that 

Indeed, this implies that the rational function \eqref{finkformularationalfunction} agrees with $Q_M(z_1, \ldots, z_n, F(z_1), \ldots, F(z_n))$ (from \Cref{formulaexists}) after substituting $F(z) = z^{r+1}$. Since this holds for all $r$, the two expressions must also agree when $F$ is the universal Chern polynomial, and the result follows from \Cref{formulaexists}.
\end{comment}
\section{Kronecker duals of $\opc{M}$ and constructing $[M]_\hbar$}\label{poincaresection}
In this section, we describe how to the Kronecker duals $\opc{M}^\dagger : A^\bullet_{GL_{r+1}}(\mb{P}^r)^{\otimes n} \to A^\bullet_{GL_{r+1}}(\pt)$, which we recall (\Cref{dualdef}) are defined by \[\opc{M}^\dagger(f_1(H_1), \ldots, f_n(H_n)) = \int_{(\mb{P}^r)^n} \opc{M} f_1(H_1) \ldots f_n(H_n),\] are computed via the $[M]_{\hbar}$ operator from \Cref{Mhbardef}. This operation specializes to the $n$-ary $\star$-product when $M$ is an invertible $n \times n$ matrix, and satisfies many other nice properties. Recalling the definitions of $\ast$ and $\tau^{\le k}$ from \cref{asttau}, we defined \begin{align*}[M]_\hbar(f_1(H_1),\ldots,f_n(H_n))=\sum_{k=0}^{d-1}\left(\int_{(\mb{P}^r)^{n+1}\to \mb{P}^r}f_1(H_1)\ldots f_n(H_n)\opc{\tau^{\le k+1}(M\oplus \ast)}\right)\hbar^k\\\in A^\bullet_{GL_{r+1}}(\mb{P}^r)[\hbar]\cong QH^\bullet_{GL_{r+1}}(\mb{P}^r),\end{align*} where $(\mb{P}^r)^{n+1}\to \mb{P}^r$ is projection to the last factor. We will show in this section all of the properties of $[M]_{\hbar}$ claimed in \Cref{Mhbarintro}.

%Thus it is of great importance to understand these classes via their Kronecker duals, which by definition is the function which we will by abuse of notation also call $\opc{M}$, which is defined by $$\opc{M}(f_1(H_1),\ldots,f_n(H_n))=\int_{(\mb{P}^r)^n}\opc{M}f_1(H_1)\cdots f_n(H_n).$$ Via partial integration, we may also take $\opc{M}$ as a function acting on certain of the $\mb{P}^r$ factors, yielding a class in the product of the remaining factors, which will be an especially fruitful concept to consider later on.

\begin{Def}
\label{Mhbardaggerdef}
We define the Kronecker dual of $[M]_{\hbar}$ to be
$$[M]_\hbar^\dagger = \sum_{k=0}^{d-1} \opc{\tau^{\le k+1}(M \oplus \ast)}\hbar^k \in A^\bullet_{GL_{r+1}}(\mb{P}^r)^{\otimes n}\otimes QH^\bullet_{GL_{r+1}}(\mb{P}^r),$$ where we apply the isomorphism $A^\bullet_{GL_{r+1}}(\mb{P}^r)[\hbar]\cong QH^\bullet_{GL_{r+1}}(\mb{P}^r)$ to the last factor of $\mb{P}^r$.
\end{Def}
\begin{Rem}\label{truncgen}
For $k \ge \rk(M)$, we have $\opc{\tau^{\le k+1}(M \oplus \ast)}=0$ by \Cref{positivecodim}. When $k < d$, we have $\tau^{\le k+1}(M \oplus \ast)=\tau^{\le k+1}\tau^{\le d}(M \oplus \ast)$, and we may take $\tau^{\le d}(M \oplus \ast)$ to be the $d \times (n+1)$ matrix formed by adding a general extra column to $M$.
\end{Rem}

Later we will see how $[M]_\hbar$ contains the information of $[\ol{\Gamma_M}]$ and $\mu_M^*$, but for now it will serve only as a combinatorial tool to aid in computing Kronecker dual classes.

\subsection{Properties of $[M]_\hbar$}
We start with a lemma collecting the easy properties of $[M]_\hbar$.
\begin{Lem}\label{mhbareasy}
We have the following properties of $[M]_\hbar$.
\begin{enumerate}[(a)]
\item $[\ast]_\hbar(f(H))=\bar{f}(z)$ for any $f(H) \in A^\bullet_{GL_{r+1}}(\mb{P}^r)$.
\item The reduction of $[M]_\hbar$ modulo $\hbar^k$ is $[\tau^{\leq k} M]_\hbar$.
\item Let $M'$ be the matrix formed by deleting the last column of $M$. Then $[M]_\hbar(\alpha \otimes 1)=[M']_{\hbar}(\alpha)$ for all $\alpha \in A^\bullet_{GL_{r+1}}(\mb{P}^r)^{\otimes(n-1)}$.
\item Identifying $QH^\bullet_{GL_{r+1}}(\mb{P}^r) \cong A^\bullet_{GL_{r+1}}(\mb{P}^r)[\hbar]$, the composite \[\left(\int_{\mb{P}^r}\right) \circ [M]_\hbar : A^\bullet_{GL_{r+1}}(\mb{P}^r)^{\otimes n} \to A^\bullet_{GL_{r+1}}(\pt)[\hbar]\] is equal to $\sum_{k=1}^{d-1} \opc{\tau^{\le k+1}M}^\dagger\hbar^k$.
\end{enumerate}
\end{Lem}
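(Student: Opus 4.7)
The plan is to verify each of the four parts by unpacking the definition of $[M]_\hbar$, whose Kronecker dual is $\sum_{k=0}^{d-1}\opc{\tau^{\le k+1}(M\oplus\ast)}\hbar^k$, and then applying a single basic lemma: Lemma \ref{integrateaway} for deleting a column via pushforward, and Lemma \ref{diagclass} combined with the calculation in the proof of \Cref{opcmd} for the base case. None of the parts require any deeper degeneration input; they are all bookkeeping about how $\tau^{\le k+1}(M\oplus \ast)$ behaves under the two operations of column deletion and applying further generic projections.

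For (a), I would use that $\ast$ is $1\times 1$, so $d=1$ and only the $k=0$ term survives. Then $\tau^{\le 1}(\ast\oplus\ast)$ is a general $1\times 2$ matrix, whose class was computed in \Cref{diagclass} to be $\frac{F(H_1)-F(H_2)}{H_1-H_2}$. The identity $\int_{\mb{P}^r}\frac{F(z)-F(H)}{z-H}f(H)=\overline{f}(z)$ established inside the proof of \Cref{opcmd} then gives $[\ast]_\hbar(f(H))=\overline{f}(z)$. For (b), the reduction of $[M]_\hbar$ modulo $\hbar^k$ is by definition $\sum_{j=0}^{k-1}\opc{\tau^{\le j+1}(M\oplus\ast)}\hbar^j$; it suffices to observe that for $j<k$, a generic $(j+1)\times d$ linear map factors through a generic $(j+1)\times k$ map composed with $\tau^{\le k}$, so $\tau^{\le j+1}(M\oplus\ast)$ and $\tau^{\le j+1}(\tau^{\le k}M\oplus\ast)$ have the same matroid and hence (by \Cref{realval}) the same generalized matrix orbit class, as already noted in \Cref{truncgen}.

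For (c), I would factor the pushforward $\int_{(\mb{P}^r)^{n+1}\to\mb{P}^r}$ through the intermediate pushforward integrating out only the $n$-th factor. Since the coefficient in front of that factor is $1$, \Cref{integrateaway} reduces $\opc{\tau^{\le k+1}(M\oplus \ast)}$ to the class associated to the matrix obtained by deleting the $n$-th column. Since $\tau^{\le k+1}$ commutes with deletion of a column and the $n$-th column of $M\oplus\ast$ is exactly the last column of $M$ padded by zeros, this deleted matrix is (up to left $GL$-action) $\tau^{\le k+1}(M'\oplus\ast)$, and summing over $k$ yields $[M']_\hbar(\alpha)$. Part (d) is parallel: the composite $\int_{\mb{P}^r}\circ[M]_\hbar$ becomes $\sum_k\hbar^k\int_{(\mb{P}^r)^{n+1}}f_1(H_1)\cdots f_n(H_n)\opc{\tau^{\le k+1}(M\oplus\ast)}$, and the integration can again be carried out by first pushing the $(n{+}1)$-th factor forward to $(\mb{P}^r)^n$. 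By \Cref{integrateaway} this deletes the $(n{+}1)$-th (last) column of $\tau^{\le k+1}(M\oplus\ast)$, which leaves $\tau^{\le k+1}M$, yielding $\sum_k \opc{\tau^{\le k+1}M}^\dagger\hbar^k$.

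The only mild subtlety, and the main step that needs attention rather than routine expansion, is checking that applying a generic projection $\tau^{\le k+1}$ commutes with deleting a designated column in the sense needed for (b), (c), and (d); this is straightforward but requires invoking the fact that generalized matrix orbit classes are matroid invariants (\Cref{realval}), since the matrices in question agree only up to a left $GL$-action that is generic in the appropriate entries. Once that observation is in hand, the four parts are immediate.
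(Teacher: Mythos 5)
Your proposal is correct and is exactly the intended argument: the paper's proof of this lemma is the one-line remark that ``everything follows from the corresponding properties of $\opc{M}$,'' and your expansion supplies precisely those properties (\Cref{diagclass} and the computation in \Cref{opcmd} for (a), \Cref{truncgen}/\Cref{realval} for the truncation bookkeeping in (b), and \Cref{integrateaway} for the column deletions in (c) and (d)). The only thing worth flagging is that the $k=0$ term does not vanish in general, so the sum in part (d) should start at $k=0$; your derivation produces the full sum, and the lower limit $k=1$ in the statement appears to be a typo.
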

\begin{proof}
Everything follows from the corresponding properties of $\opc{M}$.
\end{proof}

The association from $d \times n$ matrices to operations $M \mapsto [M]_\hbar$ is not additive. For example, $\rk(M) < d$ does not imply $[M]_\hbar=0$. We will see that $M \mapsto [M]_\hbar$ satisfies the more refined version of additivity called valuativity (recall \Cref{matroidpolytopedef}). %, and that for fixed $n$ and possibly varying $d$, the operation $[M]_\hbar$ depends only on the matroid of $M$.

Now we prove the difficult important properties of $[M]_\hbar$ from \Cref{Mhbarintro}.
\begin{Thm}\label{starunion}
\begin{enumerate}[(a)]
\item If $M_1$ and $M_2$ are matrices with $n_1$ and $n_2$ columns respectively, then $$[M_1 \oplus M_2]_\hbar = [M_1]_\hbar \star [M_2]_\hbar,$$ where the $[M_1]_\hbar$ applies to the first $n_1$ tensor factors and the $[M_2]_\hbar$ applies to the last $n_2$ tensor factors.
\item The association $M \mapsto [M]_\hbar$ from $d \times n$ matrices to operators is valuative, and for fixed $n$ and possibly varying $d$, $[M]_\hbar$ depends only on the matroid of $M$.
\end{enumerate}
\end{Thm}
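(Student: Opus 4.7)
The plan is to first reduce both (a) and (b) to non-equivariant statements about integer-point transforms of independence polytopes, then lift to the equivariant setting using the universal rational function encoding of \Cref{formulaexists}. The fundamental observation is that the generating function $[M]_\hbar^\dagger = \sum_{k=0}^{d-1} \opc{\tau^{\le k+1}(M \oplus \ast)} \hbar^k$ can be interpreted non-equivariantly via \Cref{NE} as a sum of monomials whose exponent vectors correspond to integer points of a rescaled translate of the full independence polytope $I_M$. The key combinatorial step is to verify that the rank polytope $P_{\tau^{\le k+1}(M\oplus \ast)} \subset \mb{R}^{n+1}$, when projected onto the first $n$ coordinates (i.e.\ forgetting the coordinate for the $\ast$ column), equals the slice $I_M \cap \{k \le \sum x_i \le k+1\}$, and that as $k$ ranges from $0$ to $d-1$ these slices tile $I_M$.

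Given this combinatorial identification, part (a) follows quickly. Since $I_{M_1 \oplus M_2} = I_{M_1} \times I_{M_2}$, the non-equivariant integer-point transform of $(r+1) I_{M_1\oplus M_2}$ factors as a product of the corresponding transforms for $M_1$ and $M_2$ in the $(n+1)$-st variable. Unpacking the slicing dictionary, this factorization is precisely the identity $[M_1 \oplus M_2]_\hbar^\dagger = [M_1]_\hbar^\dagger \star [M_2]_\hbar^\dagger$ non-equivariantly for every $r$, since the $\star$-product on $QH^\bullet_{GL_{r+1}}(\mb{P}^r)$ is genuine polynomial multiplication in the $z$-variable (cf.\ \Cref{QCRD}). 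By \Cref{formulaexists}, each $\opc{\tau^{\le k+1}(M\oplus \ast)}$ is encoded by a universal rational function independent of $r$, and such rational functions are determined by their non-equivariant specializations as $r$ varies; this lifts the identity to the equivariant setting. As a sanity check, one can alternatively prove (a) by repeatedly applying \Cref{serpar}: the parallel connection formula $\opc{A}\star \opc{B} = \opc{P(A,B)} + \hbar \opc{S(A,B)}$, combined with a matroid polytope subdivision decomposing $P_{\tau^{\le k+1}((M_1\oplus M_2)\oplus \ast)}$ into rank polytopes of the relevant parallel and series connections along the shared $\ast$ column, yields the same identity.

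For part (b), valuativity reduces to the independence polytope version via \Cref{rankind}: a relation $\sum a_i 1_{P_{M_i}} = 0$ (modulo lower-dimensional sets) is equivalent to $\sum a_i 1_{I_{M_i}} = 0$. Since each $\opc{\tau^{\le k+1}(M\oplus \ast)}$ is determined non-equivariantly by the slice $I_M \cap \{k \le \sum x_i \le k+1\}$, any such relation induces $\sum a_i \opc{\tau^{\le k+1}(M_i \oplus \ast)} = 0$ non-equivariantly for every $r$ and every $k$, and the equivariant version follows again from \Cref{formulaexists}. The matroid-dependence claim is then immediate: the independence polytope, and hence each slice, depends only on the matroid of $M$.

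The main obstacle is the careful bookkeeping in the slicing dictionary, namely verifying that projection of $P_{\tau^{\le k+1}(M\oplus \ast)}$ onto the first $n$ coordinates yields exactly $I_M \cap \{k \le \sum x_i \le k+1\}$ and that the perturbation $\epsilon$ in \Cref{NE} can be chosen so the slices are half-open and partition a shifted copy of $(r+1)I_M$ without overlap or omission. One must in particular verify that the rank constraints on subsets $A \cup \{n+1\}$ involving the $\ast$ column produce no additional constraints beyond those defining $I_M \cap \{k \le \sum x_i \le k+1\}$, which is straightforward but requires attention to which rank inequalities become trivial when capped at $k+1$.
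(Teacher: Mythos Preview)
Your proposal is correct and follows essentially the same route as the paper: reduce to the non-equivariant setting via the rational-function encoding of \Cref{formulaexists} and the Zariski density argument, identify the non-equivariant $[M]_\hbar^\dagger$ as the integer-point transform of (a shifted copy of) $(r+1)I_M$ by slicing into the pieces $\opc{\tau^{\le k+1}(M\oplus\ast)}$, then use $I_{M_1\oplus M_2}=I_{M_1}\times I_{M_2}$ for (a) and \Cref{rankind} for (b). The paper carries out exactly this computation, so your bookkeeping concerns about the $A\cup\{n+1\}$ constraints and the $\epsilon$-perturbation are the right details to check but present no hidden difficulty.
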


\begin{proof}[Proof of \cref{starunion}]
Rather than work with $[M]_\hbar$, we work with the Kronecker dual class \[[M]_\hbar^\dagger \in A^\bullet_{GL_{r+1}}(\mb{P}^r)^{\otimes n}\otimes QH^\bullet_{GL_{r+1}}(\mb{P}^r).\] By \cref{serpar} and \cref{formulaexists}, we know that $[M]_\hbar^\dagger \star [N]_\hbar^\dagger-[M \oplus N]_\hbar^\dagger$ can be written as $Q(H_1,\ldots,H_n, z,F(H_1),\ldots,F(H_n), F(z))$, where $Q$ is a rational function with coefficients in $A^\bullet_{GL_{r+1}}(\pt)$. Similarly for any given expression $\sum a_i [M_i]_\hbar$. Hence, by the Zariski density argument at the end of the proof of \Cref{finkformulas}, it suffices to show the statements in the non-equivariant setting; that is, after substituting $F(H)=H^{r+1}$.

We claim that the non-equivariant $[M]_\hbar^\dagger$ is given by \begin{equation} \label{noneqmhbar} [M]_\hbar^\dagger=\sum z^{\sum e_i}\prod_{i=1}^n H_i^{r-e_i}\end{equation} where the sum is over all tuples $(e_1, \ldots, e_n)\in \mb{Z}^n$ such that
$\sum_{i \in A} e_i < (r+1)\rk(A)$ for all nonempty $A\subset [n]$. (Note that in light of \cref{NEgraphclosure}, this claim implies the non-equivariant analogue of \Cref{PFAintro}, which we will prove equivariantly in \Cref{graphclosuresection}.)

Note that the non-equivariant $[M]_\hbar^\dagger$ is homogeneous of degree $nr$ by definition, so it suffices to find the $(e_i)$ which appear in monomials $z^{\sum e_i}\prod_{i=1}^n H_i^{r-e_i}$. By \Cref{NE}, a monomial $H_{n+1}^{r-e}\prod_{i=1}^n H_i^{r-e_i}$ appears in $\opc{\tau^{\le (k+1)}(M\oplus \ast)}$ if and only if $\sum_{i \in A} e_i < (r+1)\rk_M(A)$ for all nonempty $A \subset [n]$ and $\sum_{i=1}^n e_i + e=(r+1)(k+1)-1$. As $e$ can range between $0$ and $r$, this latter equality implies $\sum_{i=1}^n e_i \in [r(k+1),(r+1)(k+1)-1]$.

Summing over $k$, we obtain all nonnegative sequences $(e_i)$ with $\sum_{i \in A} e_i < (r+1)\rk(A)$ for all nonempty $A \subset [n]$ as claimed.

Fixing $0<\epsilon_1,\ldots,\epsilon_n$ with $\sum_{i=1}^n \epsilon_i \le 1$, the sum in \eqref{noneqmhbar} is equivalently over all $(e_1, \ldots, e_n)\in \mb{Z}_{\ge 0}^n$ such that $(e_1+\epsilon_1, \ldots, e_n + \epsilon_n) \in (r+1)I_M$, so (b) follows as $[M]_{\hbar}^\dagger$ is manifestly valuative in light of \Cref{rankind}.
Also, (a) follows from the fact that $I_{M \oplus N} = I_M \times I_N$ (taking $\epsilon_i$ sequences for $M$, $N$ which combined add up to at most $1$).
% The inequalities for $[M \oplus N]_\hbar$ can all be deduced from the inequalities for $[M]_\hbar$ and the inequalities for $[N]_\hbar$, so clearly we can identify the product of a monomial from $[M]_\hbar$ and a monomial from $[N]_\hbar$ with a monomial from $[M \oplus N]_\hbar$ and vice versa, proving the first part of the theorem. The valuativeness follows from the claim.
\end{proof}

\subsection{Kronecker duals for general matrices and mod $F^k$}
\label{PDGM}
We will now investigate the Kronecker dual of $\opc{M}$ for a general $d \times n$ matrix $M$.

A general $d \times n$ matrix $M$ is realized as $\tau^{\le d}(\ast \oplus \ldots \oplus \ast)$, so by \Cref{mhbareasy}(a,b) and \Cref{starunion}(a) we have $$[M]_\hbar(f_1(H_1),\ldots,f_n(H_n))=\bar{f_1}(z)\ldots\bar{f_n}(z)\text{ mod }F(z)^d$$ whenever $f_1, \ldots, f_n$ are polynomials with coefficients in $A^\bullet_{GL_{r+1}}(\pt)$. Writing $\bar{f_1}(z)\ldots\bar{f_n}(z)=a_0+a_1F(z)+\ldots$, we have $$\opc{M}^\dagger(f_1(H_1),\ldots,f_n(H_n))=\int_{\mb{P}^r} a_{d-1}=[z^r][F(z)^{d-1}](\bar{f_1}(z)\ldots\bar{f_n}(z)\text{ mod }F(z)^d)$$ by \Cref{mhbareasy}(d).

\subsection{Kronecker duals for Schubert matrices and iterated mod $F^k$}
We will now investigate the Kronecker dual of $\opc{M}$ where $M$ is a Schubert matrix. Let $M$ be a matrix realizing $\operatorname{Sch}(r_1, \ldots, r_\ell, \{1, \ldots, a_1\}, \ldots, \{1, \ldots, a_\ell\})$ where $0 \leq r_1 \leq \ldots \leq r_\ell = d$ and $0 \leq a_1 \leq \ldots \leq a_\ell = n$. Recall from \cref{SchubertDef} that we may realize $M$ as $$M = \tau^{\le r_k}(\tau^{\le r_{k-1}}(\ldots\tau^{\le r_2}(\tau^{\le r_1}(\ast \oplus \ldots \oplus \ast)\oplus \ast \oplus \ldots \oplus \ast)\ldots)\oplus \ast \oplus \ldots \oplus \ast).$$
By \cref{mhbareasy}(a,b) and \Cref{starunion}(a) we have
$$[M]_\hbar(f_1(H_1),\ldots,f_n(H_n))=\left(\left(\bar{f}_1\ldots\bar{f}_{a_1}\text{ mod }F^{r_1}\right)\bar{f}_{a_1+1}\ldots \bar{f}_{a_2}\text{ mod }F^{r_2}\right)\ldots\bar{f}_n\text{ mod }F^{r_\ell}$$ (evaluated at $z$). By \Cref{mhbareasy}(d), applying the operator $[z^r][F(z)^{d-1}]$ to this expression yields $\opc{M}^\dagger(f_1(H_1),\ldots,f_n(H_n))$. Both $[M]_\hbar$ and $\opc{M}^\dagger$ are asymmetric and we may recover the expressions for all Schubert matroids by permuting the inputs.

\subsection{Kronecker duals for all $\opc{M}$} \label{poincaredualsforallom}
For an arbitrary matrix $M$, we apply \cref{SchubertFinkThm} and \Cref{starunion}(b) to get $$[M]_\hbar=\sum_{\ell=1}^n\sum_{\emptyset \subsetneq X_1  \subsetneq \ldots \subsetneq X_\ell = \{1,\ldots,n\}}(-1)^{n-\ell}[\operatorname{Sch}(\rk_M(X_1),\ldots,\rk_M(X_\ell),X_1,\ldots,X_\ell)]_\hbar,$$
This expression in particular implies the uniqueness in \Cref{Mhbarintro} in light of the previous subsection. We have thus expressed $[M]_\hbar(f_1(H_1), \ldots, f_n(H_n))$ in terms of iterated mod $F^k$ over various groupings of the $\bar{f}_i$. By \Cref{mhbareasy}(d), applying the operator $[z^r][F(z)^{d-1}]$ to this expression yields $\opc{M}^\dagger(f_1(H_1),\ldots,f_n(H_n))$.

\section{Equivariant Chow class of the graph closure $\ol{\Gamma}_M$ of $\mu_M$}\label{graphclosuresection}
In this section, we compute the equivariant Chow class of the closure $\ol{\Gamma}_M \subset \mb{P}^{(r+1)\times d-1}\times (\mb{P}^r)^n$ of the graph of $\mu_M$, which yields \Cref{PFAintro} from \Cref{qcirc} (the other part involving $\ell$ formally follows for degree reasons). \Cref{graphvaluative}, which completes the proof of \Cref{realvalintro}, then follows from the valuativity of $[M]_{\hbar}$. We will use the specific resolution of $\mu_M$ given by the wonderful compactification of \cite{LiLi} described in Appendix \ref{LL} to describe $(\mu_M)_*$ in terms of generalized matrix orbits, and then use this to reconstruct $[\ol{\Gamma}_M]$ in terms of $[M]_{\hbar}^\dagger$ ( \Cref{Mhbardaggerdef}).

\begin{Thm}
\label{PFA}
Let $M$ be a $d \times n$ matrix, and let $$q : QH^\bullet_{GL_{r+1}}(\mb{P}^r) \to A^\bullet_{GL_{r+1}}(\mb{P}^{(r+1)\times d-1})\cong A^\bullet_{GL_{r+1}}(\pt)[H]/(F(H)^d)$$ be the quotient map, with $q(z) = H$ and $q(\hbar) = F(H)$. Then the equivariant Chow class \[[\ol{\Gamma}_M] \in A^\bullet_{GL_{r+1}}(\mb{P}^{(r+1)\times d-1}\times(\mb{P}^r)^n) = A^\bullet_{GL_{r+1}}(\mb{P}^r)^{\otimes n}\otimes A^\bullet_{GL_{r+1}}(\mb{P}^{(r+1)\times d-1})\] is equal to $(1 \otimes q)([M]_\hbar^\dagger)$.

Equivalently, let $a_k \in  A^\bullet_{GL_{r+1}} ((\mb{P}^r)^n)\otimes A^\bullet_{GL_{r+1}}(\mb{P}^{(r+1)\times d-1})$ be the result of replacing $H_{n+1}$ with $H$ in any expression for $\opc{\tau^{\le k+1}(M \oplus \ast)}$ with degree at most $r$ in $H_{n+1}$. (For example, we can use the formula given in \Cref{finkformulas}.)
Then
$$[\ol{\Gamma_M}]=\sum_{k=0}^{d-1} a_k F(H)^k.$$

\begin{comment}
Equivalently, given a class $\beta\in A^\bullet((\mb{P}^r)^n)$ and $0\leq i\leq d$ and $0\leq j\leq r$, we have
\begin{equation} \label{pushforwardintegral}
\int_{(\mb{P}^r)^n}{(\mu_M)_*(F(H)^i H^j)\cap \beta}=\int_{(\mb{P}^r)^{n+1}}{\opc[r][n+1]{\tau^{\le d-i}(M \cup \ast)}\cap H_{n+1}^j \cap p^* \beta},
\end{equation}
where $p : (\mb{P}^r)^{n+1}\to (\mb{P}^r)^{n}$ is the projection to the first $n$ factors.
\end{comment}
\end{Thm}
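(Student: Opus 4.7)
Plan.  The identity $[\overline{\Gamma_M}] = (1\otimes q)([M]_\hbar^\dagger)$ is equivalent, via Kronecker duality against the basis $\{H^j F(H)^i\mid 0\le j\le r,\ 0\le i\le d-1\}$ of $A^\bullet_{GL_{r+1}}(\mb{P}^{(r+1)d-1})$ over $A^\bullet_{GL_{r+1}}(\pt)$, to the collection of pushforward identities
\begin{equation*}
(\mu_M)_*\bigl(H^j F(H)^i\bigr) \;=\; (p_n)_*\!\bigl(H_{n+1}^j \cdot \opc{\tau^{\le d-i}(M \oplus \ast)}\bigr) \quad \text{in } A^\bullet_{GL_{r+1}}((\mb{P}^r)^n),
\end{equation*}
for $0\le j\le r$ and $0\le i\le d-1$, where $p_n:(\mb{P}^r)^{n+1}\to(\mb{P}^r)^n$ projects away from the last factor.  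This is the equivariant refinement of the non-equivariant equality already visible in the proofs of \Cref{starunion} and \Cref{NEgraphclosure}.

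The approach is to realise $\opc{\tau^{\le d-i}(M \oplus \ast)}$ directly on a wonderful compactification resolution $\pi_M:B\to \mb{P}^{(r+1)d-1}$ of $\mu_M$ from \Cref{LL}.  Decompose a general $\tau^{\le d-i}$ as $(T\mid u_0)$ with $T$ a general $(d-i)\times d$ matrix and $u_0\in K^{d-i}$ general.  Then $\tau^{\le d-i}(M\oplus \ast) = (TM\mid u_0)$, and the map $\mu_{\tau^{\le d-i}(M\oplus \ast)}$ factors as $[A]\mapsto (\mu_M([AT]), [A u_0])$, i.e.\ as $\Phi\circ(A\mapsto AT)$ where $\Phi = (\wt\mu_M,\phi\circ\pi_M):B\to(\mb{P}^r)^{n+1}$ and $\phi:\mb{P}^{(r+1)d-1}\dashrightarrow\mb{P}^r$ is the $GL_{r+1}$-equivariant linear projection $[A']\mapsto [A'u]$ for any $u\in K^d$ with $Tu = u_0$ (which exists by surjectivity of $T$).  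The map $A\mapsto AT$ is a linear embedding of $\mb{P}^{(r+1)(d-i)-1}$ onto the $GL_{r+1}$-invariant subspace $V_{d-i}=\{A'\in \mb{P}^{(r+1)d-1}:\text{the rows of }A'\text{ lie in }\operatorname{Row}(T)\}$.  Under $\mb{A}^{(r+1)d}\cong K^{r+1}\otimes K^d$, we have $V_{d-i}=\mb{P}(K^{r+1}\otimes \operatorname{Row}(T))$, whose $GL_{r+1}$-equivariant Chow class is the equivariant top Chern class of $(K^{r+1})^{\oplus i}$ twisted by the hyperplane bundle, namely $F(H)^i$.  Therefore
\begin{equation*}
\opc{\tau^{\le d-i}(M \oplus \ast)} \;=\; \Phi_*\,\pi_M^*\!\bigl(F(H)^i\bigr),
\end{equation*}
and the desired pushforward identity follows by projection formula: since $\phi$ is linear we have $\phi^*H_{n+1}=H$, hence $\Phi^*H_{n+1}=\pi_M^*H$ and $p_n\circ\Phi=\wt\mu_M$, so
\begin{align*}
(p_n)_*\bigl(H_{n+1}^j\cdot \Phi_*\pi_M^*F(H)^i\bigr) &= (p_n\circ\Phi)_*\bigl(\Phi^*H_{n+1}^j\cdot \pi_M^*F(H)^i\bigr) \\
&= (\wt\mu_M)_*\pi_M^*\bigl(H^jF(H)^i\bigr) \;=\; (\mu_M)_*\bigl(H^jF(H)^i\bigr).
\end{align*}

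The hard part is verifying the key identification $\opc{\tau^{\le d-i}(M \oplus \ast)} = \Phi_*\,\pi_M^*(F(H)^i)$ on $B$: concretely, one must show that the strict transform of $V_{d-i}$ in the wonderful compactification $B$ agrees cycle-theoretically with the pullback $\pi_M^{-1}(V_{d-i})$, and that the linear projection $\phi$ extends compatibly to $B$ (possibly after one further smooth equivariant blowup).  Both statements should follow from the genericity of $T$ and $u_0$, which guarantees that $V_{d-i}$ and the indeterminacy locus $\{A'u=0\}$ of $\phi$ meet each of the successive blowup centres in the wonderful compactification transversally.  Once the formula $[\overline{\Gamma_M}] = (1\otimes q)([M]_\hbar^\dagger)$ is proved, the valuativity of $M\mapsto [\overline{\Gamma_M}]$ claimed in \Cref{graphvaluative} is immediate from the $A^\bullet_{GL_{r+1}}(\pt)$-linearity of $1\otimes q$ and the valuativity of $M\mapsto [M]_\hbar^\dagger$ in \Cref{starunion}(b).
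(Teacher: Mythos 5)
Your overall strategy coincides with the paper's: reduce, via the perfect pairing against the basis $\{H^jF(H)^i\}$ of $A^\bullet_{GL_{r+1}}(\mb{P}^{(r+1)\times d-1})$, to the pushforward identities $(\mu_M)_*(H^jF(H)^i)=p_*\bigl(H_{n+1}^j\cdot\opc{\tau^{\le d-i}(M\oplus\ast)}\bigr)$ (where $p:(\mb{P}^r)^{n+1}\to(\mb{P}^r)^n$ forgets the last factor); realize $\tau^{\le d-i}(M\oplus\ast)$ on the linear subspace $\mb{P}(V\otimes\operatorname{Row}(T))$ of class $F(H)^i$ together with one extra general linear projection $\phi$; and identify strict transforms with total transforms on the wonderful compactification by genericity/transversality. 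That last identification is exactly the paper's \Cref{BUR}, so deferring it is reasonable, and your decomposition $\tau^{\le d-i}=(T\mid u_0)$ with $u=T^{-1}(u_0)$ is literally the paper's choice of a general column $x_{n+1}$ and a general quotient $W\to W/\Lambda$ with $\Lambda=\ker T$.

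There is, however, one genuine gap: the assertion $\Phi^*H_{n+1}=\pi_M^*H$. The projection $\phi:[A']\mapsto[A'u]$ has nonempty indeterminacy locus $\mb{P}(V\otimes u^\perp)$, and since $u$ is general this locus is not among the centers blown up in $B$; you must pass to a further blowup $b:B'\to B$ along its strict transform, and on $B'$ the resolved projection satisfies $\wt\phi^*H_{n+1}=b^*\pi_M^*H-E$ with $E$ the exceptional divisor — not $b^*\pi_M^*H$. Your projection-formula chain therefore produces
\begin{equation*}
(\wt\mu_M\circ b)_*\bigl((b^*\pi_M^*H-E)^j\cdot b^*\pi_M^*F(H)^i\bigr),
\end{equation*}
which differs a priori from $(\mu_M)_*(H^jF(H)^i)$ by the cross terms involving $E$. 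The paper closes exactly this gap in \Cref{pushforwardkeylemma}(i): expanding binomially, each term $b_*\bigl((b^*\pi_M^*H)^kE^{j-k}\bigr)$ with $k<j$ is the pushforward of a class supported on $E$ of codimension $j-1$, and $b|_E$ has $r$-dimensional fibers with $j-1<r$, so these terms vanish and only $k=j$ survives, giving $b_*\wt\phi^*(H_{n+1}^j)=\pi_M^*(H^j)$. Without this dimension count your derivation of the pushforward identity is not valid as written; with it, your argument becomes the paper's proof.
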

\begin{Cor}
\label{qcirc}
We have  $q\circ[M]_\hbar(f_1(H_1),\ldots,f_n(H_n))=\mu_M^*(f_1(H_1)\cdots f_n(H_n))$.
\end{Cor}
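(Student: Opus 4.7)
Given that \Cref{PFA} is already in hand, I expect this corollary to be essentially a direct computation, with no substantive obstacle beyond carefully tracking the identifications between $A^\bullet_{GL_{r+1}}(\mb{P}^r)[\hbar]$ and $QH^\bullet_{GL_{r+1}}(\mb{P}^r)$. The plan is to unwind the definition of $\mu_M^*$ as convolution with $[\ol{\Gamma_M}]$ and compare the coefficients of $F(H)^k$ directly.

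First I would write $\mu_M^*(\beta) = (p_1)_*\bigl([\ol{\Gamma_M}] \cap p_2^*\beta\bigr)$ for $\beta \in A^\bullet_{GL_{r+1}}((\mb{P}^r)^n)$, where $p_1:\mb{P}^{(r+1)\times d-1}\times(\mb{P}^r)^n \to \mb{P}^{(r+1)\times d-1}$ and $p_2:\mb{P}^{(r+1)\times d-1}\times(\mb{P}^r)^n \to (\mb{P}^r)^n$ are the two projections. This identity is the content of the displayed equation for $\mu_M^*$ via $\ol{\Gamma_M}$ given in \Cref{Setup} (it is just the standard recipe for the map on Chow groups associated to a correspondence).

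Next I would substitute the formula $[\ol{\Gamma_M}] = \sum_{k=0}^{d-1} a_k F(H)^k$ from \Cref{PFA} and apply the projection formula, pulling $F(H)^k$ out of $(p_1)_*$:
\[
\mu_M^*(f_1(H_1)\cdots f_n(H_n)) \;=\; \sum_{k=0}^{d-1} F(H)^k \cdot (p_1)_*\bigl(a_k \cdot f_1(H_1)\cdots f_n(H_n)\bigr).
\]
Since $a_k$ is obtained from an expression for $\opc{\tau^{\le k+1}(M\oplus\ast)}$ of degree at most $r$ in $H_{n+1}$ by substituting $H \mapsto H_{n+1}$, and since $f_1(H_1)\cdots f_n(H_n)$ involves only the $(\mb{P}^r)^n$ factor, the pushforward $(p_1)_*(a_k\cdot f_1\cdots f_n)$ is exactly the polynomial obtained by performing the partial integral
\[
g_k(H_{n+1}) \;=\; \int_{(\mb{P}^r)^{n+1}\to\mb{P}^r} \opc{\tau^{\le k+1}(M\oplus\ast)}\cdot f_1(H_1)\cdots f_n(H_n)
\]
and then substituting $H_{n+1}\mapsto H$.

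Finally I would compare with \Cref{Mhbardef}, which by construction gives $[M]_\hbar(f_1,\ldots,f_n) = \sum_{k=0}^{d-1} g_k(z)\,\hbar^k$. Applying $q$, which sends $z\mapsto H$ and $\hbar \mapsto F(H)$, yields $\sum_{k=0}^{d-1} g_k(H)\,F(H)^k$, matching the expression above for $\mu_M^*(f_1(H_1)\cdots f_n(H_n))$. The main (minor) bookkeeping point will be verifying that the representative of $[M]_\hbar(f_1,\ldots,f_n)$ in $A^\bullet_{GL_{r+1}}(\mb{P}^r)[\hbar]$ given by the $g_k$ (each of degree $\le r$ in $z$) is the one compatible with the isomorphism $\hbar \leftrightarrow F(z)$ used to define $q$; this is automatic from the construction of the lifting map in \Cref{QCRD}, since each $g_k$ is already reduced modulo $F$ (being the result of an equivariant integration over $\mb{P}^r$ producing a class in $A^\bullet_{GL_{r+1}}(\mb{P}^r)$).
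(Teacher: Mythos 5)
Your argument is correct and is exactly the deduction the paper intends: the corollary is stated without proof because it follows from \Cref{PFA} by precisely the unwinding you describe (write $\mu_M^*\beta=(p_1)_*([\ol{\Gamma_M}]\cap p_2^*\beta)$, substitute $[\ol{\Gamma_M}]=\sum a_kF(H)^k$, apply the projection formula, and recognize the partial integrals as the coefficients in \Cref{Mhbardef}). Your closing remark about the reduced representatives is the right bookkeeping point, and it is handled correctly since the $a_k$ in \Cref{PFA} are built from the degree-$\le r$ representatives in $H_{n+1}$, matching the lifting convention of \Cref{QCRD}.
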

\begin{Cor}\label{graphvaluative}
The map $M \mapsto [\ol{\Gamma_M}]$ from $d \times n$ matrices to equivariant Chow classes is valuative.
\end{Cor}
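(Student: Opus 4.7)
The plan is to deduce the valuativity of $M \mapsto [\ol{\Gamma_M}]$ directly from \Cref{PFA} together with the valuativity of $M \mapsto [M]_\hbar$ already established in \Cref{starunion}(b). The key observation is that the map $(1 \otimes q)$ appearing in \Cref{PFA} is a fixed $A^\bullet_{GL_{r+1}}(\pt)$-linear map that does not depend on the matrix $M$, so it automatically transports valuative associations to valuative associations.

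More precisely, suppose $M_1, \ldots, M_k$ are $d\times n$ matrices with $\sum_i a_i\, 1_{P_{M_i}} = 0$. By \Cref{starunion}(b), the operations $[M_i]_\hbar$ satisfy $\sum_i a_i [M_i]_\hbar = 0$. Passing to Kronecker duals (which is a tautological bijective correspondence under the perfect pairing between $A^\bullet_{GL_{r+1}}((\mb{P}^r)^n)^{\otimes n}$ and itself via equivariant integration, extended $\hbar$-linearly), this yields
\[
\sum_i a_i [M_i]_\hbar^\dagger \;=\; 0 \quad\text{in}\quad A^\bullet_{GL_{r+1}}(\mb{P}^r)^{\otimes n}\otimes QH^\bullet_{GL_{r+1}}(\mb{P}^r).
\]
Applying the $A^\bullet_{GL_{r+1}}(\pt)$-linear map $1\otimes q$ and invoking \Cref{PFA}, I obtain
\[
\sum_i a_i [\ol{\Gamma_{M_i}}] \;=\; (1\otimes q)\!\left(\sum_i a_i [M_i]_\hbar^\dagger\right) \;=\; 0.
\]

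This is exactly the valuativity condition for $M\mapsto [\ol{\Gamma_M}]$, so the corollary follows. There is no real obstacle here: the essential work has been done in \Cref{starunion}(b), whose proof reduces valuativity of $[M]_\hbar$ to the non-equivariant shape of $[M]_\hbar^\dagger$ as a sum of monomials supported on lattice points of $(r+1)I_M$, combined with \Cref{rankind} which allows replacing the rank polytope condition by the independence polytope condition. The only mild thing to verify explicitly is that Kronecker duality commutes with linear combinations in $M$, which is immediate from its definition as a perfect pairing.
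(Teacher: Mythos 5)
Your proposal is correct and follows exactly the paper's route: the corollary is deduced immediately from the identity $[\ol{\Gamma_M}]=(1\otimes q)([M]_\hbar^\dagger)$ of \Cref{PFA} together with the valuativity of $M\mapsto[M]_\hbar$ from \Cref{starunion}(b), using that $1\otimes q$ is a fixed linear map independent of $M$. (The only blemish is the typo ``$A^\bullet_{GL_{r+1}}((\mb{P}^r)^n)^{\otimes n}$'' where you mean $A^\bullet_{GL_{r+1}}(\mb{P}^r)^{\otimes n}$; the passage between $[M]_\hbar$ and $[M]_\hbar^\dagger$ is indeed immediate since both are built from the same classes $\opc{\tau^{\le k+1}(M\oplus\ast)}$.)
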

\begin{proof}[Proof of \cref{PFA}]
Let $V = K^{r+1}$ and $W = K^d$ and let $x_1, \ldots, x_n \in W$ be the columns of $M$. As in \Cref{ParAndSer}, we can think of the generalized matrix orbit map as the map $\mu_{M} : \mb{P}(V \otimes W^\vee) \dashrightarrow (\mb{P}(V))^n$ defined by the formula $\mu_{M}(\wt A) = (\wt{Ax_1}, \ldots, \wt{Ax_n})$ whenever $A \in \Hom(W, V) = V \otimes W^*$ is a matrix such that $A x_i \neq 0$ for $1 \leq i \leq n$.

Choose a general $x_{n+1} \in W$ and let $M'$ be the $d \times (n+1)$ matrix whose columns are $x_1, \ldots, x_{n+1}$. For $0 \leq i \leq d$, let $\Lambda \subset W$ be a general subspace of dimension $i$, and let $\overline{x}_1, \ldots, \overline{x}_{n+1} \in W / \Lambda$ be the images of $x_1, \ldots, x_{n+1}$. After identifying $W/\Lambda$ with $K^{d-i}$ by a fixed isomorphism, let $\overline{M'}$ be the $(d - i)\times (n+1)$ matrix whose columns are $\overline{x}_1, \ldots, \overline{x}_{n+1}$. By \Cref{truncgen}, $\overline{M'}$ is a realization of $\tau^{\le d-i}(M \oplus \ast)$, so $\opc[r][n+1]{\overline{M'}} = \opc[r][n+1]{\tau^{\le d-i}(M \oplus \ast)}$. The following key lemma relates the maps $\mu_{M}$, $\mu_{M'}$, and $\mu_{\overline{M'}}$.

\begin{Lem} \label{pushforwardkeylemma}
There exist resolutions $B$, $B'$, and $\overline{B'}$ of the maps $\mu_{M}$, $\mu_{M'}$, and $\mu_{\overline{M'}}$, respectively, that fit into a commutative diagram
\begin{equation} \label{pushforwardkeydiagram}
\begin{tikzcd}
\overline{B'}\arrow[ddr, phantom, "\square", description, font=\normalsize]\arrow[r,hook,"\wt{\iota}"]\arrow[dd,"\pi_{\ol{M'}}"]&B'\arrow[dd,bend right=25, swap, "\pi_{M'}"]\arrow[d,"b"]\arrow[ddrr,"\wt{\mu}_{M'}"]&&\\
&B\arrow[d,"\pi_M"]\arrow[dr,"\wt{\mu}_M"]&&\\
\mb{P}(V \otimes \Lambda^{\perp})\arrow[rrr,dashed,bend right=20]\arrow[r,hook,"\iota"]&\mb{P}(V \otimes W^\vee)\arrow[r,dashed]\arrow[rr,dashed,bend right=15]&\mb{P}(V)^n&\mb{P}(V)^{n+1}\arrow[l,swap,"p"]
\end{tikzcd}
\end{equation}
where $p : \mb{P}(V)^{n+1} \to \mb{P}(V)$ is the projection onto the first $n$ factors and $\iota : \mb{P}(V \otimes \Lambda^\perp) \to \mb{P}(V \otimes W^\vee)$ is the inclusion, satisfying the following equalities of equivariant Chow classes:
\begin{enumerate}[(i)]
\item $\pi_M^* H^j = b_*\wt{\mu}_{M'}^* (H_{n+1}^j)$ for $0 \leq j \leq r$, and
\item $\pi_{M'}^{*} (F(H)^i)  = \wt{\iota}_*[\overline{B'}]$.
\end{enumerate}
\end{Lem}
To prove \Cref{pushforwardkeylemma}, we will need the following proposition, which states that blowup commutes with restriction to a subvariety transverse to the center of blowup.
\begin{Prop}
\label{BUR}
Let $X$ be a smooth variety containing smooth subvarieties $Y,Z$ such that $Y\pitchfork Z$. Let $\pi: {\rm Bl}_Z X\to X$ be the blowup map. Then, the canonical embedding ${\rm Bl}_{Y\cap Z} Y\hookrightarrow \pi^{-1}(Y)$ is an isomorphism. Furthermore, we have the equality of Chow classes $\pi^*[Y] = [{\rm Bl}_{Y\cap Z} Y]$ (which is an equality of $G$-equivariant Chow classes if $X$ carries an action of a group $G$ and $Y$ and $Z$ are $G$-invariant).
\end{Prop}
\begin{proof}
By the blowup closure lemma, the proper transform of $Y$ in ${\rm Bl}_Z X$ is ${\rm Bl}_{Y\cap Z} Y$ and ${\rm Bl}_{Y\cap Z} Y$ is irreducible. Therefore, the map ${\rm Bl}_{Y\cap Z} Y\hookrightarrow \pi^{-1}(Y)$ is a closed immersion.

The exceptional divisor $E\subset {\rm Bl}_Z X$ is isomorphic to the projectivization of the normal bundle $\mb{P}(N_{Z/X})$. Since $Y\pitchfork Z$, $Y\cap Z$ is smooth and of the expected dimension $\dim(X)-\codim(Y)-\codim(Z)$. Hence, $\pi^{-1}(Y\cap Z)$ is smooth of dimension $\dim(X)-1-\codim(Y) = \dim(Y) - 1$.

Therefore, ${\rm Bl}_{Y\cap Z} Y\hookrightarrow \pi^{-1}(Y)$ is a bijection as it is a bijection over $Z\cap Y$. Finally, $\pi^{-1}(Y\cap Z)=\pi^{-1}(Y)\cap E\subset \pi^{-1}(Y)$ is cut out by one equation and smooth of codimension 1, so $\pi^{-1}(Y)$ must be smooth. By Zariski's main theorem,  ${\rm Bl}_{Y\cap Z} Y\hookrightarrow \pi^{-1}(Y)$ is an isomorphism. 

The equality $\pi^*[Y] = [{\rm Bl}_{Y\cap Z} Y]$ follows from \cite[Corollary~6.7.2]{Fulton}.
\end{proof}
\begin{proof}[Proof of \Cref{pushforwardkeylemma}]
We will construct the resolutions $B$, $B'$, and $\overline{B'}$ using the wonderful compactification as follows. Let $\mc{F}$ be the finite set of linear subspaces of $W^{\vee}$ given by
\begin{align*}
\mc{F}=\left\{\bigcap_{i\in S}{x_i^{\perp}}\mid S\subset \{1,\ldots,n\}\right\}\end{align*}
and let
\begin{align*}
\mc{S}=\{\mb{P}(V \otimes U)\mid U\in \mc{F}, \{0\}\subsetneq U \subsetneq W^\vee\}. 
\end{align*}
Let $G_1, \ldots, G_N$ be the elements of $\mc{S}$, ordered in such a way that $G_i \subset G_j$ implies $i \leq j$. Define the ordered sets
\begin{align*}
\mc{G} &= \{G_1, \ldots, G_N\}, \qquad
\mc{G}' = \{G_1, \ldots, G_N, \mb{P}(V \otimes x_{n+1}^\perp)\}, \\
\overline{\mc{G}} &= \{G_1 \cap \mb{P}(V \otimes \Lambda^\perp), \ldots, G_N \cap \mb{P}(V \otimes \Lambda^\perp), \mb{P}(V \otimes x_{n+1}^\perp) \cap \mb{P}(V \otimes \Lambda^\perp)\}.
\end{align*}
It follows from the genericity of $x_{n+1}$ and $\Lambda$ that $\mb{P}(V \otimes x_{n+1}^\perp)$ and $\mb{P}(V \otimes \Lambda^\perp)$ are transverse to each $G_i$ and to each other. Hence $\mc{G}$ and $\mc{G}'$ are ordered building sets of subvarieties of $\mb{P}(V \otimes W^*)$ and $\overline{\mc{G}'}$ is an ordered building set of subvarieties of $\mb{P}(V \otimes \Lambda^\perp)$ (in the sense of \Cref{iterBU}).

Let 
\begin{align*}
B = \mb{P}(V \otimes W^\vee)_\mathcal{G}, \qquad 
B' = \mb{P}(V \otimes W^\vee)_\mathcal{G'} , \qquad
\overline{B'} = \mb{P}(V \otimes \Lambda^\perp)_{\overline{\mathcal{G'}}}
\end{align*} be the corresponding wonderful compactifications.
By \Cref{RMM}, the spaces $B$, $B'$, and $\overline{B'}$ smoothly and $GL(V)$-equivariantly resolve the maps $\mu_M$, $\mu_{M'}$, and $\mu_{\overline{M'}}$ respectively.

By \Cref{iterBU}, the space $B'$ is the blowup of $B$ along the strict transform of $\mb{P}(V \otimes x_{n+1}^\perp)$ in $B$. Also $\overline{B'} = \pi_{M'}^{-1}(\mb{P}(V \otimes \Lambda^\perp))$ by a repeated application of \Cref{BUR}. Hence, we can construct the diagram \eqref{pushforwardkeydiagram}. It remains to prove equations (i) and (ii).

% Need to actually show properties
First, we show (i). Let $E$ be the exceptional divisor of $b$. Letting $\wt{H} = \pi_{M'}^* H$, we have $\wt{H}=\wt{\mu}_{M'}^{*}H_{n+1}+E$. Hence by rewriting $\mu_{M'}^{*}(H_{n+1}^j)= (\wt{H}-E)^j$, we get
\begin{align}
b_{*}\wt{\mu}_{M'}^{*}(H_{n+1}^j)=\sum_{k=0}^{j}(-1)^{j-k}\binom{j}{k}b_*(\wt{H}^{k}E^{j-k}). \label{HEsum}
\end{align}
Let $ \iota_E : E \hookrightarrow B'$ be the inclusion of the exceptional divisor, which is $GL(V)$-equivariant. For $k<j$, the class $\wt{H}^{k}E^{j-k}=(\iota_E)_* \iota_E^* (\wt{H}^{k}E^{j-k-1})$ is the pushforward to $B'$ of a class in $A^{j-1}(E)$. Since the map $b$ restricted to $E$ has $r$-dimensional fibers and $r > j-1$, this implies that $b_{*}(\wt{H}^{k}E^{j-k}) = 0$. Hence each term of \eqref{HEsum} vanishes except the term with $k=j$, and
\[
b_{*}\wt{\mu}_{M'}^{*}(H_{n+1}^j) = b_* (\wt{H}^j) = b_* \pi_{M'}^* (H^j) = b_* b^* \pi_{M}^* (H^j) = \pi_{M}^* (H^j),
\]
which is exactly (i).

Equation (ii) follows from $\pi_{M'}^{*} (F(H)^i) = \pi_{M'}^{*} \iota_*[\mb{P}(V \otimes \Lambda^\perp)]$ and repeated applications of \Cref{BUR}.
\end{proof}

Continuing the proof of the theorem, for $\beta\in A^\bullet_{GL_{r+1}}((\mb{P}^r)^n)$, $0\leq i\leq d$, $0\leq j\leq r$, we claim
\begin{equation} \label{pushforwardintegral}
\int_{(\mb{P}^r)^n}{(\mu_M)_*(F(H)^i H^j)\cap \beta}=\int_{(\mb{P}^r)^{n+1}}{\opc[r][n+1]{\tau^{\le d-i}(M \oplus \ast)}\cap H_{n+1}^j \cap p^* \beta},
\end{equation}
where $p : (\mb{P}^r)^{n+1}\to (\mb{P}^r)^{n}$ is the projection to the first $n$ factors. Indeed, by \Cref{pushforwardkeylemma},
\begin{align*}
\int_{(\mb{P}^r)^n} (\mu_{M})_{*}(F(H)^i H^j)\cap \beta &= \int_{B}\pi_M^*(F(H)^i) \cap \pi_M^*(H^j)\cap \wt{\mu}_{M}^*\beta \\
&= \int_{B}\pi_M^*(F(H)^i) \cap b_* \wt{\mu}_{M'}^*(H_{n+1}^j)\cap \wt{\mu}_{M}^*\beta \\
&= \int_{B'} \pi_{M'}^*(F(H)^i) \cap \wt{\mu}_{M'}^*(H_{n+1}^j) \cap \wt{\mu}_{M'}^* p^* \beta \\
&= \int_{B'} \wt{\iota}_*[\overline{B'}] \cap \wt{\mu}_{M'}^*(H_{n+1}^j \cap p^* \beta) \\
&= \int_{\overline{B'}} \wt{\mu}_{\overline{M'}}^*(H_{n+1}^j \cap p^* \beta)= \int_{(\mb{P}^r)^{n+1}}{\opc[r][n+1]{\tau^{\le d-i}(M \oplus \ast)}\cap H_{n+1}^j \cap p^* \beta}.
\end{align*}

Now, we complete the proof of the theorem. The two statements of the theorem are equivalent by the definition of $[M]_\hbar$, so we wish to show that $[\ol{\Gamma}_M] = (1 \otimes q)([M]_\hbar^\dagger)$. It suffices to show \[\int_{\mb{P}^{(r+1)\times d - 1} \times (\mb{P}^r)^n}[\ol{\Gamma}_M] \cap (F(H)^i H^j) \cap \beta = \int_{\mb{P}^{(r+1)\times d - 1} \times (\mb{P}^r)^n} (1 \otimes q)([M]_\hbar^\dagger) \cap (F(H)^i H^j) \cap \beta\] for $\beta\in A^\bullet_{GL_{r+1}}((\mb{P}^r)^n)$, $0\leq i\leq d$ and $0\leq j\leq r$.

The pushforward $(\mu_M)_*$ does not depend on the choice of the resolution of $\mu_M$, so we can take the resolution $\ol{\Gamma_M} \subset \mb{P}^{(r+1)\times d - 1} \times (\mb{P}^r)^n$. Then
\begin{align*}
\int_{\mb{P}^{(r+1)\times d - 1} \times (\mb{P}^r)^n}[\ol{\Gamma_M}] \cap (F(H)^i H^j) \cap \beta 
&=\int_{(\mb{P}^r)^n} (\mu_{M})_{*}(F(H)^i H^j)\cap \beta \\
&=\int_{(\mb{P}^r)^{n+1}}{\opc{\tau^{\le d-i}(M \oplus \ast)}\cap H_{n+1}^j \cap p^* \beta}\\
&=\int_{\mb{P}^{(r+1)\times d - 1} \times (\mb{P}^r)^n} (1 \otimes q)([M]_\hbar^\dagger) \cap (F(H)^i H^j) \cap \beta.
\end{align*}
\end{proof}
\section{Application: line sections of a hypersurface}
\label{application}
We consider the following problem.
\begin{Pro}
\label{Patel}
For a general hypersurface $X\subset\mb{P}^r$ of degree $d$ with $d=2r+1$, what is the degree of the map
\begin{align*}
\phi_X: \mb{G}(1,r)\dashrightarrow {\rm Sym}^d\mb{P}^1//SL_2\cong \ol{\mc{M}_{0,d}}/S_d
\end{align*}
that sends each line $\ell\subset \mb{P}^r$ to the moduli of the $d$ unordered points $\ell \cap X$ on $\ell\cong \mb{P}^1$?
\end{Pro}

When $r=2$, the answer to Problem \ref{Patel} was shown to be 420 in \cite{Laza}. We give a separate approach that yields the answer for all $r$. More generally, in the threshold $d\geq 2r-2$ where a general such $X$ contains no lines, we find the class of a general fiber of $\phi_X$ in $\mb{G}(1,r)$. 

We can also ask a related question.
\begin{Pro}
\label{Patel2}
With the setup of \Cref{Patel}, when $r=2$ Cadman and Laza \cite{Laza} show 
\begin{align*}
\deg(\phi_X)= 2\cdot (\text{\# bitangents of $X$})+4\cdot (\text{\# flexes of $X$}) = 2\cdot 120 + 4\cdot 45 = 420.
\end{align*}
How does this generalize to arbitrary $r$? 
%\cite[pp. 277--282]{GriffithsHarris}.
%In the special case $d=5,r=2$, Cadman and Laza show
%\begin{align*}
%\deg(\phi_X)=2\cdot 120+4\cdot 45=420,
%\end{align*}
%where $120$ is the number of bitangents and $45$ is the number of flexes. How does this generalize? 
\end{Pro}

We solve Problem \ref{Patel} in \Cref{AP} and answer Problem \ref{Patel2} in \Cref{APFS}. 

\subsection{Connection with $\op{M}$}
For the rest of \Cref{application}, all of our cycles $\op{M}$ are in a product of $\mb{P}^1$'s. For any matrix $M$, denote by $(\op[1]{M})_{\mb{P}(\mc{S}^{\vee})^d}$ the pullback of $\op{M}$ from $\mb{P}(\zeta_2)^d$, where $\zeta_2$ is the universal rank $2$ vector bundle. By an abuse of notation, we will drop the subscript $\mb{P}(\mc{S}^{\vee})^d$ when it is clear from context. 
\begin{Lem}
\label{section}
Let $f$ be a homogeneous form of degree $d$ on $K^{r+1}$ such that $X = V(f) \subset \mb{P}^r$ contains no lines. Denoting by $\mc{S}$ the tautological bundle on $\mb{G}(1,r)$, there is an everywhere nonzero section $\mb{G}(1,r)\to {\rm Sym}^d\mc{S}^\vee$ induced by $f$, and the class of the image of the induced section $s: \mb{G}(1,r)\to \mb{P}({\rm Sym}^d\mc{S}^\vee)$ is given by
\begin{align*}
[\im(s)]=H^{d}+c_1({\rm Sym}^d\mc{S}^\vee)H^{d-1}+\cdots+c_{d}({\rm Sym}^d\mc{S}^\vee).
\end{align*}
\end{Lem}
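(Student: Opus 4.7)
The plan is to handle the two assertions separately, with the first being a direct construction and the second a standard computation via a tautological exact sequence.

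For the first assertion, I would observe that a degree $d$ homogeneous form $f$ on $K^{r+1}$ lies in $\operatorname{Sym}^d (K^{r+1})^\vee$, and the inclusion of the tautological bundle $\mc{S} \hookrightarrow K^{r+1}\otimes \mc{O}_{\mb{G}(1,r)}$ induces a restriction map $\operatorname{Sym}^d (K^{r+1})^\vee \otimes \mc{O}_{\mb{G}(1,r)} \twoheadrightarrow \operatorname{Sym}^d \mc{S}^\vee$. Pushing $f$ through this map yields a global section whose value at a line $\ell \in \mb{G}(1,r)$ is the restriction $f|_{\hat\ell}$ of $f$ to the $2$-dimensional subspace $\hat{\ell}\subset K^{r+1}$ over $\ell$. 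This section vanishes at $\ell$ exactly when $\hat\ell\subset V(f)$, i.e.\ $\ell\subset X$. Since by hypothesis $X$ contains no lines, the section is everywhere nonzero and therefore projectivizes to a regular section $s:\mb{G}(1,r)\to \mb{P}(\operatorname{Sym}^d\mc{S}^\vee)$.

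For the second assertion, I would use the tautological short exact sequence on $\mb{P}(\operatorname{Sym}^d\mc{S}^\vee)$
\[
0 \to \mc{O}(-1) \to \pi^*\operatorname{Sym}^d\mc{S}^\vee \to Q \to 0,
\]
where $Q$ is the tautological quotient bundle of rank $d$ (note $\operatorname{Sym}^d\mc{S}^\vee$ has rank $d+1$). The pullback $\pi^*\sigma$ of the nonvanishing section $\sigma$ constructed above gives a section of $\pi^*\operatorname{Sym}^d\mc{S}^\vee$ whose image in $Q$ is a section $\bar\sigma$ that vanishes precisely on $\im(s)$: at a point $[v]\in \mb{P}(\operatorname{Sym}^d\mc{S}^\vee_\ell)$ lying over $\ell$, the value $\bar\sigma([v])$ is zero iff $\sigma(\ell)$ lies in the line $\mc{O}(-1)_{[v]}\subset \operatorname{Sym}^d\mc{S}^\vee_\ell$, iff $[v] = [\sigma(\ell)] = s(\ell)$. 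Since $s$ is a section, $\im(s)$ has the expected codimension $d$, so $[\im(s)] = c_d(Q)$.

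Finally I would compute $c_d(Q)$ from the exact sequence using $c(\pi^*\operatorname{Sym}^d\mc{S}^\vee) = c(\mc{O}(-1))\cdot c(Q)$. With $H = c_1(\mc{O}(1))$, we get $c(Q) = c(\pi^*\operatorname{Sym}^d\mc{S}^\vee)\cdot(1+H+H^2+\cdots)$, and extracting the codimension $d$ part yields
\[
c_d(Q) = \sum_{i=0}^d c_i(\operatorname{Sym}^d\mc{S}^\vee)\, H^{d-i} = H^d + c_1(\operatorname{Sym}^d\mc{S}^\vee)H^{d-1} + \cdots + c_d(\operatorname{Sym}^d\mc{S}^\vee),
\]
as claimed. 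The main (mild) obstacle is just fixing the sign convention for $H$ compatibly with the one implicitly used elsewhere in the paper; no essential difficulty arises since everything reduces to Chern class bookkeeping once the geometric identification $[\im(s)] = c_d(Q)$ is made.
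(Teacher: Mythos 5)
Your proposal is correct and follows essentially the same route as the paper: the construction of the nowhere-vanishing section of ${\rm Sym}^d\mc{S}^\vee$ via restriction of $f$ to each line is exactly the paper's argument, and the class formula is the content of \cite[Proposition 9.13]{3264}, which the paper simply cites. The only difference is that you derive that formula explicitly by identifying $\im(s)$ with the zero locus of the induced section of the tautological quotient bundle and computing $c_d(Q)$ from the Euler sequence, which is the standard proof of the cited proposition and is consistent with the paper's subspace convention for $\mb{P}(-)$ and its Leray relation $G(z)=z^{d+1}+c_1 z^d+\cdots$.
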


\begin{proof}
Let $V=K^{r+1}$. Then, the canonical inclusion of $\mc{S}\subset V\times \mb{G}(1,V)$ of vector bundles over $\mb{G}(1,V)$ induces a surjection ${\rm Sym}^d(V^\vee)\times \mb{G}(1,V)\to{\rm Sym}^d(\mc{S}^\vee)$ of vector bundles. The section $\mb{G}(1,V)\to {\rm Sym}^d\mc{S}^\vee$ is the composition 
\begin{align*}
\mb{G}(1,V)\to {\rm Sym}^d(V^\vee)\times \mb{G}(1,V)\to {\rm Sym}^d\mc{S}^\vee.
\end{align*}
If $X=V(f)$ contains no lines, then $f$ restricts to something nonzero on each line, so $\mb{G}(1,V)\to {\rm Sym}^d\mc{S}^\vee$ avoids zero. The formula for $[\im(s)]$ follows from \cite[Proposition 9.13]{3264}. 
\end{proof}

\Cref{IN} describes the class of a general fiber of $\phi_X$, and contains a rigorous definition of $\phi_X$ in its proof.

\begin{Lem}
\label{IN}
With the setup of Lemma \ref{section}, assume every line in $\mb{P}^r$ meets $X$ in at least three points. The general fiber of
\begin{align*}
\phi_X : \mb{G}(1,r)\dashrightarrow \mb{P}({\rm Sym}^d (K^{\vee})^{2})//SL_{2}
\end{align*}
is closed in $\mb{G}(1,r)$ and has class $s^{*}\Phi_*\opc[1]{M}_{\mb{P}(\mc{S}^{\vee})^d}$, where $M$ is a general $2 \times d$ matrix and $\Phi$ is the multiplication map
\begin{align*}
\Phi:\mb{P}(\mc{S}^{\vee})^d\to \mb{P}({\rm Sym}^d \mc{S}^{\vee}), 
\end{align*}
where $\mb{P}(\mc{S}^{\vee})^d$ denotes the $d$-fold fiber product of $\mb{P}(\mc{S}^{\vee})$ with itself over $\mb{G}(1,r)$.
\end{Lem}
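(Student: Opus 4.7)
The plan is to first rigorously define $\phi_X$ and establish closedness of a general fiber, then identify the fiber class via the pullback-pushforward formula.

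First, for any line $\ell$ not contained in $X$, the restriction $f|_\ell$ is a nonzero section of $\mc{O}_\ell(d)$, and this data descends to a well-defined element of the GIT quotient $\mb{P}(\mathrm{Sym}^d(K^\vee)^2)//SL_2$ whenever the divisor cut out by $f|_\ell$ is GIT-stable, i.e. no root has multiplicity $\geq d/2$. A general moduli point corresponds to $d$ distinct points on $\mb{P}^1$, hence is stable; the locus of unstable lines in $\mb{G}(1,r)$ is a proper closed subvariety by the hypothesis that every line meets $X$ in at least three distinct points, and a dimension count then shows that a general fiber $\phi_X^{-1}(y)$ avoids it, giving closedness of the fiber in $\mb{G}(1,r)$.

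Next, to compute the class of the fiber I would pass to the relative setting over $\mb{G}(1,r)$. The $d$-fold fiber product $\mb{P}(\mc{S}^\vee)^d$ parameterizes configurations $([\ell],(p_1,\ldots,p_d))$ with $p_i\in \ell$, and the relative cycle $\opc[1]{M}_{\mb{P}(\mc{S}^\vee)^d}$ restricts over each $[\ell]$ to the $GL_2$-orbit closure in $\ell^d$ of a $d$-tuple with moduli matching $M$. This patches across the family because the transition functions for $\mc{S}^\vee$ preserve the relevant orbit closure, analogously to the discussion in the introduction for $\op{M}$ over a general rank $r+1$ bundle. The multiplication map $\Phi$ sends the tuple to the corresponding effective divisor on $\ell$, landing in $\mb{P}(\mathrm{Sym}^d\mc{S}^\vee)$, and the section $s$ attaches to each $[\ell]$ the divisor class of $f|_\ell$. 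Set-theoretically, $[\ell]\in\phi_X^{-1}(y)$ (with $y$ the moduli of $M$) exactly when $s([\ell])$ lies in the support of $\Phi(\opc[1]{M})$.

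The identification of the fiber class with $s^{*}\Phi_*\opc[1]{M}$ will then follow once I verify that $\Phi|_{\opc[1]{M}}$ is generically injective and that $s$ meets $\Phi_*\opc[1]{M}$ transversally along a general fiber. For general $M$, the $GL_2$-orbit of $d$ distinct points intersects each $S_d$-orbit in a single element, since the stabilizer of a generic $d$-tuple is trivial and no nontrivial element of $GL_2$ can permute generic points on $\mb{P}^1$; hence $\Phi|_{\opc[1]{M}}$ is generically injective and $\Phi_*\opc[1]{M}$ has no multiplicity. The main obstacle is then a generic transversality argument for $s$: as $X$ varies in the complete linear system $|\mc{O}_{\mb{P}^r}(d)|$, the section $s$ varies in the base-point-free linear system of sections of $\mathrm{Sym}^d\mc{S}^\vee$ (using the surjectivity statement in the proof of \Cref{section}), and a Bertini-type argument delivers generic transversality of $s$ and $\Phi_*\opc[1]{M}$ for a general $X$, yielding the stated class equality.
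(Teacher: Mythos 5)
There is a genuine gap at the step that carries the whole weight of the hypothesis ``every line meets $X$ in at least three points.'' You assert that ``a dimension count shows that a general fiber $\phi_X^{-1}(y)$ avoids'' the unstable locus, and later that ``$[\ell]\in\phi_X^{-1}(y)$ exactly when $s([\ell])$ lies in the support of $\Phi(\op[1]{M})$.'' The first claim proves nothing: $\phi_X$ is only defined on the stable locus, so its fiber avoids the unstable locus tautologically; the issue is whether the \emph{closure} of the fiber does, equivalently whether $s^{-1}(\Phi_*\op[1]{M})$ contains any unstable lines, and no dimension count on $\mb{G}(1,r)$ addresses this (the unstable locus is in general nonempty --- e.g.\ lines meeting $X$ at a point of multiplicity $\ge d/2$ --- and the closure of a general fiber could a priori limit onto it). The second claim is exactly the statement that needs proof in its ``if'' direction: if $s([\ell])$ lies in the \emph{boundary} of $\Phi(\op[1]{M})$, then $[\ell]$ is not in the fiber at all. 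The paper's argument is that the boundary of $\op[1]{M}$ consists of configurations supported on at most two points: the orbit closure is only $3$-dimensional fiberwise, its boundary is $PGL_2$-invariant of dimension at most $2$, and the $PGL_2$-orbit of any configuration supported on $\ge 3$ distinct points is already $3$-dimensional. Hence $\im(s)$ misses the boundary precisely by the three-point hypothesis, so $s^{-1}(\Phi_*\op[1]{M})$ lies in the stable locus, equals the fiber, and is closed. This identification of the boundary is the missing idea in your write-up.

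A secondary problem: your final transversality step invokes Bertini ``as $X$ varies in the complete linear system.'' But $X$ is fixed in the statement of the lemma (and in \Cref{AP}); the only genericity available is in the target point $y$, i.e.\ in the choice of the general matrix $M$. So you cannot buy transversality by moving $X$; whatever multiplicity statement you need must come from varying $M$ (or be absorbed into the meaning of ``class,'' as the paper does when it counts the $n_{a,b,c}$ with multiplicities). Also, your justification that no nontrivial element of $GL_2$ permutes generic points on $\mb{P}^1$ fails for $d=4$ (the Klein four-group preserving a cross-ratio), though it is harmless in the intended range $d=2r+1\ge 5$.
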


\begin{proof}
Let $U\subset \mb{G}(1,r)$ be an open subset over which $\mc{S}\to \mb{G}(1,r)$ is trivial. Taking a trivialization, we have an $SL_{2}$-equivariant map $\mb{P}({\rm Sym^d} \mc{S}^{\vee})|_{U}\to U$. We can take the GIT quotient, which gives us
\begin{center}
\begin{tikzcd}
\mb{P}({\rm Sym^d} \mc{S}^{\vee})|_{U} \arrow[dr] \arrow[rr, dashed] & & \mb{P}({\rm Sym^d}\mc{S}^{\vee})|_{U}//SL_{2} \arrow[dl]\\
& U&
\end{tikzcd}
\end{center}
Given another open subset $U'\subset \mb{G}(1,r)$ over which $\mc{S}\to \mb{G}(1,r)$ is trivial, we get identifications of the GIT quotients over $U\cap U'$ that are independent of the trivializations chosen over $U$ and $U'$.
\begin{center}
\begin{tikzcd}
(\mb{P}({\rm Sym^d} \mc{S}^{\vee})|_{U})|_{U'} \arrow[d,dashed] \arrow[r, "\sim"] & (\mb{P}({\rm Sym^d} \mc{S}^{\vee})|_{U'})|_{U} \arrow[d,dashed]  \\
(\mb{P}({\rm Sym^d} \mc{S}^{\vee})|_{U}//SL_{2})|_{U'} \arrow[r,equal] &(\mb{P}({\rm Sym^d} \mc{S}^{\vee})|_{U'}//SL_{2})|_{U} 
\end{tikzcd}
\end{center}
This means the GIT quotients over each $U$ glue together to a global object, and this global object is precisely $\mb{G}(1,r)\times (\mb{P}({\rm Sym}^d (K^{\vee})^2)//SL_{2})$. We have an induced map
\begin{align}\label{symmapexists}
\mb{P}({\rm Sym^d} \mc{S}^{\vee})\dashrightarrow \mb{G}(1,r)\times (\mb{P}({\rm Sym}^d (K^{\vee})^2)//SL_{2})\to \mb{P}({\rm Sym}^d (K^{\vee})^2)//SL_{2}
\end{align}
and the map $\phi_X$ can be defined as the composite
\begin{align*}
\phi_X:\mb{G}(1,r)\xrightarrow{s} \mb{P}({\rm Sym^d} \mc{S}^{\vee})\dashrightarrow \mb{G}(1,r)\times (\mb{P}({\rm Sym}^d (K^{\vee})^2)//SL_{2}) \to \mb{P}({\rm Sym}^d (K^{\vee})^2)//SL_{2}.
\end{align*}
Let $U_s\subset \mb{P}({\rm Sym^d} \mc{S}^{\vee})$ denote the dense open subset of points that are stable with respect to the local GIT quotients above. The set $U_s$ consists of configurations of $d$ points on a line such that no point with multiplicity at least $\frac{d}{2}$ appears. The map \eqref{symmapexists} is defined on $U_{s}$ and maps a stable configuration of $d$ points to the class of its $SL_2$ orbit in $\mb{P}({\rm Sym}^d (K^{\vee})^2)//SL_{2}$. A general fiber of the map $U_{s}\to \mb{P}({\rm Sym}^d (K^{\vee})^2)//SL_{2}$ given by \eqref{symmapexists} is $\Phi_*\op[1]{M}\cap U_{s}$. 

The set $s^{-1}(U_s)$ is in the domain of definition of $\phi_X$, and the scheme $s^{-1}(\Phi_*\op[1]{M}\cap U_{s})$ is a general fiber of $\phi_X$ restricted to $s^{-1}(U_s)$. We claim that 
\begin{align*}
s(\mb{G}(1,r))\cap \Phi_*\op[1]{M}\subset U_s.
\end{align*}
Indeed, every configuration in the boundary of $\Phi_*\op[1]{M}$ (supported on fewer than $d$ points) must be supported on at most 2 points for dimension reasons, and thus $\im(s)$ does not intersect the boundary of $\Phi_*\op[1]{M}$ by assumption. Since the complement of the boundary of $\Phi_*\op[1]{M}$ lies in $U_s$, we have $s(\mb{G}(1,r))\cap \Phi_*\op[1]{M}\subset U_s$. 

Therefore, $s^{-1}(\Phi_*\op[1]{M}\cap U_{s})= s^{-1}(\Phi_*\op[1]{M})$, which completes the proof.
\begin{comment}
On $U_{ss}$, the map $U_{ss}\to \mb{P}({\rm Sym}^d (K^{\vee})^2)//SL_{2}$ given by \eqref{symmapexists} maps $d$ points on a line corresponding to a semistable configuration (e.g. if no point appears with multiplicity greater than $\frac{d}{2}$ \cite[Example 4.2]{Newstead}) to the . A general fiber of the map $U_{ss}\to \mb{P}({\rm Sym}^d (K^{\vee})^2)//SL_{2}$ given by \eqref{symmapexists} is $\Phi_*\op[1]{M}\cap U_{ss}$. 

and the map $\phi_X$ can be defined as the composite
\begin{align*}
\mb{G}(1,r)\xrightarrow{s} \mb{P}({\rm Sym^d} \mc{S}^{\vee})\dashrightarrow \mb{G}(1,r)\times (\mb{P}({\rm Sym}^d (K^{\vee})^2)//SL_{2}) \to \mb{P}({\rm Sym}^d (K^{\vee})^2)//SL_{2}.
\end{align*}
Let $U_{ss}\subset \mb{P}({\rm Sym^d} \mc{S}^{\vee})$ denote the open subset of points that are semistable with respect to the local GIT quotients above. 

The existence of \eqref{symmapexists} implies $\Phi_*\op[1]{M}\cap U_{ss}$ is the preimage in $U_{ss}$ of a general point in the quotient $\mb{P}({\rm Sym}^d (K^{\vee})^2)//SL_{2}$. By our assumption of there existing no lines that meet $X$ only twice, the image of $\mb{G}(1,r)$ in $\mb{P}({\rm Sym^d} \mc{S}^{\vee})$ intersects $\Phi_*\op[1]{M}$ only at points in $U_{ss}$ \cite[Example 4.2]{Newstead}. Therefore, $s^{*}\Phi_*\opc[1]{M}$ equals the class of $s^{-1}(\Phi_*\op[1]{M}\cap U_{ss})$. 
\end{comment}
\end{proof}

\subsection{Computation of degree}
\label{APS}
%Rest of the proof
\begin{Thm}
\label{AP}
Let $X\subset\mb{P}^r$ be a hypersurface of degree $d$ such that every line meets $X$ in at least three points and is not contained in $X$. After restricting the rational map
\begin{align*}
\phi_X: \mb{G}(1,r)\dashrightarrow \mb{P}({\rm Sym}^d (K^{\vee})^{2})//SL_{2}
\end{align*}
to its domain of definition, the general fiber is closed in $\mb{G}(1,r)$ and has class 
\begin{align*}
d(d-1)(d-2)\prod_{k=2}^{d-2} (ku+(d-k)v),
\end{align*}
where $u,v$ are the Chern roots of the dual $\mc{S}^{\vee}\to \mb{G}(1,r)$ of the tautological bundle. In particular, if $d=2r+1$, then the degree of $\phi_X$ is the difference between the coefficients of $u^{r-1}v^{r-1}$ and $u^{r-2}v^{r}$ in the expansion of $d(d-1)(d-2)\prod_{k=2}^{d-2} (ku+(d-k)v)$. 
\end{Thm}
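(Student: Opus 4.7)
The plan is to compute the class $s^*\Phi_*\opc[1]{M}$ from \Cref{IN} directly, where $M$ is a general $2\times d$ matrix. By the projection formula for the section $s$, this equals $\tilde\pi_*(\Phi^*[\im(s)]\cdot \opc[1]{M})$ with $\tilde\pi = \pi\circ\Phi:\mb{P}(\mc{S}^\vee)^d\to\mb{G}(1,r)$. Let $u,v$ be the Chern roots of $\mc{S}^\vee$, $F(z)=(z+u)(z+v)$, $S = H_1+\cdots+H_d$, and $\lambda_k = ku+(d-k)v$ for $0\le k \le d$ (the Chern roots of $\operatorname{Sym}^d\mc{S}^\vee$). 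Since $\Phi^*H = S$, \Cref{section} yields
\begin{align*}
\Phi^*[\im(s)] = \bigl(\textstyle\prod_{k=0}^d(S+\lambda_k)-\prod_{k=0}^d\lambda_k\bigr)\big/ S,
\end{align*}
and since $M$ is general, \Cref{opcmd} (using the identity $\frac{F(z)-F(H_i)}{z-H_i} = z+H_i+u+v$) gives $\opc[1]{M} = [z^1][F(z)^1]\prod_i(z+H_i+u+v)$.

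Setting $w:=z+u+v$, the main task is to evaluate the polynomial $G(z) := \int_{(\mb{P}^1)^d}\Phi^*[\im(s)]\cdot\prod_i(H_i+w)$. Iterating $\int_{\mb{P}^1}f(H) = (f(-u)-f(-v))/(v-u)$ expands this as a sum over $\epsilon \in \{-u,-v\}^d$. At $\epsilon$ with $k$ entries equal to $-v$, one computes $S|_\epsilon = -\lambda_{d-k}$; since $-\lambda_{d-k}$ is a root of $\prod_j(S+\lambda_j)$, this forces $\Phi^*[\im(s)]|_\epsilon = \prod_{j\ne d-k}\lambda_j$. Reindexing gives
\begin{align*}
G(z) = \frac{1}{(u-v)^d}\sum_{m=0}^d (-1)^m\binom{d}{m}\prod_{j\ne m}\lambda_j\cdot (z+v)^m(z+u)^{d-m}.
\end{align*}

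To extract $[z^1][F(z)^1]G(z)$, note that $(z+v)^m(z+u)^{d-m}$ contains the factor $F(z)^{\min(m,d-m)}$, so only $m\in\{0,1,d-1,d\}$ can contribute. A short induction gives $(z+u)^{d-2}\equiv(u-v)^{d-3}(z+u)\pmod{F(z)}$, and an L'H\^opital residue argument yields $[F(z)^1](z+u)^d = (u-v)^{d-3}\bigl((d-2)z+(d-1)u-v\bigr)$, with the $u\leftrightarrow v$ analogues for $(z+v)$. Substituting and picking out the coefficient of $z$, the factor $\prod_{k=2}^{d-2}\lambda_k$ pulls out, leaving the bracket
\begin{align*}
\lambda_{d-1}\lambda_d\bigl((d-2)u-(3d-2)v\bigr)+\lambda_0\lambda_1\bigl((3d-2)u-(d-2)v\bigr),
\end{align*}
which by direct polynomial expansion equals $d(d-1)(d-2)(u-v)^3$; this cancels the $(u-v)^3$ denominator factor and yields the claimed class.

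The main obstacle is verifying the ``miraculous'' identity displayed above, which reduces to checking that the $u^{3-i}v^i$ coefficients in the bracket are $(-1)^i\binom{3}{i}(d-1)(d-2)$ for $i=0,1,2,3$. For the degree statement when $d=2r+1$, the class has codimension $2r-2 = \dim\mb{G}(1,r)$, and its pairing with the fundamental class is computed via standard Schubert calculus on $G(2,r+1)$: since $(uv)^{r-1}$ is the point class and the relations $s_{(r,r-2)}(u,v)=0$ in $A^\bullet(G(2,r+1))$ determine the integration pairing, one finds that $\deg(\phi_X)$ equals the coefficient of $u^{r-1}v^{r-1}$ minus that of $u^{r-2}v^r$, as stated.
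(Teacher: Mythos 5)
Your proposal is correct and follows essentially the same route as the paper's proof: the paper's reduction of $\Phi^*[\im(s)]$ modulo each $F(H_i)$ via Lagrange interpolation on $\{-u,-v\}^d$, followed by the Kronecker-dual extraction $[z^1][F(z)^1]$, is literally the same iterated-residue evaluation you perform, and your $G(z)$ coincides with the paper's $\ol{L}(z,\ldots,z)$, after which both arguments isolate the same four terms $m\in\{0,1,d-1,d\}$ and use the same mod-$F(z)^2$ identities. One cosmetic slip: in your final verification the target $u^{3-i}v^i$ coefficients of the bracket should be $(-1)^i\binom{3}{i}\,d(d-1)(d-2)$ rather than $(-1)^i\binom{3}{i}(d-1)(d-2)$, though the displayed identity (bracket $=d(d-1)(d-2)(u-v)^3$) is correct as stated.
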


\begin{Exam}
For $(d,r)=(7,3)$, we are considering line sections of a degree $7$ surface in $\mb{P}^3$. \Cref{AP} says the degree in this case is 77070.  
\end{Exam}

\begin{comment}
\begin{proof}
Let $V$ be a 2-dimensional vector space over $K$. The multiplication map gives a $GL_{2}$-equivariant map
\begin{align*}
\Phi: \mb{P}(V^{\vee})^d\to \mb{P}({\rm Sym}^d V^{\vee}),
\end{align*}
which induces a map of equivariant chow rings
\begin{align*}
A_{GL_2}^{\bullet}(\mb{P}(V^{\vee})^d)\leftarrow  A_{GL_2}^{\bullet}\mb{P}({\rm Sym}^d V^{\vee})
\end{align*}
\end{proof}
\end{comment}

\begin{proof}
%Then inside $\mb{P}({\rm Sym}^d\mc{S}^\vee)$ the intersection of the section $[\mb{P}(\mathcal{O})]$ with the pushforward of $[\mathcal{O}]_{n,1,r}$ from $\mb{P}(\mc{S}^\vee)$, pushed forward to the Grassmannian yields the class of lines which have a particular general moduli of intersection with the hypersurface. By the push-pull formula, 
Let $M$ be a general $2\times d$ matrix. By Lemma \ref{IN}, the class of a general fiber of $\phi_X$ is $s^{*}\Phi_*\opc[1]{M}_{\mb{P}(\mc{S}^{\vee})^d}$.
\begin{center}
\begin{tikzcd}
\mb{P}(\mc{S}^\vee)^d \arrow[dr, "\pi_1"] \arrow[rr, "\Phi"] & &  \mb{P}({\rm Sym^d} \mc{S}^{\vee})  \arrow[dl, "\pi_2"]\\
& \mb{G}(1,r) \arrow[ur, swap, bend right=30, "s"]&
\end{tikzcd}
\end{center}
By the projection formula, we have
\begin{align*}
s^{*}\Phi_*\opc[1]{M}_{\mb{P}(\mc{S}^{\vee})^d} =(\pi_2)_{*}s_{*}(s^{*}\Phi_*\opc[1]{M}_{\mb{P}(\mc{S}^{\vee})^d})
&=(\pi_2)_{*}(s_{*}[\mb{G}(1,r)]\cap\Phi_*\opc[1]{M}_{\mb{P}(\mc{S}^{\vee})^d})\\
&=(\pi_2)_{*}\Phi_*(\Phi^*s_{*}[\mb{G}(1,r)]\cap\opc[1]{M}_{\mb{P}(\mc{S}^{\vee})^d})\\
&=(\pi_1)_*(\Phi^*[\im(s)] \cap \opc[1]{M}_{\mb{P}(\mc{S}^{\vee})^d}).
\end{align*}

To compute $(\pi_1)_*(\Phi^*[\im(s)] \cap \opc[1]{M}_{\mb{P}(\mc{S}^{\vee})^d})$, we will use \Cref{section} and \Cref{PDGM}.

Let $F(z) = (z+u)(z+v)$ be the Leray relation of $\mb{P}(\mc{S}^\vee)$, let
$$G(z)=\prod_{k=0}^d (z+ku+(d-k)v) = z^{d+1} + c_{1}({\rm Sym}^d\mc{S}^\vee)z^d + \cdots + c_{d+1}({\rm Sym}^d\mc{S}^\vee)$$ be the Leray relation of $\mb{P}({\rm Sym}^d\mc{S}^\vee)$ and let $H=\ms{O}_{\mb{P}({\rm Sym}^d\mc{S}^\vee)}(1)$. By \Cref{section}, we have $[\im(s)] = L(H)$, where $L$ is the polynomial
$$L(z)=\frac{G(z)-G(0)}{z}.$$ Hence $\Phi^*[\im(s)] = L(\sum_{i=1}^d H_i)$, where $H_i$ denotes $\ms{O}_{\mb{P}(\mc{S}^{\vee})}(1)$ pulled back from the $i$th factor. 

Let $\ol{L}(H_1, \ldots, H_d)$ be the result of reducing $L(\sum_{i=1}^d H_i)$ modulo $F(H_i)=(H_i+u)(H_i+v)$ for each $i$. Then by \Cref{PDGM} we have  \[\pi_*(\Phi^*[\im(s)] \cap \opc[1]{M}_{\mb{P}(\mc{S}^{\vee})^d})=[z^1][F(z)^1] \ol{L}(z, \ldots, z).\] (As usual, $[F(z)^1] \ol{L}(z, \ldots, z)$ denotes the coefficient $a_1$ when $\ol{L}(z, \ldots, z)$ is expressed as $a_0(z) + a_1(z) F(z) + a_2 F(z)^2 + \cdots$, where each $a_i$ is a polynomial of degree at most $1$ in $z$.)

We can thus carry out the computation of the class of the general fiber of $\phi_X$ in three steps: first we compute $\ol{L}(H_1, \ldots, H_n)$, second we compute $\ol{L}(z, \ldots, z)$, and third we compute the answer $[z^1][F(z)^1] \ol{L}(z, \ldots, z)$. We will perform these steps completely formally, treating all variables as indeterminates with no relations between them, i.e. in the ring $\mb{Q}(u,v,H_1,\ldots,H_n, z)$. In particular, we will not use any relations between the $H_i$ and $u,v$ in the relevant Chow rings.

\textbf{Step 1} Note that $\ol{L}(H_1, \ldots, H_n)$ has degree at most $1$ in each $H_i$ and agrees with $L(\sum_{i=1}^d H_i)$ on $\{-u, -v\}^d$. Hence we may compute $\ol{L}(H_1, \ldots, H_n)$ by evaluating $L(\sum_{i=1}^d H_i)$ on $\{-u, -v\}^d$ and using the Lagrange interpolation formula.  Observe that the term $G(H_1 + \ldots + H_d)$ of \[L(H_1 + \ldots + H_d) = \frac{G(H_1 + \ldots + H_d) - G(0)}{H_1 + \ldots + H_d}\] vanishes on $\{-u, -v\}^d$, so we get
$$\ol{L}(H_1, \ldots, H_n) = \sum_{T \subset \{1,\ldots,d\}} \frac{-G(0)}{-|T| v - (d - |T|) u}\left(\prod_{i \in T} \frac{H_i + u}{-v + u}\right) \left(\prod_{i \in \{1, \ldots, d\} \setminus T} \frac{H_i + v}{-u + v}\right).$$

\textbf{Step 2}
Substituting $z$ for each $H_i$ yields
\begin{align}
\ol{L}(z, \ldots, z) &= G(0)\sum_{k=0}^d \frac{1}{kv+(d-k)u} \binom{d}{k} \frac{(z+u)^k(z+v)^{d-k}}{(u-v)^k(v-u)^{d-k}} \nonumber \\
&= \frac{G(0)}{(u - v)^d}\sum_{k=0}^d \frac{\binom{d}{k}(z+u)^k(-z-v)^{d-k}}{kv+(d-k)u}  \label{lbarsum}
\end{align}

\textbf{Step 3} All terms of \eqref{lbarsum} except for the $k=0,1,d-1,d$ terms are divisible by $F(z)^2$. Thus 

\begin{comment}
\begin{alignat}{2}
&\phantom{=}[z^1][F(z)^1] \ol{L}(z, \ldots, z) && \nonumber \\
&=\frac{G(0)}{(u - v)^d}[z^1][F(z)^1]&&\left(\frac{(-z-v)^d}{du} + \frac{d(z+u)(-z-v)^{d-1}}{v+(d-1)u} \right. \nonumber \\
& && \left. + \frac{d(z+u)^{d-1}(-z-v)}{(d-1)v+u}+ \frac{(z+u)^d}{dv} \right)\nonumber \\
&=\frac{G(0)}{(u - v)^d}[z^1][F(z)^1] && \left(\frac{(-1)^d(z+v)^d}{du} + \frac{(-1)^{d-1}dF(z)(z+v)^{d-2}}{v+(d-1)u} \right. \label{lbarfourterms} \\
& &&\left. - \frac{dF(z)(z+u)^{d-2}}{(d-1)v+u}+ \frac{(z+u)^d}{dv} \right). \nonumber
\end{alignat}
\end{comment}
\begin{alignat}{2}
[z^1][F(z)^1] \ol{L}(z, \ldots, z)&=\frac{G(0)}{(u - v)^d}[z^1][F(z)^1]&&\left(\frac{(-z-v)^d}{du} + \frac{d(z+u)(-z-v)^{d-1}}{v+(d-1)u} \right. \nonumber \\
& && \left. + \frac{d(z+u)^{d-1}(-z-v)}{(d-1)v+u}+ \frac{(z+u)^d}{dv} \right)\nonumber \\
&=\frac{G(0)}{(u - v)^d}[z^1][F(z)^1] && \left(\frac{(-1)^d(z+v)^d}{du} + \frac{(-1)^{d-1}dF(z)(z+v)^{d-2}}{v+(d-1)u} \right. \label{lbarfourterms} \\
& &&\left. - \frac{dF(z)(z+u)^{d-2}}{(d-1)v+u}+ \frac{(z+u)^d}{dv} \right). \nonumber
\end{alignat}

We note now that 
\begin{align*}
%(z+u)^k\equiv (z+u)(u-v)^{k-1}+F(z)(u-v)^{k-3}((k-1)(z+u)-(z+v)) \pmod{F(z)^2}\\
(z+u)^k\equiv &(z+u)(u-v)^{k-1}\\
& +F(z)(-(z+v)(u-v)^{k-3}+(z+u)(k-1)(u-v)^{k-3}) \pmod{F(z)^2}.
\end{align*}
because both sides of the equation and their first derivatives agree on $\{-u, -v\}$. Hence 
\begin{alignat*}{2}
[z^1][F(z)^1] F(z)(z+u)^k &= (u-v)^{k-1},\qquad
[z^1][F(z)^1] (z+u)^k &&= (k-2)(u-v)^{k-3}, \\
[z^1][F(z)^1] F(z)(z+ v)^k &= (v-u)^{k-1},\qquad
[z^1][F(z)^1](z + v)^k &&= (k-2)(v-u)^{k-3}.
\end{alignat*} Applying this term by term to \eqref{lbarfourterms}, we obtain
%To check this, one notes first that $(z+u)^k$ agrees with $(z+u)(u-v)^{k-1}$ when plugging in $-u$ or $-v$ into $z$. To get the next term of the expansion, we take the quotient
%\begin{align*}
%\frac{(z+u)^k-(z+u)(u-v)^{k-1}}{(z+u)(z+v)}=\frac{(z+u)^{k-1}-(u-v)^{k-1}}{z+v}
%\end{align*}
%and evaluate it when $z=-u$ and $z=-v$. We can plug $z=-u$ directly and we can evaluate at $z=-v$ using L'Hopital's rule. 
\begin{comment}
\begin{align*}
&G(0)\left(\frac{(d-2){du}+\frac{d(v-u)^{d-3}}{(v+(d-1)u)(u-v)(v-u)^{d-1}}\right)+\\
&G(0)\left(\frac{d(u-v)^{d-3}}{((d-1)v+u)(u-v)^{d-1}(v-u)}+\frac{(d-2)(u-v)^{d-3}}{dv(u-v)^d}\right)\\
%=&-G(0)\left(\frac{d-2}{du(u-v)^3}+\frac{d}{(v+(d-1)u)(v-u)(u-v)^{2}}+\frac{d}{((d-1)v+u)(v-u)^{2}(u-v)}+\frac{d-2}{dv(v-u)^3}\right)\\
=&G(0)\frac{(d-1)(d-2)}{du(u+(d-1)v)((d-1)v+u)v}\\ %magically simplifies with Mathematica
=&d(d-1)(d-2)\prod_{k=2}^{d-2} (ku+(d-k)v),
\end{align*}
\end{comment}
\begin{align*}
&\phantom{=} \frac{G(0)}{(u - v)^d} \left(-\frac{(d-2)(u-v)^{d-3}}{du} + \frac{d(u-v)^{d-3}}{v+(d-1)u} - \frac{d(u-v)^{d-3}}{(d-1)v+u}+ \frac{(d-2)(u-v)^{d-3}}{dv} \right) \\
&=G(0)\frac{(d-1)(d-2)}{du(u+(d-1)v)((d-1)v+u)v}\\ %magically simplifies with Mathematica
&=d(d-1)(d-2)\prod_{k=2}^{d-2} (ku+(d-k)v),
\end{align*} 
which finishes the proof in general. In the case $d = 2r + 1$, the degree of $\phi_X$ is the $u^rv^r$ term when we decompose our expression into Schubert cycles, which are Schur polynomials in the Chern roots $u,v$ of $\mc{S}^{\vee}$ \cite[Chapter 10, Proposition 8]{FultonYoung}. This is the difference between the $u^{r-1}v^{r-1}$ coefficient and the $u^{r-2}v^{r}$ coefficient of $d(d-1)(d-2)\prod_{k=2}^{d-2} (ku+(d-k)v)$.
\end{proof}

\begin{comment}
So I don't forget, here is how you see the class of Schubert cycles are Schur functions:
1) Each Schur function is expressed as a determinant of complete symmetric functions (Jacobi-Trudi formula)
2) The chern classes of the universal quotient bundle are complete symmetric functions of the chern roots of the dual of the universal subbundle
3) Giambelli's formula expresses an arbitrary Schubert cycle in terms of \sigma_a, and the \sigma_a's are chern classes of universal quotient bundle
4) Notice that 1) and 3) are the same formula
Warning: Giambelli's formula for Schur functions is not the analogue of Giambelli's formula for Grassmannians. It's also a determinantal formula, but it's more compact and involves Schur functions of hooks instead of just a row. 
\end{comment}

\subsection{Degeneration of the general orbit}
\label{APFS}
We now answer \Cref{Patel2}, generalizing the relation in the case $r=2$ to arbitrary $r$, via a relation between generalized matrix orbit classes. The following definition will be used in \Cref{APFS} only. 

\begin{Def}
Given a partition $A_1\sqcup\ldots\sqcup A_k$ of $\{1,\ldots,n\}$, let $M_{A_1\sqcup\cdots\sqcup A_k}$ denote a choice of a $2\times n$ matrix such that two columns are linearly dependent if and only if their indices lie in the same $A_i$. Equivalently, we may define $$M_{A_1 \sqcup \ldots \sqcup A_k}= \tau^{\le 2}(\tau^{\le 1}(\oplus_{j \in A_1} \ast)\oplus\ldots\oplus\tau^{\le 1}(\oplus_{j \in A_k} \ast)).$$
\end{Def}

By \Cref{realval}, the class $\opc[1]{M_{A_1\sqcup\cdots \sqcup A_k}}$ depends only on the partition $A_1\sqcup\cdots \sqcup A_k$ and not on the specific choice of $M_{A_1\sqcup\cdots\sqcup A_k}$.

\begin{Prop}
\label{P1S}
We have 
\begin{align*}
\opc[1]{M_{\{1\}\sqcup\cdots\sqcup \{n\}}} = \sum_{i=1}^{n-2}{\opc[1]{M_{\{1,\ldots,i\}\sqcup \{i+1\}\sqcup \{i+2, \ldots,n\}}}}
\end{align*}
\end{Prop}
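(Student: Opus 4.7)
The plan is to deduce this from additivity (\Cref{realval}), which reduces the equality of equivariant Chow classes to an equality $\sum_{i=1}^{n-2} 1_{P_i} = 1_{\Delta_{2,n}}$ of indicator functions modulo functions supported on a positive-codimension subset of the hypersimplex $\Delta_{2,n}$. So the first step is to identify the rank polytopes involved. The matrix $M_{\{1\}\sqcup\cdots\sqcup\{n\}}$ is a general $2\times n$ matrix, so its rank polytope is the full hypersimplex $\Delta_{2,n}$. For the partition $A_1=\{1,\ldots,i\}$, $A_2=\{i+1\}$, $A_3=\{i+2,\ldots,n\}$, the columns within each $A_l$ span the same line in $K^2$, so $\rk_M(A_l)=1$ while every set meeting two different $A_l$'s has rank $2$. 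Thus the rank polytope is
\[
P_i \;=\; \Bigl\{x\in \Delta_{2,n}\;\Big|\; \sum_{j=1}^{i} x_j \le 1 \text{ and } \sum_{j=i+2}^{n} x_j \le 1\Bigr\}.
\]

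The key geometric step is to show that the $P_i$ tile $\Delta_{2,n}$ up to positive codimension. Given $x\in \Delta_{2,n}$, set $s_k=\sum_{j=1}^k x_j$, so that $s_0=0$, $s_n=2$, the sequence is nondecreasing, and $s_k - s_{k-1}=x_k\in[0,1]$. Membership $x\in P_i$ is equivalent to $s_i \le 1 \le s_{i+1}$. Since $s_0=0<1<2=s_n$ and $s_1\le 1$ and $s_{n-1}\ge 1$ (because $x_1,x_n\le 1$), for generic $x$ (namely when no $s_k$ equals $1$) there is a unique index $i\in\{1,\ldots,n-2\}$ with $s_i<1<s_{i+1}$; this gives the unique $P_i$ containing $x$. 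The locus where some $s_k=1$ is a union of hyperplane slices and thus has positive codimension in $\Delta_{2,n}$, so the identity of indicator functions holds modulo such loci.

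The only remaining check is that overlaps $P_i\cap P_j$ for $i<j$ are also of positive codimension. But $x\in P_i\cap P_j$ forces $s_{i+1}\ge 1$, $s_j\le 1$, and the sequence is nondecreasing, so $s_{i+1}=s_{i+2}=\cdots=s_j=1$, yielding at least one equation of the form $s_k=1$ and hence positive codimension. Combining these observations,
\[
1_{\Delta_{2,n}} \;=\; \sum_{i=1}^{n-2} 1_{P_i}
\]
modulo functions supported on a positive-codimension subset of $\Delta_{2,n}$, so the claim follows from \Cref{realval}.

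The main obstacle is simply bookkeeping the polytope geometry correctly; no serious technical input beyond additivity is required. One could alternatively construct an explicit one-parameter degeneration of $n$ general points on $\mathbb{P}^1$ to three clustered configurations and invoke \Cref{degenerationfibers}, but the additivity route above is cleaner since the required polytope subdivision is the standard ``partial sum'' decomposition of $\Delta_{2,n}$ into hypersimplex slices.
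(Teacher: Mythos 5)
Your proof is correct and follows essentially the same route as the paper: additivity (\Cref{realval}) combined with a subdivision of $\Delta_{2,n}$ into the rank polytopes $P_i$. The paper obtains this subdivision by citing the recursive Schubert-to-series-parallel subdivision from the proof of \Cref{formulaexists}, whereas you verify the indicator-function identity directly via the partial sums $s_k$; this is a harmless, slightly more self-contained substitution, and your checks that the generic point lies in exactly one $P_i$ and that the overlaps have positive codimension are both accurate.
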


\begin{proof}
%This follows from applying \Cref{explserpar} repeatedly in the case $k=1$, $M=\tau^{\leq 1}(\ast\oplus\cdots\oplus \ast)$ is a general $1\times j$ matrix and $N=M_{\{j+1\}\sqcup \cdots\sqcup\{n-1\}}$ is a general $n-j-1$ by $n-j-1$ matrix. 
This follows by \cref{realval} and the subdivision of Schubert rank polytopes in the proof of \cref{formulaexists}. This also follows from an application of \cite[Theorem 1.3.6]{Kapranov} in the case where interior vertices of our tree form a chain. Not coincidentially, in characteristic zero, this also follows from \Cref{DTDC}.  
\end{proof}

As a corollary of \Cref{P1S}, we have
\begin{Thm}
\label{APD}
With the setup of \Cref{section}, if $d=2r+1$, then the degree of $\phi_X$ is
\begin{align*}
 \sum_{a\ge b>1,a+b+1=d}{2n_{a,b,1}}+4n_{d-2,1,1},
\end{align*}
where \[n_{a,b,c} = \int_{\mb{G}(1, r)}s^*[\ol{Z_{a,b,c}}]\] and $Z_{a,b,c} \subset {\rm Sym}^d(\mb{P}(\mc{S}^\vee)) = \mb{P}({\rm Sym}^d\mc{S}^\vee)$ is the closure of the locus of $n$-tuples of points in each fiber supported at $3$ points with multiplicities $a,b,c$.
\end{Thm}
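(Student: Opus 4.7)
The plan is to combine Lemma \ref{IN} with Proposition \ref{P1S}, reducing the computation of $\deg(\phi_X)$ to a sum of integrals $\int_{\mb{G}(1,r)} s^*\Phi_*\opc[1]{M_{A_1\sqcup A_2\sqcup A_3}}$ where each $\Phi_*\opc[1]{M_{A_1\sqcup A_2\sqcup A_3}}$ can be identified with an integer multiple of some $[Z_{a,b,c}]$. First, since a general $2\times d$ matrix realizes the uniform matroid $U_{2,d}$, valuativity (part of \Cref{realvalintro}) gives $\opc[1]{M}=\opc[1]{M_{\{1\}\sqcup\cdots\sqcup\{d\}}}$; combined with \Cref{IN}, this identifies the class of a general fiber of $\phi_X$ with $s^*\Phi_*\opc[1]{M_{\{1\}\sqcup\cdots\sqcup\{d\}}}$.

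Next I would apply \Cref{P1S} to rewrite
\[
\opc[1]{M_{\{1\}\sqcup\cdots\sqcup\{d\}}}=\sum_{i=1}^{d-2}\opc[1]{M_{\{1,\ldots,i\}\sqcup\{i+1\}\sqcup\{i+2,\ldots,d\}}},
\]
and then analyze $\Phi$ restricted to each summand. The cycle $\op[1]{M_{(\{1,\ldots,i\},\{i+1\},\{i+2,\ldots,d\})}}$ generically parameterizes ordered tuples $(p,\ldots,p,q,r,\ldots,r)$ with block sizes $(i,1,d-i-1)$ and $p,q,r$ distinct points on the line. Its set-theoretic image under $\Phi$ is the closure $Z_{i,1,d-i-1}$, and the degree of $\Phi$ onto this image is exactly the number of bijections between the three blocks and the three values that match block sizes with multiplicities.

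This degree computation is the key (but routine) combinatorial step: the degree equals $1$ when the three block sizes $\{i,1,d-i-1\}$ are pairwise distinct and equals $2$ when two block sizes coincide, since the value sitting in each block is then uniquely determined up to swapping the two equal-size blocks. The cases where two block sizes agree are precisely $i=1$ (sizes $(1,1,d-2)$), $i=d-2$ (sizes $(d-2,1,1)$), and, using $d=2r+1$, $i=r$ (sizes $(r,1,r)$).

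Finally I would regroup the resulting classes by the unordered multiset $\{a,b,1\}$ with $a\geq b>1$ and $a+b+1=d$. For each $a>b>1$, the multiset $\{a,b,1\}$ arises from exactly the two indices $i=a$ and $i=b$, each contributing $[Z_{a,b,1}]$ with degree $1$, for a total coefficient of $2$. The multiset $\{r,r,1\}$ arises only from $i=r$ with degree $2$, and is thus absorbed into the $a=b=r$ term of the sum $\sum_{a\geq b>1}2[Z_{a,b,1}]$. The multiset $\{d-2,1,1\}$ arises from both $i=1$ and $i=d-2$, each with degree $2$, contributing $2+2=4$ copies of $[Z_{d-2,1,1}]$. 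Applying $s^*$ and integrating over $\mb{G}(1,r)$ yields
\[
\deg(\phi_X)=\sum_{\substack{a\geq b>1\\ a+b+1=d}}2n_{a,b,1}+4n_{d-2,1,1},
\]
as desired. The only place care is required is the irreducibility of each $\op[1]{M_{A_1\sqcup A_2\sqcup A_3}}$ and its image (so that pushforward coincides with the degree times the image class); both follow because the generic fiber of each cycle is parametrized by the irreducible configuration space of three distinct points on $\mb{P}^1$.
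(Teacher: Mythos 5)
Your proposal is correct and follows essentially the same route as the paper: reduce via \Cref{IN} to $s^*\Phi_*\opc[1]{M_{\{1\}\sqcup\cdots\sqcup\{d\}}}$, decompose using \Cref{P1S}, and compute the generic degree of $\Phi$ on each summand (two-to-one exactly when $i\in\{1,r,d-2\}$, birational otherwise) before regrouping by the unordered multiset $\{a,b,1\}$. Your explicit count of the degree as the number of multiplicity-preserving bijections between blocks and support points is a slightly more detailed justification of the paper's two-to-one/birational dichotomy, but the argument is the same.
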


\begin{Rem}
The inverse image $s^{-1}(Z_{a, b, c})$ consists exactly of lines that intersect $X$ at three points with multiplicities exactly $a,b,c$. Hence, we may think of $n_{a, b, c}$ as the number of such lines, counted with multiplicity. In characteristic zero, it is possible to give a transversality argument to show that, for a general $X$, we can take $n_{a,b,c}$ to be the honest number of lines, as all lines appear with multiplicity one. 
\end{Rem}

\begin{proof}
By \Cref{IN}, the degree of $\phi_X$ is equal to the degree of $s^{*}\Phi_*\opc[1]{M_{\{1\}\sqcup\cdots\sqcup \{d\}}}$, where
\begin{align*}
\Phi:\mb{P}(\mc{S}^{\vee})^d\to \mb{P}({\rm Sym}^d \mc{S}^{\vee})
\end{align*} is the multiplication map.
By \Cref{P1S}, we have
\begin{align*}
s^{*}\Phi_*\opc[1]{M_{\{1\}\sqcup\cdots\sqcup \{d\}}} = \sum_{i=1}^{d-2} s^{*}\Phi_*\opc[1]{M_{\{1,\ldots,i\}\sqcup \{i+1\}\sqcup \{i+2, \ldots,d\}}}.
\end{align*}
Now, observe that $\Phi$ maps $\op[1]{M_{\{1,\ldots,i\}\sqcup \{i+1\}\sqcup \{i+2,\ldots,d\}}}$ generically two-to-one onto $\ol{Z_{i,d-i-1, 1}}$ if $i\in\{1,\frac{d-1}{2},d-2\}$ and birationally onto it otherwise. Therefore $ s^{*}\Phi_*\opc[1]{M_{\{1,\ldots,i\}\sqcup \{i+1\}\sqcup \{i+2, \ldots,d\}}}$ is the class of $2n_{i,d-i-1,1}$ points if $i\in\{1,\frac{d-1}{2},d-2\}$ and $n_{i,d-i-1,1}$ points otherwise. Taking the sum from $i=1$ to $d-2$ yields the desired answer.
\end{proof}
\begin{Rem}
Using the properties of $[M]_\hbar$ in \Cref{poincaresection} and the description of $M_{A_1 \sqcup \ldots \sqcup A_k}$ in terms of truncations and direct sums with $\ast$, one can compute the $n_{a,b,1}$ directly.
\end{Rem}
\section{Lifting to $\mb{A}^{(r+1)\times n}$}\label{liftingsection}
\begin{comment}
Thus far, the goal of this paper has been to compute the pushforward of the fundamental class of a resolution of the map
\begin{align*}
\mu_M:\mb{P}^{(r+1)\times d-1}\dashrightarrow (\mb{P}^r)^n.
\end{align*}
We also reduced the seemingly more general problem of finding the graph closure of $\mu_M$ to the problem above in \Cref{graphclosuresection}. 

In this section, we will consider a different generalization.
\end{comment}
In this section, we show that our formulas for $\opc{M}$ and $[\ol{\Gamma_M}]$ allow us to compute the $GL_{r+1}\times (K^\times)^n$-equivariant classes of the analogues of $\op{M}$ and $\ol{\Gamma_M}$ in $\mb{A}^{(r+1)\times n}$, thus completing and vastly extending the computation of Berget and Fink in \cite{Fink}. The precise formulation of the problem for $\op{M}$ is as follows.
\begin{Pro}
\label{AV}
Let $T = (K^\times)^n$ be the torus. We have a $GL_{r+1} \times T$-equivariant analogue
\begin{align*}
{\rm mult}_M:\mb{A}^{(r+1)\times d} \times_{K^\times} T \rightarrow \mb{A}^{(r+1)\times n}
\end{align*}
of $\mu_M$ that sends $(A,D) \in \mb{A}^{(r+1)\times d}\times_{K^{\times}} T$ to the product $AMD$. What is the class of the image closure of ${\rm mult}_M$ in $A^{\bullet}_{GL_{r+1} \times T}(\mb{A}^{(r+1)\times n})$ as an $((r+1)d+n-1)$-dimensional cycle?
\end{Pro}

By \Cref{SX} below, the formula in \cref{finkformulas} for $\opc{M}$ is precisely the same as the formula for the lifted class described in \cref{AV}, as it satisfies a certain degree bound of Feh\'er and Rim\'anyi \cite{FR07}. This formula works verbatim even if $M$ has zero columns because the locus of matrices with vanishing $i$th column has class $F(H_i)$. We can also lift $[\ol{\Gamma_M}]$ in an analogous fashion because the formula in \Cref{PFA} satisfies the degree bounds.

%Therefore, it would seem like \Cref{AV} asks for more information than $\opc{M}$. However, it turns out that $\opc{M}$ is sufficient, as \Cref{SX} says that the reduced expression for $\opc{M}$ is the answer to \Cref{AV}. 

\subsection{Degree bound of Feh\'er and Rim\'anyi}
To pass from 
\begin{align*}
A^{\bullet}_{GL_{r+1}\times T}(\mb{A}^{(r+1)\times n})\cong \mb{Z}[t_0,\ldots,t_r]^{S_{r+1}}[H_1,\ldots,H_n]
\end{align*}
to $A^{\bullet}_{GL_{r+1}}((\mb{P}^r)^n)$, we remove the locus in $\mb{A}^{(r+1)\times n}$ of matrices containing a zero column and quotient out by the resulting free $T$-action. This amounts to taking the quotient of $A^{\bullet}_{GL_{r+1}\times T}(\mb{A}^{(r+1)\times n})$ by the ideal generated by $F(H_i)$ for $1\leq i\leq n$.

\begin{Lem}
[{\cite[Theorem 2.1]{FR07}}]
\label{FRL}
Let $V_1,V_2$ be vector spaces and let $G$ be an algebraic group that acts on $V_1\oplus V_2$. Let $K^{\times}$ act on $V_1$ trivially and on $V_2$ by scaling. Let $X\subset V_1\oplus V_2$ be an irreducible closed $G\times K^{\times}$-invariant subvariety not contained in $V_1$. 

Then, under the isomorphism $A_{G\times K^{\times}}^{\bullet}(V_1\oplus V_2)\cong A_{G}^{\bullet}(\pt)\otimes \mb{Z}[H]$, the class $[X]$ maps to an expression $\sum x_j\otimes H^j$ where $x_j=0$ if $j\geq \dim(V_2)$. 
\end{Lem}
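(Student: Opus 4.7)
The plan is to prove Lemma~\ref{FRL} using the excision exact sequence in $(G\times K^\times)$-equivariant Chow theory, combined with the polynomial structure of $A^\bullet_G(\pt)[H]$. The key first observation is that the class $[V_1]\in A^\bullet_{G\times K^\times}(V_1\oplus V_2)=A^\bullet_G(\pt)[H]$ is a monic polynomial of degree exactly $n:=\dim V_2$ in $H$. Indeed, $V_1$ sits inside $V_1\oplus V_2$ as the zero section of the trivial $(G\times K^\times)$-equivariant vector bundle with fiber $V_2$, and $[V_1]$ is the top equivariant Chern class of $V_2$; by the splitting principle, if $\alpha_1,\ldots,\alpha_n$ are the $G$-equivariant Chern roots of $V_2$, then $[V_1]=\prod_{i=1}^n(\alpha_i+H)$, which is monic of $H$-degree $n$.

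Next I would invoke the excision exact sequence
\[
A^{\bullet-n}_{G\times K^\times}(V_1)\xrightarrow{\,\cdot[V_1]\,}A^\bullet_{G\times K^\times}(V_1\oplus V_2)\xrightarrow{\,j^*\,}A^\bullet_{G\times K^\times}((V_1\oplus V_2)\setminus V_1)\to 0.
\]
Since $K^\times$ acts trivially on $V_1$ and freely by scaling on $V_2\setminus\{0\}$, the quotient of $(V_1\oplus V_2)\setminus V_1$ by $K^\times$ is $V_1\times\mb{P}(V_2)$, so the open complement has equivariant Chow ring $A^\bullet_G(\mb{P}(V_2))=A^\bullet_G(\pt)[H]/([V_1])$ and $j^*$ is reduction modulo $[V_1]$. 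Because $[V_1]$ is monic of $H$-degree $n$, Euclidean division makes every $\beta\in A^\bullet_{G\times K^\times}(V_1\oplus V_2)$ uniquely expressible as $\beta=r(H)+q(H)\cdot[V_1]$ with $\deg_H r<n$. The hypothesis that $X$ is irreducible and not contained in $V_1$ ensures that $X\cap((V_1\oplus V_2)\setminus V_1)$ is dense in $X$, so $j^*[X]\ne 0$ and, applying the unique decomposition to $[X]$, we get $[X]=r(H)+q(H)\cdot[V_1]$ with $r$ nonzero and of $H$-degree strictly less than $n$.

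The remaining step, and the main obstacle, is to show that the polynomial $q(H)$ actually vanishes. My plan is to work in a finite-dimensional Borel approximation: replacing $BK^\times$ by $\mb{P}^N$ for $N\gg n$, the ring $A^\bullet_{G\times K^\times}(V_1\oplus V_2)$ is realized (through degree $\le N$) as $A^\bullet_G(\mb{P}^N)$, and $[X]$ is represented geometrically by the $G$-invariant irreducible subvariety $\widetilde X:=X\times^{K^\times}\mc{O}(-1)^\times$ inside the rank-$(\dim V_1+\dim V_2)$ vector bundle $V_1\times(V_2\otimes\mc{O}(-1))\to\mb{P}^N$. Under this identification the ideal generated by $[V_1]$ corresponds to classes supported on the sub-bundle $V_1\times\mb{P}^N$ (the zero section of the $V_2\otimes\mc{O}(-1)$ factor), and the hypothesis $X\not\subset V_1$ translates to saying $\widetilde X$ is not contained in this sub-bundle. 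A transversality argument, intersecting $\widetilde X$ with generic codimension-$k$ linear subspaces of the base $\mb{P}^N$ for $k$ ranging up to $n$ and tracking how the resulting classes sit relative to the zero section of $V_2\otimes\mc{O}(-1)$, then yields the desired bound $\deg_H[\widetilde X]<n$, forcing $q(H)=0$ and completing the proof.
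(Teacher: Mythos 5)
Your setup is fine as far as it goes: $[V_1]$ is indeed monic of $H$-degree $n=\dim V_2$, the excision sequence identifies $A^\bullet_{G\times K^\times}((V_1\oplus V_2)\setminus V_1)$ with $A^\bullet_G(\pt)[H]/([V_1])$, and Euclidean division writes $[X]=r(H)+q(H)[V_1]$ with $\deg_H r<n$. But note that this much is true for \emph{every} class and uses nothing about $X$; the entire content of the lemma is the assertion $q(H)=0$, and that is exactly the step you leave as a sketch. The proposed "transversality argument" in the Borel model is not an argument yet: intersecting $\widetilde X$ with generic linear sections of the base $\mb{P}^N$ computes $H^k\cdot[\widetilde X]$, which does not isolate the coefficients $x_j$, and nowhere in the sketch does the hypothesis that $X$ is irreducible and not contained in $V_1$ enter quantitatively. (Also, your interim claim that $j^*[X]\neq 0$ does not follow from density of $X\setminus V_1$ in $X$ — fundamental classes of nonempty subvarieties of non-proper spaces can vanish — though this claim is not actually needed.)

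The missing idea, which is how the paper closes the argument, is to restrict $[X]$ to the \emph{closed} subspace $V_1$ rather than to the open complement. Since $K^\times$ acts trivially on $V_1$, one has $A^\bullet_{G\times K^\times}(V_1\setminus(X\cap V_1))\cong A^\bullet_G(V_1\setminus(X\cap V_1))\otimes\mb{Z}[H]$, and the restriction of $[X]=\sum x_j\otimes H^j$ to this open set vanishes because it factors through $(V_1\oplus V_2)\setminus X$. Freeness over $\mb{Z}[H]$ then forces \emph{each coefficient} $x_j$ separately into the kernel of $A^\bullet_G(V_1)\to A^\bullet_G(V_1\setminus(X\cap V_1))$, i.e., each $x_j$ is supported on $X\cap V_1$. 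Now the hypothesis does its work: $X$ irreducible and $X\not\subset V_1$ gives $\dim(X\cap V_1)\le\dim X-1$, so $\codim_{V_1}(X\cap V_1)\ge\codim(X)-\dim V_2+1$, while $x_j$ lives in codimension $\codim(X)-j$. For $j\ge\dim V_2$ this codimension is strictly smaller than $\codim_{V_1}(X\cap V_1)$, so $x_j=0$. You should either carry out this restriction-to-$V_1$ argument or supply an honest replacement for it; as written, the proof is incomplete at its only essential point.
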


\begin{comment}
Proof idea:
We ripped an X sized hole into V_1\oplus V_2. After intersecting with V_1, the hole got smaller and only things smaller or equal to the size of the new hole can fall into it. 
\end{comment}

\begin{comment}
\begin{Rem}
The assumption on the fibers of $X\to V_1$ being positive dimensional is equivalent to each fiber containing a point other than $0\in V_2$ by the $K^{\times}$-equivariance of $X$. 
\end{Rem}
\end{comment}

\begin{proof}
This is a special case of the argument in \cite[Theorem 2.1]{FR07} applied to the case $V=V_1$, $V^{\#}=V_1\oplus V_2$, $G=G$, $H=K^{\times}$, $\eta^{\#}=X$, and $\eta=X\cap V_1$. Since the argument is short and our setting is with equivariant Chow groups instead of equivariant cohomology, we recall the proof for completeness. 

Consider the diagram
\begin{center}
\begin{tikzcd}
A_{G}^{\bullet}(\pt)\otimes \mb{Z}[H] \arrow[r,"\sim"]& A_{G\times K^{\times}}^{\bullet}(V_1\oplus V_2) \arrow[r] \arrow[d, "\sim"] & A_{G\times K^{\times}}^{\bullet}((V_1\oplus V_2)\backslash X) \arrow[d]\\
& A_{G\times K^{\times}}^{\bullet}(V_1) \arrow[r] & A_{G\times K^{\times}}^{\bullet}(V_1\backslash (X\cap V_1))
\end{tikzcd}
\end{center}
The class $\sum x_j\otimes H^j$ maps to zero in $A_{G\times K^{\times}}^{\bullet}((V_1\oplus V_2)\backslash X)$, so the image of $\sum x_i\otimes H^j$ in $A_{G\times K^{\times}}^{\bullet}(V_1) $ is in the kernel of $A_{G\times K^{\times}}^{\bullet}(V_1) \to A_{G\times K^{\times}}^{\bullet}(V_1\backslash (X\cap V_1))$. In particular, this means that each $x_j$ is in the kernel of $A_{G\times K^{\times}}^{\bullet}(V_1) \to A_{G\times K^{\times}}^{\bullet}(V_1\backslash (X\cap V_1))$. 

%In general, we know that the codimension of $X\cap V_1$ in $V_1$ is at most the codimension of $X$ in $V_1\oplus V_2$ minus $\dim(V_2)$. 
Since $X$ is irreducible and not contained in $V_1$, we have $\dim(X\cap V_1)\leq \dim(X)-1$, or equivalently, the codimension of $X\cap V_1$ in $V_1$ is at most the codimension of $X$ in $V_1\oplus V_2$ minus $(\dim(V_2)-1)$. If $j\geq \dim(V_2)$, then the codimension of $x_j$ is at most the codimension of $X$ in $V_1\oplus V_2$ minus $\dim(V_2)$. Since it is in the kernel of $A_{G\times K^{\times}}^{\bullet}(V_1) \to A_{G\times K^{\times}}^{\bullet}(V_1\backslash (X\cap V_1))$, it must be zero.
\end{proof}

\begin{Cor}
\label{SX}Let $G$ be an algebraic group and let $V_1,\ldots,V_n$ be representations of $G$. Let $T=(K^{\times})^n$ act on $V_1\oplus\cdots\oplus V_n$ by scaling each summand separately.

Let $X\subset V_1\times\cdots\times V_n$ be an irreducible closed $G \times T$-invariant subvariety intersecting $(V_1\backslash \{0\})\times\cdots\times (V_n\backslash \{0\})$ and let $f$ be a polynomial with coefficients in $A^\bullet_{G}(\pt)$ such that $f(H_1, \ldots, H_n)$ is the class
\begin{align*}
[X\cap (V_1\backslash \{0\})\times\cdots\times (V_n\backslash \{0\})] &\in A_{G\times T}^{\bullet}((V_1\backslash \{0\})\times\cdots\times (V_n\backslash \{0\})) \\
&= A_{G}^{\bullet}(\mb{P}(V_1)\times\cdots\times\mb{P}(V_n)) \\
&= A_{G}^{\bullet}(\pt)[H_1,\ldots,H_n]/(p_{V_1}(H_1),\ldots,p_{V_n}(H_n)),
\end{align*}
where $p_V(z) = z^{\dim V} + c_1^G(V) z^{\dim V - 1} + \ldots + c_{\dim V}^G(V)$ denotes the Leray relation of $V$.

Assume that $f$ has degree less than $\dim(V_i)$ in each $H_i$. Then the class \[[X] \in A^\bullet_{G \times T}(V_1 \times \cdots \times V_n) = A^\bullet_{G}(\pt)[H_1, \ldots, H_n]\] is equal to $f(H_1, \ldots, H_n)$.

\begin{comment}
The surjection $q$ in the diagram below induced by the inclusion $(V_1\backslash \{0\})\times\cdots\times (V_n\backslash \{0\})\hookrightarrow V_1\times \cdots\times V_n$
\begin{center}
\begin{tikzcd}
A_{G\times T}^{\bullet}(V_1\times \cdots\times V_n) \arrow[d,"\sim"] \arrow[r] & A_{G\times T}^{\bullet}((V_1\backslash 0)\times\cdots\times (V_n\backslash 0))\cong A_{G}^{\bullet}(\mb{P}(V_1)\times\cdots\mb{P}(V_n)) \arrow[d,"\sim"] \\
A_{G}^{\bullet}(\pt)[H_1,\ldots,H_n] \arrow[r,two heads, "q"]& A_{G}^{\bullet}(\pt)[H_1,\ldots,H_n]/(p_1(H_1),\ldots,p_n(H_n))
\end{tikzcd}
\end{center}
has a splitting $s: A_{G}^{\bullet}(\pt)[H_1,\ldots,H_n]/(p_1(H_1),\ldots,p_n(H_n))\to A_{G}^{\bullet}(\pt)[H_1,\ldots,H_n]$ of $A_{G}^{\bullet}(\pt)$-modules such that $s\circ q$ takes a polynomial $f(H_1,\ldots,H_n)$ with coefficients in $A_{G}^{\bullet}(\pt)$, and outputs the unique polynomial $\ol{f}(H_1,\ldots,H_n)$ of degree strictly less than $\dim(V_i)$ with $f-\ol{f}\in (p_1(H_1),\ldots,p_n(H_n))$. 

Then, given any irreducible $G\times T$ invariant subvariety $X\subset V_1\times\cdots\times V_n$ intersecting $(V_1\backslash \{0\})\times\cdots\times (V_n\backslash \{0\})$, we have 
\begin{align}
s([X\cap (V_1\backslash \{0\})\times\cdots\times (V_n\backslash \{0\})])=[X]\label{SXE}
\end{align}
under the identifications in the diagram above.
\end{comment}
\end{Cor}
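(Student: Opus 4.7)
The plan is to deduce \Cref{SX} from \Cref{FRL} by applying the degree bound once for each factor $V_i$ and then using the fact that the restriction to the open locus $\prod(V_i\setminus\{0\})$ is injective on the subgroup of polynomial representatives with bounded degree.

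First I would regard $[X]$ as an element of $A^\bullet_{G\times T}(V_1\times\cdots\times V_n)=A^\bullet_G(\pt)[H_1,\ldots,H_n]$. Then, for each $1\le i\le n$, I would apply \Cref{FRL} with ``$G$'' taken to be $G\times\prod_{j\ne i}K^\times_j$ (the product of $G$ with the scaling tori of the other factors), ``$V_1$'' taken to be $\bigoplus_{j\ne i} V_j$, ``$V_2$'' taken to be $V_i$, and ``$K^\times$'' taken to be the $i$-th factor $K^\times_i\subset T$. The hypothesis of \Cref{FRL} that $X$ is not contained in $V_1$ is equivalent to $X$ not lying in $\{v_i=0\}$, which holds because $X$ intersects $\prod_j(V_j\setminus\{0\})$ by assumption. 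Hence \Cref{FRL} guarantees that, when $[X]$ is expanded as a polynomial in $H_i$ with coefficients in $A^\bullet_G(\pt)[H_1,\ldots,\widehat{H_i},\ldots,H_n]$, the coefficient of $H_i^j$ vanishes for $j\ge\dim V_i$. Running this for every $i$ shows $[X]$ has degree strictly less than $\dim V_i$ in each $H_i$.

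Next I would analyze the restriction map
\[A^\bullet_{G\times T}(V_1\times\cdots\times V_n)\longrightarrow A^\bullet_{G\times T}\bigl((V_1\setminus\{0\})\times\cdots\times(V_n\setminus\{0\})\bigr).\]
Iterating the basic fact that the kernel of $A^\bullet_{G\times K^\times}(V_i)\to A^\bullet_{G\times K^\times}(V_i\setminus\{0\})$ is generated by the class of $\{0\}\subset V_i$, which is the equivariant top Chern class $p_{V_i}(H_i)$, this kernel is the ideal $(p_{V_1}(H_1),\ldots,p_{V_n}(H_n))$. Because each $p_{V_i}(H_i)$ is monic of degree $\dim V_i$ in $H_i$, polynomial division in each variable shows that the polynomials of degree strictly less than $\dim V_i$ in each $H_i$ form a complete set of representatives modulo this ideal.

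By hypothesis, $f$ satisfies these degree bounds, and by Step~2 so does $[X]$; by construction they agree modulo $(p_{V_1}(H_1),\ldots,p_{V_n}(H_n))$. Uniqueness of the bounded-degree representative forces $[X]=f(H_1,\ldots,H_n)$, which is the statement. The only subtle point is the bookkeeping in Step~2: one must verify that the ambient group structure in each application of \Cref{FRL} really does isolate the $i$-th scaling $K^\times_i$ while absorbing everything else into the ``$G$'' of that lemma, but this is immediate from the product decomposition of $T$ and the fact that the other tori act trivially on $V_i$.
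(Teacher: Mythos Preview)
Your proof is correct and is exactly the fleshed-out version of the paper's argument, which reads in its entirety ``The result follows from a repeated application of \Cref{FRL}.'' Your Steps 2 and 3 make explicit the uniqueness-of-lift reasoning that the paper leaves implicit.
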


\begin{proof}
%The identification $A_{G\times T}^{\bullet}((V_1\backslash \{0\})\times\cdots\times (V_n\backslash \{0\}))\cong A_{G}^{\bullet}(\mb{P}(V_1)\times\cdots\mb{P}(V_n))$ is because the action of $T$ on $(V_1\backslash \{0\})\times\cdots\times (V_n\backslash \{0\})$ is free, so the isomorphism $A_{G}^{\bullet}(\mb{P}(V_1)\times\cdots\mb{P}(V_n))\cong A_{G}^{\bullet}(\pt)[H_1,\ldots,H_n]/(p_1(H_1),\ldots,p_n(H_n))$ can be proven by the projective bundle theorem for Chow rings or by computing the class in $A_{G\times T}^{\bullet}(V_1\times \cdots\times V_n) $ of the locus $[V_1\times\cdots\times \{0\}\cdots\times V_n]$ of tuples of vectors where one of the components is zero. 

The result follows from a repeated application of \Cref{FRL}. 
\end{proof}
Hence, the formulas in \Cref{finkformulas,PFA} solve \Cref{AV}.

\appendix
\section{Polytopes}
\label{polyapp}
We collect here facts about (convex) polytope geometry and matroid polytopes that we need in the body of the paper. Not all polytopes we will work with are bounded.

\begin{Def}
\label{vertdef}
Say $P$ is a \emph{cone} if $|\Vt(P)| \le 1$. Equivalently, $P$ is the Minkowski sum of a point $v$ and rays $\mb{R}_{\ge 0}v_i$. If such a cone does not contain a line, then $v$ is the unique vertex of the cone, called its \emph{apex}. 
\end{Def}
If a cone $v + \sum_i \mb{R}_{\ge 0}v_i$ has an apex, then its edges are a subset of the rays $v+\mb{R}_{\ge 0}v_i$.
%The \emph{characteristic cone} of a (possibly unbounded) polytope $P$ is the cone generated by all unbounded rays in $P$, or equivalently is closure of $\lim_{\lambda \to 0^+} \lambda P$. It is easy to see that the characteristic cone of the Minkowski sum of $P+Q$ is the Minkowski sum of the corresponding characteristic cones.
\begin{Def}
The \emph{tangent cone} of a polytope $P$ at a point $p \in P$ is $$T_{p,P}=p+\bigcup_{p' \in P} \mb{R}_{\ge 0}(p'-p).$$ If the polytope $P$ is understood, we will denote the tangent cone simply by $T_p$. There exists a finite set $S \subset P$ such that $T_p=p+\sum_{p' \in S}\mb{R}_{\ge 0} (p'-p)$, so $T_p$ a polytope and a cone. 
\end{Def}
An equivalent description of $x \in P$ to be a vertex is that $T_x$ contains no line.

Given polytopes $P$, $Q$, and points $p\in P$, $q \in Q$, we have $T_{p+q, P+Q}=T_{p, P}+T_{q,Q}$. As the sum of cones which contain lines is a cone which contains lines, we have $$\Vt(P+Q) = \{x+y \mid x \in \Vt(P), y \in \Vt(Q), T_x+T_y\text{ does not contain a line}\}.$$
%Furthermore, every extremal ray of a tangent cone of a vertex of $p+q \in P+Q$ (which is parallel either to an extermal ray at the tangent cone of $p\in P$ or of $q \in Q$) corresponds to an edge of $P+Q$, and if $P+Q$ contains no lines, all edges are identified in this way.
More generally, every face of $P+Q$ is the Minkowski sum of a face of $P$ with a face of $Q$.

The following is an equivalent characterization of rank polytopes arising from matroids (not necessarily arising from matrices).
 
\begin{Lem} \cite{GGM}
A polytope $P \subset \Delta$ is the rank polytope of a matroid if and only if $\Vt(P) \subset \Vt(\Delta)$ and all edges of $P$ are parallel to vectors of the form $e_i - e_j$. In particular, every nonempty face of a rank polytope is also a rank polytope.
\end{Lem}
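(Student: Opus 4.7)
The plan is to prove the two implications of the equivalence and then deduce the face statement as an immediate corollary. The face statement is automatic: if $F$ is a nonempty face of a rank polytope $P = P_M$, then every vertex of $F$ is a vertex of $P$ (hence in $\Vt(\Delta)$) and every edge of $F$ is an edge of $P$ (hence parallel to some $e_i - e_j$), so by the equivalence $F$ is itself a rank polytope.

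For the ``only if'' direction, suppose $P = P_M$ where $M$ has basis collection $\mc{B}$. By definition $\Vt(P) \subseteq \{e_B : B \in \mc{B}\} \subseteq \Vt(\Delta)$. To verify the edge condition, I would show that if $e_{B_1}$ and $e_{B_2}$ are joined by an edge then $|B_1 \triangle B_2| = 2$. Pick a linear functional $w$ uniquely maximized on that edge and fix $i \in B_1 \setminus B_2$; the symmetric basis exchange axiom produces $j \in B_2 \setminus B_1$ such that $B_1' := B_1 \setminus \{i\} \cup \{j\}$ and $B_2' := B_2 \setminus \{j\} \cup \{i\}$ are both in $\mc{B}$. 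Because $w(e_{B_1'}) + w(e_{B_2'}) = w(e_{B_1}) + w(e_{B_2})$ and $e_{B_1'}, e_{B_2'} \in P$, both lie on the maximal edge, which (together with $B_1' \neq B_1$ being impossible unless $B_1' = B_2$) forces $|B_1 \setminus B_2| = 1$.

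For the ``if'' direction, given $P$ satisfying both conditions, I define $\mc{B} := \{B : e_B \in \Vt(P)\}$ and verify that $\mc{B}$ is the basis collection of a matroid; since $P = \conv(\Vt(P))$, this will give $P = P_M$. All bases have the same size because $P \subseteq \Delta$, so only the basis exchange axiom needs checking. Fix $B_1, B_2 \in \mc{B}$ and $i \in B_1 \setminus B_2$; I need $j \in B_2 \setminus B_1$ with $B_1 \setminus \{i\} \cup \{j\} \in \mc{B}$. The strategy is to consider the linear functional $w(x) = \sum_{k \in B_2} x_k - M x_i$ for a large parameter $M$, which attains its maximum on $P$ at $e_{B_2}$ (for $M$ large, the $x_i = 0$ constraint dominates and on that face the maximum of $\sum_{k \in B_2} x_k$ over $P$ is $d$, attained uniquely at $e_{B_2}$). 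Starting from $e_{B_1}$, there is a path $e_{B_1} = e_{C_0}, e_{C_1}, \ldots, e_{C_\ell} = e_{B_2}$ along edges of $P$ with $w$ strictly increasing at each step; each edge direction is $e_{a_t} - e_{b_t}$ with $a_t \notin C_{t-1}$, $b_t \in C_{t-1}$.

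The main obstacle is ensuring that the very first such step swaps out the specific element $i$ rather than some other element of $B_1 \setminus B_2$. To force this, I would refine the argument by working inside the face $P \cap \{x_i = 1\}$: either every $w$-improving edge from $e_{B_1}$ has $b_t = i$ (so we are immediately done, with $a_t = j \in B_2 \setminus B_1$ because the $B_2$-term must account for the improvement beyond the $+M$ gain), or we first make improving steps within $\{x_i = 1\}$ (which stays in the face), reducing to $e_{B_1'}$ with $i \in B_1'$ and $|B_1' \cap B_2| > |B_1 \cap B_2|$, and iterate; the process must terminate since $|B \cap B_2|$ is bounded, and the terminating step is an edge across $\{x_i = 1\} \to \{x_i = 0\}$ in direction $e_j - e_i$ with $w$-improvement forcing $j \in B_2 \setminus B_1$. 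This pivot argument is the heart of the proof and is precisely where the edge-direction hypothesis is used in full force.
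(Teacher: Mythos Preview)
The paper does not prove this lemma; it is cited from \cite{GGM} without argument. So there is nothing to compare against, and I will just assess your proof.

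Your ``only if'' direction and the face corollary are correct and standard. The symmetric basis exchange argument is exactly the usual one.

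Your ``if'' direction has a genuine gap. With your functional $w(x)=\sum_{k\in B_2}x_k - Mx_i$, an edge from $e_{B_1}$ in direction $e_a-e_i$ gains $[a\in B_2]+M$, which is positive regardless of whether $a\in B_2$. So even when ``every $w$-improving edge from $e_{B_1}$ has $b_t=i$'', you cannot conclude $a\in B_2\setminus B_1$; your phrase ``the $B_2$-term must account for the improvement beyond the $+M$ gain'' is simply false, since the $+M$ alone already suffices. Your proposed iteration inside $\{x_i=1\}$ does not rescue this: it moves from $B_1$ to some $B_1'$ and eventually produces an exchange $B_1^{(k)}-i+j\in\mc{B}$, but that is not the exchange axiom for $B_1$.

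The fix is to choose the functional more carefully so that a \emph{single} improving edge is forced to do what you want. Let $S=(B_1\setminus\{i\})\cup B_2$ and maximize $w(x)=\sum_{k\in S}x_k$ over $P$. Then $w(e_{B_1})=d-1$ while $w(e_{B_2})=d$, so $e_{B_1}$ is not $w$-maximal and some edge $e_a-e_b$ from $e_{B_1}$ strictly increases $w$. The gain $[a\in S]-[b\in S]$ being positive forces $b\in B_1\setminus S=\{i\}$ and $a\in S\setminus B_1=B_2\setminus B_1$, giving $(B_1\setminus\{i\})\cup\{a\}\in\mc{B}$ immediately. No path-following or iteration is needed.
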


This is the only property of rank polytopes we will be using, and to emphasize this, in this section we work with matroids $\MM$ and their rank polytopes $P_\MM$ rather than with matrices.

\begin{Def}
\label{matroidalconedef}
We define a \emph{matroidal} cone $C$ to be a cone with apex at the origin which is generated by rays of the form $\mb{R}_{\ge 0}(e_i-e_j)$. If $C$ is a matroidal cone generated by lines, we define the equivalence relation $\sim_C$ on $\{1, \ldots, n\}$ by $i \sim_C j$ if $e_i - e_j \in C$ and we call $C$ the \emph{associated cone} to $\sim_C$.
\end{Def}
Note that if $C$ is a matroidal cone generated by lines, then \[C = \left\{(x_1, \ldots, x_n) \in \mb{R}^n \middle| \text{$\sum_{i \in S_j} x_i = 0$ for all $j$}\right\},\] where $S_1, \ldots, S_k$ are the equivalence classes of $\sim_C$.

\begin{Cor}
\label{equivrel}
Given a matroid $\MM$, there exists a unique partition $S_1 \sqcup \ldots \sqcup S_k$ of $\{1, \ldots, n\}$ such that $P_\MM$ is a full-dimensional subset of
$$T_z=\left\{(x_1, \ldots, x_n) \in \mb{R}^n \middle| \sum_{i \in S_j} x_i=\rk_{\MM}(S_j)\text{ for all $j$}\right\}$$ for any relative interior point $z \in P_\MM$. In particular, for any point $(x_1, \ldots, x_n) \in P_\MM$ we have $\sum_{i \in S_j} x_j \in \mb{Z}$ for all $j$.

%for a unique partition $\{1,\ldots,n\}=S_1 \sqcup \ldots \sqcup S_k$, and we further have $\sum_{i \in S_j} x_j$ is constant on $P_\MM$ for any $i$ (so in particular $\sum_{i \in S_j} x_j \in \mb{Z}$).
\end{Cor}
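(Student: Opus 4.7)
My plan is to define the partition directly from the affine geometry of $P_\MM$, then identify the constants using basis-extension in the matroid.

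\textbf{Step 1: Defining the partition.} Let $D \subset \mb{R}^n$ denote the direction subspace of the affine hull of $P_\MM$, i.e.\ the linear span of $\{p - p' : p, p' \in P_\MM\}$. By the preceding lemma the edges of $P_\MM$ are parallel to vectors $e_i - e_j$, and these edge-directions span $D$. Declare $i \sim j$ iff $e_i - e_j \in D$; this is reflexive, symmetric, and transitive (since $(e_i - e_j) + (e_j - e_k) = e_i - e_k$), so it is an equivalence relation. Let $S_1, \ldots, S_k$ be its equivalence classes.

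\textbf{Step 2: Identifying the affine hull.} The annihilator $D^\perp \subset (\mb{R}^n)^\vee$ consists of linear functionals that are constant on each equivalence class, so it is exactly the $k$-dimensional subspace spanned by $\{\sum_{i \in S_j} x_i\}_{j=1}^k$. For any $z$ in the relative interior of $P_\MM$, the affine hull of $P_\MM$ equals $z + D$, which is precisely the set where each $\sum_{i \in S_j} x_i$ equals its value $c_j := \sum_{i \in S_j} z_i$; and $P_\MM$ is full-dimensional inside this affine subspace by construction.

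\textbf{Step 3: Computing $c_j = \rk_\MM(S_j)$.} Evaluating $c_j$ on any vertex $e_A \in P_\MM$ (where $A$ is a basis of $\MM$) gives $c_j = |A \cap S_j|$, and since $A \cap S_j$ is independent we have $c_j \le \rk_\MM(S_j)$. For the reverse inequality, pick a basis $B$ of $\MM|_{S_j}$, so $|B| = \rk_\MM(S_j)$, and extend $B$ to a basis $A$ of $\MM$; then $B \subset A \cap S_j$, forcing $|A \cap S_j| \ge \rk_\MM(S_j)$. Since $c_j = |A \cap S_j|$ is independent of the choice of vertex, equality follows. This simultaneously shows that $\sum_{i \in S_j} x_i \in \mb{Z}$ on $P_\MM$, since the value is constant and attained at an integer vertex.

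\textbf{Step 4: Uniqueness.} Any partition $S'_1, \ldots, S'_{k'}$ with the stated property forces $1_{S'_j} \in D^\perp$ for each $j$, so each $1_{S'_j}$ is a $\{0,1\}$-linear combination of the basis vectors $\{1_{S_l}\}$ of $D^\perp$, meaning each $S'_j$ is a union of equivalence classes $S_l$. Symmetrically each $S_l$ is a union of $S'_j$'s, so the two partitions coincide.

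The mildly delicate step is Step 3, where one must avoid circularity: the upper bound $c_j \le \rk_\MM(S_j)$ is built into the definition of $P_\MM$, while the matching lower bound requires producing a single basis of $\MM$ meeting each $S_j$ in a set of rank $\rk_\MM(S_j)$, which is exactly what basis extension provides. Everything else is a direct unwinding of definitions.
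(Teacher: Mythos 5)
Your proof is correct and follows essentially the same route as the paper: both define the partition as the equivalence classes of $i \sim j$ iff $e_i - e_j$ lies in the direction space of the affine hull of $P_\MM$ (equivalently, in $T_z - z$ for $z$ a relative interior point), using the fact that the edges of a rank polytope are parallel to vectors $e_i - e_j$. Your Steps 3 and 4 (identifying the constants as $\rk_\MM(S_j)$ via basis extension, and uniqueness) carefully fill in details that the paper's one-line proof leaves implicit, and they are correct.
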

\begin{proof}
The tangent cone at $z$ is the Minkowski sum of the tangent cones of the vertices of $P_\MM$, so it is generated by some collection of vectors of the form $e_i-e_j$ and their negations. Now take $S_1, \ldots, S_k$ to be the equivalence classes of $\sim_{T_z - z}$.
\end{proof}

We will denote by $\sim_{\MM}$ the equivalence relation $\sim_{T_{z}-z}$ for any relative interior point $z$ of $P_\MM$.

We are interested in polytopes of the form $P_\MM+C$ for $C$ a matroidal cone and the intersections $(P_\MM+C) \cap \Delta$. We prove the key facts about these polytopes below. Recall that by abuse of notation we identify the vertices of $\Delta$ with $d$-element subsets of $\{1, \ldots, n\}$.
\begin{Lem} \label{horribleequivalence}
Let $\MM$ be a matroid and let $C$ be a matroidal cone. Then all edges of $P_\MM+C$ are parallel to vectors of the form $e_i-e_j$ and $(P_\MM+C)\cap \Delta$ is a matroid rank polytope with
\begin{align}
\Vt((P_\MM+C) \cap \Delta)=\bigcup_{w \in \Vt(P_\MM)} (w+C) \cap \Vt(\Delta) \label{vertpmplusc1} \\
=\{z \in \Vt(\Delta) \mid \exists w \in \Vt(P_\MM),\text{bijection } \phi:z\setminus w \to w \setminus z\text{ with }e_i-e_{\phi(i)}\in C\ \forall i\} \label{vertpmplusc2}
\end{align}
\end{Lem}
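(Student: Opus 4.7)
First, I would verify that every edge of $P_\MM + C$ is parallel to some $e_i - e_j$. Edges of a Minkowski sum lie in the edge directions of the summands; $P_\MM$ has $e_i - e_j$ edges by the Gelfand--Goresky--MacPherson characterization of rank polytopes cited just above the statement, and $C$ has $e_i - e_j$ ray directions by definition of matroidal cone.

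Second, I would show that $(P_\MM + C) \cap \Delta$ is itself a matroid rank polytope. The edge directions $e_i - e_j$ are preserved under intersection with $\Delta$, since $\Delta$ also has this property. For the vertex claim, any vertex of $P_\MM + C$ (when one exists) is a vertex of $P_\MM$ and thus already lies in $\Vt(\Delta)$; additional vertices of the intersection arise by cutting an $(e_i - e_j)$-directed edge of $P_\MM + C$ by a facet $\{x_k = 0\}$ or $\{x_k = 1\}$ of $\Delta$, which applied to an edge with integer endpoints produces an integer point whose coordinates lie in $[0,1]$ and sum to $d$, again in $\Vt(\Delta)$. Combined with the edge direction claim this confirms the rank polytope property.

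Third, I would verify the equivalence of the two vertex descriptions via a flow decomposition argument. For $z \in \Vt(\Delta)$ and $w \in \Vt(P_\MM)$, the condition $z - w \in C$ amounts to writing $z - w = \sum \lambda_{ab}(e_a - e_b)$ with $\lambda_{ab} \geq 0$ over generators $(a,b)$ with $e_a - e_b \in C$. Interpreting $\{\lambda_{ab}\}$ as a nonnegative flow on the directed graph with vertex set $\{1,\ldots,n\}$ and arcs $(a,b)$ for each such generator, with unit supply at each $i \in z \setminus w$ and unit demand at each $j \in w \setminus z$, standard flow decomposition yields simple paths. Transitivity of $C$ (closure under summation: if $e_a - e_b, e_b - e_c \in C$ then $e_a - e_c \in C$) then collapses each path $i = a_0 \to \cdots \to a_m = j$ to a single matched pair $e_i - e_j \in C$, assembling the required bijection.

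Fourth, the main inclusion $\Vt((P_\MM + C) \cap \Delta) \subseteq \bigcup_{w \in \Vt(P_\MM)}(w + C) \cap \Vt(\Delta)$ will be the main obstacle. By the second step such a vertex $z$ lies in $\Vt(\Delta)$; the delicate point is producing a vertex $w \in \Vt(P_\MM)$, rather than some arbitrary $w' \in P_\MM$, with $z - w \in C$. My plan is to choose a linear functional $\xi$ uniquely maximized at $z$ over $(P_\MM + C) \cap \Delta$, and argue by a segment argument that $z$ in fact maximizes $\xi$ over all of $P_\MM + C$ (otherwise a point of strictly greater $\xi$-value would lie on a segment from $z$ meeting the interior of $\Delta$ near $z$ with $\xi$-value exceeding $\xi(z)$, contradicting maximality on $(P_\MM + C) \cap \Delta$). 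This forces $\xi \leq 0$ on $C$ and places $z$ in the $\xi$-maximal face $F_\xi + (C \cap \xi^\perp)$ of $P_\MM + C$, where $F_\xi$ is a face of $P_\MM$. Selecting any vertex $w$ of the face $F_\xi$ then yields $w \in \Vt(P_\MM)$, and the matroid exchange axiom applied within $F_\xi$, combined with the flow characterization from the third step, will give $z - w \in C \cap \xi^\perp \subseteq C$. The crux is this last compatibility check, which tracks how the $C$-component of the decomposition $z = w + c$ behaves under basis exchanges inside the matroid face $F_\xi$.
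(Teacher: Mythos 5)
Your first and third steps are sound: the edge directions of the Minkowski sum, and the equivalence of \eqref{vertpmplusc1} and \eqref{vertpmplusc2} via flow decomposition (the paper uses Hall's marriage theorem for the same purpose), are both fine. The gaps are in the two hard steps. In your second step, the claim that every new vertex of $(P_\MM+C)\cap \Delta$ arises by cutting an $(e_i-e_j)$-directed \emph{edge} of $P_\MM+C$ with a single facet of $\Delta$ is not justified: a vertex of an intersection of two polytopes can lie in the relative interior of a higher-dimensional face of $P_\MM+C$, cut down to a point by several facets of $\Delta$ simultaneously (that only fails to happen when you intersect with a single half-space). So neither the integrality of the vertices nor the root-direction property of the edges of the intersection follows from your argument; "edge directions are preserved under intersection because $\Delta$ also has them" is likewise false for intersections of polytopes in general. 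This lattice-point claim is precisely the content that requires the paper's argument via the equivalence relation $\sim_{\MM'+C'}$ and the integrality of coordinate sums over its classes.

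In your fourth step, the segment argument fails at the outset: if $\xi$ is uniquely maximized at $z$ over $(P_\MM+C)\cap\Delta$, it need not be maximized at $z$ over $P_\MM+C$, because the improving segment from $z$ toward a better point of $P_\MM+C$ can immediately exit $\Delta$ --- and since $z$ lies on the boundary of $\Delta$ (indeed at a vertex) and $P_\MM+C$ is typically unbounded, this is exactly what happens. Without that, you cannot conclude $\xi\le 0$ on $C$ or localize $z$ to a face $F_\xi+(C\cap\xi^\perp)$. Finally, the "compatibility check" you defer --- upgrading an arbitrary base point $w'\in P_\MM$ with $z-w'\in C$ to a genuine vertex $w\in\Vt(P_\MM)$ --- is where the real work lies; the paper does this by taking $w\in\Vt((z-C)\cap P_\MM)$ and showing that if $w$ had a fractional coordinate, an alternating chain of $\sim_{C'}$- and $\sim_{\MM'}$-equivalences would close into a cycle producing a nonzero vector in $(T_{w,\MM'}-w)\cap C'$, contradicting that $w$ is a vertex of $(z-C)\cap P_\MM$. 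Your proposal does not contain a substitute for either of these arguments.
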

\begin{proof}%Still need to prove the thing about it being a matroid polytope.
The edges of $P_M+C$ are parallel to vectors of the form $e_i-e_j$ by the results on Minkowski sums. We now prove \eqref{vertpmplusc1} and \eqref{vertpmplusc2}, postponing the proof that $(P_\MM+C)\cap \Delta$ is a matroid rank polytope until the end.

Suppose $z \in \Vt((P_\MM+C) \cap \Delta)$. Then there exists $w \in P_\MM$ such that $w+\sum a_{\alpha}(e_{i_\alpha}-e_{j_\alpha})=z$ where each $a_\alpha>0$ and each $e_{i_\alpha}-e_{j_\alpha} \in C$. Let $C'=\sum_{\alpha}\mb{R}(e_{i_\alpha}-e_{j_\alpha})$. Let $\MM'$ be the matroid such that $P_{\MM'}$ is the face of $P_\MM$ that contains $w$ in its relative interior. %S_i
Let $\sim_{\MM'+C'}$ be the equivalence relation generated by $\sim_{\MM'}$ and $\sim_{C'}$. Then by the above $T_{z, P_\MM + C} - z$ contains the associated cone of $\sim_{\MM' + C'}$.

By \Cref{equivrel}, $w$ has integral sum of coordinates on any $\sim_{\MM'}$ equivalence class, and $z$ has the same sum of coordinates on any $\sim_{C'}$ equivalence class as $w$, so $z$ has integral sum of coordinates on any $\sim_{\MM'+C'}$-equivalence class.

We first show \eqref{vertpmplusc1}. Suppose $z_i \in (0,1)$ for some $i$. Then there must exist an $i'\sim_{\MM'+C'}i$ with $i' \neq i$ for which $z_{i'} \in (0,1)$. But then for sufficiently small $\epsilon>0$, we have $z\pm \epsilon(e_i-e_{i'}) \in P_\MM+C$ as $T_{z, P_\MM + C} - z$ contains the associated cone of $\sim_{\MM' + C'}$. Also $z \pm \epsilon(e_i-e_{i'}) \in \Delta$, so $z\pm \epsilon(e_i-e_{i'}) \in (P_\MM+C) \cap \Delta,$
contradicting $z \in \Vt((P_\MM+C) \cap \Delta)$. Hence, $z \in \Vt(\Delta)$.

Assume $w \in \Vt((z - C) \cap P_\MM)$. We will now show that $w \in \Vt(P_\MM)$ (or equivalently $w \in \Vt(\Delta)$), which will complete the proof of \eqref{vertpmplusc1}.

It suffices to show that if $w \not \in \Vt(\Delta)$, then $(T_{w,\MM'} -w)\cap C'$ contains a nonzero vector $v$. Indeed, such a $v$ yields a small linear perturbation $w\pm \epsilon v\in P_\MM$ such that $w \pm \epsilon v \in z-C$ as well, contradicting that $w$ is a vertex of $(z - C) \cap P_\MM$.

If $w \not \in \Vt(\Delta)$, then there exists $w_{i} \in (0,1)$. Then since the sum of the coordinates of $w$ along any $\sim_{\MM'}$-equivalence class is integral, there exists $i' \sim_{\MM'} i$ with $i \ne i'$ for which $w_{i'} \in (0,1)$. Similarly, since the sum of the coordinates of $z$ agrees with the sum of the coordinates of $w$ on any $\sim_{C'}$ equivalence class, the sum of the coordinates of $w$ on any $\sim_{C'}$ equivalence class is integral. Hence, there is an $i''\sim_{C'}i'$ with $i'' \neq i'$ for which $w_{i'} \in (0,1)$. Continuing in this fashion, we create a sequence of indices $i=i^1,i^2,\ldots$ for which $i^{2\beta-1}\sim_{C'}i^{2\beta}$ are distinct and $i^{2\beta}\sim_{\MM'}i^{2\beta+1}$ are distinct and $w_{i^\beta} \in (0,1)$ for all $\beta$. As the list of indices is finite, we have $i^{\alpha} = i^{\beta}$ for some minimal $\alpha < \beta$. We may assume that $\alpha$ and $\beta$ have the same parity; otherwise, we replace $\alpha$ by $\alpha + 1$ and $i^\beta$ by $i^{\alpha + 1}$. Now let \[v = \sum_{j = \alpha}^{\beta -1} (-1)^j e_{i^j}.\] By grouping the terms in two different ways, we have $v \in T_{w,\MM'}-w$ and $v \in C'$ as desired.
%and by $i_1$ is the start of this cycle. If the cycle is of odd length $2\beta+1$, then either $i_{2\beta} \sim_{\MM'} i_{2\beta+1}\sim_{\MM'} i_1$ or $i_{2\beta} \sim_{C'} i_{2\beta+1}\sim_{C'} i_1$ so we can remove $i_{2\beta+1}$ from the cycle, making the length even. Then the line generated by the alternating sum of the $e_{i_\beta}$ along the even cycle is both in $T_{w,\MM'}-w$ and in $C'$ by grouping the terms in two different ways. Hence $w \in \Vt(P_\MM)$.

Hence $w \in \Vt(P_\MM)$, completing the proof of \eqref{vertpmplusc1}.

By combining terms, we may assume that we never have $i_\alpha=j_{\alpha'}$ in the equation. As $w,z \in \Vt(\Delta)$, this then forces $j_\alpha \in w \setminus z$ and $i_\alpha \in z \setminus w$. By Hall's marriage theorem, some subset of  $\{(j_\alpha, i_\alpha)\}$ yields a matching between $w\setminus z$ and $z \setminus w$, demonstrating \eqref{vertpmplusc2}.

Finally, we show that $(P_\MM+C)\cap \Delta$ is a matroid rank polytope. By \eqref{vertpmplusc2}, we can reach all of the vertices of $(P_\MM+C)\cap \Delta$ by starting with $P_\MM$ and applying the operation $P \mapsto (P+\mb{R}_{\ge 0}v)\cap \Delta$ once for each $v$ of the form $e_i-e_j \in C$. As it is clear that the result of applying this operation will keep $P$ contained inside $(P_\MM+C)\cap \Delta$, when we reach all of the vertices of $(P_\MM+C)\cap \Delta$ we will have exactly the polytope $(P_\MM+C)\cap \Delta$. Hence, it suffices to show that the property of being a matroid rank polytope is preserved under $P \mapsto (P+\mb{R}_{\ge 0}(e_i-e_j))\cap \Delta$. By \eqref{vertpmplusc1}, all vertices of $(P+\mb{R}_{\ge 0}(e_i-e_j))\cap \Delta$ are of the form $v+\lambda(e_i-e_j) \in \Vt(\Delta)$ for $v \in \Vt(P)$ and $\lambda \geq 0$. This forces either $\lambda=0$, in which case $v+\lambda(e_i-e_j) \in \Vt(P)$, or $\lambda=1$, in which case we get $(v \setminus j)\cup i$ whenever $j \in v$ and $i \not \in v$. This description yields a matroid by the basis exchange property.
\end{proof}

\begin{Lem} \label{lambdalambdaprime}
Let $\MM$ be a matroid and let $C$ be a matroidal cone such that $P_\MM = (P_\MM + C) \cap \Delta$. Let $v=e_i-e_j$ for some $i \ne j$. Let $z$ be a vertex of $(P_\MM+C+\mb{R}_{\ge 0}v)\cap \Delta$. Let $\lambda$ be the minimum nonnegative value such that $z - \lambda v \in P_\MM + C$, and let $\lambda'$ be the minimum value of $|z \setminus w|$ over all pairs $(w, \phi)$ such that $w \in \Vt(P_\MM)$ and $\phi: z \setminus w \to w \setminus z$ is a bijection with $e_k-e_{\phi(k)} \in C+\mb{R}_{\ge 0}v$ for all $k \in z \setminus w$. Then $\lambda = \lambda'$.
\end{Lem}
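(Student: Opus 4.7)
The plan is to relate both quantities through a graph-theoretic interpretation of the cone $C + \mb{R}_{\geq 0}v$. I would introduce the directed graph $G'$ on vertex set $\{1,\ldots, n\}$ with an edge $a \to b$ for each generator $e_a - e_b$ of $C$, together with one extra edge $(i,j)$ representing $v$. A standard flow-theoretic fact then gives that $u \in C + \mb{R}_{\geq 0}v$ iff $u$ is realized as a nonnegative flow on $G'$; in particular, $e_k - e_{\phi(k)} \in C + \mb{R}_{\geq 0}v$ iff there is a simple directed $k \to \phi(k)$ path in $G'$, and such a simple path uses the edge $(i,j)$ at most once.

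For the direction $\lambda \leq \lambda'$, I would take $(w, \phi)$ achieving $\lambda'$ (existence guaranteed by \Cref{horribleequivalence} applied to $C + \mb{R}_{\geq 0}v$). For each $k \in z \setminus w$, fix a simple directed path from $k$ to $\phi(k)$ in $G'$. This path uses $(i,j)$ either zero or one times, so I can write $e_k - e_{\phi(k)} = c_k + \mu_k v$ with $c_k \in C$ and $\mu_k \in \{0,1\}$. Summing over $k$ yields $z - (\sum_k \mu_k)v = w + \sum_k c_k \in P_M + C$, so $\lambda \leq \sum_k \mu_k \leq |z \setminus w| = \lambda'$.

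For the reverse direction $\lambda' \leq \lambda$, I would pick $y \in P_M$ and $c \in C$ realizing $z - \lambda v = y + c$, and reduce to the case $y = w_0 \in \Vt(P_M)$ via an extremal-point argument on the polytope $\{y' \in P_M : z - \lambda v - y' \in C\}$, using the hypothesis $(P_M + C) \cap \Delta = P_M$. Then $z - w_0 = c + \lambda v$ is realized by a nonnegative flow on $G'$ with flow $\lambda$ on edge $(i,j)$. Since the incidence matrix of $G'$ is totally unimodular and the demand $z - w_0$ is integer-valued, an integer such flow exists; by standard flow decomposition it breaks into $|z \setminus w_0|$ directed source-to-sink paths (plus cycles, which may be discarded), with exactly $\lambda$ of them using the edge $(i,j)$. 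For each $k$ whose path avoids $(i,j)$, the corresponding $e_k - e_{\phi(k)}$ lies in $C$, and then $w_0 + (e_k - e_{\phi(k)})$ is in $\Vt(\Delta) \cap (P_M + C) = \Vt(P_M)$ by the hypothesis. Iterating this basis-exchange step eliminates all paths avoiding $(i,j)$, producing a vertex $w$ with $|z \setminus w| \leq \lambda$, so $\lambda' \leq \lambda$.

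The main obstacle is the reduction to $y \in \Vt(P_M)$: without it, the flow realizing $z - y$ has real rather than integer coefficients, and the decomposition into paths no longer directly produces an integer bijection. I expect this step to follow from careful polyhedral analysis, using total unimodularity of the flow polytope together with the structural hypothesis $(P_M + C) \cap \Delta = P_M$, which forces extremal points of the auxiliary polytope to lie at vertices of $P_M$. Once inside the integer setting, the basis-exchange cleanup and the graph-theoretic path interpretation combine cleanly to match the two minima.
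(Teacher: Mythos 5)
Your first direction ($\lambda \le \lambda'$) is essentially the paper's argument: the observation that a simple directed path in your graph $G'$ uses the extra edge $(i,j)$ at most once is exactly how the paper justifies that each $e_k-e_{\phi(k)}$ lies in $C$ or in $v+C$, after which summation gives $\lambda\le\lambda'$. The reverse direction is where you diverge, and it is also where the proof has a genuine gap. The step you yourself flag as the main obstacle --- producing a vertex $w_0\in\Vt(P_\MM)$ with $z-\lambda v\in w_0+C$, together with the integrality of $\lambda$ --- is the entire content of the hard direction, and the justification you offer does not close it. Total unimodularity of the incidence matrix of $G'$ controls the flow polyhedron alone; the polyhedron whose integrality you actually need is the set of pairs (point of $P_\MM$, nonnegative flow) whose sum realizes $z$, i.e.\ the intersection of a flow polyhedron with the matroid polytope, and a vertex of an intersection of two integral polyhedra need not be a vertex of either, nor integral. (Integrality here is a real theorem --- essentially an Edmonds--Giles submodular-flow statement --- not a formal consequence of TU.) There is also a circularity: the natural way to show that extremal points of $(z-\lambda v-C)\cap P_\MM$ lie in $\Vt(P_\MM)$, following the proof of \Cref{horribleequivalence}, uses that the target point $z-\lambda v$ has integral coordinate sums on the relevant equivalence classes, which presupposes that $\lambda$ is an integer --- itself part of what must be proved.

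The paper sidesteps both problems by not taking an arbitrary extremal point: it fixes a representation $z=w+\sum a_\alpha(e_{i_\alpha}-e_{j_\alpha})+\lambda v$ with $w$ \emph{lexicographically maximal} among all such representations, and shows that any failure of integrality of $w$ or of $\lambda$ yields a perturbation direction that either decreases $\lambda$ (contradicting minimality) or moves $w$ lexicographically upward (contradicting maximality); a further case analysis, using $P_\MM=(P_\MM+C)\cap\Delta$ and $z\in\Vt(\Delta)$, pins every term down to the form $e_j-e_k$ or $e_\ell-e_i$ with coefficient $1$ and counts them. To salvage your route you would need either to import the integrality of polymatroid/submodular flow polyhedra as a black box, or to reproduce a perturbation argument of the paper's type for the reduction to $w_0\in\Vt(P_\MM)$ and integer $\lambda$. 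Granting that step, your flow decomposition and basis-exchange cleanup via $(P_\MM+C)\cap\Delta=P_\MM$ are correct and would finish the proof cleanly.
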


\begin{proof}
\Cref{horribleequivalence} guarantees that $\lambda'$ is well-defined. Observe that $z - w = \sum_{k \in z \setminus w} (e_k - e_{\phi(k)})$. For each $k \in z \setminus w$, we have either $e_k - e_{\phi(k)} \in C$ or $e_k - e_{\phi(k)} = v + (e_k - e_i) + (e_j - e_{\phi(k)}) \in v + C$. Summing over all $k$ yields $z - w \in \mu v + C$ for some $\mu \leq |z \setminus w| = \lambda'$. Then $z - \mu v \in w + C \subset P_\MM + C$, so $\lambda \leq \mu \leq \lambda'$.

It remains to show that $\lambda' \leq \lambda$. We have an equation
\begin{equation}\label{pluslambdav}
w+\sum_{\alpha} a_{\alpha}(e_{i_\alpha}-e_{j_\alpha})+\lambda v=z,
\end{equation} 
where $w \in P_\MM$ and $a_\alpha>0$ and $e_{i_\alpha}-e_{j_\alpha} \in C$ for all $\alpha$. Note that if $\lambda=0$, then $z\in P_\MM$ and we know that all coordinates of $z$ are integral so in fact $z \in \Vt(P_\MM)$ and the result is trivial. Hence, we may assume $\lambda>0$.

We may assume our equation is reduced in the sense that $j_{\alpha}\ne i_\beta$ for any $\alpha \ne \beta$. Let $C'=\sum_\alpha \mb{R}(e_{i_\alpha}-e_{j_\alpha})$. Clearly $e_i-e_j \not \in C'$ (nor its negation) as then we could use this to decrease $\lambda$ by perturbing the equation. Hence, by reordering the indices, we may assume without loss of generality that $j_\alpha < i_\alpha$ and $j < i$.

We may assume that $w \in P_\MM$ is lexicographically maximal satisfying an equation of the form \eqref{pluslambdav} with $j_\alpha < i_\alpha$ and $j < i$. 

We show first that $w\in \Vt(P_\MM)$. As in the proof of \Cref{horribleequivalence}, if not all coordinates of $w$ are integral, then $(T_{w,\MM}-w)\cap C''$ contains a line, where $C''$ is the cone generated by $\mb{R}(e_i-e_j)$ and $\mb{R}(e_{i_\alpha}-e_{j_\alpha})$ for all $\alpha$. If a nonzero multiple of $e_i-e_j$ is used in the expression for this line, then a small perturbation of $w$ along one of the two directions of the line decreases $\lambda$, contradicting the minimality of $\lambda$. Otherwise, $w$ may be perturbed in either direction, contradicting the lexicographic maximality of $w$. Hence, $w \in \Vt(P_\MM)$.

Suppose $\lambda$ is not integral. We have $i\sim_{C'}j$ as otherwise the sum of the coordinates of the left-hand side of \eqref{pluslambdav} in the $C'$-equivalence class containing $i$ is not an integer, contradicting the integrality of $z$. Thus we may create a perturbation of the expression which decreases $\lambda$, again a contradiction.
Hence $\lambda$ is integral.

Suppose $a(e_\ell-e_k)$ appears as one of the terms of the sum in \eqref{pluslambdav} with $k \ne i,j$ and $\ell \ne i,j$. Then for $0<\epsilon<a$, we have $w+\epsilon(e_l-e_k) \in P_M+C$ and $w+\epsilon(e_l-e_k) \not \in P_M=(P_M+C)\cap \Delta$ by the lexicographic maximality of $w$. Hence $w+\epsilon(e_l-e_k) \not \in \Delta$ for $\epsilon$ arbitrarily small. This implies either $w_l=1$ or $w_k=0$. No expression of the form $b(e_k-e_{k'})$ appears in the sum, so if $w_k=0$, then we have $w+\sum a_\alpha (e_{i_\alpha}-e_{j_\alpha})+\lambda(e_i-e_j)$ has negative $k$-coordinate, contradicting that $z \in \Delta$. Similarly, no expression of the form $b(e_{\ell'}-e_\ell)$ appears in the sum, so if $w_l=1$ then we have $w+\sum_\alpha a_\alpha (e_{i_\alpha}-e_{j_\alpha})+\lambda(e_i-e_j)$ has $\ell$-coordinate strictly greater than $1$, contradicting the assumption $z \in \Delta$. Hence no such term $a(e_\ell-e_k)$ appears in the sum.

Suppose now that $a(e_\ell-e_j)$ appears as one of the terms of the sum in \eqref{pluslambdav} with $\ell \ne i$. Then by the same argument as above, either $w_\ell=1$ or $w_j=0$ lest $w$ not be lexicographically critical. If $w_\ell=1$ we argue as above, and if $w_j=0$ then since there is no other term in the sum of the form $b(e_k-e_j)$ and $\lambda \ge 0$, we get that $z_j<0$, contradicting that $z \in \Delta$. A similar argument shows that $a(e_i-e_k)$ does not appear as one of the terms of the sum for $k \ne j$.

Hence, each $e_{i_\alpha}-e_{j_\alpha}$ is of the form $e_j-e_k$ or $e_\ell-e_i$. Since $w,z \in \Vt(\Delta)$, we conclude that each $a_\alpha=1$. Furthermore $w_k=1$ if a term of the form $e_j-e_k$ appears in the sum, and $w_\ell=0$ if a term of the form $e_\ell-e_i$ appears in the sum. Let $a$ be the number of terms of the form $e_j-e_k$ and let $b$ be the number of terms of the form $e_l-e_i$. Then since $z_i,z_j \in \{0,1\}$, we conclude that $\lambda-1 \le a,b \le \lambda+1$. But $a=\lambda+1$ implies $a>0$, so there is a term of the form $e_j-e_k$, and it also implies $w_j=0$, so $w+(e_j-e_k)\in (P_\MM+C)\cap \Delta=P_\MM$, contradicting the lexicographic maximality of $w$. Thus $a \leq \lambda$. Similarly, we have $b \le \lambda$. Combining the terms in the sum with $\lambda(e_i-e_j)$, we get $\lambda$ terms of the form $e_{i'}-e_{j'} \in C \cup (C+v)$ which yield a bijection $\phi:z \setminus w \to w \setminus z$ with $\phi(i')=j'$. Hence $\lambda' \le \lambda$ as desired.
\end{proof}
\section{Resolution via wonderful compactification}
\label{LL}
We recall the setup to blow up inductively an arrangement of subvarieties from \cite{LiLi}. In this section, a \emph{variety} is a reduced irreducible scheme of finite type over $K$.

\subsection{Arrangement of subvarieties}
\begin{Def}
[{\cite[Definition~2.1, Lemma~5.1]{LiLi}}] Let $X$ be a smooth variety and let $\mc{S}=\{S_i\}$ be a finite set of smooth nonempty proper subvarieties of $X$. We say that $\mc{S}$ is a \emph{simple arrangment of subvarieties} of $X$ if $\mc{S} \cup \{\emptyset\}$ is closed under scheme-theoretic intersection.
\end{Def}

\begin{Def}
[{\cite[Definition 2.2]{LiLi}}]
\label{BSD}
Let $\mc{S}$ be a simple arrangement of subvarieties of $X$. A subset $\mc{G}\subset \mc{S}$ is called a \emph{building set} of $\mc{S}$ if for all $S\in \mc{S}$, the minimal elements of $\mc{G}$ containing $S$ intersect transversely and their intersection is $S$. 

We say that a set $\mc{G}$ of subvarieties of $X$ is a \emph{building set} if it is a building set of some simple arrangement $\mc{S}$ of subvarieties of $X$. In this case $\mc{S}$ is the set of all proper nonempty intersections of elements of $\mc{G}$.
\end{Def}

\subsection{Wonderful compactification}

\begin{Def} [{\cite[Definition 2.12]{LiLi}}]
\label{iterBU}
Let $X$ be a smooth variety and let $\mc{G}$ be a building set of subvarieties of $X$. Order $\mc{G}=\{G_1,\ldots,G_n\}$ so that $G_i\subset G_j$ implies $i < j$, i.e. $G_i$ is a minimal element of $\{G_i,\ldots,G_n\}$ for all $i$. Let $X_0 = X$ and for each $k$ with $0 < k \leq n$, let $X_{k}$ be the blowup of $X_{k-1}$ along $\wt{G_k}$ (the iterated strict transform of $G_k$ in $X_{k-1}$). Define the \emph{wonderful compactification} $X_{\mc{G}}$ to be $X_n$, which by \Cref{wonderfulcompactificationclosure} below is independent of the ordering of $\mc{G}$.

By \cite[Proposition 2.8]{LiLi}, all the blowups carried out in \Cref{iterBU} have smooth centers, so $X_{\mc{G}}$ is smooth.

\end{Def}

\begin{Rem}\label{equivariantblowup}
If $X$ carries the action of an algebraic group $G$ and $\mc{G}$ is a building set containing only $G$-invariant subvarieties of $X$, then the action of $G$ on $X$ induces an action of $G$ on $X_\mc{G}$ and the blowdown map $X_{\mc{G}} \to X$ is $G$-equivariant.
\end{Rem}

\begin{Prop}
[{\cite[Proposition 2.13]{LiLi}}] \label{wonderfulcompactificationclosure}
\label{WCE}
The construction of $X_{\mc{G}}$ in \Cref{iterBU} agrees with the closure of the image
\begin{align*}
X\backslash\bigcup_{G\in \mc{G}}G\hookrightarrow \prod_{G\in\mc{G}}{{\rm Bl}_GX}.
\end{align*}
\end{Prop}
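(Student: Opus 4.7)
The plan is to follow the strategy of Li's original argument, proceeding by induction on $|\mathcal{G}|$ and exploiting the universal property of blowing up. The base case $\mathcal{G} = \{G\}$ is immediate: both sides equal $\mathrm{Bl}_G X$, since $X \setminus G$ is dense in $\mathrm{Bl}_G X$.

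For the inductive step, I would first construct a canonical morphism $\varphi \colon X_{\mathcal{G}} \to \prod_{G \in \mathcal{G}} \mathrm{Bl}_G X$ that extends the tautological embedding of the open locus $X \setminus \bigcup_{G \in \mathcal{G}} G$. To produce each factor $X_{\mathcal{G}} \to \mathrm{Bl}_G X$ lying over the blowdown to $X$, one invokes the universal property of blowup: the preimage of $G$ in $X_{\mathcal{G}}$ must be an effective Cartier divisor. This holds because at the stage of \Cref{iterBU} where $G$ is addressed, its strict transform is blown up as a smooth subvariety (smoothness of the center is guaranteed by the building set property, as recorded in \cite[Proposition 2.8]{LiLi}); after this blowup the total transform of $G$ is a Cartier divisor, and all subsequent blowups only pull it back, preserving the Cartier condition. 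Since $X_{\mathcal{G}} \to X$ is proper, so is $\varphi$, so the image $\varphi(X_{\mathcal{G}})$ is closed in $\prod_{G} \mathrm{Bl}_G X$. As this image contains the dense open $X \setminus \bigcup_G G$, it equals the claimed closure, and $\varphi$ factors as a surjection $X_{\mathcal{G}} \twoheadrightarrow Y$, where $Y$ denotes the closure in question.

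The heart of the proof — and the step I expect to be the main obstacle — is to upgrade the surjection $\varphi$ to an isomorphism. For this I would proceed by induction on $|\mathcal{G}|$. Pick a minimal element $G_1 \in \mathcal{G}$, set $X_1 = \mathrm{Bl}_{G_1} X$, and let $\mathcal{G}' = \{\widetilde{G_2}, \ldots, \widetilde{G_n}\}$ consist of the strict transforms in $X_1$; one first checks that $\mathcal{G}'$ is a building set of subvarieties of $X_1$, so that the inductive hypothesis applies. By \Cref{iterBU}, $X_{\mathcal{G}} = (X_1)_{\mathcal{G}'}$, and inductively this is the closure of $X_1 \setminus \bigcup_i \widetilde{G_i}$ in $\prod_i \mathrm{Bl}_{\widetilde{G_i}} X_1$. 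To reconcile this with the closure of $X \setminus \bigcup_j G_j$ in $\prod_j \mathrm{Bl}_{G_j} X$, I would build the tautological morphisms $\mathrm{Bl}_{\widetilde{G_i}} X_1 \to \mathrm{Bl}_{G_i} X$ (again by the universal property, since the total transform of $G_i$ is Cartier upstairs) together with the blowdown $X_1 \to X$, and verify that the induced map identifies the two closures.

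The subtlest point, and the reason the argument requires care, is the combinatorial interaction between blowing up $G_1$ first and the behavior of the remaining strict transforms: here one uses the transversality axiom in \Cref{BSD}, which guarantees that each $\widetilde{G_i}$ is smooth, that $\mathcal{G}'$ is again a building set, and that the iterated blowup of \Cref{iterBU} is independent of the chosen ordering. Granting these combinatorial inputs — all of which are established in \cite[Sections~2.8--2.12]{LiLi} — the inductive comparison of the two closures is a diagram chase combined with Zariski's main theorem applied to the birational proper morphism $\varphi$ between normal varieties whose geometric fibers one verifies are singletons via the explicit description of the exceptional locus of the iterated blowup.
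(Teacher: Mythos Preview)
The paper does not supply its own proof of this proposition: it is quoted verbatim from \cite[Proposition 2.13]{LiLi} and stated without argument, since the authors only need the statement as a black box to deduce \Cref{RMBU}. Your sketch follows Li's original inductive strategy and is a reasonable outline of that proof, so there is nothing to compare against here.
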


\subsection{Resolution of maps}
Proposition \ref{WCE} allows one to resolve a collection of rational maps with smooth base loci via iterated blowups.
\begin{Prop}
\label{RMBU}
Let $X\dashrightarrow X_i$ be a finite set of rational maps with simple base loci $B_i$. If $\mc{G}$ is a building set of subvarieties of $X$ that contains each $B_i$, then the wonderful compactification $X_{\mc{G}}\to X$ resolves the map $X\dashrightarrow \prod X_i$.
\end{Prop}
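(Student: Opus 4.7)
The plan is to resolve each rational map $X \dashrightarrow X_i$ individually on $X_{\mathcal{G}}$ and then assemble the resulting regular maps into a single resolution of the product. The key is that $X_{\mathcal{G}}$ dominates $\operatorname{Bl}_{B_i} X$ for each $i$ (since each $B_i \in \mathcal{G}$), so it suffices to show that blowing up $B_i$ already resolves $X \dashrightarrow X_i$.

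First I would invoke \Cref{wonderfulcompactificationclosure}: because $X_{\mathcal{G}}$ is the closure of $X \setminus \bigcup_{G \in \mathcal{G}} G$ inside $\prod_{G \in \mathcal{G}} \operatorname{Bl}_G X$, the projection onto the factor indexed by $B_i$ restricts to a regular morphism $\pi_i : X_{\mathcal{G}} \to \operatorname{Bl}_{B_i} X$. This is the essential use of the wonderful compactification --- it packages all the blowup resolutions simultaneously into one smooth space.

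Next I would show that the rational map $f_i : X \dashrightarrow X_i$, whose locus of indeterminacy is contained in the smooth subvariety $B_i$, lifts to a regular map $\widetilde{f}_i : \operatorname{Bl}_{B_i} X \to X_i$. The graph closure $\Gamma_i \subset X \times X_i$ projects birationally onto $X$ and is isomorphic to $X$ away from $B_i$; by the universal property of blowing up the ideal of indeterminacy, there is a dominant morphism $\operatorname{Bl}_{B_i} X \to \Gamma_i$, and composing with the second projection produces $\widetilde{f}_i$. Composing $\widetilde{f}_i \circ \pi_i : X_{\mathcal{G}} \to X_i$ and taking the product over $i$ yields the desired regular map $X_{\mathcal{G}} \to \prod_i X_i$ agreeing with the given rational map on the dense open set $X \setminus \bigcup_{G \in \mathcal{G}} G$.

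The main subtlety is the second step, where one must check that blowing up $B_i$ with its reduced subscheme structure is enough to resolve $f_i$ --- in general, blowing up the scheme-theoretic base ideal is what the universal property provides, and these coincide precisely when the base ideal is reduced. In the intended applications (e.g.\ the resolutions used in \Cref{graphclosuresection}), the rational maps are linear projections whose base loci are reduced smooth subspaces, so this is automatic; more generally the smoothness of $B_i$ plus the description of $f_i$ in the applications ensures the base ideal agrees with $\mathcal{I}_{B_i}$, which is where the word ``simple'' is used.
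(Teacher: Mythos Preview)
Your proof is correct and follows essentially the same route as the paper: the paper's proof is just the commutative diagram encoding exactly your argument --- it uses \Cref{wonderfulcompactificationclosure} to embed $X_{\mathcal{G}}$ into $\prod_{G \in \mathcal{G}} \operatorname{Bl}_G X$, projects to each $\operatorname{Bl}_{B_i} X$, and then composes with the resolution $\operatorname{Bl}_{B_i} X \to X_i$. Your added discussion of the subtlety behind the word ``simple'' (that the base ideal must coincide with the reduced ideal of $B_i$ for the blowup along $B_i$ to resolve $f_i$) is a point the paper leaves implicit.
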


\begin{proof}
Consider the diagram
\begin{center}
\begin{tikzcd}
X_{\mc{G}} \arrow[r,hook] \arrow[d]& \prod_{G\in \mc{G}}{{\rm Bl}_{G}X} \arrow[d]\\
X \arrow[r,dashed] & \prod_{i}{X_i},
\end{tikzcd}
\end{center}
where the map $\prod_{G\in \mc{G}}{{\rm Bl}_{G}X}\to \prod_{i}{X_i}$ is given on each factor by the composition $\prod_{G\in \mc{G}}{{\rm Bl}_{G}X}\to {\rm Bl}_{B_i}X\to X_i$.
\end{proof}

\begin{Cor}
\label{RMM}
Let $M$ be a $d \times n$ matrix with no zero columns. Let $V = K^r$ and let $W = K^d$ and let $x_1, \ldots, x_n \in W$ be the columns of $M$. Let $\mc{G}$ be a building set of subvarieties of $\mb{P}^{(r+1) \times d-1} = \mb{P}(V \otimes W^*)$ that contains each subspace $\mb{P}(V \otimes x_i^{\perp})$. Then the wonderful compactification $B = (\mb{P}^{(r+1) \times d-1})_\mc{G}$ is a $GL_{r+1}$-equivariant resolution of $\mu_M$:
\begin{center}
\begin{tikzcd}
B \arrow[d,"\pi_M"] \arrow[dr, "\wt{\mu}_M"] & \\
\mb{P}^{(r+1)\times d-1} \arrow[r, dashed, "\mu_M"] & (\mb{P}^r)^n
\end{tikzcd}
\end{center}
\end{Cor}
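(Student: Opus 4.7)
The plan is to deduce this corollary directly from Proposition \ref{RMBU} combined with Remark \ref{equivariantblowup}, by first expressing $\mu_M$ as a product of simple rational maps whose base loci are exactly the subspaces $\mb{P}(V \otimes x_i^\perp)$.

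First, I would factor $\mu_M$ as the product $\prod_{i=1}^n \mu_M^{(i)}$, where each $\mu_M^{(i)} : \mb{P}(V \otimes W^\vee) \dashrightarrow \mb{P}(V)$ is given by $\wt{A} \mapsto \wt{Ax_i}$. Evaluation at $x_i$ defines a linear surjection $V \otimes W^\vee \to V$ (here we use $x_i \neq 0$) whose kernel is $V \otimes x_i^\perp$, so $\mu_M^{(i)}$ is the linear projection with base locus exactly the linear subspace $B_i := \mb{P}(V \otimes x_i^\perp)$. In particular each $B_i$ is smooth and nonempty, i.e.\ a simple base locus in the sense of Proposition \ref{RMBU}.

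Next, since $\mc{G}$ is a building set containing each $B_i$ by hypothesis, Proposition \ref{RMBU} applied to the collection $\{\mu_M^{(i)}\}_{i=1}^n$ and the targets $X_i = \mb{P}(V) = \mb{P}^r$ shows that $B = \mb{P}(V \otimes W^\vee)_\mc{G} \to \mb{P}^{(r+1)\times d-1}$ resolves the product map $\mu_M = \prod_i \mu_M^{(i)} : \mb{P}^{(r+1)\times d-1} \dashrightarrow (\mb{P}^r)^n$. This gives the desired factorization through a regular map $\wt{\mu}_M : B \to (\mb{P}^r)^n$ with $\wt{\mu}_M = \mu_M \circ \pi_M$ on the open set where $\mu_M$ is defined.

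Finally, for the $GL_{r+1}$-equivariance: the $GL_{r+1}$-action on $\mb{P}(V\otimes W^\vee)$ comes from its action on $V$ and fixes $W^\vee$, so every subspace of the form $\mb{P}(V \otimes U)$ for $U \subset W^\vee$ is $GL_{r+1}$-invariant; in particular each $B_i$ and (in all applications in this paper) every element of $\mc{G}$ is $GL_{r+1}$-invariant. Applying Remark \ref{equivariantblowup} inductively along the sequence of blowups in Definition \ref{iterBU}, the blowdown $\pi_M : B \to \mb{P}^{(r+1)\times d-1}$ is $GL_{r+1}$-equivariant. Since $\mu_M$ is manifestly $GL_{r+1}$-equivariant on its domain of definition, the resolved map $\wt{\mu}_M$ agrees with $\mu_M \circ \pi_M$ on the preimage of a dense open $GL_{r+1}$-invariant subset, and hence is $GL_{r+1}$-equivariant by continuity (separatedness of $(\mb{P}^r)^n$). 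There is no real obstacle here once Proposition \ref{RMBU} and Remark \ref{equivariantblowup} are in hand; the only thing to check is the identification of the base loci of the component maps $\mu_M^{(i)}$, which is immediate from linear algebra.
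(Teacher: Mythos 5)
Your proposal is correct and follows essentially the same route as the paper: the paper's proof is exactly to apply \Cref{RMBU} with $X_i = \mb{P}^r$ and the $i$th map being multiplication by $x_i$ (whose base locus is $\mb{P}(V\otimes x_i^\perp)$), with equivariance supplied by \Cref{equivariantblowup}. Your additional remarks — the identification of the base loci via the surjection $V\otimes W^\vee \to V$ and the observation that subspaces of the form $\mb{P}(V\otimes U)$ are automatically $GL(V)$-invariant — are correct elaborations of steps the paper leaves implicit.
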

\begin{proof}
Apply \Cref{RMBU} with $X = \mb{P}^{(r+1) \times d-1}$ and $X_i = \mb{P}^r$, where the map $X \dashrightarrow X_i$ is the multiplication map by $x_i$. The equivariance follows from \Cref{equivariantblowup}.
\end{proof}
\section{Tables of symbols and definitions}
\label{tableappendix}
For the benefit of the reader, we collect certain definitions in \Cref{wordtable} and symbols used throughout the paper in \Cref{symboltable}.

\begin{figure}[h]
    \centering
\begin{longtable}{l | p{8cm} | r}
Word & Short Description & Reference \\\hline
valuative & Function on matroids respecting rank polytope subdivision & \Cref{matroidpolytopedef} \\
additive & Function on matroids respecting rank polytope subdivision (ignores lower dimensional faces) & \Cref{matroidpolytopedef} \\
\begin{tabular}{l}Series parallel\\ matroid\end{tabular} &
 Matroid built from $\ast$ by series and parallel connection with general $1\times 2$ matrices & \Cref{serpardef}\\
Schubert matroid & Matroid of general matrix in Schubert cell & \Cref{SchubertDef}\\
Regular Subdivision &Subdivision of polytope induced by a ``lifting function'' on the vertices & \Cref{regularsubdef}
\\
Orb-limit & A matrix $N$ such that $\op{M(t)}$ contains $\op{N}$ when $t=0$& \Cref{orblimitdef}
\\
Cone-like matroid polytope &Matroid polytope arising from intersecting a matroidal cone with $\Delta_{d,n}$& \Cref{conelike}\\
(Positive) matroidal cone &Cone generated by vectors $e_i-e_j$ (resp. $i>j$)& \Cref{matroidalconedef}
\\
\end{longtable}
\caption{Table of certain definitions} \label{wordtable}
\end{figure}

\begin{figure}[h]
    \centering
\begin{longtable}{l | p{10cm} | r}
Symbol & Short Description & Reference \\\hline
 $\pt$ & variety consisting of a point & \\
  $\wt{v}$ & $\wt{v}\in \mb{P}(V)$ is the point corresponding to the vector $v\in V$ & \Cref{NotandConv} \\
  $\ast$ & General $1\times 1$ matrix & \Cref{asttau}\\
$\tau_{\leq k}$ & General matrix representing projection onto $K^k$& \Cref{asttau}\\
$P_M$ & Rank polytope of the matroid of $M$ & \Cref{matroidpolytopedef}\\
$I_M$ & Independence polytope of the matroid of $M$ & \Cref{matroidpolytopedef}\\
$P(M,N)$ & parallel connection of matrices $M$ and $N$ & \Cref{serpardefn}\\
$S(M,N)$ & series connection of matrices $M$ and $N$ & \Cref{serpardefn}\\
 $\operatorname{Vert}(P)$ & vertices of the polytope $P$ & \Cref{matroidpolytopedef} \\
$t_0,\ldots,t_r$ &Torus characters of diagonal matrices in $GL_{r+1}$  &\Cref{Chowring} \\
$A^\bullet_{GL_{r+1}}(\pt)$ & \begin{tabular}{l}$GL_{r+1}$-equivariant Chow ring of a point \\ Ring presentation: $\mathbb{Z}[t_0,\ldots,t_r]^{S_{r+1}}$ \end{tabular} &\Cref{Chowring} \\
$F(z)$&  Universal Leray Relation, Formula: $\prod_{i=0}^{r}(z+t_i)$&\Cref{Chowring} \\
$\overline{f}$ & Reduction of $f$ modulo $F$ & \Cref{reducedef}\\
$\alpha^\dagger$ & Kronecker dual class of $\alpha$ & \Cref{dualdef}
\\
$A^\bullet_{GL_{r+1}}(\mathbb{P}^n)$ &  \begin{tabular}{c} $GL_{r+1}$-equivariant Chow ring of $\mb{P}^r$ \\ $A^\bullet_{GL_{r+1}}(\pt)[H]/F(H)$ \end{tabular}&\Cref{Chowring} \\
$QH^{\bullet}_{GL_{r+1}}(\mb{P}^r)$ & \begin{tabular}{l} Small equivariant quantum cohomology ring of $\mathbb{P}^r$ \\ Ring Presentation: $\mb{Z}[t_0,\ldots,t_r][H][\hbar]/(F(H)=\hbar)$ \end{tabular} & \Cref{QCRD}\\
$[z^k]f$ & Coefficient of $z^k$ of $f$ & \Cref{QCRD}\\
$[F(z)^k]f$& $a_k$ from $f=a_0+a_1F+\cdots$ with each $a_i$ reduced mod $F$ & \Cref{QCRD}\\
$\mu_M$ & Matrix multiplication map $\mb{P}^{(r+1)\times d-1} \dashrightarrow (\mb{P}^r)^n$ & \Cref{Setup}
\\
$\op{M}$ & Generalized matrix orbit & \Cref{Setup}
\\
$\Gamma_M$ & Graph closure of $\mu_M$ & \Cref{Setup}\\
$[M]_{\hbar}$ & Operator $A^\bullet_{GL_{r+1}}(\mb{P}^r)^{\otimes n} \to QH^\bullet_{GL_{r+1}}(\mb{P}^r)$ & \Cref{Mhbardef}
\\
$[M]_{\hbar}^\dagger$ & Kronecker dual to $[M]_{\hbar}$ in $A^\bullet_{GL_{r+1}}(\mb{P}^r)^{\otimes n}\otimes QH^\bullet_{GL_{r+1}}(\mb{P}^r)$ & \Cref{Mhbardaggerdef}
\\
\end{longtable}
\caption{Table of symbols} \label{symboltable}
\end{figure}

\clearpage
\bibliographystyle{alpha}
\bibliography{references.bib}
\end{document}